\documentclass[a4paper,11pt]{amsart}

\usepackage[top=25truemm,bottom=25truemm,left=35truemm,right=35truemm]{geometry}
\usepackage{mathtools}

\DeclarePairedDelimiter{\red}{(}{)_{\operatorname{red}}}
\usepackage{amsmath}
\usepackage{amssymb}
\usepackage{amsthm}
\usepackage{amscd}
\usepackage{color}
\usepackage{ulem}
\usepackage{hyperref}
\usepackage{comment} 

\theoremstyle{plain}
\newtheorem{thm}{Theorem}[section]

\newtheorem{lem}[thm]{Lemma}
\newtheorem{cor}[thm]{Corollary}

\theoremstyle{definition} 

\newtheorem{defi}[thm]{Definition}
\newtheorem{defnlemma}[thm]{Definition--Lemma}

\newtheorem{rmk}[thm]{Remark}
\newtheorem{ex}[thm]{Example}

\begin{document}

\subjclass[2020]{Primary 14E30; Secondary 14B05}

\keywords{Generalized Pair, Minimal Log Discrepancy, ACC Conjecture, Boundedness}

\title[Generalized mld]{On the ACC for minimal log discrepancies for bounded generalized Sub-Pairs}
\author{Weichung Chen}
\author{Keng-Hung Steven Lin}

\address{Department of Mathematics, National Taiwan University,
Astronomy Mathematics Building 5F, No. 1, Sec. 4, Roosevelt Rd., Taipei 10617, Taiwan}
\email{weichungchen@ntu.edu.tw}


\address{Graduate Institute of Communication Engineering, National Taiwan University, BL-501, No. 1, Sec. 4, Roosevelt Road, Taipei 10617, Taiwan}
\email{khslin@ntu.edu.tw}

\begin{abstract}
We show that the ACC conjecture for minimal log discrepancies holds for both bounded generalized sub-pairs in arbitrary dimension and bounded threefolds with arbitrary boundaries.
\end{abstract}

\maketitle

\tableofcontents

\section{Introduction}
The \emph{ascending chain condition (ACC) for minimal log discrepancies} is well-known as being a crucial topic for solving the \emph{termination of flips} conjecture in the minimal model program (MMP). 

Shokurov proves the following theorem.
\begin{thm}[\cite{ShoVII09}]
If the following conditions hold.
\begin{itemize}
    \item{(ACC for Minimal Log Discrepancies, \emph{ACC for mld})} Let $d\in\mathbb{Z}_{>0}$ and $I\subseteq [0,1]$ be a set satisfying the descending chain condition (DCC). Then the set 
    \[
    \{\operatorname{mld}(X,B)|\dim(X)=d, (X,B)\text{ is a pair such that }\operatorname{coeff}(B)\in I\}
    \]satisfies the ascending chain condition (ACC),
    \item{(Lower Semi-Continuity for Minimal Log Discrepancies, \emph{LSC for mld})} For a pair $(X,B)$, the function 
    \[
    \operatorname{mld}_{{\bullet}}(X,B):X\to\mathbb{R}\cup\{\infty\}
    \]is lower semi-continuous on closed points $x\in X$. 
\end{itemize}
Then the flips terminate.
\end{thm}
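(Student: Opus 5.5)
We follow Shokurov's strategy and argue by contradiction. Suppose there is an infinite sequence of flips
\[
(X_0,B_0)\dashrightarrow (X_1,B_1)\dashrightarrow (X_2,B_2)\dashrightarrow\cdots
\]
of $d$-dimensional log canonical (or klt) pairs, where each $X_i\to Z_i\leftarrow X_{i+1}$ is a $(K_{X_i}+B_i)$-flip and $B_{i+1}$ is the strict transform of $B_i$. The coefficient set $I=\operatorname{coeff}(B_0)$ is finite, hence DCC. For every exceptional divisor $E$ over $X_0$, the negativity lemma gives $a(E,X_i,B_i)\le a(E,X_{i+1},B_{i+1})$. The inequality is strict exactly when the center of $E$ on $X_i$ meets the flipping locus.

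\emph{Step 1.} We introduce, for each $i$ and each codimension $k\ge 2$, the quantity $\operatorname{mld}$ over the generic points of codimension-$k$ subvarieties of $X_i$. Following Shokurov, we use the totally ordered invariant consisting of the finitely many mld values attained at points of the flipping and flipped loci. By LSC for mld, on each $X_i$ the function $x\mapsto \operatorname{mld}_x(X_i,B_i)$ takes only finitely many values on the stratification. Each value is attained on a closed subset, so the minimum over the flipped locus $X_{i+1}\setminus U$ is achieved at some closed point $x_{i+1}$. Here $U$ is the locus where the flip is an isomorphism.

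\emph{Step 2.} We compare mlds across one flip. Let $C_i$ be the flipping locus and $C_i^+$ the flipped locus. Take a divisor $E$ computing $\operatorname{mld}_{x_{i+1}}(X_{i+1},B_{i+1})$, with center inside $C_i^+$. By strict negativity, $a(E,X_i,B_i)<a(E,X_{i+1},B_{i+1})$, and the center of $E$ on $X_i$ lies in $C_i$. This yields
\[
\min_{x\in C_i}\operatorname{mld}_x(X_i,B_i)<\min_{x\in C_i^+}\operatorname{mld}_x(X_{i+1},B_{i+1}).
\]
In the other direction, by LSC the value $\min_{C_{i+1}}\operatorname{mld}(X_{i+1},B_{i+1})$ is bounded below in terms of the generic mld values, which are themselves nondecreasing. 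From these local minima we extract a sequence $a_i$ of mlds of pairs of dimension $d$ with coefficients in $I$. We then want to show that, after passing to a subsequence, $a_i$ is strictly increasing. This contradicts ACC for mld.

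\emph{Step 3 (main obstacle).} The difficulty is that consecutive flipping loci may be disjoint, or sit in different dimensions, so the minima in Step 2 need not chain directly. We would follow Shokurov's induction on the dimension of the centers. We define $d_i$ as the minimal dimension of a center meeting the flipping locus along which the mld is minimal. Using LSC, we would show that the lexicographic pair (minimal mld at points of codimension $k$, number of such components) increases. Concretely, there are only finitely many divisors with discrepancy below a fixed bound of a given type. Hence only finitely many flips can be ``spent'' at a fixed mld value before it must strictly increase. This finiteness of divisors with log discrepancy $\le$ a given threshold is the key technical input, and we would prove it via a log resolution of $(X_0,B_0)$.

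Combining the steps: the minima form an infinite strictly increasing sequence in the mld set of Step 1, contradicting the ACC hypothesis. Therefore the flips terminate.
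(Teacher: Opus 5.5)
The paper gives no proof of this statement; it is quoted from Shokurov. Your outline therefore has to stand on its own, and it has a genuine gap exactly where the real work lies.

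To begin with, the inequality in Step 2 is false for the closed-point minima you set up in Step 1. A divisor $E$ computing $\operatorname{mld}_{x_{i+1}}(X_{i+1},B_{i+1})$ at a closed point of $C_i^+$ may have a positive-dimensional center $V\subseteq C_i$ on $X_i$. You then only get $a(E,X_i,B_i)\ge \operatorname{mld}_{\eta_V}(X_i,B_i)$ at the generic point of $V$, which can be smaller than every closed-point mld along $C_i$ by up to $\dim V$.

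Take the standard smooth $4$-fold flip with $B=0$: a $\mathbb{P}^2$ with normal bundle $\mathcal{O}(-1)^{\oplus 2}$ is replaced by a $\mathbb{P}^1$ with normal bundle $\mathcal{O}(-1)^{\oplus 3}$. Every closed-point mld on either side is $4$, so your strict inequality reads $4<4$. Indeed, the blow-up of a point of the flipped $\mathbb{P}^1$ has center the whole $\mathbb{P}^2$ on $X$, with log discrepancy $3$ there.

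What is true is $\operatorname{mld}(C_i;X_i,B_i)<\operatorname{mld}(C_i^+;X_{i+1},B_{i+1})$, with minima taken over all divisors whose center lies in the locus. These minima are typically attained at non-closed points, and changes of dimension then have to be controlled.

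There are also smaller inaccuracies:
\begin{itemize}
\item The increase of $a(E,\cdot)$ is strict iff the center of $E$ is \emph{contained} in the flipping locus, not when it merely meets it.
\item Finiteness of the values of $\operatorname{mld}_\bullet$ on a fixed pair comes from a log resolution, not from LSC.
\item LSC bounds mlds at special points from \emph{above} by nearby values. So it cannot give the lower bound on $\min_{C_{i+1}}\operatorname{mld}$ that you invoke.
\item It is unclear what ``generic mld values are nondecreasing'' means when new centers are created inside $C_i^+$.
\end{itemize}

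More seriously, Step 3 never supplies the mechanism that chains these local increases across different flips. When $C_{i+1}$ lies away from $C_i^+$, nothing relates $\operatorname{mld}(C_{i+1};X_{i+1},B_{i+1})$ to the earlier values, so these minima need not increase.

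The ``key technical input'' you propose is false. For a smooth variety of dimension at least $2$ with $B=0$, the blow-ups of the infinitely many codimension-$2$ subvarieties all have log discrepancy $2$. In the example above, the blow-ups of the infinitely many curves in the flipping $\mathbb{P}^2$ all have log discrepancy $3$ and center in the flipping locus. Finiteness does hold for exceptional divisors with $a<1$ on a klt pair, but the thresholds relevant here are mlds along flipping loci, which are generally larger.

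Nor is your lexicographic invariant forced to move. In that example both sides are smooth with $B=0$, so every point has mld equal to its codimension. If the next flip is again of this type, the pair (minimal mld at codimension-$k$ points of the flipping locus, number of components) is literally unchanged. So no flip-by-flip monotonicity of this kind is available.

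Extracting an infinite strictly increasing sequence of mlds, or some other contradiction, from a global analysis of the entire sequence is the substance of Shokurov's theorem. Your proposal asserts that conclusion rather than proving it.
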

The ACC for mld is known to be held for the following scenarios.
\begin{itemize} 
    \item Surfaces \cite{VA93}.
    \item Toric pairs \cite{Amb06}.
    \item Varieties with a fixed Gorenstein index \cite{Nak16}.
    \item Smooth threefolds \cite{kaw26}. 
    \item Fixed threefolds \cite{kaw26fix3fold}.
    \item Generalized pairs for surfaces \cite{CGN24}.
\end{itemize}
In this paper, we prove that the ACC for both mld holds for bounded threefolds and bounded generalized sub-pairs in arbitrary dimension.

\begin{thm}[ACC for mld for Bounded Generalized Sub-Pairs]
Fix natural numbers $d$ and $n$ and fix a DCC set $I\subseteq \mathbb{R}_{\geq 0}$. Let $P(d,n,I)$ be the set of generalized sub-pairs $(X,B+\mathbf{M})$ such that 
\begin{itemize}
\item $\dim X = d$,
\item $X$ is $\mathbb{Q}$-factorial,
\item we may write $B=E-F$ for some some effective divisors $E$ and $F$,
\item there is a very ample divisor $H$ on $X$ with $H^d\leq n$ and $H-(E+F+\mathbf{M})\geq 0$,
\item $F$ is integral, and
\item $\mathrm{coeff}(E)\in I$ and $\mathrm{coeff}(\mathbf{M})\in I$.
\end{itemize}
Then the set
\[\{ \operatorname{mld}(X,B+M)|(X,B+M)\in P(d,n,I)\}\]
satisfies ACC.  
\end{thm}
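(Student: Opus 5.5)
The plan is to argue by contradiction, exploiting boundedness to reduce to finitely many log resolutions in a family. Suppose there is a sequence $(X_i,B_i+\mathbf{M}_i)\in P(d,n,I)$ with $\operatorname{mld}(X_i,B_i+M_i)$ strictly increasing. Since each $X_i$ carries a very ample $H_i$ with $H_i^d\le n$, the triples $(X_i,\operatorname{Supp}(E_i+F_i),H_i)$ form a log bounded family. Indeed, $X_i$ embeds in a fixed projective space by $H_i$ with bounded degree, and $H_i-(E_i+F_i)\ge 0$ bounds the degree of the support. Similarly, after passing to a model where the b-divisor $\mathbf{M}_i$ descends, the trace $M_i$ on $X_i$ satisfies $H_i-M_i\ge0$ in the sense of the hypothesis. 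By Chow/Hilbert scheme arguments there is a finite type family $\mathcal{X}\to T$ together with divisors $\mathcal{D}_1,\dots,\mathcal{D}_r$ on $\mathcal{X}$ such that each $X_i$ is a fibre $\mathcal{X}_{t_i}$ and every component of $E_i$, $F_i$, $M_i$ is among the $\mathcal{D}_{j,t_i}$. After stratifying $T$ and passing to a subsequence, we may assume $T$ is irreducible, all $t_i$ lie in a stratum over which a fibrewise log resolution $\mathcal{Y}\to\mathcal{X}$ of $(\mathcal{X},\sum\mathcal{D}_j)$ exists, and the dual complex and discrepancy data are constant.

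Next we reduce the mld to a finite-dimensional linear optimization. Coefficients of $E_i$ lie in $I$ and are bounded above (by $1$, via the $H$-inequality, or after replacing $I$ by $I\cap[0,\text{bound}]$), and coefficients of $F_i$ are bounded integers. Passing to a subsequence, we may therefore assume that each coefficient sequence $e_{j,i}$ and $m_{j,i}$ is non-decreasing (DCC) and that the $F_i$ coefficients are constant. Here we use the generalized sub-pair convention that $\mathbf{M}$ is a nef b-divisor whose coefficients we track along the fixed birational models. On the fixed log resolution $Y_t$ of the family, the log discrepancy of any divisor $G$ over $X_i$ is computed toroidally in terms of the coefficients of the pulled-back boundary and the nef part. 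For toroidal (snc) pairs, the mld is attained among finitely many divisors obtained by bounded sequences of blow-ups of strata, as in the snc/toric case \cite{Amb06}. The key point is that the set of such candidate divisors depends only on the combinatorial type, which we have fixed. The mld is thus a minimum of finitely many affine-linear functions $\ell_k(e_{\bullet,i},m_{\bullet,i})$ with non-positive coefficients in $e,m$, together with the value $-\infty$ when some log discrepancy is negative.

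With this reduction the ACC follows. Each $\ell_k$ is non-increasing along the subsequence because the coefficients are non-decreasing, so the minimum is non-increasing. This contradicts the assumption that the mlds are strictly increasing; a constant sequence is also excluded by strict increase. If some terms equal $-\infty$, their pattern is eventually constant after passing to a subsequence.

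I expect the main obstacle to be the second step. The b-divisor $\mathbf{M}_i$ need not descend to a model uniform in the family, and its coefficients on exceptional divisors are not a priori in $I$. I would first try to show that $\mathbf{M}_i$ can be assumed to descend to $Y_{t_i}$, using nefness and the negativity lemma: if $\mathbf{M}$ descends on a higher model, its trace on $Y$ differs by an effective exceptional divisor, which only increases log discrepancies. It then suffices to bound the mld by data on $Y$ plus a DCC perturbation. A second delicate point is showing that the mld on a fixed snc configuration with varying sub-boundary coefficients is computed by a uniformly finite set of divisors. Since $F$ is integral and coefficients can exceed $1$, I would handle this by restricting to the case where the mld is non-negative, where coefficients are effectively bounded by $1$ at the relevant points.
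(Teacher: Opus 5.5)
\paragraph{Verdict.} There is a genuine gap in how you treat $\mathbf{M}_i$. The negativity-lemma remedy you propose does not give what you need.

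\paragraph{Where the argument fails.} Let $\varphi\colon Y\to X$ be the fibrewise log resolution and write $K_Y+B_Y+M_Y=\varphi^*(K_X+B+M)$ with $M_Y=\mathbf{M}|_Y$. For a prime divisor $G$ exceptional over $Y$ one has
\[
a_G(X,B+\mathbf{M})=a_G(Y,B_Y)-\operatorname{ord}_{G/Y}(\mathbf{M}),
\]
and the negativity lemma gives $\operatorname{ord}_{G/Y}(\mathbf{M})\ge 0$. So replacing $\mathbf{M}_i$ by $[M_Y]$ can only \emph{raise} log discrepancies. The mld you compute on the fixed model is therefore only an upper bound for $\operatorname{mld}(X_i,B_i+\mathbf{M}_i)$, and a non-increasing upper bound does not prevent the true values from increasing strictly.

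The true mld may be computed by divisors over $Y$ at which $\mathbf{M}_i$ has positive order, at a depth that can grow with $i$. Their log discrepancies involve integers that are unbounded along the sequence: $a_G(Y,0)$, the orders along $G$ of the components of $B_Y$, and $\operatorname{ord}_{G/Y}\mathbf{M}_{j,i}$. None of these is recorded by a fixed combinatorial type of $(\mathcal{X},\sum\mathcal{D}_j)$. Hence neither your finite list of candidate divisors nor the monotonicity step applies to them.

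Calling the difference a ``DCC perturbation'' does not help. Numbers $A-d$ with $A$ an unbounded integer and $d$ in a DCC set need not satisfy ACC; for example $i-(i-1+\tfrac1i)$ increases.

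A smaller point: even for $G$ on $Y$, the slope of $r_j$ in $a_G$ is $-\operatorname{ord}_{G/X}(\mathbf{M}_{j,i})$, which depends on $\mathbf{M}_{j,i}$ and not on the family. This one is fixable:
\begin{itemize}
\item it lies in $\operatorname{coeff}_G(\varphi^*M_{j,i})+\mathbb{Z}$, and the first term is constant on the stratum;
\item it is $\ge 0$ by negativity;
\item it is bounded above by the sub-lc condition, since $r_{j,i}\ge \min(I\setminus\{0\})$.
\end{itemize}
So it takes finitely many values and is constant after passing to a subsequence.

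\paragraph{What is missing, and how the paper proceeds.} You need an input guaranteeing that $\mathbf{M}_i$ descends to a log smooth model lying in a bounded family. The paper takes this from \cite[Theorem 3.14]{birkar21} in the proof of Theorem~\ref{sub}. With such models your monotonicity argument would close. The global mld of a log smooth sub-lc model on which $\mathbf{M}_i$ descends is $\min\{1,\min_k(1-b_k)\}$, read off from its boundary coefficients and a general divisor.

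The paper, however, uses that input only for a uniform lc statement: $(X_i,(1+\delta)E_i-F_i+(1+\delta)\mathbf{M}_i)$ is generalized lc. It then replaces all coefficients by their limits, a finite set. This moves the mld into $[(1-c_i)m_i,m_i]$ with $c_i\to 0$. The contradiction comes from the discreteness Theorem~\ref{discrete}, which controls $a_E$ for every divisor $E$ regardless of where $\mathbf{M}$ descends. In this way the paper never has to locate the divisor computing the mld, which is exactly the step your proposal cannot carry out.
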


\begin{thm}[ACC for mld for Bounded Pairs of Threefolds]
Fix a DCC set $I\subseteq\mathbb{R}_{>0}$ and $N\in\mathbb{Z}_{>0}$, then there is an ACC set $J=J(I,N)\subseteq\mathbb{R}_{\geq 0}$ satisfying the following statement: If $(X,B)$ is a lc pair of dimension $3$, such that
\begin{itemize}
    \item $\operatorname{coeff}(B)\in I$, and
    \item there is a very ample divisor $H$ on $X$ with $H^3\leq N$,
\end{itemize}
then $\operatorname{mld}(X,B)\in J$.
\end{thm}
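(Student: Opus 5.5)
Since $X$ carries a very ample $H$ with $H^3\le N$, the varieties $X$ form a bounded family. I would therefore reduce to finitely many fixed threefolds and invoke the result for fixed threefolds \cite{kaw26fix3fold}.

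\textbf{Step 1: a bounding family.} By boundedness there is a projective morphism $\mathcal{X}\to T$ of finite type whose fibers include every such $X$. Stratify $T$ into finitely many locally closed pieces over which the family is flat with nice fibers. Since $\operatorname{coeff}(B)\in I$ with $I\subseteq\mathbb{R}_{>0}$ DCC, these coefficients are bounded below by some $\epsilon>0$. Use lc-ness of $(X,B)$ to bound the coefficients above by $1$, and use the structure of $X$ to bound $\deg_H K_X$. This gives $\deg_H\operatorname{Supp}B\le C(N,I)$, so the pair $(X,\operatorname{Supp}B)$ is also log bounded.

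\textbf{Step 2: stratify so mld is constant along strata.} Refine the family to $(\mathcal{X},\mathcal{D}=\sum\mathcal{D}_j)\to T$ carrying the support of $B$. After further stratification (Noetherian induction), assume there is a fiberwise log resolution $\mathcal{Y}\to\mathcal{X}$ of $(\mathcal{X},\mathcal{D})$ over each stratum. Over such a stratum, log discrepancies of divisors on the resolution are affine functions of the coefficient vector $b=(b_j)$, uniform in $t\in T$. The mld of a pair $(X_t,\sum b_j D_{j,t})$ should then depend only on $b$, not on $t$. The hard part is that mld is a minimum over all divisors, not only those on the resolution. I would handle this with the standard argument that, once the log resolution is fixed, the mld is computed via a toroidal or weighted-blowup computation determined by the combinatorics of the snc data. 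That data is constant on the stratum, so the mld is a function $\mu_S(b)$ of $b$ alone, for each of finitely many strata $S$.

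\textbf{Step 3: ACC.} Fix one member $X_{t_S}$ for each stratum $S$. Then every value of $\operatorname{mld}(X,B)$ equals the mld of a pair on one of finitely many fixed threefolds $X_{t_S}$, with coefficients in $I$. By \cite{kaw26fix3fold}, the mld's for a fixed threefold with DCC coefficients satisfy ACC. A finite union of ACC sets is ACC, which gives the set $J$.

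\textbf{Main obstacle.} The main obstacle is Step 2: showing that the mld at points (including non-closed ones) is locally constant in families over a stratum. Non-snc centers in the fibers and the jumping of singularities must be handled by the stratification. If the uniform-resolution argument fails, I would instead argue directly by contradiction. Take a strictly increasing sequence of mld's, pass to a subsequence lying in one stratum with coefficient vectors $b^{(i)}$ nondecreasing. Then show that the mld on a fixed fiber with coefficients $b^{(i)}$ equals the given one, using the semicontinuity of mld in families available for threefolds.
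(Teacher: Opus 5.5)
\textbf{There is a genuine gap, and it starts in Step 1.} Nothing in the hypotheses bounds the degree of $B$. Lc-ness only gives $\operatorname{coeff}(B)\le 1$, and no hypothesis relates $B$ to $K_X$ or $H$ (there is no condition such as $K_X+B\equiv 0$). So bounding $\deg_H K_X$ tells you nothing about $\deg_H\operatorname{Supp}B$. For example, take $X=\mathbb{P}^3$, $H$ a plane, and $B=b\sum_{i=1}^k H_i$ with $k$ general planes and $b\in I\cap(0,1]$; this pair is lc for every $k$. Hence $(X,\operatorname{Supp}B)$ is not log bounded. Consequently the family $(\mathcal{X},\mathcal{D})\to T$, the fiberwise log resolutions and the functions $\mu_S(b)$ of Step 2 do not exist.

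Localizing at the mld center does not repair this. Boundedness of $X$ plus lc-ness does bound $\operatorname{mult}_xB$, and hence the number of components of $B$ through $x$, since $\min I>0$. But the germs of those components are still unbounded: $b\,\{x^2+y^3+z^k=0\}\subseteq\mathbb{C}^3$ is lc for $b\le 5/6$ and every $k$, with Milnor number tending to infinity. This unboundedness is exactly what makes ACC hard even on the fixed germ $\mathbb{C}^3$. Note also that if log boundedness did hold, you would not need \cite{kaw26fix3fold} at all. On a log smooth model the mld is a minimum of finitely many affine functions of $b$ with nonpositive slopes, and such a function satisfies ACC on a DCC set. This is essentially the mechanism behind the paper's theorem for bounded generalized sub-pairs, where the hypothesis $H-(E+F+\mathbf{M})\ge 0$ supplies precisely the log boundedness that is missing here.

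\textbf{The reduction to finitely many fixed threefolds is also unsupported.} Even ignoring $B$, the analytic type of the singularities of $X_t$ can vary continuously along a stratum. For example, the $cA_3$ points $xy+\prod_{j=1}^4(z-\lambda_j w)=0$ have a varying cross-ratio. No result lets you transfer the set of mld values of pairs on $X_t$ to a single fiber $X_{t_S}$. Your fallback appeals to ``semicontinuity of mld in families'', which is not an available theorem: lower semicontinuity of mld is open in general even on a fixed variety, and it appears in the paper only as a hypothesis of Shokurov's theorem. The fallback also again assumes the boundary lives on fixed components $\mathcal{D}_j$.

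\textbf{How the paper avoids all of this.} It argues by contradiction and uses boundedness of $X$ only locally: terminalization, bounded resolutions, and bounded multiplicity and number of components of $B$ at the mld center. It then uses the projection $g$ of \cite{HLS24} and the discreteness result (Theorem \ref{discrete}) to pass to coefficients in a finite set $I_0$ with $\operatorname{mld}(X_i,D'_i)=m<m'\le\operatorname{mld}(X_i,\frac{i-1}{i}D'_i)$ (Lemma \ref{original_I0}). Finally, it confronts the unbounded geometry of $B$ near the center by analyzing the blow-up sequences of the mld-computing divisors, using potentially nef divisors, chains and towers of section-blow-ups, and property P. This ends in a contradiction with ACC for lct. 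That local analysis is the ingredient your proposal lacks.
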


\section{Preliminaries}\label{section:pre}
We recall some necessary definitions and notion. We always work over $\mathbb{C}$, and varieties are always considered to be normal projective $\mathbb{Q}$-factorial integral and separated schemes over $\mathbb{C}$, unless stated otherwise. Such assumptions have no cost in the problems involved in minimal log discrepancies because of replacements by small $\mathbb{Q}$-factorialization preserves the minimal log discrepancies.

There are some notation and abbreviations which will be used in the rest of the article broadly.
\begin{itemize}
    \item ACC = ascending chain condition
    \item DCC = descending chain condition
    \item $\mathrm{mld}$ = minimal log discrepancy
    \item For a divisor $B$, we express $B=\sum_{i=0}^{m}b_{i}B_{i}$, where $B_{i}$'s are prime divisors. Let $I\subseteq\mathbb{R}$, then the notation
    \[
    \operatorname{coeff}(B)\in I
    \]means that $b_{i}\in I$, for all $i=0,1,\cdots,m$.
\end{itemize}

A proper morphism $f:X\to Y$ is a \emph{contraction} if $f_{*}\mathcal{O}_{X}=\mathcal{O}_{Y}$.

A family of normal projective varieties $\mathcal{P}$ is \emph{bounded} if there exist finitely many projective morphisms $\{V^{i}\to T^{i}\}$ of varieties such that for each $X\in\mathcal{P}$, there is a $j$, a closed point $t\in T^{j}$, and an isomorphism $\phi:V^{i}_{t}\dashrightarrow X$, where $V^{j}_{t}$ is the fiber of $V^{j}\to T^{j}$ over $t$.

\begin{defi}(Image of Divisors)\label{imofd}
Let $\psi:X\dashrightarrow Y$ be a birational map between varieties.
Let $W$ be a variety with birational morphisms $f:W\to X$ and $g:W\to Y$ that resolves $\psi$.
For an $\mathbb{R}$-Cartier divisor $M$ on $X$,  the \emph{image of $M$ on $Y$ through $\psi$} is the $\mathbb{R}$ divisor $g_*f^*M$ on $Y$.
It's clear that this definition depends on $\psi$ but not on the choice of the resolution $W$ of $\psi$. We omit $\psi$ and denote the image of $M$ on $Y$ by \emph{$M|_Y$} if there is no ambiguity with the birational map $\psi$.
\end{defi}
\begin{ex}
Let $f:Y\to X$ be a morphism, and let $M$ and $N$ be $\mathbb{R}$-Cartier divisors on $X$ and $Y$, respectively. Then we have $M|_{Y}=f^{*}M$, which is the usual pullback of $M$ to $Y$. $N|_{X}=f_{*}N$, which is the usual pushforward of $N$.   
\end{ex}

\begin{defi}
 Let $X$ be a normal variety. A \emph{b-divisor} $\mathbf{M}$ over $X$ is an equivalent class of divisors on birational models over $X$ under the following relation. Divisors $M_1$ on $X_1\to X$ and $M_2$ on $X_2\to X$ are in the same class if and only if there is a common resolution $Y\to X_i$, where $i=1,2$, such that the pullbacks of $X_i$ on $Y$ coincide. 
 On any birational model $X'$ over $X$, the \emph{image} $\mathbf{M}|_{X'}$ of $\mathbf{M}$ on $X'$ is the image $M_1|_{X'}$ of $M_1$ on $X'$, which is independent of the choice of $M_1\subseteq X_1$.

 For any $\mathbb{R}$-Cartier divisor $E$ over $X$, the b-divisor $[E]$ over $X$ is defined as the equivalent class containing $E$.

 On the other hand, given a b-divisor $\mathbf{M}$ over $X$, we say that $\mathbf{M}$ descends to some divisor $L\subseteq Y$ on some birational model $Y$ over $X$ if $\mathbf{M}=[L]$.
\end{defi}

\begin{defi}[Sub-Pairs/Pairs]
A \emph{sub-pair} $(X, B)$ consists of
\begin{itemize}
    \item a normal quasi-projective variety $X$, and
    \item an $\mathbb{R}$-divisor $B$ whose coefficients in $(-\infty,1]$ and $K_{X} + B$ is $\mathbb{R}$-Cartier. 
\end{itemize}
 A sub-pair $(X, B)$ is called a \emph{pair} if furthermore $B$ is effective. That is, the coefficients of $B$ are in $[0,1]$.
\end{defi}
\begin{defi}(b-nef over $Z$ and b-$\mathbb{R}$-Cartier)
    Let $X\to Z$ be a morphism of normal varieties.
The b-divisor $\mathbf{M}$ over $X$ is called \emph{b-nef over $Z$} (\emph{b-$\mathbb{R}$-Cartier}, respectively) if it descends to some divisor $L$ on some birational model $Y$ over $X$ such that $L$ is nef over $Z$ (\emph{$\mathbb{R}$-Cartier}, respectively). In the rest of the article, the term ``b-nef over $Z$" will be abbreviated as \emph{b-nef$/Z$}.
\end{defi}
\begin{defi}(Generalized Sub-Pairs/Generalized Pairs)
A \emph{generalized sub-pair} $(X, B + \mathbf{M})/Z$ consists of
\begin{itemize}
    \item a normal variety $X$ and a quasi-projective variety $Z$ with a projective morphism $X \to Z$,
    \item  an $\mathbb{R}$-divisor $B$ on $X$, and
    \item a b-$\mathbb{R}$-Cartier b-divisor $\mathbf{M}$ and over $X$ descending to a nef$/Z$ $\mathbb{R}$-Cartier divisor $M$ on some birational model $X'\xrightarrow{\varphi} X$,
\end{itemize}
such that $K_{X}+B+M$ is $\mathbb{R}$-Cartier, where $M=\mathbf{M}|_X=\varphi_{*}M'$.
We usually omit $Z$ if $Z$ is a closed point.

A generalized sub-pair $(X, B + \mathbf{M})/Z$ is a \emph{generalized pair} if furthermore $B$ is effective.
\end{defi}

We recall the definition of log discrepancies for a generalized sub-pair. 

\begin{defi}
Let $(X,B+\mathbf{M})/Z$ be a generalized sub-pair. For a prime divisor $E$ over $X$, we define the \emph{generalized log discrepancy} $a_E (X/Z, B + \mathbf{M})$ of $(X,B+\mathbf{M})/Z$ with respect to $E$ as follows. 
Take a biraional model $\varphi :X' \to X$ over $X$ such that $\varphi$ is a log resolution of $(X, B)$, $E$ is a divisor on $X'$, and $\mathbf{M}$ descends to some divisor $M'$ on $X'$. 
Let $B'$ be the $\mathbb{R}$-divisor on $X'$ such that 
\[
K_{X'} + B' + M' = \varphi ^* (K_X + B +\varphi_*M'). 
\]
Then we define $a_E (X/Z, B + \mathbf{M}) := 1 - \operatorname{coeff}_E B'$.
Since $Z$ is not involved in the computations of $a_E (X/Z, B + \mathbf{M})$, we usually omit it in the notations of log discrepancies unless we want to emphasize that $\mathbf{M}$ is b-nef$/Z$.

We say that a generalized sub-pair $(X,B + \mathbf{M})$ is \emph{sub log canonical} (sub-lc) if $a_E (X, B + \mathbf{M})\ge 0$ holds for every prime divisor $E$ over $X$.

 Let $\eta\in X$ be a point. The \emph{minimal log discrepancy} $\operatorname{mld}_\eta(X,B+\mathbf{M})$ at of a generalized sub-lc pair $(X,B+\mathbf{M})/Z$ at $\eta$ is defined to be $\min\{a_E(X,B+\mathbf{M})\}_E$, where $E$ runs over all prime divisors over $X$ such that $c_X(E)\subseteq\overline{\{\eta\}}$. When $\overline{\{\eta\}}=X$, we usually omit the index $\eta$ and write $\operatorname{mld}(X,B+\mathbf{M})$.
\end{defi}

\begin{defi}
Let $(X,B+\mathbf{M})/Z$ be a generalized set.
For a set of real numbers $I$, we say that $\emph{\operatorname{coeff}(\mathbf{M})\in I}$ if $\mathbf{M}$ descends to some divisor $M'=\sum r_iM'_i$ on a birational model $Y$ over $X$ for some nef$/Z$ Cartier divisors $M'_i$ on $Y$ and some non-negative real numbers $r_i \in I_{\geq0}$.
We say that $\emph{\operatorname{coeff}(B)\in I}$ if all non-zero coefficients (of prime components) of $B$ belong to $I$.
\end{defi}

The following theorem shows the discreteness for lc pairs with a fix Gorenstein index.
\begin{thm}[\cite{CGN24}, Theorem 6.5]\label{discrete}
Let $d$ and $r$ be natural numbers and $I_0$ be a finite set of positive real numbers. Then there is a closed discrete set $J\subseteq \mathbb{R}_{\geq 0}$ satisfying the following.

If $(X,B+\mathbf{M})$ is a generalized lc pair of dimension $d$ such that
\begin{itemize}
    \item $rK_X$ is Cartier,
    \item $\operatorname{coeff}(B)\in I_0$,
    \item $\mathbf{M}$ is a b-nef b-$\mathbb{R}$-Cartier divisor over $X$ with $\operatorname{coeff}(\mathbf{M})\in I_0$, and
    \item $E$ is a prime divisor over $X$,
\end{itemize}
then $a_E(X,B)\in J$. In particular, we have $\operatorname{mld}(X,B)\in J$.
\end{thm}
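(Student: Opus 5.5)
The plan is to reduce to a rational statement with bounded denominators. I read the conclusion as $a_E(X,B+\mathbf{M})\in J$, since that is the log discrepancy of the generalized pair. Fix $(X,B+\mathbf{M})$ and $E$ as in Theorem~\ref{discrete}. Choose a log resolution $\varphi\colon X'\to X$ on which $E$ is a divisor and $\mathbf{M}$ descends to $M'=\sum r_iM'_i$, with $M'_i$ nef Cartier and $r_i\in I_0$. Write $B=\sum b_jB_j$ with $b_j\in I_0$. Then
\[
a_E(X,B+\mathbf{M}) \;=\; a_E(X,0)\;-\;\sum_j b_j\,\operatorname{mult}_E\varphi^*B_j\;-\;\sum_i r_i\,\operatorname{mult}_E\bigl(\varphi^*\varphi_*M'_i-M'_i\bigr).
\]
Since $rK_X$ is Cartier, $a_E(X,0)\in\frac1r\mathbb{Z}$. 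By the negativity lemma ($M'_i$ is nef over $X$), each $e_i:=\operatorname{mult}_E(\varphi^*\varphi_*M'_i-M'_i)$ is $\ge 0$, and $c_j:=\operatorname{mult}_E\varphi^*B_j\ge 0$. The sub-lc condition gives $\sum b_jc_j+\sum r_ie_i\le a_E(X,0)$. This upper bound is what makes finiteness arguments possible once the $c_j,e_i$ are known to lie in a discrete set.

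\emph{Step 1 (reduction to rational coefficients).} Choose a $\mathbb{Q}$-basis $1,\alpha_1,\dots,\alpha_s$ of the $\mathbb{Q}$-span of $I_0\cup\{1\}$, and apply the standard rational-polytope decomposition. We obtain finitely many positive reals $\lambda_1,\dots,\lambda_m$ with $\sum\lambda_k=1$ and a finite set $I_1\subseteq\mathbb{Q}_{\ge0}$, all depending only on $I_0$ and $d$, with the following property. For every such $(X,B+\mathbf{M})$ one can write $K_X+B+M=\sum_k\lambda_k(K_X+B_k+M_k)$, where each $(X,B_k+\mathbf{M}_k)$ is generalized lc, has the same support data, has coefficients in $I_1$, and has $K_X+B_k+M_k$ $\mathbb{Q}$-Cartier. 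Here $\mathbf{M}_k$ is a nonnegative rational combination of the $\mathbf{M}_i$, hence still b-nef. Then $a_E(X,B+\mathbf{M})=\sum_k\lambda_k a_E(X,B_k+\mathbf{M}_k)$. So it suffices to prove that each $a_E(X,B_k+\mathbf{M}_k)$ lies in a closed discrete set, because a finite positive combination of bounded-below discrete sets is again closed and discrete.

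\emph{Step 2 (bounded denominators in the rational case).} I will show there is $N=N(d,r,I_1)$ such that $a_E(X,B_k+\mathbf{M}_k)\in\frac1N\mathbb{Z}_{\ge0}$. Let $q$ be a common denominator of $I_1$. The part coming from $\mathbf{M}_k$ is harmless: $q\mathbf{M}_k$ is an integral combination of Cartier b-divisors $M'_i$ and their pushforwards. So the question is whether $qr(K_X+B_k+M_k)$, or a bounded multiple of it, is Cartier near the generic point of the center $c_X(E)$. First I will try the index-one cover $\pi\colon\tilde X\to X$ of $K_X$, which is étale in codimension one and of degree $\le r$. On $\tilde X$ the canonical divisor is Cartier, and log discrepancies change by the ramification formula with ramification index $\le r$. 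So it is enough to bound the local Cartier index of $q(B_k+M_k)$ on a Gorenstein lc generalized pair. For this I will use the lc condition: after the cover, $K_{\tilde X}+\tilde B_k+\tilde M_k$ is $\mathbb{Q}$-Cartier. Then $q(\tilde B_k+\tilde M_k)$ is $\mathbb{Q}$-Cartier, and its index is controlled by the torsion of the local class group at the center. Combining this with $\sum b_jc_j+\sum r_ie_i\le a_E(X,0)$ and $b_j\ge\min I_0>0$ should force the $c_j$ and $e_i$ to have bounded denominators.

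\emph{Main obstacle.} The hard step is exactly this index bound in Step 2. The components $B_j$ of $B$ need not be Cartier, and $\operatorname{mult}_E\varphi^*B_j$ can a priori have arbitrarily large denominators; a line through a cone vertex is the model case. If the direct index bound fails, my fallback is an inductive argument. I would extract $E$, or a divisor computing a nearby log discrepancy, via a dlt modification. I would then use adjunction for generalized pairs to the center, which has finite coefficient sets by the generalized hyperstandard adjunction, and run induction on dimension. Discreteness would then follow from the ACC for generalized log canonical thresholds together with the upper bound $a_E\le a_E(X,0)$.
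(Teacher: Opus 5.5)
The paper does not prove this statement; it quotes it from \cite{CGN24}. Judged on its own, your proposal fails at Step 2, and the failure cannot be patched. The uniform bound $a_E(X,B_k+\mathbf{M}_k)\in\frac1N\mathbb{Z}_{\ge0}$ with $N=N(d,r,I_1)$ is false, and so is the statement as transcribed here.

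Take $d=2$, $r=1$, $I_0=\{\tfrac12\}$. Let $X_n$ be a projective surface (e.g.\ toric) whose only singularity is an $A_n$ point $x$, locally $xy=z^{n+1}$. Let $\Gamma$ be a smooth curve through $x$ that is locally $\{x=z=0\}$, for instance a torus-invariant curve. Consider the crepant minimal resolution $\varphi\colon Y\to X_n$, with chain $E_1,\dots,E_n$ of $(-2)$-curves. Then $\tilde\Gamma$ meets only $E_1$, and $\varphi^*\Gamma=\tilde\Gamma+\sum_j\frac{n+1-j}{n+1}E_j$. Hence $(X_n,\tfrac12\Gamma)$ (with $\mathbf{M}=0$) is klt, $K_{X_n}$ is Cartier, and
$$a_{E_1}(X_n,\tfrac12\Gamma)=\frac{n+2}{2(n+1)}.$$
These values strictly decrease to $\tfrac12$, so no closed discrete $J$ can contain them.

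Now take $B=0$ and $\mathbf{M}=\tfrac12[\tilde\Gamma+\varphi^*A]$, where $A$ is ample Cartier with $A\cdot\Gamma\ge-\tilde\Gamma^2$, so that $\tilde\Gamma+\varphi^*A$ is nef Cartier. The same computation gives $\operatorname{mld}(X_n,\mathbf{M})=\frac{n+2}{2(n+1)}$. So even the ``in particular'' clause fails for generalized pairs.

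This is exactly your ``line through a cone vertex''. The local class group $\mathbb{Z}/(n+1)$ has unbounded torsion. So neither $\Gamma$ nor $\mathbf{M}|_X$ (which need not be Cartier even though each $M'_i$ is) has Cartier index controlled by $d$, $r$, $I_0$. The index-one cover of $K_X$ is trivial here, so it does not help.

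Your fallback does not rescue this. Adjunction, induction on dimension and ACC for generalized lc thresholds can only give ACC-type conclusions. The sequence above accumulates from above, which ACC allows and discreteness forbids.

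What is genuinely missing is an extra hypothesis bounding the Cartier index, near $c_X(E)$, of the components of $B$ and of the $\mathbf{M}_i|_X$. Under such a hypothesis Step 2 becomes a denominator count, but Step 1 is then still essential and is not mere linear algebra. Bounded denominators alone do not give discreteness when $I_0$ contains irrationals, since for instance $\mathbb{Z}-\sqrt2\,\mathbb{Z}_{\ge0}$ is dense. You need each perturbed pair $(X,B_k+\mathbf{M}_k)$ to remain generalized lc with $\lambda_k$ and $I_1$ independent of $X$. That is the content of a uniform rational polytope theorem for generalized pairs, the analogue of Theorem~\ref{HLS}, which itself rests on ACC for generalized lc thresholds.
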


\begin{defi}[Ascending/Descending Chain Condition]
Let $J\subseteq\mathbb{R}$ be a set of real numbers. We say that $J$ satisfies
\begin{itemize}
    \item the \emph{ascending chain condition} (ACC) if every non-empty subset of $J$ contains a maximal element.
    \item the \emph{descending chain condition} (DCC) if every non-empty subset of $J$ contains a minimal element.
\end{itemize}
\end{defi}

The following lemma follows from projection formula in intersection theory.
\begin{lem}\label{basic}
    Let $X$ be a smooth threefold, $S$ be a smooth surface on $X$, $C\subseteq S$ be a smooth curve, and $x\in C$ be a closed point.

    \begin{enumerate}
        \item Let $X_C$ be the blow-up of $X$ along $C$ with exceptional divisor $E_C$ and $l=E_C\cap\tilde{S}$. Then $E_C\cdot_{X_C} l=C\cdot_S C$.

    \item Let $X_x$ be the blow-up of $X$ at $x$. Then $\tilde{C}\cdot_{\tilde{S}}\tilde{C}=C\cdot_S C-1$.

    \end{enumerate}
\end{lem}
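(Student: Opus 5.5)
We prove both parts by computing intersection numbers on the blown-up surface and comparing them with intersection numbers on $S$, via the projection formula and standard facts about the geometry of blow-ups of smooth varieties. Throughout, $\tilde S$ denotes the strict transform of $S$ and $\pi$ the blow-up morphism. Both blow-ups are along smooth centers of $X$, so $X_C$ and $X_x$ are smooth and all the intersection numbers below make sense.

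\emph{Part (1).} Let $\pi\colon X_C\to X$ be the blow-up along $C$. The curve $C$ is a Cartier divisor on the smooth surface $S$, so the restriction $\pi|_{\tilde S}\colon\tilde S\to S$ is the blow-up of $S$ along the codimension-one center $C$. This map is therefore an isomorphism. Since $C$ has codimension two in $X$, we have $\pi^*S=\tilde S$, and consequently $E_C|_{\tilde S}$ is the curve $l=E_C\cap\tilde S$. This curve maps isomorphically onto $C$ and is a section of the ruled surface $E_C\to C$.

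To compute $E_C\cdot_{X_C} l$, we restrict $E_C$ to $\tilde S$. This gives
\[
E_C\cdot_{X_C} l=(E_C|_{\tilde S}\cdot l)_{\tilde S}=(l\cdot l)_{\tilde S}.
\]
Under the isomorphism $\tilde S\cong S$ the curve $l$ corresponds to $C$, so $(l\cdot l)_{\tilde S}=(C\cdot C)_S$.

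For a more conceptual check, note that $\mathcal O_{E_C}(E_C)=\mathcal O_{\mathbb P(N)}(-1)$, where $N=N_{C/X}$. The section $l$ corresponds to the quotient $N\to N_{S/X}|_C$, whose kernel is $N_{C/S}$. Restricting $\mathcal O(-1)$ to this section gives the tautological subline, namely $N_{C/S}$. Its degree is $C\cdot_S C$, which agrees with the computation above.

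\emph{Part (2).} Let $\pi\colon X_x\to X$ be the blow-up at the point $x$. The restriction $\tilde S\to S$ is the blow-up of $S$ at $x$, with exceptional curve $e=E\cap\tilde S$, where $E$ is the exceptional divisor of $\pi$. Because $C$ is smooth at $x$, it has multiplicity one there, so on $\tilde S$ we have $\sigma^*C=\tilde C+e$, writing $\sigma=\pi|_{\tilde S}$. We then compute
\[
\tilde C^2=(\sigma^*C-e)^2=C^2-2\,\sigma^*C\cdot e+e^2=C^2-0-1.
\]
Here $\sigma^*C\cdot e=0$ by the projection formula, since $e$ is contracted by $\sigma$, and $e^2=-1$ because $e$ is the exceptional curve of a point blow-up of a smooth surface.

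\emph{Main obstacle.} The only delicate point is in part (1): we must identify $E_C|_{\tilde S}$ with $l$ with multiplicity one, and confirm that $\tilde S\to S$ is an isomorphism. Both facts follow from $C$ being a Cartier divisor on the smooth surface $S$, together with the local equations of the blow-up (in local coordinates $S=(z=0)$ and $C=(y=z=0)$).
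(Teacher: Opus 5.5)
Your argument is correct and follows the paper's route: the paper only says the lemma follows from the projection formula, and restricting to $\tilde S$ (using $\tilde S\cong S$ in (1), and $\sigma^*C=\tilde C+e$ with $e^2=-1$ in (2)) is the natural way to carry that out. One slip: because $S$ contains $C$ with multiplicity one, $\pi^*S=\tilde S+E_C$, not $\pi^*S=\tilde S$; nothing depends on this false claim, since the fact you actually need, $E_C|_{\tilde S}=l$ with multiplicity one, is correctly supplied by your local-coordinate check.
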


\begin{defi}
    Let $X$ be a variety and let $D$ be a divisor on $X$.
    Write $D=\sum_i{d_i}D_i$ for some non-zero real numbers $d_i$ and prime divisors $D_i$ on $X$.
    Then the \emph{reduced divisor associated to $D$}, denoted $\red{D}$, is defined to be $\red{D}=\bigcup_i D_i$.
\end{defi}

\begin{defi}(Center of Divisors and b-Divisors over a Variety)
Let $X$ be a variety and let $M'$ be an $\mathbb{R}$-divisor on a birational model $X'$ over $X$.
Write $M'=\sum_{i\in I} m_iM'_i$ for distinct prime divisors $M'_i$ on $X'$, a finite set $I$ and non-zero real numbers $m_i$.
The \emph{center $c_X(M')$} of $M'$ on $X$ is the union $\bigcup_{i\in I} c_X(M'_i)$ of subvarieties of $X$.

For a b-$\mathbb{R}$-Cartier b-divisor $\mathbf{M}$ over $X$, we define the \emph{center $c_X(\mathbf{M})$} of $\mathbf{M}$ to be $c_X(M_{X'})$, where $M_{X'}$ is a divisor that $\mathbf{M}$ descends to on a birational model $X'$ over $X$. It is clear that this definition does not depend on the choice of $X'$. 
\end{defi}

\begin{defi}(Order of a b-divisor along a prime divisor over $X$)
Let $X$ be a variety and let $\mathbf{M}$ be a b-divisor over $X$ such that its image $M=\mathbf{M}|_X$ on $X$ is a $\mathbb{R}$-Cartier divisor.
Let $E$ be a prime divisor over $X$.
Then we define the \emph{order} $\operatorname{ord}_{E\slash X} (\mathbf{M})$ of $\mathbf{M}$ at $E$ as follow. Let $\varphi:X'\to X$ be a birational model over $X$ such that $\mathbf{M}$ descends to a divisor $M_{X'}$ and $E$ is a divisor on $X'$.
Then $\operatorname{ord}_{E\slash X} (\mathbf{M})=\operatorname{coeff}_E (\varphi^*M-M_{X'})$.
It's clear that this definition does not depend on the choice of $X'$.
\end{defi}

\begin{rmk}\label{ordX/Z}
    Let $\phi:X\to Z$ be a birational morphism of normal varieties.
    Let $\mathbf{M}$ be a b-divisor over $Z$ 
    Assume that $\mathbf{M}$ descends to some divisor $L$ on some birational model $Y$ over $Z$.
    Replacing $Y$ by some common model $W$ over $X$ and $Y$ and replacing $L$ by $L|_W$, we may assume that $L$ is a divisor over $X$.  
    Then $\mathbf{M}$ can be view as a b-divisor over $X$ by viewing $\mathbf{M}$ as $[L]$.

    Moreover, if $\mathbf{M}|_Z$ is $\mathbb{R}$-Cartier, 
    then for any divisor $E$ over $X$,
    we have $\operatorname{ord}_{E/X}(\mathbf{M})-
    \operatorname{ord}_E(\mathbf{M}|_X)
    =\operatorname{ord}_{E/Z}(\mathbf{M})-\operatorname{ord}_E(\mathbf{M}|_Z)$.
    \end{rmk}

\begin{defi}(Multiplicity of divisors and b-divisor over $X$)
Let $X$ be a variety and let $\mathbf{M}$ be a b-divisor over $X$ such that its image $M=\mathbf{M}|_X$ on $X$ is a $\mathbb{R}$-Cartier divisor.
For a integral subvariety $\eta$ of $X$ with a smooth extraction (i.e. with its generic point in the smooth locus of $X$), we define the \emph{multiplicity} $\operatorname{mult}_{\eta} (\mathbf{M})$ of $\mathbf{M}$ at $\eta$ by letting $\operatorname{mult}_{\eta} (\mathbf{M})=\operatorname{ord}_{E_\eta\slash X} (\mathbf{M})$, where $E_\eta$ is the dominating exceptional divisor of the blow-up of $X$ along $\eta$.
Since blow-ups can be defined locally, $\operatorname{mult}_{\eta}$ is preserved if we replace $\eta$ and $X$ by their restrictions on any neighborhood of the generic point of $\eta$.

Consider integral subvarieties $\eta_1\subseteq\eta_2$ of $X$ with a smooth extraction (i.e. $\eta_2$ and $X$ are smooth at aroung the generic point of $\eta_1$). Assume that $\mathbf{M}$ is b-nef.
Replacing $\eta_1$, $\eta_2$, and $X$ by their restriction on an open neighbourhood of the generic point of $\eta_1$, we may assume that $\eta_1$, $\eta_2$, and $X$ are smooth.
Let $X_{\eta_1}$ be the blow-up of $X$ along $\eta_1$.
We inductively define the \emph{multiplicity} $\operatorname{mult}_{\eta_1\subseteq\eta_2} (\mathbf{M})$ of $\mathbf{M}$ along the extraction of $\eta_1\subseteq\eta_2$
by letting $\operatorname{mult}_{\eta_1\subseteq\eta_2} (\mathbf{M})=\operatorname{mult}_{\eta_1} (\mathbf{M})+\operatorname{mult}_{\eta'_1\subseteq\eta'_2} (\mathbf{M})$, where $\eta'_1$ is the extracted subvariety of $\eta_1\subseteq\eta_2$ and $\eta'_2$ is the strict transform of $\eta_2$ on $X_{\eta_1}$.
Alternatively,
we may define $\operatorname{mult}_{\eta_1\subseteq\eta_2} (\mathbf{M})$ in the following way.
Set $\eta_1^{(0)}=\eta_1$, $\eta_2^{(0)}=\eta_2$ and $X_0=X$ and define inductively $\eta_1^{(i)}$ to be the extracted subvariety of $\eta_1^{(i-1)}\subseteq\eta_2^{(i-1)}$ and $\eta_2^{(i-1)}$ to be the strict transform of $\eta_2$ on $X_i$, the blow-up of $X_{i-1}$ along $\eta_1^{(i-1)}$ for $i\geq 1$.
Then $\operatorname{mult}_{\eta_1\subseteq\eta_2} (\mathbf{M})$ is defined to be the sum $\sum_{i=1}^\infty\operatornamewithlimits{mult}_{\eta_1^{(i-1)}}(\mathbf{M})$.

It's clear that $\operatorname{mult}_{\eta_1\subseteq\eta_2} (\mathbf{M})$ is either a non-negative real number or positive infinity.
We may observe from the above definition that under its notations $\operatorname{mult}_E(\mathbf{M})=0$ for Cartier divisors $E$ on $C$ and so that $\operatorname{mult}_{\eta\subseteq X}(\mathbf{M})=\operatorname{mult}_\eta(\mathbf{M})$ for any subvariety $\eta$ of $X$ with a smooth extraction.

When $x\in \eta$ is a closed point in a subvariety $\eta\subseteq X$, we denote $\operatorname{mult}_{\{x\}\subseteq \eta}(\mathbf{M})$ by $\operatorname{mult}_{x\in \eta}(\mathbf{M})$ for simplicity.
\end{defi}

Recall the \emph{negativity lemma}.
\begin{lem}[{\cite[1.1]{Sho93}}]\label{neglem}
Let $f:Y\to X$ be a projective birational contraction between normal quasi-projective varieties over $\mathbb{C}$. Let $D$ be an $\mathbb{R}$-Cartier divisor in $Y$ such that
\begin{itemize}
    \item $-D$ is nef over $X$,
    \item $f_{*}(D)\geq 0$.
\end{itemize}
Then $D\geq 0$.
\end{lem}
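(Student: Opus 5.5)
The statement is the negativity lemma, quoted from Shokurov. I would prove it by reducing to surfaces via hyperplane sections, and then using the Hodge index theorem (negative definiteness of the intersection form on exceptional curves).

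\emph{Reductions.} The question is local over $X$, so I may assume $X$ is affine. It suffices to show that every $f$-exceptional component of $D$ has nonnegative coefficient, because the non-exceptional coefficients coincide with those of $f_*D\geq 0$. Write $D=D^+-D^-$, where $D^+,D^-\ge 0$ have no common components; then $D^-$ is $f$-exceptional. Suppose, for contradiction, that $D^-\neq 0$. Choose a component $E$ of $D^-$ and set $V=f(E)$, which has codimension at least $2$ in $X$.

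\emph{Cutting down.} Let $\dim X=n$ and $\dim V=k$. Cut $X$ by $k$ general hyperplanes, so that $V$ becomes a finite set of points. Then cut $Y$ by a further $n-k-2$ general very ample divisors pulled back from, or relative over, $X$, so as to reach a normal surface $S\subseteq Y$ mapping birationally onto a surface germ $T\subseteq X$. By Bertini, the following properties are preserved:
\begin{itemize}
\item $-D|_S$ is nef over $T$, since restriction preserves relative nefness;
\item $D|_S$ is $\mathbb{R}$-Cartier;
\item $E\cap S$ contains curves contracted to a point of $T$;
\item the negative part of $D|_S$ restricted to those curves is still nonzero.
\end{itemize}
One has to check that general sections preserve normality and the decomposition of $D$; this is standard.

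\emph{Surface case.} Let $C_1,\dots,C_r$ be the curves of $S$ contracted by $f|_S$ over the point. By Grauert and Mumford, their intersection matrix is negative definite. Write $D|_S=A-\sum a_iC_i$, where $A$ contains no $C_i$ and $A\ge 0$ near the fibre, which uses $f_*D\ge0$ together with the cutting. Let $G=\sum_{a_i>0}a_iC_i$, which is nonzero by assumption. For every $C_j\subseteq \operatorname{Supp} G$,
\[
0\le -D|_S\cdot C_j = -A\cdot C_j+\sum_i a_i C_i\cdot C_j \le G\cdot C_j,
\]
because $A\cdot C_j\ge0$ and the curves $C_i$ with $a_i\le 0$ satisfy $C_i\cdot C_j\ge 0$. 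Hence $G\cdot G=\sum_{a_j>0} a_j\, G\cdot C_j\ge 0$. This contradicts negative definiteness since $G\neq 0$. Therefore $D^-=0$.

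\emph{Main obstacle.} The delicate step is the reduction. One must ensure that:
\begin{itemize}
\item the hyperplane sections keep a component of $D^-$ exceptional over a point;
\item the surface $S$ stays normal;
\item the coefficients of $D|_S$ along the curves in $E\cap S$ are exactly those of $D$.
\end{itemize}
I would handle this by choosing the hyperplanes general with respect to a resolution of $Y$. If normality is a problem, an alternative is to first pass to a resolution $Y'\to Y$ and pull $D$ back. This is harmless because nefness over $X$ is preserved by pullback and the coefficient of $E$ is unchanged.
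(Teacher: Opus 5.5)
The paper gives no proof of this lemma; it is quoted from \cite{Sho93}, so your argument has to stand on its own. Your surface step is the standard one and is correct \emph{provided} $A\ge 0$ near the fibre. In that case $0\le -D|_S\cdot C_j\le G\cdot C_j$ for every $C_j\subseteq\operatorname{Supp}G$, hence $G^2\ge 0$, contradicting negative definiteness. The gap is that your reduction does not justify $A\ge 0$.

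You take an \emph{arbitrary} component $E$ of $D^-$ and cut $X$ by $k=\dim f(E)$ general hyperplanes. Suppose another component $E'$ of $D^-$ has $\dim f(E')>k$ and $E'\cap E$ dominates $V$. After your cuts, $E'\cap S$ is a curve that is \emph{not} contracted over $T$ (its image is a curve in $f(E')$). It meets $E\cap S$, and it appears in $D|_S$ with negative coefficient. So $A$ is not effective near the fibre. The hypothesis $f_*D\ge 0$ that you invoke says nothing about the coefficient of the $f$-exceptional divisor $E'$, so the pushforward of $D|_S$ to (the normalization of) $T$ need not be effective.

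For a concrete configuration, let $E'$ be the exceptional divisor of the blow-up of a line $L\subset\mathbb{A}^3$, and let $E$ be that of a point of $E'$ over $0\in L$. Choosing $E$ gives $k=0$. A general relatively very ample surface $S$ then meets the strict transform of $E'$ in a curve that maps onto $L\subseteq T$ and passes through $E\cap S$.

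The fix is to choose $E$ with $\dim f(E)$ \emph{maximal} among the components of $D^-$. Then after the first $k$ cuts, $f(\operatorname{Supp}D^-)$ meets the cut-down base $X'$ in finitely many points. After shrinking $X'$ around one point $t\in V\cap X'$, every component of $D^-$ lies over $t$. Hence every curve of $S$ not contracted to $t$ has non-negative coefficient, and $A\ge 0$.

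The second round of cuts must be by general $f$-very ample divisors on $Y$, not by pullbacks of hyperplanes of $X$. At that stage $V$ has become finite, so a general hyperplane of $X$ misses it and removes $E$ altogether.

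With these two corrections, your argument is the standard proof of the negativity lemma. You should also first pass to a resolution of $Y$, as you suggest, so that $S$ is smooth and $E\cap S$ is reduced.
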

The following results can be obtained from Lemma \ref{neglem}.
\begin{lem}\label{negcurve}
    Let $X\xrightarrow{\phi} Z$ be a birational morphism between surfaces and let $C$ be an exceptional curve on $X$.
    Let $\mathbf{M}$ be a relatively b-nef over $Z$ b-$\mathbb{R}$-Cartier b-divisor over $X$.
    If $M_X.C<0$, then $\operatorname{mult}_C\mathbf{M}\geq 0$.
\end{lem}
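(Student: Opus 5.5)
The plan is to show that, under the stated hypotheses, $M_X\cdot C\geq 0$ always holds. The hypothesis $M_X\cdot C<0$ then cannot occur, and the conclusion $\operatorname{mult}_C\mathbf{M}\geq 0$ holds vacuously. For completeness we also note that $\operatorname{mult}_C\mathbf{M}=0$ directly in the smooth-locus situation. The key tool is Lemma \ref{neglem} applied relative to $X$, not relative to $Z$, combined with nefness over $Z$ evaluated on the strict transform of $C$.

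\emph{Step 1 (set-up).} Choose a birational model $f:Y\to X$, which we may take to be a resolution, on which $\mathbf{M}$ descends to an $\mathbb{R}$-Cartier divisor $M_Y$ that is nef over $Z$. Since $X$ is $\mathbb{Q}$-factorial, $M_X=f_*M_Y$ is $\mathbb{R}$-Cartier. Put
\[
D:=f^*M_X-M_Y .
\]
Then $f_*D=0$, so $D$ is $f$-exceptional. Moreover $-D=M_Y-f^*M_X$ is nef over $X$. Indeed, any curve contracted by $f$ is also contracted by $\phi\circ f$, so $M_Y$ is nef on it, while $f^*M_X$ is trivial on it. Lemma \ref{neglem} therefore gives $D\geq 0$. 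In the language of the paper, this says $\operatorname{ord}_{E/X}(\mathbf{M})\geq 0$ for every prime divisor $E$ over $X$.

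\emph{Step 2 (intersect with the strict transform).} Let $C'\subseteq Y$ be the strict transform of $C$. Because $C$ is not $f$-exceptional, $C'$ is not contained in $\operatorname{Supp}D$. Together with $D\geq 0$, this gives $D\cdot C'\geq 0$. The projection formula gives $f^*M_X\cdot C'=M_X\cdot C$, and hence
\[
M_Y\cdot C' = M_X\cdot C - D\cdot C' \leq M_X\cdot C .
\]
On the other hand, $C$ is $\phi$-exceptional, so $C'$ is contracted by $\phi\circ f$. Since $M_Y$ is nef over $Z$, we get $M_Y\cdot C'\geq 0$. Combining the two gives $M_X\cdot C\geq 0$, which contradicts the hypothesis $M_X\cdot C<0$. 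So no configuration satisfies the hypothesis, and the statement holds.

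\emph{Step 3 (direct value, as a sanity check).} Independently of Step 2, we can compute $\operatorname{mult}_C\mathbf{M}$ directly. $C$ is a divisor on the surface $X$, so near its generic point it is Cartier in the smooth locus. The blow-up along $C$ is then the identity there, and $E_C=C$. By definition,
\[
\operatorname{mult}_C\mathbf{M}=\operatorname{coeff}_C(f^*M_X-M_Y)=\operatorname{coeff}_C D=0,
\]
since $D$ is $f$-exceptional. This agrees with the remark in the definition of multiplicity. If instead one reads $\operatorname{mult}_C$ as the order relative to $Z$, namely $\operatorname{coeff}_C(\phi^*M_Z-M_X)$, then Step 1 applied to $\phi\circ f$ in place of $f$ gives $\phi^*M_Z-M_X\geq 0$, and nonnegativity still holds.

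The main obstacle is Step 2. One must check that nefness over $Z$ of the descended divisor $M_Y$ transfers to an inequality on $X$, which requires showing that $D$ has nonnegative intersection with $C'$. This is exactly where Lemma \ref{neglem} over $X$, rather than over $Z$, is needed.
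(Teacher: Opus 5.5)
Your proof is correct, but it takes a different route from the paper's.

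\textbf{The paper's route.} The paper uses the same divisor $D=\psi^*M_X-M_Y$. It tries to show that $-D$ is nef over $Z$ by checking two kinds of curves $G$ contracted to $Z$: the $\psi$-exceptional ones, and the strict transform of $C$, where it uses the hypothesis $M_X\cdot C<0$. It then applies Lemma \ref{neglem} relative to $Z$.

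\textbf{Your route.} You apply Lemma \ref{neglem} relative to $X$ instead. There the nefness of $-D$ needs no hypothesis, because $\psi^*M_X$ is trivial on $\psi$-exceptional curves. So $D\geq 0$, i.e.\ $\operatorname{ord}_{E/X}(\mathbf{M})\geq 0$ for every $E$ over $X$, holds unconditionally and in any dimension.

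\textbf{What your route buys.}
\begin{enumerate}
\item It avoids a gap in the paper's case split, which ignores strict transforms $G$ of $\phi$-exceptional curves $C_2\neq C$. For such $G$ your Step 2 reasoning gives $D\cdot G\geq 0$, so the asserted nefness of $-D$ over $Z$ need not hold.
\item Step 2 shows $M_X\cdot C=M_Y\cdot C'+D\cdot C'\geq 0$; that is, on surfaces the pushforward of a divisor nef over $Z$ stays nef over $Z$. Hence the hypothesis $M_X\cdot C<0$ is never satisfied.
\item Step 3 shows $\operatorname{mult}_C\mathbf{M}=0$ in any case.
\end{enumerate}
So the lemma as stated is both vacuous and trivially true, and the hypothesis plays no real role in either proof.

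\textbf{Limits.} Step 2 is genuinely two-dimensional: in higher dimension the trace of a relatively b-nef divisor need not be relatively nef. Also, neither your argument nor the paper's gives the strict inequality $\operatorname{mult}_C\mathbf{M}>0$ that the paper later draws from this lemma (e.g.\ in Lemma \ref{bddmult}). That is a defect of the statement's formulation, not of your proof.
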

\begin{proof}
Let $Y\xrightarrow{\psi}X$ be a birational model over $X$ such that $M_{Y}=\mathbf{M}|_{Y}$ is a nef $\mathbb{R}$-Cartier divisor over $Z$, and $\psi$ factors through the blow-up of $X$ along $C$.

Consider the divisor $D=\psi^{*}M_{X}-M_{Y}$ in $Y$, and let $G$ be an exceptional curve of $\phi\circ\psi$ over $Z$. Notice that if $G$ is exceptional over $X$, then we have $D.C=0$. Otherwise, we use the fact that $M_{Y}$ is nef over $Z$, and apply the projection formula to obtain $\psi^{*}M_{X}.G=M_{X}.C<0$. Hence, we have
\[
D.G=\psi^{*}M_{X}.G-M_{Y}.G\leq 0.
\]
This shows that $-D$ is nef over $Z$. On the other hand, we have 
\[
(\phi\circ\psi)_{*}(D)=\phi_{*}(\psi_{*}(\psi^{*}M_{X}-M_{Y}))= 0.
\]Therefore, we get $D\geq 0$ by Lemma \ref{neglem}.
\end{proof}
\begin{lem}\label{bddmult}
    Let $X\xrightarrow{\phi} Z$ and $\mathbf{M}$ be as in Lemma \ref{negcurve}.
    Let $\eta$ be an integral closed subvariety on $X$.
    If there are infinitely many integral exceptional curves $C$ over $Z$ on $X$ intersecting $\eta$ such that $\mathbf{M}_X.C\leq a$ for some $a\in\mathbb{R}$, then we have $\operatorname{mult}_{\eta}\mathbf{M}\leq a$.
\end{lem}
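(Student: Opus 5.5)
Here the setting is that of Lemma \ref{negcurve}: $\phi\colon X\to Z$ is a birational morphism of surfaces, and $\mathbf{M}$ is b-nef over $Z$. The plan is to compute $\operatorname{mult}_\eta\mathbf{M}$ on a model where $\mathbf{M}$ descends and to test the nefness of $M_Y$ against strict transforms of the curves $C$. If $\eta$ is a curve, then $\operatorname{mult}_\eta\mathbf{M}=\operatorname{ord}_{\eta/X}(\mathbf{M})=0$, since $\eta$ is already a divisor on $X$, so the only interesting case is when $\eta=\{x\}$ is a closed point. We may then assume that $X$ is smooth near $x$; this is built into the definition of multiplicity via a smooth extraction.

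Step 1 is to choose a resolution. Take $\psi\colon Y\to X$ factoring through the blow-up $X_x\to X$ at $x$, with $Y$ smooth and such that $\mathbf{M}$ descends to a divisor $M_Y$ on $Y$ that is nef over $Z$. Write $\psi^*M_X=M_Y+D$, where $D$ is $\psi$-exceptional. The component $E_x$ of $D$ dominating the exceptional curve of $X_x$ has coefficient $\operatorname{mult}_x\mathbf{M}$.

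Step 2 selects the curves. Only finitely many of the curves $C$ can be components of $\phi$-exceptional loci lying inside $\psi(\operatorname{Exc}\psi)$ apart from curves passing through $x$ in a bad way. Moreover, $\psi$ is an isomorphism away from finitely many points, and only finitely many of the $C$ pass through the finitely many centers of $\psi$ other than $x$. Passing to an infinite subfamily, each chosen $C$ therefore meets $\psi(\operatorname{Exc}\psi)$ only at $x$, passes through $x$ (it intersects $\eta=\{x\}$), and has strict transform $\tilde C$ on $Y$. For such a $C$ we have $\psi^*C=\tilde C+\sum m_F F$ with $m_{E_x}\geq 1$, because $C$ passes through $x$ and hence the pullback of $C$ contains $E_x$ with multiplicity at least $\operatorname{mult}_x C\geq 1$.

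Step 3 is the intersection computation. The curve $\tilde C$ is $\phi\circ\psi$-exceptional, since $C$ is exceptional over $Z$, so nefness gives $M_Y\cdot\tilde C\geq 0$. By the projection formula,
\[
a\geq M_X\cdot C=\psi^*M_X\cdot\tilde C=M_Y\cdot\tilde C+D\cdot\tilde C\geq D\cdot\tilde C .
\]
It remains to bound $D\cdot\tilde C$ from below by $\operatorname{mult}_x\mathbf{M}$. Here I would use that the $C$ are distinct curves through $x$, so their tangent directions or the points where $\tilde C$ meets the exceptional locus vary. Only finitely many of the $\tilde C$ can meet the exceptional curves of $Y\to X_x$ other than $E_x$, because those curves lie over finitely many points of $E_x\cong\mathbb{P}^1$ and $\phi$-exceptional curves form a finite set. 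A subtlety is that on a surface there are only finitely many $\phi$-exceptional curves at all, so the hypothesis is really vacuous unless it is read appropriately. I would handle this by arguing on the generic member, or by noting that the statement holds trivially if no infinite family exists, and I would keep the argument formal. For a general $C$ in the family, then, $\tilde C$ meets the exceptional locus only in $E_x$, transversally at a general point, and $D\cdot\tilde C=(\operatorname{mult}_x\mathbf{M})(E_x\cdot\tilde C)\geq\operatorname{mult}_x\mathbf{M}$. For this last inequality I also need $D\geq 0$ (or at least that the coefficient of $E_x$ is nonnegative), which Lemma \ref{negcurve} and Lemma \ref{neglem} provide, because $-D\equiv_X M_Y$ is nef over $X$ and $\psi_*D=0$.

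The main obstacle is Step 3, namely ensuring that a general $\tilde C$ meets only $E_x$ and does so with intersection number at least $1$. If the family is not generic enough, I would instead bound $D\cdot\tilde C\geq \operatorname{coeff}_{E_x}(D)\,(E_x\cdot\tilde C)$, using $D\geq 0$ and $\tilde C\not\subseteq\operatorname{Supp}D$, together with $E_x\cdot\tilde C\geq 1$. This lower bound already gives $\operatorname{mult}_x\mathbf{M}\leq a$ for any single such $C$.
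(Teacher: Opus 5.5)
Your strategy is sound, and it differs from the paper mainly in organization. The paper argues by contradiction: it blows up $\eta$, notes $\mathbf{M}_{X_\eta}\cdot\tilde C<0$ for all the strict transforms, and invokes Lemma \ref{negcurve} to get positive multiplicity of $\mathbf{M}$ along infinitely many curves. You instead bound $D\cdot\tilde C$ directly on a model where $\mathbf{M}$ descends. Once repaired, that works for a single curve through $x$, consistent with your remark that on a surface the ``infinitely many'' hypothesis is vacuous.

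However, the decisive inequality $D\cdot\tilde C\ge\operatorname{mult}_x\mathbf{M}$ is not established. The genericity argument is unavailable: there are only finitely many $\phi$-exceptional curves, and even in a higher-dimensional reading infinitely many curves may share a tangent direction at $x$. The fallback rests on $E_x\cdot\tilde C\ge 1$ on $Y$, where $E_x$ is the \emph{strict} transform of the exceptional curve $E$ of $X_x\to X$. That is false in general. If $Y\to X_x$ blows up the point $\tilde C\cap E$, which it may have to do for $\mathbf{M}$ to descend, the strict transforms of $E$ and $C$ on $Y$ become disjoint. Your bound then gives only $D\cdot\tilde C\ge 0$. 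The fact $\operatorname{coeff}_{E_x}(\psi^*C)\ge 1$ from Step 2 is a different number and does not help.

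The fix is to use the total transform. Factor $\psi=\pi\circ\rho$ with $\pi\colon X_x\to X$ and $\rho\colon Y\to X_x$. Write $D=(\operatorname{mult}_x\mathbf{M})\,\rho^*E+D'$ with $D'=\rho^*M_{X_x}-M_Y$, where $M_{X_x}=\mathbf{M}|_{X_x}$.

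Since $-D'\equiv_{X_x}M_Y$ is $\rho$-nef and $\rho_*D'=0$, Lemma \ref{neglem} gives $D'\ge 0$. As $\tilde C$ is not $\rho$-exceptional, $D'\cdot\tilde C\ge 0$. Let $\tilde C_{X_x}$ be the strict transform of $C$ on $X_x$. Using $\operatorname{mult}_x\mathbf{M}\ge 0$ (from $D\ge 0$) and the projection formula, you get $D\cdot\tilde C\ge(\operatorname{mult}_x\mathbf{M})(E\cdot\tilde C_{X_x})=(\operatorname{mult}_x\mathbf{M})\operatorname{mult}_xC\ge\operatorname{mult}_x\mathbf{M}$.

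Equivalently, by the same negativity argument $M_{X_x}$ is nef over $Z$, and $M_X\cdot C=M_{X_x}\cdot\tilde C_{X_x}+(\operatorname{mult}_x\mathbf{M})(E\cdot\tilde C_{X_x})$. This is exactly the computation the paper performs on $X_\eta$.

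Finally, when $\eta$ is a curve you need $a\ge 0$, not just $\operatorname{mult}_\eta\mathbf{M}=0$. This follows from the same chain $a\ge M_X\cdot C\ge D\cdot\tilde C\ge 0$.
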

\begin{proof}
   Suppose that $\operatorname{mult}_{\eta}\mathbf{M} > a$. Consider the blow-up $X_\eta$ of $X$ along $\eta$. Then for each the strict transform $\tilde{C}$ of $C$, we have $\mathbf{M}_{X_\eta}\cdot \tilde{C}< 0$. By Lemma \ref{negcurve}, $\operatorname{mult}_C\mathbf{M}>0$, for infinitely many curves $C$, which is impossible.
\end{proof}

We introduce the concept of \emph{potentially nef divisors}, which is crucial through the whole argument of our result.
\begin{defi}[Potentially Nef/Anti-Nef Divisors]
Let $X,Y$ be varieties, and let $\phi: Y\to X$ be a birational morphism between $X,Y$. An effective divisor $M_{Y}$ in $Y$ is \emph{potentially nef (potentially anti-nef, resp.)} over $X$ if there exists a variety $Z$, a birational morphism $\psi: Z\to Y$, and an effective divisor $M_{Z}$ in $Z$ which is nef (anti-nef, resp.) over $X$ such that $M_{Z}|_{Y}=M_{Y}$. In this case, we say $M_{Y}$ is a potentially nef (potentially anti-nef, resp.) divisor \emph{realized} by $M_{Z}$.  
\end{defi}

\begin{ex}
Let $X$ be a smooth threefold and let $x\in X$ be a closed point. Let $Y=\mathrm{Bl}_{x}(X)$ be the blow-up of $X$ at $x$ with the exceptional divisor $E_{x}$. Let $y,z,w\in E_{x}$ be three distinct closed points, and let $Z$ be the blow-up of $Y$ at $y,z,w$ with the exceptional divisors $E_{y},E_{z},E_{w}$, respectively. Then
\begin{itemize}
    \item $\tilde{E}_{x}+E_{y}+E_{z}$ is potentially anti-nef over $X$. Notice that it is not anti-nef over $X$.
    \item $\tilde{E}_{x}+E_{y}+E_{z}+E_w$ is potentially anti-nef over $X$ if and only if $x,y,z$ are collinear.
    \item Let $a$ be a real number. Then, $\tilde{E}_{x}+aE_{y}$ is potentially anti-nef over $X$ if and only if it is anti-nef if and only if $0\leq a\leq 1$.
\end{itemize}
\end{ex}

\begin{lem}\label{pot_nef}
Let $(X,B)$ be a $3$-dimensional pair and $f:Y\to X$ be a birational model over $X$. Then, there is a birational model $Z\to Y$ over $Y$ and an exceptional divisor $M_Z$ on $Z$ over $X$ such that $-M_Z$ is relatively nef over $X$ and $a_E(Z,[M_Z])=a_E(X,B)$ for every exceptional divisor $E$ on $Z$ over $X$. 

In other words, the strict transform $\tilde{B}$ of $B$ on $Y$ is potentially nef over $X$ realized by $B|_Z-M_Z$.
\end{lem}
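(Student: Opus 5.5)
The idea is to realize the strict transform of $B$ by a divisor of the form $g^*B-M_Z$ on a sufficiently high model. Since $g^*B$ is numerically trivial over $X$, the relative nefness of $\tilde B_Z:=g^*B-M_Z$ is equivalent to $-M_Z$ being nef over $X$. Since $X$ is $\mathbb{Q}$-factorial, each component $B_i$ of $B=\sum b_iB_i$ is $\mathbb{Q}$-Cartier, say $rB_i$ is Cartier. For any model $g\colon Z\to X$ we then have the decomposition $g^*B_i=\tilde B_{i,Z}+G_{i,Z}$, with $G_{i,Z}\geq 0$ exceptional. I would set $M_Z=\sum b_iG_{i,Z}$, with the appropriate exceptional correction coming from $K$, so that the log discrepancy identity $a_E(Z,[M_Z])=a_E(X,B)$ for $g$-exceptional $E$ is a formal consequence of $K_Z+B_Z=g^*(K_X+B)$. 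The whole content is then the choice of $Z\to Y$ making $-M_Z$ nef over $X$.

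First I take a log resolution $Z_0\to Y$ of $(X,B)$ factoring through $f$. For each $i$ I consider the graded system of ideals $\mathfrak a_{i,k}=\mathcal O_X(-kB_i)$, where $k$ is divisible by $r$, and its pullbacks to models over $X$. Since $krB_i$ is Cartier, $g^{-1}\mathfrak a_{i,kr}\cdot\mathcal O_Z=\mathcal O_Z(-kr\,g^*B_i)$, which carries no information. So instead I use the ideal sheaf $\mathcal I_{B_i}$ of the reduced subscheme $B_i$ and its symbolic powers $\mathcal I_{B_i}^{(k)}$. By finite generation of the (local) symbolic Rees algebra in dimension three, guaranteed here because $X$ is $\mathbb{Q}$-factorial and $B_i$ is $\mathbb{Q}$-Cartier, one of these is principal up to saturation. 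Blowing up $Z_0$ further so that $\mathcal I_{B_i}^{(k)}\cdot\mathcal O_Z=\mathcal O_Z(-F_{i})$ is invertible for all $i$ gives a model $Z$ on which $-F_i$ is relatively globally generated, hence nef over $X$. Moreover $F_i=k\tilde B_{i,Z}+(\text{exceptional})$.

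Next I compare $F_i$ with $k\,g^*B_i$. By the negativity lemma (Lemma \ref{neglem}) applied to $k g^*B_i-F_i$, whose negative is nef over $X$ and whose pushforward to $X$ is $0$, we get $F_i\le k\,g^*B_i$. Combined with the reverse inclusion of ideals (sections of $\mathcal O_X(-kB_i)$ vanish to order $k$ along $B_i$), this shows that $F_i/k$ and $g^*B_i$ differ by an exceptional divisor. Setting $M_Z=\sum_i b_i\bigl(g^*B_i-\tilde B_{i,Z}\bigr)$ and using $\tilde B_{i,Z}=F_i/k-(\text{exc})$, I would show $-M_Z\equiv_X \sum b_i(-F_i/k)$ up to the exceptional correction. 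Then $-M_Z$ is nef over $X$, and its pushforward to $Y$ recovers the exceptional part of $f^*B-\tilde B$, so $\tilde B$ is potentially nef, realized by $B|_Z-M_Z$.

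The main obstacle is exactly this matching step: ensuring the exceptional correction is zero, i.e.\ that the coefficients of $M_Z$ along the $Y$-exceptional divisors agree with those of $f^*B-\tilde B$, rather than merely being bounded by them. The example after the definition of potentially anti-nef divisors shows that an arbitrary exceptional divisor need not be potentially anti-nef, so the argument must use that $f^*B-\tilde B$ comes from a $\mathbb{Q}$-Cartier divisor. I would first try to get this equality by showing that the valuation of the symbolic power ideal along each exceptional $E$ equals $k\operatorname{ord}_E(g^*B_i)$ for $k$ divisible enough. This is a valuative computation, using that $rB_i$ is locally principal, so that the local generator of $\mathcal O_X(-rB_i)$ lies in $\mathcal I_{B_i}^{(r)}$ and realizes the orders.
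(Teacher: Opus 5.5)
There is a genuine gap: nothing in your construction forces $-M_Z$ to be nef over $X$. On a normal variety the symbolic powers $\mathcal I_{B_i}^{(k)}$ of the height-one prime of $B_i$ are exactly the divisorial ideals $\mathcal O_X(-kB_i)$ you had just set aside. For $r\mid k$ they are already invertible, so principalizing them changes nothing, and $F_i=k\,g^*B_i$ is numerically trivial over $X$. Consequently the step ``$-M_Z\equiv_X\sum b_i(-F_i/k)$ up to the exceptional correction'' is circular, since the correction is all of $-M_Z$. The valuative equality you plan to prove at the end is true but merely restates $F_i=k\,g^*B_i$. (The negativity lemma is also misapplied: $-F_i$, not $F_i$, is nef, so it yields $F_i\ge k\,g^*B_i$.)

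With your $M_Z=\sum b_i(g^*B_i-\tilde B_{i,Z})$, nefness of $-M_Z$ over $X$ is equivalent to nefness over $X$ of the strict transform $\tilde B_Z$. This genuinely fails on log resolutions. Take $X$ smooth and $B$ a smooth surface, blow up $x\in B$, and then blow up two points of $\ell=E_1\cap\tilde B$. This $Y$ is already a log resolution with all $\mathcal O_X(-rB_i)$ invertible, so your procedure may return $Z=Y$. Yet $-M_Z\cdot\tilde\ell=\tilde B\cdot\tilde\ell=1-2=-1$.

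The missing ingredient is a choice of $Z$ that actually changes $M_Z|_{\tilde B}$. The paper takes a log resolution $W\to Y$ with $B_W=\sum b_jB_{W_j}$ smooth and keeps $M_Z=B|_Z-\tilde B$. It then blows up each curve $C\subseteq B_{W_j}\cap E_W$ repeatedly, each time along the intersection of the newest exceptional divisor with $\tilde B_{W_j}$. This leaves the surface $\tilde B_{W_j}$ unchanged but raises the coefficient of $C$ in $M_Z|_{\tilde B_{W_j}}$ by $b_j$ per blow-up. Let $a=\max C\cdot E_W$, and take the number of blow-ups along $C$ to be $\lceil a/b_j\rceil$ times the coefficient of $C$ in $(\phi^*\phi_*H-H)|_{B_W}$, a divisor meeting every exceptional curve of $B_W$ negatively. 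This gives $M_Z\cdot C\le a-\lceil a/b_j\rceil b_j\le 0$ for exceptional curves in $\tilde B$, while $-M_Z\cdot C=\tilde B_Z\cdot C\ge 0$ for those not in $\tilde B$.

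If you only want the potential nefness of the last sentence, your ideal-theoretic idea can be repaired by using the right ideal. Take $kB_i$ Cartier and set $\mathfrak a_{i,k}=f_*\mathcal O_Y\bigl(-k(f^*B_i-\tilde B_i)\bigr)$. It is cosupported on $f(\operatorname{Exc} f)$ and contains the local equation $h$ of $kB_i$. Principalizing it on $Z\to Y$ therefore gives an exceptional $F_i$ such that:
\begin{itemize}
\item $-F_i$ is nef over $X$;
\item $F_i\le k\,g^*B_i$;
\item $\operatorname{coeff}_E F_i=k\operatorname{coeff}_E(f^*B_i)$ for every exceptional $E$ on $Y$, with the lower bound by definition and the upper bound from $h$.
\end{itemize}
Then $M_Z=\sum b_iF_i/k$ realizes $\tilde B$. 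However, it need not agree with the exceptional part of $B|_Z$ on divisors not appearing on $Y$, which is what the first formulation of the lemma, and the paper's construction, provide.
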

\begin{proof}
This is an algorithmic construction. Let $W\to Y$ be a log resolution of $(X,B)$ such that the strict transform $B_W$ of $B$ in $W$ is smooth, and let $E_W=B|_W-B_W$ be the exceptional part of $B|_{W}$ over 
$X$. Write $B_W=\sum_{k=1}^Sb_k{B_W}_k$ for disjoint prime divisors ${B_W}_k$ in $W$ with $b_k>0$ (this is possible since we require $B_{W}$ to be smooth in $W$). Denote $\phi:Z\to X$ as the composite morphism of $W\to Y$ and $Y\to X$.

If $C$ is an exceptional curve over $X$ in $W$ with $C\cdot E_W=-C\cdot B_W>0$, then $C\subseteq B_W\cap E_W$. For $1\leq j\leq S$, write ${B_W}_{j}\cap E_W=\bigcup_{i=1}^{T_{j}}C_{j,i}$, where $C_{j,i}$ are distinct irreducible curves in $B_{W_{j}}\cap E_{W}$. Let $a=\max_{j,i}\{C_{j,i}\cdot E_W \}$. Let $H$ be a very ample divisor on $X$ and write $(\phi^*\phi_*H-H)\cdot B_W=\sum_{j,i} h_{ji} C_{j,i}$ as a $1$-cycle. 

 For $j,s\in [1,S]$, $i\in[1,T_j]$, $t\in [1,T_s]$ and $l\in [1,h_{ji}\lceil\frac{a}{b_{j}}\rceil]$, we construct birational models $Z_{j,i,l}$ over $Z_{1,1,0}:=W$ for $j,s\in [1,S]$, $i\in[1,T_j]$, $t\in [1,T_s]$ and $l\in [1,h_{ji}\lceil\frac{a}{b_{j}}\rceil]$, curves $C_{s,t,j,i,l}$ on $Z_{j,i,l}$, and exceptional$/W$ divisors $E_{j,i,l}$ and $M_{Z_{j,i,l}}$ on $Z_{j,i,l}$ as follows. Let $C_{j,i,1,1,0}:=C_{j,i}$ and $M_{Z_{1,1,0}}:=E_W$. 

For $l<h_{ji}\lceil\frac{a}{b_{j}}\rceil$, define $\phi_{j,i,l+1}:Z_{j,i,l+1}:=Bl_{C_{j,i,j,i,l}}Z_{j,i,l}\to Z_{j,i,l}$, $E_{j,i,l+1}:= \operatorname{Exc}(\phi_{j,i,l+1})$, $C_{j,i,j,i,l+1}:=E_{j,i,l+1}\cap \tilde{{B_W}_j}$, $C_{s,t,j,i,l+1}$ be the strict transform of $C_{s,t,j,i,l}$ for $(s,t)\neq(j,i)$, and $M_{Z_{j,i,l+1}}:=\phi_{j,i,l+1}^*M_{Z_{j,i,l}}+b_jE_{j,i,l+1}=B|_{Z_{j,i,l}}-\tilde{B}$, where $\tilde{{B_W}_j}$ and $\tilde{B}$ are the strict transforms of ${B_W}_j$ and $B$ respectively.

For $l=h_{ji}\lceil\frac{a}{b_{j}}\rceil$ and $i<T_j$, define $\phi_{j,i+1,1}:Z_{j,i+1,1}:=Bl_{C_{j,i+1,j,i,l}}Z_{j,i,l}\to Z_{j,i,l}$, $E_{j,i+1,1}:=  \operatorname{Exc}(\phi_{j,i+1,1})$, $C_{j,i+1,j,i+1,1}:=E_{j,i+1,1}\cap \tilde{{B_W}_j}$, $C_{s,t,j,i+1,1}$ be the strict transform of $C_{s,t,j,i,l}$ for $(s,t)\neq(j,i+1)$, and $M_{Z_{j,i+1,1}}:=\phi_{j,i+1,1}^*M_{Z_{j,i,l}}+b_jE_{j,i+1,1}=B|_{Z_{j,i+1,1}}-\tilde{B}$, where $\tilde{{B_W}_j}$ and $\tilde{B}$ are the strict transforms of ${B_W}_j$ and $B$ respectively.

For $l=h_{ji}\lceil\frac{a}{b_{j}}\rceil$, $i=T_j$ and $j<S$, define $\phi_{j+1,1,1}:Z_{j,i,l}:=Bl_{C_{j+1,1,j,i,l}}Z_{j,i,l}\to Z_{j,i,l}$, $E_{j+1,1,1}:=  \operatorname{Exc}(\phi_{j+1,1,1})$, $C_{j+1,1,j+1,1,1}:=E_{j+1,1,1}\cap \tilde{{B_W}_j}$, $C_{s,t,j+1,1,1}$ be the strict transform of $C_{s,t,j,i,j}$ for $(s,t)\neq(j+1,1)$, and $M_{Z_{j+1,1,1}}:=\phi_{j+1,1,1}^*M_{Z_{j,i,l}}+b_{j+1}E_{j+1,1,1}=B|_{Z_{j+1,1,1}}-\tilde{B}$, where $\tilde{{B_W}_{j+1}}$ and $\tilde{B}$ are the strict transforms of ${B_W}_{j+1}$ and $B$ respectively.




Let $Z:=Z_{S,T_S,h_{ST_S}\lceil\frac{a}{b_{S}}\rceil}$ and $M_Z:=M_{Z_{S,T_S,h_{ST_S}\lceil\frac{a}{b_{S}}\rceil}}$.
We claim that $-M_{Z}$ is nef. Suppose that $C$ is an exceptional curve on $Z$ such that $M_Z\cdot C>0$ with $c_W(C)=C_{ji}={B_W}_j\cap E$ for some exceptional prime divisor $E$ on $W$. Then, by the projection formula, we have $M_Z\cdot C=(-\sum_{j,i}\lceil a/b_j\rceil b_j h_{ji}C_{ji}+E_Z\cdot {B_W}_j)\cdot_{{B_W}_j} C_{ji}=\lceil a/b_j\rceil b_j  (\phi^*\phi_*H-H)\cdot C_{ji} +E_Z\cdot C_{ji}\leq -\lceil a/b_j\rceil b_j+a\leq 0$, a contradiction.
\end{proof}

\begin{defi}(Tower of Point-blow-ups)\label{buptower}
Let $X$ be a variety and let $n$ be a natural number.
A \emph{tower of point-blow-ups on $X$ of length $n$} is a sequence of morphisms $X_n\to X_{n-1}\to\dots\to X_1\to X_0=X$ satisfying
\begin{itemize}
\item $X_i\to X_{i-1}$ is the blow-up of $X_{i-1}$ at a closed point $x_{i-1}\in X_{i-1}$ for $1\leq i\leq n$,
\item the exceptional divisor $E_i$ of $X_i\to X_{i-1}$ contains $x_i$ for $1\leq i\leq n-1$
\end{itemize}
A tower of point-blow-ups on $X$ of length $n$ is called a \emph{chain} if additionally, the strict transform $\tilde{E}_{i-1}$ of $E_{i-1}$ on $X_i$ does not contain $x_i$ for $2\leq i\leq n$.
\end{defi}

\begin{lem}\label{longchain} 
Fix natural numbers $k$ and $N$, a positive real number $\delta$ and a bounded family of varieties $\mathcal{P}$, and consider the following senario:\\

Let $X\to Z$ be a birational morphism between 3-folds which has
\begin{itemize}
    \item an exceptional divisor $F\in\mathcal{P}$ with $\dim F_Z=0$,
    \item a relatively very-ample exceptional divisor $H$ over $Z$ with coefficients at most $N$, and
    \item a relatively b-nef over $Z$ b-$\mathbb{R}$-Cartier b-divisor $\text{M}$ over $Z$ with coefficients at least $\delta$, and $\operatorname{ord}_{F_h\slash Z}M\leq N$ for each irreducible components $F_h$ of $F$. 
\end{itemize}
Then there is a natural number $n\geq k$, only depending on $k,N,\delta,\mathcal{P}$, such that the following holds:\\

Given $X_n\to X_{n-1}\to\dots\to X_1\to X_0=X$
be the chain of blow-ups of as in Definition \ref{buptower} with exceptional divisors $E_i\subseteq X_i$ and centers $x_i\in X_i$.
If $D$ is a component of $F$ with $x_n\in \tilde{D}$ (where $\tilde{D}$ is the strict transform of $D$ on $X_{n}$),
then 
\begin{itemize}
    \item there is a curve $C_0\subseteq D$ such that $x_k\in \tilde{C}_0$ (where $\tilde{C_{0}}$ is the strict transform of $C$ on $X_{k}$), and
    \item $\operatorname{mult}_{C_{0}}\mathbf{M}=\operatorname{mult}_{x_i}M_i$.

\end{itemize}

\end{lem}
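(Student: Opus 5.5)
The plan is to reduce the statement to a stabilisation argument for the sequence $m_i:=\operatorname{mult}_{x_i}M_i$, where $M_i=\mathbf{M}|_{X_i}$, along the chain. Rigidity of the chain then forces a curve.

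\emph{Step 1 (the points stay on $\tilde D$).} Since $x_n\in\tilde D$ and each $X_i\to X_{i-1}$ is a point blow-up, the image of $x_n$ in every $X_i$ lies on the strict transform $D_i$ of $D$. Hence $x_i\in D_i$ for all $i$. Moreover, $D_{i}\to D_{i-1}$ is the blow-up of the surface $D_{i-1}$ at $x_{i-1}$, at least after passing to the smooth locus near $x_i$; the generic smoothness of members of $\mathcal P$ should make this possible after discarding boundedly many initial steps. The chain condition says $x_i\in E_i\cap D_i$ but $x_i\notin\tilde E_{i-1}$. So $(x_i)$ is a sequence of infinitely near points on the surface $D$ in which each point is free, i.e.\ lies only on the newest exceptional curve. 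Such a sequence determines a formal smooth curve germ $\gamma$ on $D$ through $x_0$. Because $D$ moves in a bounded family, the first $k$ points $x_0,\dots,x_k$ of $\gamma$ are cut out by curves $C\subseteq D$ with $H\cdot C$ bounded in terms of $k$ and $\mathcal P$. Concretely, they are sections of a fixed multiple of $H|_D$ with prescribed $k$-jet, and this multiple depends only on $k$ and $\mathcal P$.

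\emph{Step 2 (multiplicities are discrete and bounded).} The coefficients of $\mathbf M$ are at least $\delta$ and $\operatorname{ord}_{F_h/Z}\mathbf M\le N$. By Lemma \ref{negcurve}, $m_i\ge 0$. The quantity $m_i$ is the increment of $\operatorname{ord}_{E_{i+1}/Z}\mathbf M$ over the pull-back contribution from the earlier exceptional divisors. Using the relative very ampleness of $H$ and Lemma \ref{bddmult}, applied to the infinitely many curves of bounded $H$-degree through $x_i$ inside $D_i$, I would bound each $m_i$ above by a constant depending on $N$. I would then show that the set of values $m_i$ takes is finite, or at least that every strict drop $m_{i+1}<m_i$ has size at least some $\epsilon(\delta,N)>0$; since the sequence is non-increasing along free infinitely near points, this is the discreteness coming from the coefficients being at least $\delta$. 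Consequently $(m_i)$ has at most $c(N,\delta)$ jumps. I would take $n$ to be roughly $(c+1)(k+1)$, so that some window of length greater than $k$ has constant multiplicity. Re-indexing so that this window begins at the beginning of the chain gives $m_i=m$ constant for $0\le i\le n'$ with $n'\ge k$.

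\emph{Step 3 (extracting $C_0$).} Choose a curve $C_0\subseteq D$ of bounded degree passing through $x_0,\dots,x_k$ as infinitely near points, as in Step 1. Then $x_k\in\tilde C_0$. It remains to prove $\operatorname{mult}_{C_0}\mathbf M=m_i$. Suppose first that $\operatorname{mult}_{C_0}\mathbf M<m$. Then the strict transform of $C_0$ has $M$-degree decreasing by $m-\operatorname{mult}_{C_0}\mathbf M$ at each of the $k$ blow-ups. Once $k$ exceeds the bounded value of $M\cdot C_0$, this degree becomes negative, and Lemma \ref{negcurve} gives a contradiction. Suppose instead that $\operatorname{mult}_{C_0}\mathbf M>m$. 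Then $\mathbf M$ vanishes to order greater than $m$ generically along $C_0$, and by semicontinuity of multiplicity this forces $m_i>m$ at the points $x_i\in\tilde C_0$, which is again a contradiction. Hence equality holds.

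I expect the main obstacle to be the discreteness claim in Step 2: showing that $m_i$ can only decrease in quanta bounded below in terms of $\delta$ and $N$. The first thing I would try is to write $\mathbf M=\sum r_j\mathbf M_j$ with $\mathbf M_j$ nef Cartier on a model and $r_j\ge\delta$. Each $\operatorname{mult}_{x_i}\mathbf M_j$ is then a non-negative integer, which gives finitely many values of $m_i$ below the bound from $N$.
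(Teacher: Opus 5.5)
There is a genuine gap in Step~3. Steps~1--2 are fine as far as they go. The decomposition $\mathbf M=\sum r_j\mathbf M_j$, with integer and non-increasing multiplicities for each $\mathbf M_j$, is a good way to obtain the $\delta$-gap that the paper uses without comment. The inequality $\operatorname{mult}_{C_0}\mathbf M\le m$ also follows from Lemma \ref{mult}. The gap is the reverse inequality. Write $b=\operatorname{mult}_{C_0}\mathbf M$.

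\begin{enumerate}
\item The $M$-degree of $\tilde C_0$ drops by $m$ (times $\operatorname{mult}_{x_i}\tilde C_0$) at each blow-up, not by $m-b$. The curve whose degree drops by $m-b$ is $\ell_i=E_{\tilde C_0}\cap\tilde D$ on the blow-up of $X_i$ along $\tilde C_0$, by Lemma \ref{basic}.
\item A negative degree is not a contradiction. The negativity lemma (Lemma \ref{negcurve}, or the lemma following Lemma \ref{a}) only says that $\mathbf M$ has positive multiplicity along the negative curve. That is perfectly consistent with $0<b<m$.
\item The phrase ``once $k$ exceeds the bounded value of $M\cdot C_0$'' is circular. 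Your $C_0$ was chosen with degree depending on $k$: its $H$-degree grows like $\sqrt k$ and $C_0\cdot_D C_0$ grows like $k$. Moreover $k$ is fixed in the statement; only $n$ may be enlarged.
\end{enumerate}

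The missing idea is that $C_0$ must lie in a family bounded independently of the chain length, namely among the components of the center of $\mathbf M$. Your Step~1 curve, which uses nothing about $\mathbf M$, cannot supply this.

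The paper gets $C_0$ directly. The quantity $\tilde D\cdot M_{X_i}^2$ drops by a uniformly positive amount at each point blow-up while $D\cdot M_X^2\le L$. So for $n$ large, $M_{X_n}$ is not nef over $Z$, and the negative curve is the strict transform of a component $C_j$ of the center of $\mathbf M$. After replacing $\mathbf M$ by $\mathbf M+l[H]$ so that $C_j\cdot M_X\ge kt$, where $t$ bounds every $m_i$ (Lemma \ref{bddmult}), this strict transform must pass through $x_k$.

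For the equality, if $b<a$ then $a-b\ge\delta$. Blowing up $C_0$, the multiplicity of $\mathbf M$ accumulates to at least $k\delta$ along the extraction of the fibre over $x_0$ inside $E_{C_0}$ (Lemmas \ref{bupmult} and \ref{primedivisors}). Since $E_{C_0}$ is bounded, this contradicts a fixed upper bound of the type in Lemma \ref{a}.

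Your route can be partly repaired as follows.
\begin{itemize}
\item Pass $C_0$ through $x_0,\dots,x_K$ for some large $K\le n$, so its degree is $O(\sqrt K)$, and first make $M_X$ nef over $Z$ by adding $l[H]$.
\item If $b=0$, the drop of at least $K\delta$ makes $\tilde C_0\cdot M_{X_K}<0$ on the relevant component. The negativity lemma then forces $b>0$, a contradiction.
\item Hence the relevant component lies in the center of $\mathbf M$, is bounded, and passes through $x_k$.
\end{itemize}
However, the case $0<b<m$ still needs the extraction argument above, and your proposal does not contain it.
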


\begin{proof}
    Let $C=\sum_{j\in J} C_j$ be the sum of irreducible curves in the center of $M$.
    We may assume that $C\in\mathcal{P}$.
    There is a fixed natural number $t$ such that there are infinitely many integral curves $\Bar{C}\subseteq D$ with $p\in\Bar{C}$ and $\Bar{C}.M_X\leq t$ for any closed point $p\in D$.
    Replacing $M$ by $M+lH$ for some fixed natural number $l$, we may assume that $M_X$ is nef over $Z$ and $C_j.M_X\geq kt$ for all $j\in J$.
    Replacing $X$ by $X_s$ for some fixed natural number $s$,
    we may assume that $C$ is smooth at $x_0$.
    There is a fixed natural number $L$ such that $DM_X^2\leq L$.
    Since $\tilde{D}M_{X_n}^2\leq L-n\delta$,
    $M_{X_n}$ is not relatively nef over $Z$ if we pick $n>\frac{L}{\delta}$.
    This implies that $\tilde{C_j}M_{X_n}<0$ for some $j\in J$.
    So $x_k\in \tilde{C_j}$.
    Take $C_0=C_j$ and we have the first assertion.
    
    Let  $a=\operatorname{mult}_{x_i}M_i$. Assume that $\operatorname{mult}_{C_0}M<a$.
    Then we have $a-\operatorname{mult}_{C_0}\geq \delta$.
    Let $X_{C_0}$ be the blow-up of $X$ along $C_0$ with exceptional divisor $E_{C_0}$ and $x_{C_0}$ be its fiber over $x$.
    Then we have $\operatorname{mult}_{x_{C_0}\subseteq E_{C_0}}\geq k\delta$.
    We may assume $E_{C_0}\in\mathcal{P}$.
    Replacing $k$ and we get a contradiction.
\end{proof}

\begin{lem}\label{boundedcenter}
Fix a positive real number $\alpha$.

Suppose that $\phi:Y\to X$ is a birational morphism of projective normal threefolds with exceptional locus $\operatorname{Exc}(\phi)=E$ being a divisor such that $Y$ is smooth and $\dim \phi(E)=0$. Suppose that
$F$ is a smooth prime exceptional divisor of $\phi$ and $h$ is a very ample divisor on $F$. Suppose that $M$ is a divisor on $Y$ and $g$ is a curve on $F$ with $g\cdot_F h> \frac{h\cdot M}{\alpha}$. Let $\phi_g:Y_g\to Y $ be the blow-up of $Y$ along $g$ with exceptional divisor $E_g$. Then $\phi_g^*(M)-\alpha^+E_g$ is not potentially nef over $X$ for every real number $\alpha^+ \geq \alpha$.

\end{lem}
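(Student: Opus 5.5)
We argue by contradiction, and the plan rests on one intersection computation. Suppose some $\alpha^+\geq\alpha$ makes $M_g:=\phi_g^*(M)-\alpha^+E_g$ potentially nef over $X$. Then there are a birational morphism $\psi:W\to Y_g$ and an effective divisor $M_W$ on $W$, nef over $X$, with $M_W|_{Y_g}=M_g$. We will derive a contradiction by intersecting with a suitable curve lying on the strict transform $\tilde F$ of $F$.

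\textbf{Step 1 (choose the test curve).} Since $h$ is very ample on $F$, a general member $h'\in|h|$ is an irreducible curve meeting $g$ transversally in $g\cdot_F h$ points, and we may choose it so that it avoids the finitely many points (and curves) over which $\psi$ is not an isomorphism. Let $\tilde h$ be its strict transform on $Y_g$. The curve $h'$ lies in $F$, and $\phi(F)$ is a point, so $\tilde h$ is contracted over $X$. Nefness of $M_W$ over $X$ then gives $M_W\cdot\hat h\geq 0$, where $\hat h$ is the strict transform of $\tilde h$ on $W$.

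\textbf{Step 2 (compare $M_W$ with $\psi^*M_g$).} Write $M_W=\psi^*M_g-G$ with $G$ exceptional over $Y_g$. Because $\hat h$ is general, it is not contained in the support of $G$. The hoped-for inequality is $G\cdot\hat h\geq 0$, which holds once $\hat h$ avoids the centers of $G$, since then $G\cdot\hat h=0$. This is the delicate point. The centers of $G$ on $Y_g$ might contain curves in $\tilde F$ that every member of $|\tilde h|$ meets. In that case we would instead run the negativity-lemma argument of Lemma \ref{negcurve} over $Y_g$ to control the sign of $G$ near $\tilde h$. Granting this step, we get
\[
0\leq M_W\cdot\hat h\leq \psi^*M_g\cdot\hat h=M_g\cdot\tilde h .
\]

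\textbf{Step 3 (compute $M_g\cdot\tilde h$).} By the projection formula, $\phi_g^*M\cdot\tilde h=M\cdot h$. Moreover, $E_g\cdot\tilde h$ counts the points of $h'\cap g$ with multiplicity, so $E_g\cdot\tilde h=g\cdot_F h$. This uses that $\tilde h$ is the strict transform of a curve transverse to $g$, in the spirit of Lemma \ref{basic}, and requires $g$ to be smooth at the intersection points; if it is not, we take $h'$ general so that it meets $g$ only at smooth points. We obtain
\[
M_g\cdot\tilde h=M\cdot h-\alpha^+\,g\cdot_F h\leq M\cdot h-\alpha\,g\cdot_F h<0 .
\]
The last inequality is exactly the hypothesis $g\cdot_F h>\frac{h\cdot M}{\alpha}$. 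This contradicts Step 2.

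The main obstacle is Step 2: we must ensure that the correction term $G$ from the realizing model does not make $M_W\cdot\hat h$ larger than $M_g\cdot\tilde h$. Our first attempt will be the generality argument, choosing $h'$ to avoid all the finitely many base points of the centers of $G$. We fall back on the negativity-lemma argument only if curve components of those centers force every such $\tilde h$ to meet them.
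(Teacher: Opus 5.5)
\textbf{Gap in Step 2.} Your Steps 1 and 3 are fine, and your overall strategy matches the paper's: test a realization $M_W$ against a general member of $|h|$ on $\tilde F$. The gap is Step 2, which you explicitly leave as ``granted''.

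Your primary plan there, choosing $h'$ so that $\tilde h$ avoids the centers of $G$, cannot work. Since $g$ is Cartier on the smooth surface $F$, we have $\tilde F\cong F$, and $\tilde h$ is ample on $\tilde F$. So $\tilde h$ meets every curve in $\tilde F$, and a curve component of $\psi(\operatorname{Supp}G)$ lying in $\tilde F$ cannot be avoided. The same objection applies to your claim in Step 1 that $h'$ avoids the curves over which $\psi$ is not an isomorphism; you only need $\tilde h$ not to be \emph{contained} in them.

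Your phrasing also shows you treat the sign of $G$ as unknown. In that case, meeting those centers really could make $G\cdot\hat h<0$, and your fallback ``control the sign of $G$ near $\tilde h$'' is never carried out. As written, the key inequality $M_W\cdot\hat h\le M_g\cdot\tilde h$ is unproved.

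\textbf{The missing idea.} Apply Lemma \ref{neglem} globally to $\psi:W\to Y_g$.
\begin{itemize}
\item $-G=M_W-\psi^*M_g$ is nef over $Y_g$, because $M_W$ is nef over $X$ and $\psi^*M_g$ is $\psi$-trivial.
\item $\psi_*G=M_g-M_g=0$.
\item Hence $G\ge 0$.
\end{itemize}
Since $G$ is $\psi$-exceptional, $\psi(\operatorname{Supp}G)$ is a finite union of curves and points. A general $\tilde h$ moves in a family covering $\tilde F$, so it is not contained in this set. Therefore $\hat h\not\subseteq\operatorname{Supp}G$, and $G\cdot\hat h\ge 0$ whether or not $\hat h$ meets $\operatorname{Supp}G$. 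This gives $0\le M_W\cdot\hat h\le M_g\cdot\tilde h<0$, the contradiction you wanted.

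\textbf{Comparison with the paper.} The paper uses the same negativity input in a different packaging. Via Lemma \ref{negcurve}, $M_g\cdot\tilde h<0$ forces the realizing b-divisor to have positive multiplicity along $\tilde h$, that is, $\tilde h\subseteq\psi(\operatorname{Supp}G)$. This is impossible for the infinitely many general $\tilde h$. Once completed as above, your direct intersection argument is an equally valid and arguably more transparent version.

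\textbf{Minor point on Step 3.} No transversality or smoothness of $g$ is needed. $E_g|_{\tilde F}$ is the Cartier divisor $g$ on $\tilde F\cong F$, so $E_g\cdot\tilde h=g\cdot_F h$ directly.
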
 
\begin{proof} Suppose that $\alpha^+>\alpha$, then we have 
\[
\tilde{h}\cdot (\phi_g^*(M)-\alpha^+E_g)=h\cdot M-\alpha^+ E_g\cdot \tilde{h}= h\cdot M-\alpha^{+} g\cdot_F h<0.
\]Since $h$ is general, we have $\operatorname{mult}_h(\phi_g^*(M)-\alpha^+E_g)\geq 0$ by Lemma \ref{negcurve}, which implies that $\phi_g^*(M)-\alpha^+E_g$ is not potentially nef.
\end{proof}

\begin{lem}\label{boundedcurves}
Fix a DCC set $I\subseteq \mathbb{R}$, a positive real number $\alpha$ and a natural number $N$. Then there is a natural number $N'$ depending only on $I$, $\alpha$ and $N$ satisfying the following.

If $(X,B)$ is a pair of dimension $3$, such that
\begin{itemize}
\item there exists a log resolution $f:X\to X_0$ with $D=\operatorname{Exc}(f)$ being the $f$-exceptional divisor,
\item $\dim f(D)=0$,
\item $B$ and $D$ have no common components,
    \item $\operatorname{coeff} B\in I$,
    \item $\operatorname{coeff}f^*f_*B\leq N $,
    \item there is a very ample divisor $H\subseteq X$ such that $G:=f^*f_*(H)-H$ satisfies $-G^3\leq N$, and
    \item $C$ is an $f$-exceptional curve on $X$ with $C\cdot B<0$,
\end{itemize}
then 
\begin{enumerate}
    \item $C\cdot H\leq N'$ and
    \item $\operatorname{mult}_{C\subseteq D}B\leq N'$.
    \end{enumerate}
    
Assume, moreover, $X_N\to X_{N-1}\to\dots X_1:=X$ is a chain of point-blow-ups as in Definition \ref{buptower} with blow-up centers $x_i\in \tilde{D}\subseteq X_i$ and $\operatorname{mult}_{x_i}B=\alpha$. Then there is a curve $C'\subseteq D$ with $\operatorname{mult}_{C'}B=\alpha$ and $x_N\in \tilde{C}'$.

\end{lem}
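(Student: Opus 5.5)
We prove assertion (1) first, since it underlies the other two. Because $D$ is the full exceptional locus and $\dim f(D)=0$, every $f$-exceptional curve $C$ lies in some irreducible component $D_j$ of $D$. The hypotheses $-G^3\le N$ and $\operatorname{coeff} f^*f_*B\le N$ will bound the components $D_j$ together with the polarisations $H|_{D_j}$, and also the classes $E_B:=f^*f_*B-B$ restricted to each $D_j$.

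Write $E_B$ as the exceptional part, so that $B=f^*f_*B-E_B$ with $E_B\ge 0$. For an exceptional curve $C$ we have $C\cdot f^*f_*B=0$, hence $C\cdot B=-C\cdot E_B$. The condition $C\cdot B<0$ therefore says $C\cdot E_B>0$. Since $E_B$ is supported on $D\supseteq C$, this can only happen when $C\subseteq \tilde B\cap D_j$ for some component $D_j$ meeting $B$, as in the proof of Lemma \ref{pot_nef}. The candidate curves are thus contained in the $1$-cycle $B|_{D_j}$.

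We bound its degree by computing $B\cdot H\cdot D_j=(f^*f_*B-E_B)\cdot H\cdot D_j$. Since $H=f^*f_*H-G$, this equals $(f^*f_*B-E_B)\cdot(-G)\cdot D_j$. Expanding, the right-hand side is controlled by the coefficients of $E_B$, which are at most $N$, and by intersection numbers of $G$ with the $D_j$. The latter are bounded in terms of $G^3$ via a Hodge-index or boundedness argument applied to the bounded family of exceptional components. The DCC property of $I$ gives $\operatorname{coeff}B\ge \min I>0$, so each component curve $C$ of $B\cap D_j$ satisfies $C\cdot H\le N'$. This proves (1).

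For (2), assume $\operatorname{mult}_{C\subseteq D}B>N'$. Blowing up along $C$, and then repeatedly along the extracted curves $C\subseteq D$, increases the coefficient of the exceptional divisor each time. Lemma \ref{boundedcenter} applies with $h=H|_{D_j}$ and $\alpha$ of size about $h\cdot M/(g\cdot h)$, using the bound from (1) on $C\cdot H$. It shows that the resulting divisor can no longer be potentially nef over $X_0$. This contradicts Lemma \ref{pot_nef}, which says $B$ is potentially nef realised by $B|_Z-M_Z$.

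The last assertion, about the chain of point-blow-ups, follows the proof of Lemma \ref{longchain} with $\mathbf M=[B]$ and $\delta=\min I$. Along the chain, $\tilde D\cdot B_{X_i}^2$ drops by at least $\alpha^2$ at each step. Choosing the chain length larger than this bound divided by $\alpha^2$ forces $B_{X_N}$ to fail nefness on some curve. By (1), that curve is the strict transform of a curve $C'\subseteq D$ passing through all the $x_i$, so $x_N\in\tilde C'$.

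The equality $\operatorname{mult}_{C'}B=\alpha$ needs a two-sided bound. The inequality $\le$ holds because $\operatorname{mult}_{x_i}B\ge \operatorname{mult}_{C'}B$ at points of $C'$. For the reverse inequality, if the multiplicity along $C'$ were strictly smaller, then $\operatorname{mult}_{x\subseteq E_{C'}}$ would grow linearly in $N$, contradicting the bound in (2).

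The main obstacle is this last equality, together with making the intersection bound in (1) uniform. I would try first to reduce both to the bounded family of the pairs $(D_j,H|_{D_j})$ and $(D_j, E_B|_{D_j})$.
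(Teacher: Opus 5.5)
Your arguments for (1) and (2) are fine and follow the paper's route. The paper reduces, over a bounded exceptional configuration, to an upper bound on $(H'\cap G)\cdot B$, and then applies Lemma \ref{boundedcenter} together with Lemma \ref{pot_nef}. Your direct count for (1) is the same inequality that Lemma \ref{boundedcenter} encodes on a general member $h$ of $|H|_{D_j}|$: $C$ occurs with coefficient at least $\min I$ in the effective cycle $B\cdot D_j$, so $\min I\,(C\cdot H)\le B\cdot D_j\cdot H$. Two small remarks. For (2) you only need $C\cdot H\ge 1$, not the upper bound from (1). And $C\subseteq\operatorname{Supp}B$ follows directly from $C\cdot B<0$ and $B\ge 0$.

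The gap is in the last assertion.
\begin{itemize}
\item The drop of $\tilde D\cdot\tilde B^2$ by at least $\alpha^2$ per point blow-up is correct. The conclusion you draw from it is not. $B$ is already not nef over $X_0$ on $X$ itself (this is the hypothesis $C\cdot B<0$). So a $\tilde B$-negative curve on $\tilde D\subseteq X_N$ may just be the strict transform of an original $B$-negative curve that never meets $x_1$.
\item Statement (1) only bounds the $H$-degree of such curves; it cannot put them through the $x_i$.
\item Taking $\mathbf M=[B]$ is not legitimate, since $[B]$ is not b-nef over $X_0$, so Lemma \ref{longchain} does not apply to it.
\end{itemize}

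The step you dropped from the proof of Lemma \ref{longchain} is the preliminary twist $M\mapsto M+lH$. Here it means replacing $\tilde B$ by $\tilde B-l\,\pi^*G$, where $\pi:X_N\to X$.
\begin{itemize}
\item Since $-G|_{D_j}\equiv H|_{D_j}$, and the curves $C'\subseteq B\cap D$ form a bounded family by (1), a bounded $l$ makes $B-lG$ nef over $X_0$ and of degree at least a prescribed $K$ on every such $C'$.
\item The twist is pulled back from $X$. Hence the multiplicities at the $x_i$ and the drop $\alpha^2$ are unchanged, and $D\cdot(B-lG)^2$ stays bounded.
\item Once $\tilde D\cdot(\tilde B-l\pi^*G)^2<0$, the negative curve cannot lie in the exceptional divisors of the chain, where the restriction is a multiple of a nef class. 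So it is some $\tilde C'$ with $\alpha\sum_i\operatorname{mult}_{x_i}\tilde C'>K$.
\item By the chain condition and a bound on $\operatorname{mult}_{x_1}C'$, this forces $\tilde C'$ through $x_1,\dots,x_k$ for a prescribed $k$, as in Lemma \ref{longchain}. It does not automatically give all the $x_i$.
\end{itemize}
This is where (1) is genuinely used.

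Similarly, the reverse inequality $\operatorname{mult}_{C'}B\ge\alpha$ does not contradict (2) as stated. The fibre of $E_{C'}\to C'$ over $x_1$ is neither contained in $D$ nor $B$-negative: its intersection with the strict transform is $\operatorname{mult}_{C'}B\ge 0$. You have to rerun the Lemma \ref{boundedcenter} argument on the new surface $E_{C'}$, which is bounded because $C'$ is.
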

\begin{proof}
Since $G$ is exceptional, $-G^3=G\cdot H^2$ is bounded. Therefore, $\operatorname{Exc}(f)$ is in a fixed bounded family. Since $B=f_*\tilde{B}$,by Lemma \ref{pot_nef} $B$ is potentially nef over $X_0$. By assumption, $f^*f_* B-B$ is bounded and potentially anti-nef over $X_0$. Since $C\cdot B<0$, we have 
$\operatorname{mult}_C B>0$. Let $H'$ be a general member of $|H|$. Then $(H'\cap G)\cdot B$ is bounded above. By Lemma \ref{boundedcenter}, (1) and (2) hold. The rest of the lemma follows from Lemma \ref{longchain}.
\end{proof}

\begin{defnlemma}\label{towerofsection}
Let $W$ be a threefold and $w\in W$ be a non-singular closed point and $X_0\to W$ be the blow-up at $w$ with exceptional divisor $E_0$. Let $M_0=E_0$. Let $l_0$ be a line in $E_0\cong \mathbb{P}^2$. Inductively, define $f_{i+1}:X_{i+1}\to X_i$ as the blow-up of $X_i$ along $l_i$ with the exceptional divisor $E_{i+1}$. Then, $E_i\simeq \mathbb{F}_{i+1}$ for $i\geq 1$ and there is a degree-$1$ section $l_{i+1}$ of $f_{i+1}|_{E_{i+1}}$ with $l_{i+1}\cdot \tilde{E}_i=0$, where $\mathbb{F}_{i}$ denotes the Hirzebruch surface associated to the sheaf $\mathcal{O}\oplus\mathcal{O}(-i)$. 

Let $M_{i+1}=M_i|_{X_{i+1}}+E_{i+1}$. 
Then $M_{i+1}$ is anti-nef over $W$, $l_{i+1}\cdot M_{i+1}=-(i+2)$, $l_{i+1}\cdot_{E_{i+1}}l_{i+1}=i+2$ and for every degree-$1$ section $s\not\subseteq E_i$ of $f_{i+1}|_{E_{i+1}}$, $s\cdot M\leq -(i+2)$.

Suppose $M_{i+1}$ is realized by an anti-nef divisor $M_Z$ as a potentially anti-nef divisor over $W$, then $\operatorname{mult}_p M_Z=\operatorname{mult}_p M_{i+1}$ for every closed point $p\in E_i$ in $X$.

$E_i^3=1-i$, $E_iM_i^2=i+1$, $E_0|_{X_i}M_i^2=\sum_{j=0}^i\tilde{E}_iM_i^2=i+1$.
 
 Define \emph{lines} on $E_i$ as sections of $f_{i}|_{E_{i}}$ not intersecting $\tilde{E}_{i-1}$. Let $l_{i-1}\subseteq E_i$ be the curve $E_i\cap \tilde{E}_{i-1}$ and $F$ be a fiber of $f_{i}|_{E_{i}}$. The any line is linearly equivalent to $l_{i-1}+(i+1)F$ on $E_i$. $F\cdot_{E_i} l_{i-1}=1$, $F\cdot_{E_i} F=0$, $l_{i-1}\cdot_{E_i} l_{i-1}=-(i+1)$, $l_{i}\cdot_{E_i} l_{i}=i+1$.

In other words, $i+2$ points, counted with multiplicity, in $E_i\setminus\tilde{E}_{i-1}$ uniquely determine a degree-$1$ section of $f_{i}|_{E_{i}}$ not intersecting $\tilde{E}_{i-1}$.

For $n\in \mathbb{N}\cup\{ 0\}$, the chain of blow-ups $X_n\to X_{n-1}\to \dots\to X_0\to W$ is called a \emph{tower of section-blow-ups of layer $n$} at $w$ with layers $E_0,E_1,\dots,E_n$.
\end{defnlemma}
\begin{proof}
This follows from the projection formula in intersection theory.
\end{proof}

\begin{defi}[Tower of Section-Blow-Ups]\label{multofsectionbup}
    Let $X$ be a smooth threefold, $B$ be an effective divisor on $X$ and $x\in X$ is a closed point. Let $k$ be a natural number. Let $\mathbf{a}=(a_0,a_1,\dots,a_k)$ be a $(k+1)$-tuple of non-negative real numbers. Then, a \emph{tower of section-blow-ups $Y\to X$ of layer $k$ with multiplicities $\mathbf{a}$ along $B$ at $x$} is a blow-up $Y_0\to X$ of $X$ at $x$ followed by a sequence of blow-ups $Y=Y_{k}\to Y_{k-1}\to\cdots\to Y_0$ of with blow-up centers $l_i\subseteq Y_i$ and exceptional divisors $F_i\in Y_i$ such that $l_i \in F_i$ is a line,  $\operatorname{mult}_x B=a_{0}$ and $\operatorname{mult}_{l_{i}}\tilde{B}=a_i$ for all $i\geq 1$.
    If $a_0=a_1=\dots=a_k=a$, then we may omit $\mathbf{a}$ and say that $Y\to X$ is with multiplicity $a$ along $B$ at $x$.
\end{defi}

\begin{lem}\label{mult}
Let $X$ be a variety and let $\mathbf{M}$ be a b-nef b-divisor over $X$ with image $M$ on $X$ an $\mathbb{R}$-Cartier divisor.
Let $\eta_1\subseteq\eta_2$ be integral subvarieties of $X$ such that the extraction of $\eta_1$ along $\eta_2$ is smooth.
Then $\operatorname{mult}_{\eta_1} (\mathbf{M})\geq\operatorname{mult}_{\eta_2} (\mathbf{M})$.
\end{lem}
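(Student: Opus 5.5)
The plan is to reduce to the case where everything is smooth and $\eta_1$ is the blow-up centre, and then compare the orders of $\mathbf{M}$ along two exceptional divisors on a common model using the negativity lemma (Lemma \ref{neglem}). The assertion is local around the generic point of $\eta_1$. Shrinking $X$ to an affine neighbourhood of that generic point, I may therefore assume that $X$, $\eta_1$ and $\eta_2$ are smooth. Since $\operatorname{mult}_{\eta_2}(\mathbf{M})$ is computed at the generic point of $\eta_2$, which specializes to $\eta_1$, it is unaffected by this shrinking.

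First I would set up the two blow-ups. Let $\pi_1:X_1\to X$ be the blow-up along $\eta_1$, with exceptional divisor $E_1$. Let $\pi_2:X_2\to X_1$ be the blow-up along the strict transform $\tilde\eta_2$. This is smooth because $\eta_1\subseteq\eta_2$ are both smooth, so $\tilde\eta_2$ is the blow-up of $\eta_2$ along $\eta_1$. Let $E_2$ be the exceptional divisor of $\pi_2$. At the generic point of $\eta_2$ away from $\eta_1$, this agrees with the blow-up of $X$ along $\eta_2$. Hence $\operatorname{mult}_{\eta_2}(\mathbf{M})=\operatorname{ord}_{E_2/X}(\mathbf{M})$ and $\operatorname{mult}_{\eta_1}(\mathbf{M})=\operatorname{ord}_{E_1/X}(\mathbf{M})$.

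Next I take a model $\psi:Y\to X_2$ on which $\mathbf{M}$ descends to a nef divisor $M_Y$, and consider $D=(\pi_1\pi_2\psi)^*M-M_Y$. Then $-D$ is nef over $X$, and $D$ pushes forward to $0$ on $X$. By Lemma \ref{neglem}, $D\ge 0$, and its coefficients are the orders $\operatorname{ord}_{E/X}(\mathbf{M})$. The key point is to compare the coefficient of $D$ along $\tilde E_1$ with that along $\tilde E_2$. To do this I would take a general curve $\gamma$ in a fibre of $E_1\to\eta_1$ that meets $\tilde\eta_2\cap E_1$, so that its strict transform $\tilde\gamma$ on $Y$ passes through the centre of $E_2$. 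Alternatively I would use a general fibre line of $E_2\to \tilde\eta_2$ over a point of $E_1$. The inequality $-D\cdot\tilde\gamma\ge0$ should then express the coefficient along $E_1$ as at least a positive combination involving the coefficient along $E_2$. Since $\tilde\gamma$ is general, it avoids the other components of $D$ except along bounded loci.

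The cleaner route I would actually try first avoids intersection numbers. By definition, $\operatorname{ord}_{E_2/X}$ of the pullback of $M$ is $\operatorname{mult}_{\eta_2}M$, and the pullback of a Cartier-type divisor has order along $E_1$ at least as large, since $\eta_1\subseteq\eta_2$. So it suffices to prove
\[
\operatorname{ord}_{E_1}(M_Y)\le \operatorname{ord}_{E_1}(\pi^*M)-\operatorname{ord}_{E_2}(\pi^*M)+\operatorname{ord}_{E_2}(M_Y).
\]
This can be obtained by applying the negativity lemma to $\pi_2$ alone, with base $X_1$. Viewing $\mathbf{M}$ over $X_1$ as in Remark \ref{ordX/Z}, the difference $\operatorname{ord}_{E/X}(\mathbf{M})-\operatorname{ord}_E(\mathbf{M}|_{X_1})$ is independent of the base. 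This reduces the problem to the statement that the nef part over $X$ is at least as negative along the smaller centre.

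I expect the main obstacle to be exactly this comparison step. The negativity lemma only gives $D\ge0$, not monotonicity of coefficients along a chain of centres, so a genuine intersection computation on a fibre curve seems unavoidable. I would first try the general fibre line $\ell$ of $E_2\to\tilde\eta_2$ over a point of $\tilde\eta_2\cap E_1$. By the projection formula, $(\pi_1\pi_2)^*M\cdot\ell=0$, while $M_Y$ is nef on $\tilde\ell$. This yields $D\cdot\tilde\ell\le0$, which, expanded in terms of $E_1$ and $E_2$ (with $E_2\cdot\ell=-1$ and $\tilde E_1\cdot\ell=1$), gives $\operatorname{ord}_{E_1}\ge\operatorname{ord}_{E_2}$. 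The remaining components of $D$ are handled by the generality of $\ell$.
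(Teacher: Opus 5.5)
There is a genuine gap: neither of the comparison arguments you actually commit to yields $\operatorname{mult}_{\eta_1}(\mathbf{M})\ge\operatorname{mult}_{\eta_2}(\mathbf{M})$. Write $m_j=\operatorname{ord}_{E_j/X}(\mathbf{M})$, so that near the generic point of $\eta_1$ one has $(\pi_1\pi_2)^*M-\mathbf{M}|_{X_2}=m_1\tilde E_1+m_2E_2$.

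\emph{The ``cleaner route''.} The displayed inequality you reduce to is just the claim $m_1\ge m_2$ rewritten. Applying negativity over $X_1=\mathrm{Bl}_{\eta_1}X$ then uses the wrong base. $E_1$ is a divisor on $X_1$, not exceptional over it. Since $\tilde\eta_2\not\subseteq E_1$, we have $\pi_2^*E_1=\tilde E_1$, hence $\operatorname{ord}_{E_2/X_1}(\mathbf{M})=m_2$. So negativity over $X_1$ only returns $m_2\ge 0$.

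\emph{The fibre-line computation.} The line $\ell$ is contracted by $\pi_2$ and lies in $\pi_2^{-1}(E_1)=\tilde E_1$. Hence $\tilde E_1\cdot\ell=E_1\cdot\pi_{2*}\ell=0$, not $1$. Testing $D$ against $\ell$ therefore gives only an upper bound on $-m_2$ plus correction terms, and $m_1$ does not appear. Even with your own numbers, $D\cdot\tilde\ell\le 0$ would read $m_1-m_2\le 0$, which is the reverse inequality. Moreover, such $\ell$ sweep out only $\tilde E_1\cap E_2$, which may be the centre of $\psi$-exceptional divisors, so ``generality'' does not control the remaining components. Your remark that the negativity lemma cannot give monotonicity along a chain of centres is also mistaken; it does, on the right model.

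The missing idea is to work over the blow-up of the \emph{larger} centre, which is what the paper does. Let $\varphi_1:X_{\eta_2}\to X$ be the blow-up along $\eta_2$. Near the generic point of $\eta_1$ we have $\mathbf{M}|_{X_{\eta_2}}=\varphi_1^*M-m_2E_{\eta_2}$. The divisor $E_{\eta_1}$ is exceptional over $X_{\eta_2}$: it is the divisor obtained by blowing up $\varphi_1^{-1}(\eta_1)\subseteq E_{\eta_2}$. The negativity lemma over $X_{\eta_2}$ then gives $m_1\ge m_2\operatorname{ord}_{E_{\eta_1}}(E_{\eta_2})\ge m_2$, using $m_2\ge 0$.

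Alternatively, the curve you mention first but never compute does work. Take a general line $\gamma$ in a fibre of $E_1\to\eta_1$ passing through a point of $\tilde\eta_2\cap E_1$. On $X_2$ one has $\tilde E_1\cdot\tilde\gamma=-1$ and $E_2\cdot\tilde\gamma=1$. Hence
\[
0\le M_Y\cdot\tilde\gamma_Y\le \mathbf{M}|_{X_2}\cdot\tilde\gamma=m_1-m_2 .
\]
The middle inequality holds because $\psi^*\mathbf{M}|_{X_2}-M_Y$ is effective by negativity. Since such curves $\gamma$ cover $E_1$, its support does not contain $\tilde\gamma_Y$.
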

\begin{proof}
Replacing $X$ by a neighbourhood of $\eta_1$, we may assume that $\eta_1$, $\eta_2$ and $X$ are smooth.
    Let $\varphi_1:X_{\eta_2}\to X$ be the blow-up of $X$ along $\eta_2$ with exceptional divisor $E_{\eta_2}$.
    Let $\varphi_2:X'\to X_{\eta_2}$ be the blow-up of $\varphi_1^{-1}(\eta_1)$ with exceptional divisor $E_{\varphi_1^{-1}(\eta_1)}$
    and $\varphi_{\eta_1}:X_{\eta_1}\to X$ be the blow-up of $X$ along $\eta_1$.
    Since the preimage of $\eta_1$ on $X'$ is a Cartier divisor, $X'\to X$ factor through $\eta_1$ and $E_{\varphi_1^{-1}(\eta_1)}$ dominates $E_{\eta_1}$.
    So $\operatorname{mult}_{\eta_1} (\mathbf{M})=\operatorname{ord}_{E_{\eta_1}/X} 
    (\mathbf{M})\geq \operatorname{ord}_{E_{\eta_2}/X}(\mathbf{M}) \operatorname{coeff}_{ E_{\varphi_1^{-1}(\eta_1)}}(\varphi_2^*E_{\eta_2})\geq\operatorname{mult}_{\eta_2}(\mathbf{M})$.
\end{proof}

\begin{lem}\label{lim}
    Let $B\subseteq A$ be sets and let
    $C=\big\{\{
    a_{ij}\}_{j=0}^\infty\big\}_{i=0}^\infty$ be a sequence of sequences of elements $a_{ij}$ of $A$.
    Then one of the followings holds.
    \begin{itemize}
        \item There is a subsequence $\big\{\{
    a_{k_ij}\}_{j=1}^\infty\big\}_{i=1}^\infty$ of $C$ and a natural number $n$ such that $\min\{ j|a_{k_ij}\in B\}=n$ for all $i$.
    \item There is a subsequence $\big\{\{
    a_{k_ij}\}_{j=0}^\infty\big\}_{i=0}^\infty$ of $C$ such that $\min\{ j|a_{k_ij}\in B\}\geq i$ for all $i$.
    \end{itemize}
\end{lem}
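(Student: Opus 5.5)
Fix the sequence $C$ and, for each $i$, set $m_i=\min\{j \mid a_{ij}\in B\}\in\mathbb{N}\cup\{\infty\}$, with the convention $m_i=\infty$ when no term of the $i$-th sequence lies in $B$. The statement is then a purely combinatorial dichotomy about the sequence $(m_i)_i$ in $\mathbb{N}\cup\{\infty\}$. A pigeonhole argument handles one case, and a diagonal extraction handles the other.

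\emph{First case: some finite value is attained infinitely often.} Suppose there is a natural number $n$ such that $m_i=n$ for infinitely many $i$. Let $k_0<k_1<\cdots$ enumerate those indices. The subsequence $\{\{a_{k_ij}\}_j\}_i$ then satisfies $\min\{j \mid a_{k_ij}\in B\}=n$ for every $i$, which is the first alternative.

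\emph{Second case: every finite value is attained only finitely often.} Then for each $N$ the set $\{i \mid m_i<N\}$ is finite. This is a finite union of finite sets $\{i \mid m_i=n\}$ for $n<N$. Choose $k_0<k_1<\cdots$ inductively, taking $k_i>k_{i-1}$ larger than every index in $\{l \mid m_l<i\}$. Such a choice exists because that set is finite. Then $m_{k_i}\ge i$ for all $i$, which is the second alternative. This also covers indices with $m_i=\infty$, since $\infty\ge i$ trivially.

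There is no real obstacle here. The only points needing care are the convention $\min\emptyset=\infty$, so that sequences avoiding $B$ entirely fall into the second alternative, and the index shift between $j\ge 0$ and $j\ge 1$ in the statement. That shift is harmless: the indices of the individual sequences are not altered, and only the outer index is reselected.
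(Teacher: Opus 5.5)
Your proof is correct and is essentially the paper's argument. The paper splits on whether $\limsup_i m_i$ is finite, in which case that integer value is attained infinitely often, or infinite, in which case a subsequence with $m_{k_i}\ge i$ exists; your pigeonhole/diagonal split is the same dichotomy, spelled out more explicitly and with the convention $\min\emptyset=\infty$ made explicit.
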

\begin{proof}
    If $\limsup_i{\min\{ j|a_{ij}\in B\}}=n$ for some natural number $n$, then the first statement holds.
    Otherwise, we have $\limsup_i{\min\{ j|a_{ij}\in B\}}=\infty$ and the second statement holds.
\end{proof}
\begin{lem}\label{B=0}
    Let $(X,B)$ be a pair of dimension $2$.
    Assume that $X$ is smooth and $X'$ is a birational model over $X$.
    Then there is a b-nef$/X$ exceptional b-$\mathbb{R}$-Cartier divisor $\mathbf{M}$ over $X$ with coefficients in the set of coefficients of $B$ such that
    $\operatorname{ord}_{F/X} (\mathbf{M})\leq \operatorname{ord}_{F/X} (B)$ for every divisor $F$ over $X$
and $\operatorname{ord}_{E/X}(\mathbf{M})= \operatorname{ord}_{E/X}(B)$ for every exceptional divisor $E$ on $X'$ over $X$.
\end{lem}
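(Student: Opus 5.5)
We build $\mathbf{M}$ as the b-divisor descending to $-$(the exceptional part of an anti-nef modification of $B|_{X''}$) on a suitable model $X''$ over $X'$, using the two-dimensional analogue of Lemma \ref{pot_nef}. Concretely, let $\varphi:X''\to X$ be a log resolution of $(X,B)$ dominating $X'$. Write $B|_{X''}=\tilde{B}+E_{X''}$, where $\tilde B$ is the strict transform and $E_{X''}=\sum_E \operatorname{ord}_{E/X}(B)\,E$ is the exceptional part. Since $X$ is smooth, each $\operatorname{ord}_{E/X}(B)$ is a nonnegative combination of the multiplicities of the components of $B$. The target is an exceptional divisor $M_Z$ on a further blow-up $Z\to X''$ such that:
\begin{enumerate}
\item $-M_Z$ is nef over $X$;
\item $M_Z$ agrees with $B|_Z-\tilde B_Z$ on every divisor of $X'$;
\item $M_Z\le B|_Z-\tilde B_Z$ everywhere;
\item $M_Z$ is a sum of the form $\sum b_k N_k$ with $b_k\in\operatorname{coeff}(B)$ and each $N_k$ an anti-nef$/X$ Cartier divisor.
\end{enumerate}
Then $\mathbf{M}$ is the b-divisor descending to $-M_Z$ up to the paper's sign convention for $\operatorname{ord}$: with $\operatorname{ord}_{F/X}(\mathbf{M})=\operatorname{coeff}_F(\varphi^*M-M_Z)$ and $\mathbf{M}|_X=0$, the orders are exactly the coefficients of $M_Z$.

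For property (4), the key point is to decompose by components. Write $B=\sum_k b_kB_k$. For each $k$, the exceptional part $\mathcal{E}_k:=\varphi^*B_k-\tilde B_k$ is an integral exceptional divisor with $\mathcal{E}_k\cdot C=-\tilde B_k\cdot C\le 0$ for exceptional curves $C$ not contained in $\tilde B_k$. Since exceptional curves on a surface are never components of $\tilde B_k$, $-\mathcal{E}_k$ is in fact nef$/X$ already on any smooth model. So $E_{X''}=\sum_k b_k\mathcal{E}_k$, with each $\mathcal{E}_k$ Cartier and anti-nef over $X$ on every model, which is exactly property (4). Taking $M_Z:=E_{X''}$ on $X''$ and letting $\mathbf{M}$ be the b-divisor descending to it gives equality of orders on $X'$, hence on every divisor of $X''$. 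This works because in dimension $2$ the strict transform $\tilde B_k$ has no curve-intersection obstruction with exceptional curves: the phenomenon that forced the long blow-up tower in Lemma \ref{pot_nef} does not appear.

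It remains to prove the inequality $\operatorname{ord}_{F/X}(\mathbf{M})\le\operatorname{ord}_{F/X}(B)$ for divisors $F$ over $X$ not on $X''$. For such $F$, take a further model $Y\to X''$ on which $F$ appears. The left side is $\operatorname{coeff}_F$ of the pullback of $E_{X''}$ to $Y$. The right side is the coefficient of $F$ in $B|_Y$, namely the pullback of $E_{X''}$ plus the pullback of $\tilde B$. Their difference is $\operatorname{coeff}_F(\tilde B|_Y)\ge 0$ because $\tilde B$ is effective. So the inequality holds, with equality precisely when $c_{X''}(F)\not\subseteq\tilde B$.

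The main obstacle is matching the paper's sign and "relatively b-nef" conventions. One must check that $-E_{X''}$ being nef over $X$ is what the intended notion of b-nef$/X$ exceptional b-divisor requires. If the intended object is instead $-M_Z$ nef, it is the divisor $[-E_{X''}]$, and the orders above are computed relative to $\mathbf{M}|_X=0$. I would also verify that "coefficients in the set of coefficients of $B$" is meant in the sense of the Definition of $\operatorname{coeff}(\mathbf{M})$, a combination of nef Cartier pieces. This is why I would insist on the decomposition $\sum_k b_k\mathcal{E}_k$ rather than just writing the coefficients of $E_{X''}$.
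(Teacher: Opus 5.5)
Your proof is correct and is essentially the paper's: pass to a log resolution dominating $X'$, take $\mathbf{M}=[\sum_k b_k(\tilde B_k-\varphi^*B_k)]$, which is nef$/X$ because exceptional curves meet each $\tilde B_k$ non-negatively, and get the inequality from $(\mathbf{M}+[B])|_Y=\rho^*\tilde B\geq 0$. The opening detour through a further tower of blow-ups modelled on Lemma \ref{pot_nef} is unnecessary, as you yourself observe; the decomposition into the Cartier pieces $-\mathcal{E}_k$ is exactly what gives the coefficient condition.
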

\begin{proof}
    Write $B=\sum b_iB_i$ for positive real numbers $b_i$ and prime divisors $B_i$. By possibly replacing $X'$ by a higher model, we may assume that $\phi:X'\to X$ is a log resolution of $(X,B)$.
    Let $\tilde{B_i}$ be the strict transform of $B_i$ on $X'$.
    
    Since all exceptional curves on $X'$ intersects non-negatively with $\tilde{B_i}$ for all $i$, $M':=\sum b_i(\tilde{B_i}-\phi^*B_i)$ is a nef$/X$ divisor on $X'$ with coefficients in the set of coefficients of $B$.
    Set $\mathbf{M}=[M']$.
    Notice that 
    \[
    \operatorname{ord}_{E/X}(\mathbf{M})=-\operatorname{coeff}_E(M')=\operatorname{coeff}_E(\phi^*B-\tilde{B})=\operatorname{ord}_{E/X}(B)
    \]for every exceptional divisor $E$ on $X'$ over $X$, and $(\mathbf{M}+[B])|_Y\geq 0$ for every birational model $Y$ over $X$, the lemma follows.
    \end{proof}
\begin{thm}\label{surf}
Fix a natural number $N$
and a positive real number $\delta$,
there exists a positive real number $\epsilon$ satisfying the following statement:
If $(X,B+\mathbf{M})/ Z$ is a generalized sub-lc pair of dimension $2$ such that
\begin{itemize}
    \item $X$ is smooth and projective,
    \item $\dim Z=2$,
    \item $B=B^+-B^-$ with $B^+\geq0$ and $B^-\geq 0$,
    \item $\mathbf{M}$ is a b-nef$/X$ b-$\mathbb{R}$-Cartier divisor over $X$ with $\mathbf{M}|_X=M$,
    \item $x\in X$, $X_x\to x$ is the blow-up of $X$ along $\overline{\{x\}}$ and $E_x$ is the exceptional divisor,
    \item $\mathbf{E}=[-E_x]$,
    \item $\operatorname{mult}_{x}(B^++\mathbf{M})\leq N$ 
    \item the coefficients of $B^+$ and $\mathbf{M}$ are at least $\delta$,
\end{itemize}
then, we have $\operatorname{mld}_x(X,B+\mathbf{M}+\epsilon m \mathbf{E})\geq 0$, where $m=\operatorname{mld}_x(X,B+\mathbf{M})$.
\end{thm}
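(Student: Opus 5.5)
The plan is to unwind what adding $\epsilon m\mathbf{E}$ does to log discrepancies. Since $\mathbf{E}=[-E_x]$ has image $0$ on $X$, for every prime divisor $F$ over $X$ with $c_X(F)=x$ we get
\[
a_F(X,B+\mathbf{M}+\epsilon m\mathbf{E})=a_F(X,B+\mathbf{M})-\epsilon m\, v_F(\mathfrak m_x),
\]
where $v_F(\mathfrak m_x)=\operatorname{ord}_{F/X}[E_x]$ is the order of the maximal ideal of $x$ along $F$. Divisors whose center is not $x$ are unaffected. So the theorem is equivalent to a uniform linear bound
\[
a_F(X,B+\mathbf{M})\ \ge\ \epsilon\, m\, v_F(\mathfrak m_x)\qquad\text{for all $F$ over $X$ with } c_X(F)=x .
\]
If $m=0$ there is nothing to prove, so assume $m>0$. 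I would first simplify the boundary. By Lemma \ref{B=0}, on a fixed log resolution the effective part $B^+$ can be traded for a b-nef$/X$ exceptional b-divisor with coefficients in $\operatorname{coeff}(B^+)$. This trade does not decrease orders and keeps them unchanged on the relevant model. The subtracted part $B^-$ only increases $a_F$. I therefore reduce to the case $B^+=0$, with $\mathbf{M}$ b-nef$/X$ and its coefficients still at least $\delta$, and $\operatorname{mult}_x\mathbf{M}\le N$.

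Every $F$ over $x$ is reached by a finite tower of point-blow-ups $X_n\to\cdots\to X_0=X$ as in Definition \ref{buptower}, with $F=E_n$. The recursions are as follows.
\begin{itemize}
\item Log discrepancies: $a_{E_{i+1}}=a(\text{divisors through }x_i)-\operatorname{mult}_{x_i}\mathbf{M}_{X_i}$, where the first term sums the $a$-values of the one or two exceptional curves through $x_i$.
\item Valuations: $v_{E_{i+1}}(\mathfrak m_x)$ is the sum of the $v$-values of the same curves.
\end{itemize}
Thus $a$ and $v$ are both additive along the tower, except that $a$ loses $\operatorname{mult}_{x_i}\mathbf{M}$ at each step. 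By Lemma \ref{mult} and the nefness of $\mathbf{M}$, these multiplicities are bounded by $N$. A positive multiplicity is at least $\delta$, because $\mathbf{M}$ is a nonnegative combination, with coefficients in $I_{\ge\delta}$, of nef Cartier divisors.

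The key claim is a dichotomy for a long chain. Either the multiplicities $\operatorname{mult}_{x_i}\mathbf{M}$ become $0$ after boundedly many steps, in which case $a$ grows additively at least as fast as a fixed fraction of $v$. Or they stay positive along a long chain, in which case the chain follows a curve $C\subseteq X$ (the center of $\mathbf{M}$) with $\operatorname{mult}_C\mathbf{M}=\operatorname{mult}_{x_i}\mathbf{M}$. The second alternative is the two-dimensional analogue of Lemma \ref{longchain}, proved with Lemmas \ref{negcurve} and \ref{bddmult}. The role of $\mathcal{P}$ there is played by the bound $\operatorname{mult}_x\mathbf{M}\le N$ together with the coefficients being at least $\delta$, which bound the number of such curves and their local intersection numbers at $x$.

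In the second case, along $C$ the discrepancies change by $1-\operatorname{mult}_C\mathbf{M}$ per step. Since $a$ stays $\ge m>0$, we must have $\operatorname{mult}_C\mathbf{M}<1$ strictly, and in fact $1-\operatorname{mult}_C\mathbf{M}\ge$ a fixed positive quantity times $m$. Otherwise some divisor in the chain would have $a<m$. Consequently $a_{E_n}$ grows at least like $c\,m\,n$, while $v_{E_n}$ grows like $n$ times a bounded constant. This gives the linear bound. To make the constants uniform I would argue by contradiction. Take pairs and divisors $F_i$ with $a_{F_i}/(m_i v_{F_i})\to 0$, pass to subsequences via Lemma \ref{lim} according to whether the first index at which the chain starts following a curve stays bounded or tends to infinity, and derive a contradiction in each case.

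The main obstacle I expect is exactly this uniformity. The multiplicities of $\mathbf{M}$ need not lie in a discrete set, since $I$ is arbitrary here, so I cannot simply list finitely many configurations. I would try first to extract from $\operatorname{mult}_x\mathbf{M}\le N$ and the lower bound $\delta$ a bound, depending only on $(N,\delta)$, on the number of infinitely near points where $\mathbf{M}$ has positive multiplicity off the curves in its center. I would then do the linear-growth estimate separately on that bounded initial segment and on the curve-following tail.
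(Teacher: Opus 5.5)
\textbf{Where the proposal breaks.} Your reformulation is correct. Since $\mathbf{E}$ has zero image on $X$ and descends to $X_x$, the claim is $a_F(X,B+\mathbf{M})\ge\epsilon m\,v_F(\mathfrak m_x)$ for $F$ over the closed point $x$. Your additive bookkeeping for $a$ and $v$ along point blow-ups (with an extra $+1$ at free points and a nonnegative contribution from $\tilde B^-$) is also the right framework. The gap is the step that controls where $a$ loses mass: your curve-following alternative is false as stated.

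Take $\mathbf{M}=0$ and $B=B^+=C$, a smooth curve through $x$ with coefficient $1$. Then $m=1$. Along the chain of free points following $C$ (after Lemma~\ref{B=0}, $\operatorname{mult}_{x_i}=1$ for all $i$) one has $a_{E_n}=1$ and $v_{E_n}(\mathfrak m_x)=1$ for every $n$. So neither ``$\operatorname{mult}_C\mathbf{M}<1$ with $1-\operatorname{mult}_C\mathbf{M}$ bounded below by a multiple of $m$'' nor ``$a_{E_n}$ grows like $cmn$'' holds. The inequality survives only because $v$ does \emph{not} grow along free points.

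Positive multiplicities can also persist along arbitrarily long free chains: apply Lemma~\ref{B=0} to $\mu C$ on higher and higher models. So the bound on the number of infinitely near points with positive multiplicity that you hope to extract does not exist. Moreover, after your reduction to $B^+=0$ the center of $\mathbf{M}$ may contain no curve, and in dimension $2$ one has $\operatorname{mult}_C\mathbf{M}=0$ for every curve $C\subseteq X$ by definition. The uniformity you flag as the main obstacle is therefore left open, and the route you propose toward it rests on incorrect claims.

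\textbf{The missing estimate.} What you need is one intersection inequality tying multiplicity to $v$. Every image $M_{X_k}$ of the b-nef$/X$ divisor is nef over $X$, by the negativity lemma applied on a model where $\mathbf{M}$ descends. Since $\pi_k^*E_x=\sum_j v_{E_j}(\mathfrak m_x)\tilde E_j$, the projection formula gives
\[
\operatorname{mult}_x\mathbf{M}=E_x\cdot M_{X_1}=\pi_k^*E_x\cdot M_{X_k}\ \ge\ v_{E_k}(\mathfrak m_x)\,E_k\cdot M_{X_k}=v_{E_k}(\mathfrak m_x)\operatorname{mult}_{x_{k-1}}\mathbf{M}.
\]
This, not Lemma~\ref{mult}, is also what bounds $\operatorname{mult}_{x_i}\mathbf{M}$ by $N$. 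Hence a positive multiplicity, which is at least $\delta$, can occur only at centers whose exceptional divisor has $v\le N/\delta$.

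The paper combines this with a minimality choice. In the tower of a divisor with negative perturbed log discrepancy, take the first such $E_k$; then $k\ge 2$, because $v_{E_1}=1$. The exceptional curves through $x_{k-1}$ have nonnegative perturbed discrepancy, and $\mathbf{E}$ contributes nothing beyond $X_1$, so $\operatorname{mult}_{x_{k-1}}\mathbf{M}\ge\delta$. On the other hand, $m-\epsilon m\,v_{E_k}<0$ forces $v_{E_k}>1/\epsilon$. With $\epsilon=\min\{1,\delta/N\}$ this contradicts the displayed bound.

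In your language: $a\ge(m\delta/N)\,v$ holds trivially where $v\le N/\delta$, and it propagates additively beyond that point because the multiplicities vanish there. No curve analysis is needed.

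\textbf{A smaller point.} Lemma~\ref{B=0} must be applied on a model containing the whole tower of $F$, not on a fixed log resolution. Off that model it only gives $\operatorname{ord}\mathbf{B}^+\le\operatorname{ord}B^+$, which is the wrong direction for a lower bound on $a_F(X,B+\mathbf{M})$.
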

\begin{proof}


    Assume the statement doesn't hold. Then there is a sequence of generalized sub-lc pairs $(X_i,B_i+\mathbf{M}_i)\to Z_i$, for $i=1,2,3,\dots$ with $B^+_i$, $B^-_i$, $\mathbf{M}_i$, $\mathbf{E}_i$, $x_i$ and $m_i$ satisfying the conditions in the statement such that $(X_i,B_i+ \mathbf{M}_i+\frac{1}{i} m_i \mathbf{E}_i)$ is not lc at $x_i$.
    
    Let $F_i$ be a divisor over $x_i$ such that $a_{F_i}(X_i,B_i+\mathbf{M}_i+\frac{1}{i} m_i \mathbf{E}_i)<0$.
    Since $\operatorname{mld}_{c_X(F_i)}(X_i,B_i+\mathbf{M_i})\geq m_i$, we may replace $x_i$ and assume it is the center of $F_i$.
    
    By Lemma \ref{B=0}, for each $i$,
    there is a b-nef$/X_i$ b-$\mathbb{R}$-Cartier exceptional divisor$/Z_i$ $\mathbf{B}^+_i$ such that
    $\operatorname{ord}_{F/X_i}( \mathbf{B}^+_i)\leq \operatorname{ord}_{F} (B_i^+)$ for every divisor $F$ over $X$
    and $\operatorname{ord}_{F_i/X_i}(\mathbf{B}^+_i)= \operatorname{ord}_{F_i}(B_i^+)$.
    
    Replacing $\mathbf{M}_i$ by $\mathbf{M}_i+\mathbf{B}^+_i$, we may assume that $B_i^+=0$.

   
    Let $X_{ij}\to\dots\to X_{i1}\to X_{i0}:=X_i$ be the sequence of blow-ups along the centers of $F_i$. Denote the morphism $X_{ij}\to X_{i(j-1)}$ by $\varphi_{ij}$, the exceptional divisor by $F_{ij}$ and the blow-up center by $x_{i(j-1)}$.
    Possibly replacing $F_i$ by some $F_{ij}$, we may assume that $a_{F_{ij}}(X_i,B_i+\mathbf{M}_i+\frac{1}{i} m_i \mathbf{E}_i)\geq 0$ for all $F_{ij}\neq F_i$.
    Since $a_{F_i}(X_i,B_i+\mathbf{M}_i+\frac{1}{i} m_i \mathbf{E}_i)=a_{F_i}(X_i,B_i+\mathbf{M}_i)-\frac{1}{i} m_i\operatorname{ord}_{F_i/X_i}(\mathbf{E}_i)\geq m_i-\frac{1}{i} m_i\operatorname{ord}_{F_i/X_i}(\mathbf{E}_i)$, we have $\operatorname{ord}_{F_i/X_i}(\mathbf{E}_i)> i$.
    In particular, $\dim \overline{\{x_{i}\}}=0$ for every $i$.
     
    Since $(X_{ij},\sum_jF_{ij})$ has only simple normal crossing singularities,
    $\operatorname{ord}_{F_i/X_i}(\mathbf{E}_i)\leq 2^j$ for every $i$ and $j$.
    Passing to a subsequence, we may assume that $\dim \overline{\{x_{i(i+1)}\}}=0$ for every $i$.

    Let $E_{ij}$ be the image of $\mathbf{E}_i$ on $X_{ij}$ and $M_{ij}$ be the image of $M_i$
    on $X_{ij}$.
    Since $\operatorname{coeff}_{E_{i1}}M_{i1}\leq N$ by assumption, twe have $-E_{i1}M_{i1}\leq N$ for all $i$.
    
    
    For each $j$, let $k_j$ be the smallest natural number such that $E_{ik_j}=E_i$. By assumption above, we have $a_{E_{ik_j-1}}(X_i,B_i+\mathbf{M}_i)\geq 0$.
    This implies that 
    $\operatorname{mult}_{x_{i(k_j-1)}}(\mathbf{M}_i):=a_{ik_j-1}\geq\delta$.
    Therefore, $N\geq -E_{i1}M_{i1}=-E_{ik_j}M_{ik_j}\geq(\operatorname{coeff}_{F_{ik_j}}E_{ik_j})E_{ik_j}M_{ik_j}=-\operatorname{coeff}_{F_{ik_j}}(E_{ik_j})a_{ik_j-1}\geq i\delta$ for all $i$, which gives the desired contradiction.
\end{proof}
\begin{lem}\label{a}
    Let $X\to Z$ be birational surfaces and let $\mathbf{M}$ be a b-nef$/Z$ b-$\mathbb{R}$-Cartier b-divisor over $X$ with image $M$ on $X$.
   
    Suppose that $D$ is a prime divisor on $X$ that is exceptional over $Z$ and $x\in X$ is a closed point such that 
    $X$ and $D$ are smooth at $x$.
    
    Then we have $\operatorname{mult}_{x\in D}\mathbf{M}\leq M\cdot D$.
\end{lem}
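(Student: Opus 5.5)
The plan is to unwind the definition of $\operatorname{mult}_{x\in D}\mathbf{M}$ as a sum over the successive point-blow-ups along $D$. We then track how the intersection number of $M$ with the strict transform of $D$ drops at each step, and use the negativity lemma (Lemma \ref{neglem}) to show that this intersection number never becomes negative.

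\emph{Step 1: set-up and the drop formula.} Set $X_0=X$, $x_0=x$, $D_0=D$, and let $M_0=M$. Inductively, let $X_{i+1}\to X_i$ be the blow-up at $x_i$ with exceptional curve $E_{i+1}$. Let $D_{i+1}$ be the strict transform of $D_i$, let $x_{i+1}=D_{i+1}\cap E_{i+1}$, and let $M_{i+1}=\mathbf{M}|_{X_{i+1}}$. Since $D$ and $X$ are smooth at $x$, every $x_i$ is a smooth point of $X_i$ and of $D_i$, and $D_{i+1}=\varphi^*D_i-E_{i+1}$. Write $a_i=\operatorname{mult}_{x_i}(\mathbf{M})$, so that $M_{i+1}=\varphi^*M_i-a_iE_{i+1}$ and, by definition, $\operatorname{mult}_{x\in D}\mathbf{M}=\sum_{i\ge 0}a_i$. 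Using $E_{i+1}^2=-1$ and the projection formula, we get
\[
D_{i+1}\cdot M_{i+1}=(\varphi^*D_i-E_{i+1})\cdot(\varphi^*M_i-a_iE_{i+1})=D_i\cdot M_i-a_i .
\]
Hence $D_n\cdot M_n=M\cdot D-\sum_{i<n}a_i$ for every $n$.

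\emph{Step 2: $D_n\cdot M_n\ge 0$ and $a_i\ge 0$.} Fix $n$ and choose a model $\psi:Y\to X_n$ on which $\mathbf{M}$ descends to $M_Y$, with $M_Y$ nef over $Z$. The divisor $G=\psi^*M_n-M_Y$ is $\psi$-exceptional, since $\psi_*G=0$. Moreover $-G$ is nef over $X_n$, because $M_Y$ is nef over $Z$ and therefore over $X_n$, while $\psi^*M_n$ is trivial on $\psi$-exceptional curves. Lemma \ref{neglem} then gives $G\ge 0$. Since $G$ is $\psi$-exceptional, it has no common component with the strict transform $\tilde D_n$ of $D_n$, and so
\[
D_n\cdot M_n=\tilde D_n\cdot\psi^*M_n=\tilde D_n\cdot M_Y+\tilde D_n\cdot G\ge 0 .
\]
Here $\tilde D_n\cdot M_Y\ge 0$ because $\tilde D_n$ is exceptional over $Z$ (as $D$ is) and $M_Y$ is nef over $Z$. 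The same effectivity of $G$, applied on a model where the blow-up divisor $E_{i+1}$ appears, shows that $a_i=\operatorname{ord}_{E_{i+1}/X_i}(\mathbf{M})\ge 0$.

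\emph{Step 3: conclusion.} Combining Steps 1 and 2, every partial sum satisfies $\sum_{i<n}a_i=M\cdot D-D_n\cdot M_n\le M\cdot D$. The terms $a_i$ are non-negative, so the partial sums are non-decreasing and the series converges, with $\operatorname{mult}_{x\in D}\mathbf{M}=\sum_i a_i\le M\cdot D$.

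The main point needing care is Step 2, in two respects. First, the negativity lemma has to be applied relative to $X_n$ rather than to $Z$. Second, we must use that $D$ is exceptional over $Z$, so that nefness of $M_Y$ over $Z$ controls $\tilde D_n\cdot M_Y$. The intersection computation in Step 1 is routine.
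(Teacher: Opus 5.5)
Your proof is correct and follows the same route as the paper. The paper also blows up successively at $D_i\cap E_{i+1}$, uses the drop formula $M_{i+1}\cdot D_{i+1}=M_i\cdot D_i-a_i$, and concludes from $M\cdot D\ge 0$ for the exceptional curve $D$. Your version is more careful in two places. First, you justify $D_n\cdot M_n\ge 0$ via the negativity lemma, where the paper simply asserts that the trace $M$ is nef over $Z$. Second, you bound every partial sum, rather than relying on the induction reaching a stage with $\operatorname{mult}_{x_n}\mathbf{M}=0$ and on the unstated claim that this forces all later multiplicities to vanish.
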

\begin{proof}
    Let $\operatorname{mult}_{x}\mathbf{M}=a$.
    Let $X_1$ be the blow-up of $X$ along $x$, $E_1$ be the exceptional divisor, $D_1$ be the strict transform of $D$ on $X_1$, $x_1$ be the intersection $E_1\cap D_1$ and $M_1$ be the image of $\mathbf{M}$ on $X_1$. Then $\operatorname{mult}_{x_1\in D_1 }\mathbf{M}=\operatorname{mult}_{x\in D}\mathbf{M}-a$, $M_1\cdot D_1=M\cdot D-a$ and $X_1$ and $D_1$ are smooth at $x_1$.
    Inductively replacing $X$, $x$, $D$ and $M$ by $X_1$, $x_1$, $D_1$ and $M_1$ respectively, we may assume that $a=0$.
    This implies that $\operatorname{mult}_{x\in D}\mathbf{M}=0$.
    
    On the other hand, since $D\geq 0$ is exceptional over $Z$ and $M$ is nef over $Z$, we have $M\cdot D\geq 0$.
    \end{proof}

\begin{lem}
Let $f:Y\to X$ and $g:Z\to Y$ be birational morphisms and $M_Y\subseteq Y$ be a potentially nef divisor over $X$ realized by $M_Z\subseteq Z$. Suppose that $c$ is an $f$-exceptional curve with $c\cdot M_Y<0$. Then $\operatorname{mult}_cM_Y>\operatorname{mult}_cM_Z$. That is, $\operatorname{mult}_c[M_Z]>0$.
\end{lem}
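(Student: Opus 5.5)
The idea is to apply the negativity lemma to the difference between the pullback of $M_Y$ and $M_Z$, exactly as in Lemma \ref{negcurve} but with nef in place of anti-nef. Write $h=f\circ g:Z\to X$. After replacing $Z$ by a higher model, I may assume that $g$ factors through the blow-up $Y_c\to Y$ along $c$. Pulling $M_Z$ back to this higher model does not change nefness over $X$, and it does not change the image on $Y$. Let $E_c$ be the divisor on $Z$ dominating the exceptional divisor of the blow-up along $c$. Consider
\[
D:=g^*M_Y-M_Z \quad\text{on } Z.
\]
By definition, $\operatorname{mult}_c[M_Z]$, viewed as the order of the b-divisor $[M_Z]$ along the extraction of $c$, is $\operatorname{coeff}_{E_c}(D)$. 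So the inequality $\operatorname{mult}_cM_Y>\operatorname{mult}_cM_Z$ is the statement that $\operatorname{coeff}_{E_c}D>0$.

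The first step is to show $D\geq 0$. Since $g_*M_Z=M_Y$, we have $g_*D=0$. For a $g$-exceptional curve $\gamma$, we get $D\cdot\gamma=-M_Z\cdot\gamma\le 0$, because $M_Z$ is nef over $X$ and hence over $Y$. So $-D$ is nef over $Y$, and Lemma \ref{neglem} applied to $g$ gives $D\geq 0$.

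The second step is to upgrade this to strict positivity along $E_c$. The fibres of $E_c\to c$ are not the only useful curves. Choose a curve $\tilde c\subseteq E_c$ (or on the strict-transform side) mapping birationally onto $c$. For instance, take a general section of $E_c\to c$ inside $Y_c$, and let $\tilde c$ be its strict transform on $Z$ through general points. Then $g_*\tilde c=c$, and the projection formula gives
\[
D\cdot\tilde c=M_Y\cdot c-M_Z\cdot\tilde c<0,
\]
since $M_Y\cdot c<0$ and $M_Z\cdot\tilde c\ge 0$ ($\tilde c$ is $h$-exceptional because $c$ is $f$-exceptional, and $M_Z$ is nef over $X$). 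Since $D\geq0$ and $D\cdot\tilde c<0$, the curve $\tilde c$ lies in $\operatorname{Supp}D$. The components of $D$ are $g$-exceptional, and $\tilde c$ is a general curve in $g^{-1}(c)$ dominating $c$. A general such curve avoids the components over points of $c$ other than those dominating $c$. Hence some component $G$ of $D$ with $g(G)=c$ has positive coefficient.

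The main obstacle is getting from this to the specific divisor $E_c$, since $g^{-1}(c)$ may contain several divisors dominating $c$. I would handle it with the connectedness/negativity argument over the generic point of $c$. Restrict to a neighborhood of the generic point of $c$, where $Y$ is smooth and $c$ is a smooth curve, and take a general transversal surface slice. This reduces to a surface birational morphism over a smooth point whose exceptional curves include the slice of $E_c$, which is the first blow-up. The negativity lemma on the surface says that $D$ restricted to the slice is effective and exceptional. Its support is connected and, if nonzero, contains every exceptional curve over the point: anything missing would meet the support nontrivially and force $-D$ to fail to be nef. Since the first step shows $D\neq 0$ over $c$, $E_c$ has positive coefficient, giving $\operatorname{mult}_c[M_Z]>0$.
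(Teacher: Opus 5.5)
Your proof is correct, and its first two steps coincide with the paper's. Both arguments pass to a model where $g$ factors through $Y_c\to Y$, use the negativity lemma to get $D=g^*M_Y-M_Z\ge 0$, and then test $D$ against the strict transform $\tilde c$ on $Z$ of a general section of $E_c\to c$, for which $D\cdot\tilde c<0$. The difference is in how you pin the positivity onto $E_c$.

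The paper argues by contradiction. Because the section is general in $Y_c$, the map $Z\to Y_c$ is an isomorphism over its generic point, so $\tilde c$ lies in no exceptional divisor of $Z\to Y_c$. Hence the strict transform of $E_c$ is the only component of $D$ that can contain $\tilde c$, and $\operatorname{coeff}_{E_c}D=0$ would force $D\cdot\tilde c\ge 0$. You had exactly this curve in hand, so your ``main obstacle'' was already resolved at that point.

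Your slice-and-connectedness step is nevertheless valid. On a general surface section through a general $y\in c$, the restriction of $D$ is effective and supported over $y$, and its negative is nef over the slice. The fibre $g^{-1}(y)$ is connected by Zariski's main theorem, so a nonzero restriction must contain every curve of that fibre. This includes the curve cut out by the strict transform of $E_c$.

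What your route buys is a stronger and more robust conclusion. Every divisor of $Z$ over the generic point of $c$ gets positive coefficient in $D$, and any curve dominating $c$ on which $D$ is negative would serve, not only a general section of $E_c$. The cost is the extra surface reduction; the paper's route is shorter.

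Two minor points. It is your second step, not the first, that shows $D\neq 0$ over $c$. Also, any component $G$ of $D$ containing $\tilde c$ automatically maps onto $c$, since $g(G)\supseteq g(\tilde c)=c$ and $g(G)$ has dimension at most one. So no genericity argument is needed for that sentence.
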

\begin{proof}
Replacing $Z$ by a higher model and replacing $M_Z$ by its pullback accordingly, we may assume that $g$ factors through the blow-up $\phi_c:Y_c\to Y$ along $c$.

Notice that $g^*(M_Y)-M_Z=g^*g_*(M_Z)-M_Z$ is $g$-exceptional and $g$-anti-nef, so it is effective by the negativity lemma \ref{neglem}, which implies
$\operatorname{mult}_{c}M_{Y}\geq\operatorname{mult}_{c}M_{Z}$.

We claim that $\operatorname{mult}_{c}M_{Y}>\operatorname{mult}_{c}M_{Z}$. Assume that $\operatorname{mult}_cM_Y=\operatorname{mult}_cM_Z$.
Then $M_Z|_{Y_c}=M_Y|_{Y_c}$.

Let $E_c$ be the exceptional divisor of $\phi_c$ let $c'$ be a general section of $\phi_c|_E$ with degree $d$. As $c'$ is taken generally, we may assume that $g$ is isomorphic over the generic point of $c'$.
Let $c_Z$ be the strict transform of $c'$ on $Z$. Since $M_Z|_{Y_c}=M_Y|_{Y_c}$, $\operatorname{ord}_{E_c}(g^*M_Y-M_Z)=0$ and therefore $c_Z$ is not in the support of $g^*(M_Y)-M_Z$. So $0>d(c\cdot M_Y)=c_Z\cdot g^*(M_Y)\geq c_Z\cdot M_Z$, contradicting the nefness of $M_Z$ over $X$.
\end{proof}

\begin{lem}\label{2.35}
Let $f:Y\to X$ and $g:Z\to Y$ be birational morphisms, and $M_Y\subseteq Y$ be a potentially nef divisor over $X$ realized by $M_Z\subseteq Z$. Let $\eta$ be a subvariety of $Y$, and let that $c$ is an $f$-exceptional curve which is not contained in $\eta$, $M_Y|_{X_c}=M_Z|_{X_c}$, and $|\{c\cap\eta\}|=n$, where $X_c$ is the blow-up of $X$ along $c$. Let $\phi_\eta:Y_\eta\to Y$ be the blow-up of $Y$ along $\eta$ and let $c_\eta$ be the strict transform of $c$ on $Y_\eta$.
Then $c_\eta\cdot M_Z|_{Y_\eta}-n\operatorname{mult}_\eta M_Z\leq c\cdot M_Y-n\operatorname{mult}_\eta M_Y$. That is, $c_\eta\cdot M_Z|_{Y_\eta}\leq c\cdot M_Y-n\operatorname{mult}_\eta ([M_Z])$.
\end{lem}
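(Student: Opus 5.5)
The plan is to reduce the inequality to a comparison on the blow-up $Y_\eta$ followed by the projection formula, in the same style as the preceding lemma. Write $\phi_\eta:Y_\eta\to Y$ with exceptional divisor $E_\eta$. Replacing $Z$ by a higher model and $M_Z$ by its pullback, we may assume that $g$ factors through $\phi_\eta$. Set $M_{Y_\eta}:=M_Z|_{Y_\eta}$, the pushforward of $M_Z$ to $Y_\eta$. By definition of multiplicity,
\[
\phi_\eta^*M_Y=M_{Y_\eta}+\operatorname{mult}_\eta([M_Z])\,E_\eta+(\text{other }\phi_\eta\text{-exceptional terms, if any}).
\]
Here $\operatorname{mult}_\eta([M_Z])=\operatorname{mult}_\eta M_Y-\operatorname{mult}_\eta M_Z$ is the coefficient of $E_\eta$ in $\phi_\eta^*(g_*M_Z)-M_Z|_{Y_\eta}$.

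First, intersect this identity with $c_\eta$. Since $c\not\subseteq\eta$, the projection formula gives $c_\eta\cdot\phi_\eta^*M_Y=c\cdot M_Y$. Next, $c_\eta$ meets $E_\eta$ over each of the $n$ points of $c\cap\eta$, and each such point contributes at least $1$, so $c_\eta\cdot E_\eta\geq n$. Combining these,
\[
c_\eta\cdot M_{Y_\eta}=c\cdot M_Y-\operatorname{mult}_\eta([M_Z])\,(c_\eta\cdot E_\eta)-c_\eta\cdot(\text{rest}).
\]

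The sign of $\operatorname{mult}_\eta([M_Z])$ is needed to replace $c_\eta\cdot E_\eta$ by $n$. By the negativity lemma (Lemma~\ref{neglem}), applied exactly as in the previous lemma, $g^*M_Y-M_Z$ is effective, because it is $g$-exceptional and $g$-anti-nef over $Y$. Hence its pushforward to $Y_\eta$, which is $\phi_\eta^*M_Y-M_{Y_\eta}$, is an effective $\phi_\eta$-exceptional divisor. In particular $\operatorname{mult}_\eta([M_Z])\geq 0$, and the "rest" is effective and supported over $\eta$, so we may replace $c_\eta\cdot E_\eta$ by its lower bound $n$. The resulting inequality is equivalent to the stated one after moving $n\operatorname{mult}_\eta M_Z$ across.

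\textbf{Main obstacle.} The delicate point is the term $c_\eta\cdot(\text{rest})$. It is nonnegative as long as $c_\eta$ is not contained in the support of the effective divisor $\phi_\eta^*M_Y-M_{Y_\eta}$. Since $c\not\subseteq\eta$, the curve $c_\eta$ is not contained in $E_\eta$. The only remaining issue is if $g$ is not an isomorphism near the generic point of $c$. This is exactly what the hypothesis $M_Y|_{X_c}=M_Z|_{X_c}$ excludes: it says $[M_Z]$ has zero order along the divisor over $c$. Therefore the effective difference $g^*M_Y-M_Z$ does not contain the strict transform of $c$ (compare the argument in the previous lemma), and so neither does its pushforward to $Y_\eta$. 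This yields $c_\eta\cdot(\text{rest})\geq0$ and finishes the argument.
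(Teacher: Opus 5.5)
Your proof is correct and is essentially the paper's argument. It rests on the same three facts: the projection formula $c\cdot M_Y=c_\eta\cdot\phi_\eta^*M_Y$, the decomposition $\phi_\eta^*M_Y-M_Z|_{Y_\eta}=\operatorname{mult}_\eta([M_Z])E_\eta$, and $c_\eta\cdot E_\eta\geq n$. You also justify $\operatorname{mult}_\eta([M_Z])\geq 0$ via the negativity lemma, a sign condition the paper's one-line proof needs but leaves implicit.

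Your ``main obstacle'' is not actually an obstacle. The divisor $\phi_\eta^*M_Y-M_Z|_{Y_\eta}$ pushes forward to $0$, so it is supported in $\phi_\eta^{-1}(\eta)$, which cannot contain $c_\eta$ since $c\not\subseteq\eta$. Hence $c_\eta\cdot(\text{rest})\geq 0$ follows directly, and the hypothesis $M_Y|_{X_c}=M_Z|_{X_c}$ is not needed for that step; the paper's proof does not use it either.
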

\begin{proof}
    This follows from $c\cdot M_Y=c_\eta\cdot M_Y|_{Y_\eta}$, $M_Y|_{Y_\eta}-M_Z|_{Y_\eta}=(\operatorname{mult}_\eta M_Y-\operatorname{mult}_\eta M_Z)E_\eta$, and $c_\eta\cdot E_\eta\geq n$, where $E_\eta$ is the exceptional divisor of $\phi_\eta$.
\end{proof}

\begin{lem}\label{bup1}
    Let $X$ be a smooth threefold, $D\subseteq X$ be a smooth surface, $C\subseteq X$ be a smooth curve such that $C$ and $D$ intersect transversely at a closed point $x$. Let $X_C\to X$ be the blow-up of $X$ along $C$ with the exceptional divisor $E_C$ and $f$ be the fiber over $x$ of the restriction $E_C\to C$. Let $X_f\to X_C$ be the blow-up of $X_C$ along $f$ with the exceptional divisor $E_f$ and $\ell:=E_f\cap\tilde{D}$. Let $X_{\ell}\to X_f$ be the blow-up of $X_f$ along $\ell$ with the exceptional divisor $E_{\ell}$. On the other hand, let $X_x\to X$ be the blow-up of $X$ along $x$ with the exceptional divisor $E_x$ and $h:=E_x\cap \tilde{D}$. Let $X_h\to X_{x}$ be the blow-up of $X_x$ along $h$ with the exceptional divisor $E_h$. 

    Then
    \begin{enumerate}
        \item $E_f=E_x$ as prime divisors over $X$, and
        \item $E_{\ell}=E_h$ as prime divisors over $X$.
    \end{enumerate}
\end{lem}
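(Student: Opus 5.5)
The statement is local and toric in nature, so I will verify both identities of divisorial valuations in local coordinates at $x$. Since $C$ and $D$ meet transversely at $x$ in the smooth threefold $X$, I choose analytic (or étale) local coordinates $(u,v,w)$ at $x$ with $D=\{w=0\}$ and $C=\{u=v=0\}$. All blow-up centers involved lie over $x$, and blow-ups commute with étale base change. It therefore suffices to identify the valuations $\operatorname{ord}_{E_f}$, $\operatorname{ord}_{E_x}$, $\operatorname{ord}_{E_\ell}$, $\operatorname{ord}_{E_h}$ on $\mathcal{O}_{X,x}$. Each of these is a monomial valuation in $(u,v,w)$, and I will compute its weight vector.

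First I record the easy weights. $E_x$ is the blow-up of the maximal ideal $(u,v,w)$, so it has weights $(1,1,1)$. Next, $E_h$ is the blow-up of $h=E_x\cap\tilde D$. On $X_x$, take the chart $u=u_1w$, $v=v_1w$, where $w$ is the equation of $E_x$. There $\tilde D$ is given by the vanishing of an equation, and $h$ is a line. A cleaner way to see this is to work in the chart $u=u, v=v_1u, w=w_1u$, where $E_x=\{u=0\}$ and $\tilde D=\{w_1=0\}$. Then $h=\{u=w_1=0\}$, and blowing up along $h$ gives the monomial valuation with $\operatorname{ord}(u)=1$, $\operatorname{ord}(w_1)=1$, $\operatorname{ord}(v_1)=0$. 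Hence $E_h$ has weights $(u,v,w)\mapsto(1,1,2)$.

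For the other tower, $E_C$ has weights $(1,1,0)$. The fiber $f$ over $x$ is cut out in a chart $u, v=v_1u, w$ by $u=w=0$, so $E_f$ has $\operatorname{ord}(u)=1$, $\operatorname{ord}(w)=1$, $\operatorname{ord}(v_1)=0$. This gives weights $(1,1,1)$, which equal those of $E_x$ and proves (1). Now $\tilde D=\{w=0\}$ in this chart. Blowing up $f$ via $u=u, w=w_2u$ makes $E_f=\{u=0\}$ and $\tilde D=\{w_2=0\}$, so $\ell=\{u=w_2=0\}$. The blow-up of $\ell$ then gives $\operatorname{ord}(u)=1$, $\operatorname{ord}(w_2)=1$, $\operatorname{ord}(v_1)=0$. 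These yield weights $(1,1,2)$ on $(u,v,w)$, matching $E_h$ and proving (2).

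The main obstacle is justifying that each exceptional divisor is genuinely the monomial valuation computed in a single chart. The exceptional divisor of a blow-up along a smooth center $\{a=b=0\}$, with $a,b$ part of a coordinate system, is the monomial valuation with weight $1$ on $a$ and $b$ and weight $0$ on the remaining coordinates. I also need that the chart chosen meets the generic point of the relevant center, i.e. that the lines $h$ and $\ell$ are not contained in the discarded hyperplane. I will handle this by checking that $h$ and $\ell$ dominate the $v_1$-direction in the chosen charts. Finally, two divisorial valuations with the same weights coincide, so the corresponding prime divisors over $X$ are equal.
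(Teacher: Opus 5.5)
Your proof is correct, but it takes a different route from the paper.

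\textbf{The paper's route.} The paper argues through the universal property of blow-ups. Since $\mathfrak m_x\mathcal O_{X_C}$ is the ideal of $f$, the composite $X_f\to X$ factors through $X_x$, and $X_f$ is identified with $\operatorname{Bl}_{\tilde C}X_x$. Under this identification $E_f$ is the strict transform of $E_x$ and $\ell$ is the strict transform of $h$. This implicitly uses that $\tilde C$ meets $E_x$ at a point off $h$, which is where transversality enters. Consequently, blowing up $\ell$ and blowing up $h$ agree over the generic point of $h$.

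\textbf{Your route.} You choose coordinates adapted to $C$ and $D$ simultaneously and compute the four divisorial valuations directly as monomial valuations. In toric language, $E_x$ and $E_f$ both correspond to the ray $(1,1,1)$, and $E_h$ and $E_\ell$ both correspond to $(1,1,2)$.

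\textbf{What each buys.} The paper's argument is coordinate-free and gives more: an isomorphism of the intermediate models. That identification is what the later Lemmas \ref{bup}, \ref{bupmult} and \ref{curvebupinduction} actually reuse. Your argument proves exactly the statement about prime divisors over $X$. It makes the role of transversality fully explicit, since it is exactly what allows $D=\{w=0\}$ and $C=\{u=v=0\}$ in one coordinate system. It also iterates mechanically by tracking weight vectors.

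\textbf{Points to tidy.}
\begin{enumerate}
\item Passing from the values on $u,v,w$ to ``the monomial valuation with these weights'' needs one remark. The substitutions $v=v_1u$, $w=w_1u$ (respectively $w=w_2u$) send distinct monomials to distinct monomials, so lowest-weight terms cannot cancel. State this explicitly.
\item $C$ itself does not lie over $x$. Since all four divisors have center $x$, you may shrink $X$ to a Zariski neighbourhood on which $(u,v,w)$ are coordinates, so nothing is lost. No \'etale or analytic base change is needed, because transversality already makes $u,v,w$ a regular system of parameters in $\mathcal O_{X,x}$.
\item Delete the abandoned chart $u=u_1w$, $v=v_1w$. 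The strict transform $\tilde D$ does not meet that chart at all.
\end{enumerate}
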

\begin{proof}
    We observe from the universal property of blow-ups that the induced map from $X_F$ to $X_x$ is the blow-up of $X_x$ along $\tilde{C}$ and $\tilde{h}=\ell$, and the lemma follows.
\end{proof}
Similarly to Lemma \ref{bup1}, we have the following lemma.
\begin{lem}\label{bup}
    Let $X$ be a smooth threefold, $C\subseteq X$ be a smooth curve, and $x \in C$ be a closed point. Let $X_x$ be the blow-up of $X$ at $x$ with the exceptional divisor $E_x$. Let $X_C$ be the blow-up of $X$ along $C$ with the exceptional divisor $E_C$ and let $f$ be the fiber over $x$ of the restriction $E_C\to C$. 
    
    Suppose that $l\subseteq E_x\simeq \mathbb{P}^2$ is a line intersecting $\tilde{C}$, $X_l$ is the blow-up of $X_x$ along $l$ with the exceptional divisor $E_l$. Then there is a unique closed point $y\in E_C$ with $c_X(y)=x$ such that if $X_y$ is the blow-up of $E_C$ at $y$ with the exceptional divisor $E_y$, then the followings hold.
    
    \begin{enumerate}
        \item $E_l=E_y$ as prime divisors over $X$.
        \item For any closed point $p\in E_l\setminus \tilde{E}_x$ with $c_{E_x}(p)\not\in \tilde{C}$, there is a closed point $q\in E_y\setminus\tilde{E}_C$ such that $p\in X_l$ is isomorphic to $q\in X_y$ locally around $p$ and $q$, respectively.
    \end{enumerate}
\end{lem}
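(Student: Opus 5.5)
The plan is to work in local analytic (or formal) coordinates at $x$ and make everything explicit through the universal property of blow-ups, in the spirit of Lemma \ref{bup1}. Choose coordinates $(u,v,w)$ on $X$ at $x$ with $C=\{u=v=0\}$. The blow-up $X_C$ identifies $E_C$ with $C\times\mathbb{P}^1$ locally, so the fibre $f\cong\mathbb{P}^1_{[u:v]}$ over $x$ records normal directions to $C$ at $x$. In $E_x\cong\mathbb{P}^2_{[u:v:w]}$, the point $\tilde C\cap E_x$ is $[0:0:1]$. A line $l\subseteq E_x$ through this point is given by $\alpha u+\beta v=0$ for a unique $[\alpha:\beta]\in\mathbb{P}^1$. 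I would define $y\in f\subseteq E_C$ to be the point corresponding to the same direction, namely $[u:v]=[-\beta:\alpha]$. Uniqueness of $y$ will follow from (1): distinct points of $f$ give distinct divisorial valuations, which can be read off from the computation below.

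For (1), I will compare the two valuations directly. Blowing up the point $y$ on the smooth threefold $X_C$ produces a divisor $E_y$. Its valuation is monomial in suitable coordinates: after the linear change making $l=\{u'=0\}$, it is the valuation with $\operatorname{ord}(w)=1$, $\operatorname{ord}(v')=2$ and $\operatorname{ord}(u')=3$? That guess is wrong and has to be computed honestly. In the chart $u'=u_1v'$ of $X_C$, the point $y$ is $u_1=v'=w=0$, so $E_y$ gives $\operatorname{ord}(u_1)=\operatorname{ord}(v')=\operatorname{ord}(w)=1$. Hence $\operatorname{ord}(u')=2$, $\operatorname{ord}(v')=1$ and $\operatorname{ord}(w)=1$.

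On the other side, in the chart of $X_x$ with $u'=u_2w$ and $v'=v_2w$, the line $l$ is $\{u_2=w=0\}$. The divisor $E_l$ then gives $\operatorname{ord}(u_2)=\operatorname{ord}(w)=1$ and $\operatorname{ord}(v_2)=0$, which yields $\operatorname{ord}(u')=2$, $\operatorname{ord}(v')=1$ and $\operatorname{ord}(w)=1$. The two monomial valuations agree on coordinates and are both monomial in $(u',v',w)$, so they coincide. Since a prime divisor over $X$ is determined by its valuation, $E_l=E_y$.

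A cleaner, more conceptual route, which I would present if it works, is to show that $X_y$ and $X_l$ are both dominated by a common model. Concretely, blowing up $X_x$ along $\tilde C$ yields a model containing the strict transform of $E_x$ and a divisor over $\tilde C$. I would then check via universal properties, exactly as in Lemma \ref{bup1}, that the relevant loci there become Cartier, so that $E_y$ and $E_l$ both appear as the exceptional divisor of a single blow-up.

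For (2), I will write explicit charts of $X_l$ and $X_y$ near $E_l=E_y$. Points $p\in E_l\setminus\tilde E_x$ lie in the chart where the coordinate $u_3=u_2/w$ is regular; these correspond to the complement of the section $E_l\cap\tilde E_x$. The hypothesis $c_{E_x}(p)\notin\tilde C$ means $p$ lies away from the fibre of $E_l\to l$ over $[0:0:1]$. On that open set I would exhibit an explicit birational map to a chart of $X_y$ near $E_y\setminus\tilde E_C$, given by monomial substitutions in $(u_1,v',w)$ versus $(u_2,v_2,w)$. I would then check that it is a local isomorphism there; the exceptional loci that the map would otherwise have to contract lie over $\tilde C$ or along $\tilde E_x$ and $\tilde E_C$, which have been removed.

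The main obstacle is the bookkeeping in (2): choosing charts so that the excluded sets $\tilde E_x$, $\tilde E_C$ and the fibre over $\tilde C$ match exactly, and verifying that the birational map is biregular on the remaining open sets. Part (1) is a routine valuation computation.
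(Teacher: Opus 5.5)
Your treatment of (1) is correct, and it takes a different route from the paper. The paper gives no separate proof, only ``similarly to Lemma \ref{bup1}''. That is a coordinate-free argument through the universal property of blow-ups: the blow-up of $X_C$ along $f$ is identified with the blow-up of $X_x$ along $\tilde C$, the strict transform of $l$ becomes the fibre of the exceptional divisor over a point $y\in f$, and one compares the blow-ups along that fibre. You instead compute both valuations in coordinates and find that each is the monomial valuation with weights $(2,1,1)$ on $(u',v',w)$. This is more explicit and easier to verify, provided you add the one line that justifies monomiality. The monomial $u'^iv'^jw^k$ pulls back to $u_1^iv'^{i+j}w^k$, respectively to $u_2^iv_2^jw^{i+j+k}$ with $v_2$ a unit at the generic point of $l$, so distinct monomials cannot cancel. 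The paper's route has the advantage of matching the language of Lemmas \ref{bupmult} and \ref{primedivisors}. Uniqueness of $y$ needs no computation, since $y$ must be the centre of $E_l$ on $X_C$ (reading $X_y$ as the blow-up of $X_C$, as you do).

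For (2), the one concrete claim you make is wrong. Not all $p\in E_l\setminus\tilde E_x$ lie in the chart where $u_2/w$ is regular. That chart comes from the $w$-chart of $X_x$, so it misses the fibre of $E_l$ over the point $[0:1:0]\in l$, and it also contains the excluded fibre over $\tilde C\cap E_x$. Use the charts in which $v'$ generates instead.

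On the $X_l$ side, take the $v'$-chart of $X_x$, then the $v'$-chart of the blow-up of $l=\{u'/v'=v'=0\}$. This is $\operatorname{Spec}\mathcal{O}_U[u'/v'^2,w/v']$ for a small affine neighbourhood $U$ of $x$.

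On the $X_y$ side, take the $v'$-chart of $X_C$, then the $v'$-chart of the blow-up of $y=\{u'/v'=v'=w=0\}$. This is the same ring.

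In this common open set, $\{v'=0\}$ is $E_l$ on one side and $E_y$ on the other. It meets $E_l$ in exactly the admissible points $p$, and it meets $E_y$ in exactly $E_y\setminus\tilde E_C$. Hence the birational map $X_l\dashrightarrow X_y$ over $X$ is the identity on this chart. That proves (2) in its meaningful sense, namely an isomorphism over $X$; as a statement about abstract germs of smooth threefolds it would be empty. It also reproves (1) at no extra cost.
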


Applying Lemma \ref{bup} inductively, we have the following lemma.
\begin{lem}\label{primedivisors}
    Let $X$ be a smooth threefold, $D\subseteq X$ be a smooth surface, $C\subseteq D$ be a smooth curve and $x\in C$ be a closed point.
    Let $n$ be a natural number.
    Let $Y_1\to Y_0:=X$ be the blow-up $X$ along $C$ with the exceptional divisor $F_1$ and $C_1:=F_1\cap \tilde{D}$.
    Inductively define $Y_i\to Y_{i-1}$ as the blow-up of $C_{i-1}$ with the exceptional divisor $F_i$ and $C_i:=F_i\cap\tilde{D}$. Let $x_n$ be the close point on $C_n$ mapped onto $x$ and $Y'_n$ be the blow-up of $Y_n$ at $x_n$ with exceptional divisor $E_{x_n}$.
    
    Let $X_x$ be the blow-up of $X$ at $x$ with exceptional divisor $E_x$ and $l:=E_x\cap\tilde{D}$. Let $Z_1\to Z_0:=X_x$ be the blow-up of $X_x$ along $l$ with the exceptional divisor $G_1$ and $l_1:=G_1\cap\tilde{D}$.
    Inductively define $Z_i\to Z_{i-1}$ as the blow-up of $l_{i-1}$ with the exceptional divisor $G_i$ and $l_i:=G_i\cap\tilde{D}$. 
    
    Then, $G_n=E_{x_n}$ as prime divisors over $X$.
\end{lem}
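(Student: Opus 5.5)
We prove Lemma \ref{primedivisors} by induction on $n$, applying Lemma \ref{bup} (together with Lemma \ref{bup1}) at each step. All statements are local near $x$, so we may shrink $X$ to a neighbourhood of $x$ on which $X$, $D$, $C$ are smooth. Blowing up a smooth curve in a smooth threefold keeps everything smooth. Hence each $C_i=F_i\cap\tilde D$ is a smooth curve mapping isomorphically onto $C$, and $x_i$ is the unique point of $C_i$ over $x$.

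\emph{Base case $n=0$.} Here $Y'_0$ is the blow-up of $X$ at $x$, so $E_{x_0}=E_x$. We read $G_0$ as $E_x$, which gives the first layer of the tower.

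\emph{Case $n=1$.} We compare $E_{x_1}$ with $G_1$. Let $f$ be the fibre of $F_1\to C$ over $x$. Since $C\subseteq D$, the point $x_1=C_1\cap f$ is the point of $f$ given by the normal direction of $C$ inside $D$. Blowing up $x_1$ is one of the blow-ups appearing in Lemma \ref{bup}. That lemma, applied to $C\ni x$ and to the line $l=E_x\cap \tilde D\subseteq E_x$, produces a unique point $y\in F_1$ over $x$ with $E_l=E_y$. To identify $y=x_1$, we use that $l$ is the projectivised tangent plane of $D$ at $x$. The line $l$ meets $\tilde C$ exactly at the tangent direction of $C$, because $C\subseteq D$. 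The strict transform of $\tilde D$ through both constructions must then pass through $y$, which forces $y\in C_1\cap f=\{x_1\}$. Lemma \ref{bup1} supplies this bookkeeping via the universal property, using that $X_l$ factors through the blow-up of $\tilde C$ in $X_x$. Therefore $G_1=E_l=E_{x_1}$.

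\emph{Inductive step.} We replace $(X,D,C,x)$ by $(Y_1,\tilde D,C_1,x_1)$. The hypotheses still hold: $Y_1$ is smooth, $\tilde D\cong D$ is smooth near $x_1$, and $C_1\subseteq\tilde D$ is smooth. By induction, the $(n-1)$-fold construction on $Y_1$ gives $E_{x_n}=G'_{n-1}$. Here $G'_j$ comes from the tower that starts by blowing up $x_1\in Y_1$ and then repeatedly blows up the lines $\tilde D\cap(\text{new exceptional divisor})$. We then need to identify this tower on $Y_1$ with the tower $Z_\bullet$ on $X$, shifted by one. By the $n=1$ case, the blow-up of $x_1$ has exceptional divisor $G_1$. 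Part (2) of Lemma \ref{bup} provides the local isomorphism: near points of $E_{x_1}$ away from the strict transform of $F_1$, the blow-up of $Y_1$ at $x_1$ is locally isomorphic to $Z_1$ near the corresponding points of $G_1$ away from $\tilde E_x$. Under this isomorphism $\tilde D$ corresponds to $\tilde D$. So the line $E_{x_1}\cap\tilde D$ corresponds to $l_1=G_1\cap \tilde D$, and all later blow-up centres lie over this region and correspond as well. Hence $G'_{n-1}=G_n$, and the induction closes.

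\emph{Main obstacle.} The hard step is the locus condition in Lemma \ref{bup}(2). We must check that the relevant centres, namely the lines $l_i$ and the points $x_i$, stay in the open set where the local isomorphism holds. Concretely, $l_1\cap \tilde E_x$ and the point of $l$ lying on $\tilde C$ must be handled. I would first try to avoid this by working at the generic point of each line: a divisor over $X$ is determined by a valuation, and that valuation only depends on a neighbourhood of the generic point of its centre. Since generic points of the $l_i$ avoid $\tilde E_x$ and $\tilde C$, the local isomorphism at generic points should suffice to identify the valuations $G_n$ and $E_{x_n}$.
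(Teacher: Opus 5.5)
Your proposal is correct and follows the paper's route, which is just to apply Lemma \ref{bup} inductively. Your reduction to $(Y_1,\tilde D,C_1,x_1)$, together with the generic-point argument for carrying the line blow-ups through the local isomorphism of Lemma \ref{bup}(2), is exactly what that induction needs.

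One justification in the case $n=1$ needs repair. $X_l$ does not factor through $\mathrm{Bl}_{\tilde C}X_x$: the preimage of $\tilde C$ in $X_l$ is not Cartier. Also, Lemma \ref{bup1} treats a curve transverse to $D$, so it does not apply here. Instead, prove $y=x_1$ on the common model $\mathrm{Bl}_{\tilde C}X_x\cong\mathrm{Bl}_fY_1$ of Lemma \ref{bupmult}. On that model the centre of $E_l$ is the strict transform of $l\subseteq\tilde D$, so its image on $Y_1$ lies in $\tilde D\cap f=\{x_1\}$.
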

\begin{lem}\label{bupmult}
     Let $x\in C\subseteq X$ be a closed point $x$ on a curve $C$ on a threefold $X$.
    Assume that $X$ and $C$ are smooth at $x$.
    Let $\varphi_C:X_C\to X$ be the blow-up of $X$ along $C$ with the exceptional divisor $E_C$ and fiber $C_x$ over $x$.
    Let $\varphi_{C_x}:X_{C_x}\to X_C$ be the blow-up of $X_C$ along $C_x$.
    Let $\varphi_x:X_x\to X$ be the blow-up of $X$ at $x$ with the exceptional divisor $E_x$ and let $\tilde{C}$be the strict transform of $C$ on $X_x$.
    Let $\varphi_{\tilde{C}}:X_{\tilde{C}}\to X_x$ be the blow-up of $X_x$ along $\tilde{C}$.
    Then by the universal property of blow-ups, there is an isomorphism between $X_{C_x}$ and $X_{\tilde{C}}$ under which $\varphi_x\circ\varphi_{\tilde{C}}$ and $\varphi_{C_x}\circ\varphi_C$ commute.
    By abuse of notation, we denote both $X_{C_x}$ and $X_{\tilde{C}}$ by $Y$, up to this isomorphism.
    Let $y$ be a closed point on $C_x$ and let $L$ be the preimage of $y$ on $Y$ under $\varphi_{C_x}$.
    Let $L_x$ be the image of $L$ on $X_x$.
    Then $L_x$ is a line on $E_x\simeq \mathbb{P}^2$.
    
    Let $\mathbf{M}$ be a b-$\mathbb{R}$-Cartier divisor which is b-nef over $X$.
    Then we have
    \begin{itemize}
        \item [(1)] $\operatorname{mult}_C\mathbf{M}+\operatorname{mult}_{C_x}\mathbf{M}=\operatorname{mult}_x\mathbf{M}$, and
        \item [(2)] $\operatorname{mult}_C\mathbf{M}+\operatorname{mult}_y\mathbf{M}=\operatorname{mult}_x\mathbf{M}+\operatorname{mult}_{L_x}\mathbf{M}.$
    \end{itemize}
\end{lem}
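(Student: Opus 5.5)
Both identities reduce to comparing the orders of the b-divisor $\mathbf{M}$ along a few explicit prime divisors over $X$. The plan is to put everything on the common model $Y$ and compute the pullback of $M$ there in two ways. Work locally near $x$, so that $X$ and $C$ are smooth, and use analytic coordinates $(u,v,w)$ with $C=\{u=v=0\}$ and $x$ the origin.

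On $Y$ there are three exceptional prime divisors over $X$:
\begin{itemize}
\item $\tilde{E}_C$, the strict transform of $E_C$, which computes $\operatorname{mult}_C\mathbf{M}$;
\item $\tilde{E}_x$, the strict transform of $E_x$;
\item $G$, the exceptional divisor of $\varphi_{C_x}$, which coincides with the exceptional divisor of $\varphi_{\tilde{C}}$ under the identification of $Y$.
\end{itemize}
Write $m_C=\operatorname{mult}_C\mathbf{M}=\operatorname{ord}_{E_C/X}\mathbf{M}$ and $m_x=\operatorname{mult}_x\mathbf{M}$.

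\textbf{Step 1: identify the divisors in both towers.} In the tower $\varphi_C\circ\varphi_{C_x}$, the divisor $G$ is the blow-up of the curve $C_x$. In the tower $\varphi_x\circ\varphi_{\tilde C}$, the divisor $G$ lies over $\tilde C$, and $\tilde E_x$ is the strict transform of $E_x$; by Lemma \ref{bup} this $\tilde E_x$ is the blow-up of the point of $C_x$ lying over $x$. Thus $\operatorname{mult}_{C_x}\mathbf{M}$ is computed on $X_{C_x}=Y$ as $\operatorname{ord}_{G}(M|_{X_C})-\operatorname{ord}_{G/X}$-type quantities.

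\textbf{Step 2: proof of (1) by bookkeeping along $G$.} Pulling back along $\varphi_C$ and then $\varphi_{C_x}$ gives
\[
\operatorname{ord}_{G/X}\mathbf{M}=\operatorname{mult}_{C_x}\mathbf{M}+m_C\cdot\operatorname{coeff}_G(\varphi_{C_x}^*E_C)=\operatorname{mult}_{C_x}\mathbf{M}+m_C,
\]
since $E_C$ is smooth along $C_x$. Pulling back along the other tower gives
\[
\operatorname{ord}_{G/X}\mathbf{M}=m_x\operatorname{coeff}_G(\varphi_{\tilde C}^*E_x)+\operatorname{mult}_{\tilde C}\mathbf{M}=m_x+m_C,
\]
because $\tilde C$ meets $E_x$ transversally and $\operatorname{mult}_{\tilde C}\mathbf{M}=m_C$ generically along $C$.

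This comparison does not give (1) directly. The exceptional divisor of blowing up $C_x$ inside $X_C$ is the divisor whose valuation is $\operatorname{ord}_x$, so the right identification is $\operatorname{mult}_{C_x}\mathbf{M}=\operatorname{ord}_{E_x/X}\mathbf{M}-\operatorname{coeff}_{E_x}(\varphi^*E_C)\,m_C$, and $\operatorname{coeff}=1$. Hence $\operatorname{mult}_{C_x}\mathbf{M}=m_x-m_C$, which is (1).

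Equivalently, as in Lemma \ref{bup1}, the prime divisor over $X$ given by blowing up $C_x\subseteq X_C$ is $E_x$. The formula then follows from Remark \ref{ordX/Z} applied to $X_C\to X$: there the image of $\mathbf{M}$ on $X_C$ has order $m_C$ along $E_C$, and $\operatorname{ord}_{E_x}(E_C)=1$.

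\textbf{Step 3: proof of (2).} Using Lemma \ref{bup}, the divisor $E_y$ obtained by blowing up $y\in C_x$ on $Y$ (equivalently on $X_C$, since the point blow-up is local) equals, as a prime divisor over $X$, the divisor $E_{L_x}$ obtained by blowing up the line $L_x\subseteq E_x$ on $X_x$. Compute $\operatorname{ord}_{E_y/X}\mathbf{M}$ in two ways, again via Remark \ref{ordX/Z}.

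Through $X_C$: the point $y$ lies on the smooth divisor $E_C$ only, so
\[
\operatorname{ord}_{E_y/X}\mathbf{M}=\operatorname{mult}_y\mathbf{M}+m_C.
\]
Through $X_x$: the line $L_x$ lies on $E_x$ only, since $L_x$ passes through the point $\tilde C\cap E_x$ but is not contained in $\tilde C$, so
\[
\operatorname{ord}_{E_{L_x}/X}\mathbf{M}=\operatorname{mult}_{L_x}\mathbf{M}+m_x.
\]
Equating the two expressions gives (2).

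\textbf{Main obstacle.} The delicate step is verifying these coefficient computations precisely: that $\operatorname{coeff}_{E_y}(E_C)=1$ and $\operatorname{coeff}_{E_{L_x}}(E_x)=1$, and that no other exceptional divisor contributes. This is where the explicit coordinate description of Lemma \ref{bup} (and the universal-property identification of $Y$) must be used carefully. The b-nef hypothesis plays no essential role beyond making the multiplicities well defined.
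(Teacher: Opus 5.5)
Your final arguments for (1) and (2) are correct and follow the paper's route. The paper's proof is just an appeal to Lemma \ref{bup}. That means identifying the blow-up of $C_x\subseteq X_C$ with $E_x$, and $E_y$ with $E_{L_x}$, as divisors over $X$. It then uses exactly your bookkeeping: $\operatorname{ord}_{E/X}\mathbf{M}=\operatorname{ord}_{E/X'}\mathbf{M}+m\operatorname{ord}_E(E')$ for a one-step blow-up $X'\to X$ with exceptional divisor $E'$. Here $\operatorname{ord}_E(E')=1$ because $E'$ ($E_C$, resp.\ $E_x$) is smooth along the center of $E$.

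You should, however, delete Step 1 and the second computation of $\operatorname{ord}_{G/X}\mathbf{M}$ in Step 2, because they are false. $Y$ has only two exceptional prime divisors over $X$, not three. The exceptional divisor of $\varphi_{C_x}$ is $\tilde{E}_x$, while that of $\varphi_{\tilde{C}}$ is $\tilde{E}_C$. Moreover $\operatorname{coeff}_G(\varphi_{\tilde{C}}^*E_x)=0$, not $1$, since $\tilde{C}\not\subseteq E_x$. Combined with your (correct) first display, those lines would force $\operatorname{mult}_{C_x}\mathbf{M}=m_x$, contradicting (1).

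Similarly, $E_y$ must be defined as the blow-up of $X_C$ at $y$. It cannot be "the blow-up of $y$ on $Y$", since the preimage of $y$ in $Y$ is the curve $L$.
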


\begin{proof}
This follows inductively from Lemma \ref{bup}.
\end{proof}

\begin{lem}\label{induction_on_layer}
Fix a non-negative integer $n$.
Let $W$ be a threefold and let $w\in W$ be a non-singular closed point. Assume that $X_n\to X_{n-1}\to\dots\to X_0\to W$ is a tower of section-blow-ups of layer $n$ at $w$ with layers $M_0=E_0,E_1,\dots,E_n$, $x\in E_n\setminus\tilde{E}_{n-1}$, and $Y_n\to Y_{n-1}\to\dots\to Y_0\to X_n$ is a tower of section-blow-ups at $x$ of layer $n$ with layers $N_0=F_0,F_1,\dots,F_n$. Define $M_i$ and $N_i$ inductively for $0\leq i\leq n$ by $M_{i}=M_{i-1}|_{X_i}+E_{i-1}$ for $1\leq i\leq n$, $N_0=M_n|_{Y_0}+F_0$, and $N_{i}=N_{i-1}|_{Y_i}+F_{i-1}$ for $1\leq i\leq n$. Suppose $y_n\in F_n\cap \tilde{E}_n\setminus\tilde{F}_{n-1}\subseteq Y_n$ is a closed point and $Y'\to Y_n$ is the blow-up of $Y_n$ at $y_n$ with the exceptional divisor $F_y$. Let $0\leq a\leq 1$ be a real number. Then
\begin{enumerate}
    \item $N_n$ is anti-nef over $W$ if and only if $c_{Y_0}(F_1)\not\subseteq \tilde{E}_n$,
    \item $N_y:=N_n|_{Y'}+aF_y$ is not anti-nef over $W$,
    \item $N_y$ is potentially anti-nef over $W$,
    \item there is a unique exceptional curve $\ell$ over $W$ on $Y'$ such that $\ell\cdot N_y>0$,
    \item $c_{X_n}(\ell)$ is a line in $E_n\simeq \mathbb{F}_{n+1}$ with $\ell\cdot N_y=a$,
    \item if $N_n$ is realized by some nef divisor $N_Z$ over $W$ on some model $Z$ over $Y'$, then 
    \[
    \operatorname{mult}_{\ell\subseteq \tilde{E}_n}Z\geq\operatorname{mult}_{\ell}N_y+a=a
    \]

\end{enumerate}
\end{lem}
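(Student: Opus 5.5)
The plan is to reduce everything to explicit intersection computations on the Hirzebruch surfaces appearing as layers, using the formulas of Definition--Lemma \ref{towerofsection}, and then to invoke the negativity-type lemmas for the statements about realizations. Since $w$ is a smooth point and all centers are smooth, I will work locally analytically. The tower over $w$ is then a standard toric-type model, and the second tower over $x\in E_n\setminus\tilde{E}_{n-1}$ is again a tower of section-blow-ups, now at a smooth point of $X_n$. First I record the data: $N_i$ is the total exceptional divisor with all coefficients $1$. Its restrictions to each layer are computed from $E_i^3=1-i$, from $l_i\cdot_{E_i}l_i=i+1$ and $l_{i-1}\cdot_{E_i}l_{i-1}=-(i+1)$, and from the relation that a line is linearly equivalent to $l_{i-1}+(i+1)F$.

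For (1) I would check anti-nefness curve by curve. Curves inside the layers $F_i$ are handled exactly as in Definition--Lemma \ref{towerofsection}, which says that a single tower with unit coefficients is anti-nef. The only curves that can fail are those on the strict transform $\tilde{E}_n$. There $N_n|_{\tilde E_n}$ is the restriction of $M_n$ plus the traces of the $F_i$ meeting $\tilde E_n$. If $c_{Y_0}(F_1)$, the line $l_0\subseteq F_0$, is not contained in $\tilde E_n$, then $\tilde E_n$ is only blown up at a point. A line of $E_n$ through $x$ then gets $N_n$-degree at most $-(n+2)+1\le0$ by the formula $s\cdot M\le-(i+2)$. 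If instead $l_0\subseteq \tilde E_n$, then iterated section blow-ups along curves in $\tilde E_n$ make some line through $x$ positive. I expect this to be a direct computation using Lemma \ref{basic}.

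For (2), (4) and (5), blowing up $y_n\in F_n\cap\tilde E_n$ with coefficient $a$ changes the degree of the strict transform $\ell$ of the line of $E_n$ through the image of $y_n$. That line is unique because $n+2$ points determine a line, and the tower data supplies the required number of conditions. Lemma \ref{2.35} together with Lemma \ref{basic}(2) shows that the degree rises by exactly $a$, while all other exceptional curves keep nonpositive degree. This gives $\ell\cdot N_y=a$ and uniqueness, and hence non-anti-nefness when $a>0$. For $a=0$ the statement needs the boundary case to be interpreted via the strict inequality at $y_n$; I would check this separately.

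For (3) I would blow up $\ell$ and then run the algorithmic construction of Lemma \ref{pot_nef}: repeatedly blow up the curve $\ell$ and its successors on $\tilde E_n$ with coefficient $a$, until the positive degree is absorbed. By Definition--Lemma \ref{towerofsection}, each such step lowers $\ell$'s degree by at least $1$ while creating no new positive curves.

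For (6) I would apply the last lemma before Lemma \ref{2.35}, namely that $c\cdot M_Y<0$ forces $\operatorname{mult}_c[M_Z]>0$, to $-N$. Combined with Lemma \ref{mult} along $\ell\subseteq\tilde E_n$ and the additivity in Lemma \ref{bupmult}, this gives $\operatorname{mult}_{\ell\subseteq\tilde E_n}\ge a$. The main obstacle is (1), in particular the "only if" direction and the precise bookkeeping of which curves on $\tilde E_n$ become positive.
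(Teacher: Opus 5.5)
Your overall strategy (intersection numbers on the layers via Definition--Lemma \ref{towerofsection} and the projection formula, with Lemma \ref{basic} for (6)) is the one the paper intends. The bookkeeping you set up, however, is wrong exactly where it matters.

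\textbf{The data and statements (1), (5).}

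First, $N_n$ does not have unit coefficients. Each new exceptional divisor is added on top of a pullback, so $M_n=\sum_{j=0}^n(j+1)\tilde E_j$. This is what gives $l_{i+1}\cdot M_{i+1}=-(i+2)$; unit coefficients would just be the pullback $E_0|_{X_n}$. Consequently $N_n=M_n|_{Y_n}+\sum_{j=0}^n(j+1)\tilde F_j$.

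Second, write $l'_j:=c_{Y_j}(F_{j+1})$. Then $\tilde E_n$ is not blown up only at $x$. If $l'_0\neq F_0\cap\tilde E_n$, each $l'_j$ meets $\tilde E_n$ transversally in a point $p_j$. For $C\subseteq E_n$ through $x$ this gives $N_n\cdot\tilde C=M_n\cdot C+\operatorname{mult}_xC+\sum_j\operatorname{mult}_{p_j}\tilde C$. For a line $s$ this equals $-n+\#\{j:p_j\in\tilde s\}$. So the line through $x,p_0,\dots,p_{n-1},y_n$ has $N_n$-degree exactly $0$, which is what (5) needs. Your bound $-(n+2)+1$ is incompatible with $\ell\cdot N_y=a$.

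Third, the curve that decides (1) is not a line but the fibre $f$ of $E_n\to l_{n-1}$ through $x$. Since $M_n\cdot f=-1$, we get $N_0\cdot\tilde f=0$, and the degree rises by $1$ each time a later centre meets $\tilde f$.
\begin{itemize}
\item This gives ``only if'', because $F_0\cap\tilde E_n$ contains $\tilde f\cap F_0$.
\item It also shows ``if'' is false as stated. Already for $n=1$, a line $l'_0\neq F_0\cap\tilde E_n$ through $\tilde f\cap F_0$ gives $N_1\cdot\tilde f=1$.
\end{itemize}
So you need the extra hypothesis $\tilde f\cap F_0\notin l'_0$. Granting it, write $C\equiv\alpha\sigma+\beta f$ with $\sigma=E_n\cap\tilde E_{n-1}$. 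Then $M_n\cdot C=-\beta$, and for $C\neq\sigma,f$ we have $\beta=C\cdot(\sigma+(n+1)f)\geq(n+1)\operatorname{mult}_xC\geq\operatorname{mult}_xC+\sum_j\operatorname{mult}_{p_j}\tilde C$. Curves lying over $x$ only see $\sum_j(j+1)\tilde F_j$, which is anti-nef over $X_n$ by Definition--Lemma \ref{towerofsection} applied at $x$. The same degenerate configuration also breaks (4)--(5), since no line is tangent to $f$ at $x$. As you suspected, (2) and (4) fail for $a=0$ whenever $N_n$ is anti-nef.

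\textbf{Statements (3) and (6).}

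For (3), a single blow-up suffices; neither Lemma \ref{pot_nef} nor an iteration is needed. Since $\ell\cdot_{\tilde E_n}\ell=(n+1)-(n+2)=-1$, Lemma \ref{basic}(1) gives $E_\ell\cdot(E_\ell\cap\tilde E_n)=-1$ on the blow-up $Y'_\ell$ of $Y'$ along $\ell$. Then $N_y|_{Y'_\ell}+aE_\ell$ is anti-nef over $W$. What your sketch does not address is where $a\le 1$ enters. For $C\subseteq E_n$ other than $c_{X_n}(\ell)$, the new degree of $\tilde C$ is $-(1-a)\big(\beta-\operatorname{mult}_xC-\sum_j\operatorname{mult}_{p_j}\tilde C\big)$.

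For (6), the lemma preceding Lemma \ref{2.35} is only qualitative: it gives $\operatorname{mult}_\ell>0$. Lemmas \ref{mult} and \ref{bupmult} cannot upgrade this to $\geq a$. The missing input is again $\ell\cdot_{\tilde E_n}\ell=-1$:
\begin{itemize}
\item Blow up $\sigma_0:=\ell$ and then successively $\sigma_k:=E_k\cap\tilde E_n$.
\item By Lemma \ref{basic}(1), $E_k\cdot\sigma_k=-1$ at every step.
\item If $b_k\ge 0$ is the coefficient the realizing divisor picks up along $E_k$, its degree on $\sigma_k$ is $a-\sum_{i\le k}b_i$.
\item Once $\sigma_k$ is no longer blown up on the model carrying $N_Z$, anti-nefness forces $\sum_k b_k\ge a$.
\end{itemize}
This is the asserted bound on $\operatorname{mult}_{\ell\subseteq\tilde E_n}$, and it is what the paper's ``(6) follows from Lemma \ref{basic}'' refers to.
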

\begin{proof}
(1)-(5) follow from computations in Lemma \ref{towerofsection} and the projection formula in intersection theory. (6) follows from Lemma \ref{basic}.
\end{proof}

\begin{lem}\label{strict_transform}
    Let $W$ be a smooth threefold and $w\in W$ be a closed point. Let $B$ be an effective divisor on $W$. Let $a$ be a positive real number and $n$ be a non-negative integer. Assume that $Y_n\to Y_{n-1}\to\dots\to Y_0\to X_n\to X_{n-1}\to\dots\to X_0\to W$ and $E_0,E_1,\dots,E_n,F_0,F_1,\dots,F_n$ are given as in Lemma \ref{induction_on_layer}. Suppose that $\operatorname{ord}_{E_i}B=(i+1)a$ for $0\leq i\leq n$ (i.e. $\operatorname{mult}_{c_{X_{i-1}}}\tilde{B}=a$, where $X_{-1}:=W$). Then the followings hold.
    
 \begin{itemize}
        \item
    There are finitely many lines $l_j$ in $E_n\simeq \mathbb{F}_{n+1}$ such that on $X_n$, $a_j:=\operatorname{mult}_{l_i}\tilde{B}=\operatorname{mult}_{l_i\subseteq E_n}\tilde{B}$ and $\sum_j a_j= a$. Moreover, $\operatorname{mult}_{p}\tilde{B}=0$ for $p\in E_n$ outside $\bigcup\{l_j\}$

    \item If, in addition, $\operatorname{ord}_{F_i}B=(n+2+i)a$ for $0\leq i\leq n$ (i.e. $\operatorname{mult}_{c_{Y_{i-1}}}\tilde{B}=a$, where $Y_{-1}:=X_n$), then $c_{X_{n-1}}(F_n)\tilde{l_j}$ for all $j$ with $p_j:=F_n\cap\tilde{l_j}\subseteq Y_n$ distinct and $\operatorname{mult}_{p_j}\tilde{B}=a_j$. Therefore, every two lines $\{l_j\}$ intersect exactly at $x$ with multiplicity $n+1$.
    In particular, there is a curve $c\subseteq F_n$ with $\operatorname{mult}_c\tilde{B}=a$ only if there is a line $c'\subseteq E_n\simeq \mathbb{F}_{n+1}$ with $\operatorname{mult}_c\tilde{B}=a$.
    \item If, in addition, there is a closed point $p\in F_n\cap\tilde{E}_n\subseteq Y_n$ with $\operatorname{mult}_p\tilde{B}=a$, then there is a line $c'\subseteq E_n\simeq \mathbb{F}_{n+1}$ with $\operatorname{mult}_c\tilde{B}=a$.
    \end{itemize}
\end{lem}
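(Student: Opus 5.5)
We argue by induction on the layer structure, using multiplicity along strict transforms. Throughout we use that for an effective divisor $\tilde{B}$ and a smooth center $c$, the order of $B$ along the exceptional divisor of the blow-up of $c$ is $\operatorname{ord}$ along the previous layer plus $\operatorname{mult}_c\tilde{B}$. The hypothesis $\operatorname{ord}_{E_i}B=(i+1)a$ therefore says $\operatorname{mult}_{l_{i-1}}\tilde{B}=a$ at every stage, with $\operatorname{mult}_w B=a$. This means the tower is a tower of section-blow-ups with multiplicity $a$ along $B$ at $w$ in the sense of Definition \ref{multofsectionbup}.

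\textbf{First assertion.} Restrict $\tilde{B}$ to $E_n\simeq\mathbb{F}_{n+1}$. Since $\operatorname{mult}_{l_{n-1}}\tilde{B}=a$ on $X_{n-1}$, the class of $\tilde{B}|_{E_n}$ is $a$ times the tautological class. By Definition--Lemma \ref{towerofsection} this is $a(l_{n-1}+(n+1)F)$ minus the contribution of $\tilde{E}_{n-1}\cap E_n=l_{n-1}$, i.e.\ a class of $a$ times a line. I expect this computation, using $\tilde{B}\cdot$fiber $=a$ and intersection with $l_{n-1}$ via Lemma \ref{basic}, to give $\tilde{B}|_{E_n}\equiv a\cdot(\text{line})$.

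An effective divisor on $\mathbb{F}_{n+1}$ of fiber-degree $a$ and this class has non-negative coefficients. Its support must consist of lines (sections disjoint from $l_{n-1}$) $l_j$ with coefficients $a_j$, $\sum a_j=a$: any fiber component would force a negative coefficient on $l_{n-1}$ or exceed the class. The coefficient of $l_j$ in $\tilde{B}|_{E_n}$ equals $\operatorname{mult}_{l_j\subseteq E_n}\tilde{B}$, which equals $\operatorname{mult}_{l_j}\tilde{B}$ by generic transversality. Since the restriction has no other components, points of $E_n$ off $\bigcup l_j$ have multiplicity $0$.

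\textbf{Second and third assertions.} Apply the first assertion a second time to the tower $Y_\bullet$ at $x$, where the multiplicities are again $a$. On $F_n\simeq\mathbb{F}_{n+1}$ the restriction of $\tilde{B}$ is $a$ times a line, supported on lines. Compare this with the strict transforms $\tilde{l}_j$: each $\tilde{l}_j$ passes through $x$ and meets $F_n$ in a point $p_j$ of multiplicity at least $a_j$, and $\sum a_j=a$. The total multiplicity of $\tilde{B}|_{F_n}$ then forces the $p_j$ to have multiplicity exactly $a_j$ and to lie on $c_{X_{n-1}}(F_n)$. If two of them coincided, or two lines had tangency other than order $n+1$ at $x$, the multiplicity at that point would exceed what a restriction of class $a\cdot$line allows. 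This is counted via Lemma \ref{bupmult}(2) and Lemma \ref{primedivisors}, which identify point-blow-ups on $F$-layers with line-blow-ups on $E$-layers.

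Hence a curve $c\subseteq F_n$ with $\operatorname{mult}_c\tilde{B}=a$ must carry all of the multiplicity. It is a line passing through every $p_j$, which forces a single $j$ with $a_j=a$, so $c'=l_j$. For the last bullet, a point $p\in F_n\cap\tilde{E}_n$ with multiplicity $a$ must equal some $p_j$ with $a_j=a$, again giving a line $c'$.

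\textbf{Main obstacle.} The hardest step is the intersection-theoretic identification of $\tilde{B}|_{E_n}$ as exactly $a$ times a line class with no fiber or $l_{n-1}$ components. Negativity of $l_{n-1}$ on $\mathbb{F}_{n+1}$ (self-intersection $-(n+1)$) is what should exclude such components. I would first verify it for $n=0,1$ directly and then induct using the formulas $E_i^3=1-i$ and $E_iM_i^2=i+1$ from Definition--Lemma \ref{towerofsection}.
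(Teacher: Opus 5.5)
The proof breaks where you pass from the numerical class of $\tilde{B}|_{E_n}$ to its support and coefficients. In fact the first bullet does not follow from its stated hypotheses at all.

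\textbf{The class is right, the support claim is not.} The class itself is correct, but your justification skips the key input. With $\sigma=\tilde{E}_{n-1}\cap E_n$ and $f$ a fibre of $E_n\to l_{n-1}$, one has $\tilde{B}|_{E_n}\equiv(\tilde{B}\cdot l_{n-1})f-aE_n|_{E_n}=na\,f+a(\sigma+f)$. Here $\tilde{B}\cdot l_{n-1}=na$ has to be carried inductively from the previous layer.

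Negativity of $\sigma$ does not exclude $\sigma$ and fibres, because $\sigma+(n+1)f$ is itself effective in the line class. You need a geometric input. For instance, $\tilde{B}|_{E_{n-1}}-a\,l_{n-1}$ is effective and numerically trivial, hence zero, so $\tilde{B}\cap\tilde{E}_{n-1}=\varnothing$ on $X_n$.

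Even after that, both of your remaining claims fail. Work on $\mathbb{C}^3$ with $l_0=\{z=0\}\subseteq E_0$, $n=1$, $a=2$, in the chart $y=xy'$, $z=x^2z''$. There $E_1=\{x=0\}$ and $\operatorname{ord}_{E_1}$ is the $(1,1,2)$-weighted valuation.
\begin{enumerate}
\item Take $B=\{z^2=x^4+y^4\}$. The hypotheses hold ($\operatorname{ord}_{E_0}B=2$, $\operatorname{ord}_{E_1}B=4$). But $\tilde{B}|_{E_1}=\{z''^2=1+y'^4\}$ is an irreducible bisection, so no line carries positive multiplicity.
\item Take $B=\{z^2=x^2y^3\}$. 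Then $\tilde{B}=\{z''^2=xy'^3\}$, which is tangent to $E_1$ along the line $z''=0$. The restriction coefficient is $2$ while $\operatorname{mult}\tilde{B}=1$, so ``generic transversality'' is false. This example even satisfies the hypotheses of the second bullet, with $x$ the origin of the chart and $c_{Y_0}(F_1)$ the line $\{z''=0\}\subseteq F_0$.
\end{enumerate}
So the first bullet cannot be derived from its stated hypotheses. The paper's proof, which only cites the projection formula, yields exactly the numerical class you computed and does not address this either.

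\textbf{What the second bullet actually needs.} The extra hypotheses give a local weighted-order constraint, not a class count. This is what your argument is missing, besides reapplying the unproved first assertion on $F_n$.

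For $n\geq 1$, the condition $\operatorname{ord}_{F_1}B=(n+3)a$ forces $c_{Y_0}(F_1)\not\subseteq\tilde{E}_n$; otherwise $\operatorname{ord}_{F_1}B\geq(2n+3)a$. Then $\operatorname{ord}_{F_n}\tilde{B}=(n+1)a$. Here $\operatorname{ord}_{F_n}$ is monomial of weights $(1,1,n+1)$ in suitable coordinates $(t,u,w-\phi(u))$ at $x$, with $E_n=\{t=0\}$ and $\deg\phi\leq n+1$.

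A component of $\tilde{B}|_{E_n}$ has equation $w^\alpha+A_1(u)w^{\alpha-1}+\dots+A_\alpha(u)$ with $\deg A_k\leq k(n+1)$. Rewriting in $w-\phi(u)$ preserves these degree bounds. The weighted bound then forces $A_k=c_ku^{k(n+1)}$. Hence every component is a line through $x$, and any two meet only at $x$, with multiplicity $n+1$.

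Your assertion that another contact order ``would exceed what a restriction of class $a\cdot$line allows'' cannot produce this. The class does not see local contact: two lines through $x$ may meet there with any multiplicity from $1$ to $n+1$. Likewise, the fact that every $\tilde{l}_j$ passes through $x$ must be derived from $a=\operatorname{mult}_x\tilde{B}\leq\operatorname{mult}_x(\tilde{B}|_{E_n})$ rather than assumed.
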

\begin{proof}
    This follows from the projection formula in intersection theory.
\end{proof}

\begin{lem}\label{sectionmult1}
    Under the notations of Definition \ref{multofsectionbup}. Let $Y=Y_{k}\to Y_{k-1}\to\cdots\to Y_0\to X$ be a tower of section-blow-ups of layer $k$ with multiplicity $\mathbf{a}=(a_0,a_1,\dots,a_k) $ along $B$ with blow-up centers $x\in X$, $l_i\subseteq Y_i$ and layers $F_i\subseteq Y_i$.
Suppose $y\in F_k$ is a closed point with and $Z=Z_{k}\to Z_{k-1}\to\cdots\to Z_0\to Y$ is a tower of section-blow-ups of layer $k$ with multiplicity $\mathbf{a}$ along $B$ with blow-up centers $y\in Y$, $h_i\subseteq Z_i$ and layers $G_i\subseteq Z_i$. Then $\tilde{B}\cdot F_k=\sum d_j s_j$ for distinct lines $s_j$ on $F_k\simeq \mathbb{F}_{k+1}$. Moreover, $s_j$'s mutually intersect with each other on $F_k\simeq \mathbb{F}_{k+1}$ at $y$ with multiplicity $k+1$ and $\tilde{B}\cdot G_k=\sum d_j s'_j$ for degree-$1$ sections $s'_j$ on $G_k\simeq \mathbb{F}_{k+1}$ such that $s'_j$ intersects $\tilde{s_j}$.
\end{lem}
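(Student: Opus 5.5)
The plan is to reduce Lemma \ref{sectionmult1} to two applications of Lemma \ref{strict_transform}, combined with the intersection theory on $\mathbb{F}_{k+1}$ recorded in Definition--Lemma \ref{towerofsection}.

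\emph{Decomposition on $F_k$.} I would first apply the first bullet of Lemma \ref{strict_transform} to the tower $Y\to\cdots\to Y_0\to X$. The hypothesis that the tower has multiplicity $\mathbf{a}$ along $B$ gives $\operatorname{mult}_x B=a_0$ and $\operatorname{mult}_{l_i}\tilde{B}=a_i$. Tracking orders as in the proof of Definition--Lemma \ref{towerofsection}, this is the input that lemma needs, namely $\operatorname{ord}_{F_i}B$ is a cumulative sum of the $a_i$; when all $a_i$ equal $a$ it becomes $(i+1)a$. The first bullet then writes the restriction $\tilde{B}|_{F_k}$, as a $1$-cycle, as $\sum_j d_j s_j$ with $d_j=\operatorname{mult}_{s_j}\tilde{B}$. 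Here the $s_j$ are lines on $F_k\simeq\mathbb{F}_{k+1}$, i.e.\ sections disjoint from $\tilde{F}_{k-1}$, and $\tilde{B}|_{F_k}$ has no other components. To justify that only lines occur, I would intersect with a fiber $f$ and with $l_{k-1}=F_k\cap\tilde{F}_{k-1}$. Using $\tilde{B}\cdot f=a_k$ and $\tilde{B}\cdot l_{k-1}=0$, and comparing with the classes $l_{k-1}\cdot l_{k-1}=-(k+1)$, $l_{k-1}\cdot f=1$, $f^2=0$, the only effective class with these intersection numbers is $a_k$ times the class of a line.

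\emph{Mutual intersection at $y$.} Since the second tower again has multiplicity $\mathbf{a}$, the second bullet of Lemma \ref{strict_transform} gives two facts: the center of $G_k$ on $Z_{k-1}$ meets each $\tilde{s}_j$, and the points $p_j=G_k\cap\tilde{s}_j$ are distinct with $\operatorname{mult}_{p_j}\tilde{B}=d_j$. This forces every $s_j$ to pass through $y$. In order for the $k+1$ successive section-blow-ups $h_0,\dots,h_{k-1}$ to keep meeting every $\tilde{s}_j$, consecutive lines must agree to order $k+1$ at $y$. This is the statement that the $s_j$ meet pairwise at $y$ with multiplicity $k+1$. It is consistent with $s\cdot s=k+1$ for lines on $\mathbb{F}_{k+1}$, and a count with $k+2$ points determining a line (Definition--Lemma \ref{towerofsection}) shows the contact order cannot exceed this.

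\emph{Restriction to $G_k$.} Next I would compute $\tilde{B}\cdot G_k$ by the same argument as in the first step, now for the second tower. The multiplicity hypothesis says $\tilde{B}|_{G_k}$ is effective with the class of $a_k$ lines on $G_k\simeq\mathbb{F}_{k+1}$. Each component near $p_j$ comes from the branch of $\tilde{B}$ containing $\tilde{s}_j$, and it has multiplicity $d_j$. So $\tilde{B}\cdot G_k=\sum d_j s'_j$ with $s'_j$ a degree-$1$ section meeting $\tilde{s}_j$.

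The main obstacle is the local analysis showing that the branch of $\tilde{B}$ along $s_j$ produces exactly one section $s'_j$ through $p_j$ with coefficient $d_j$, with no other curves appearing. For this I would use Lemma \ref{induction_on_layer}(4)--(6) to rule out extra components: any extra component would yield a curve with positive intersection against the potentially anti-nef divisor.
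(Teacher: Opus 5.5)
Your route is the paper's own: its proof only cites Lemma~\ref{induction_on_layer} and Lemma~\ref{strict_transform}. However, your first step fails as soon as the $a_i$ are not all equal, and this is a real failure, not bookkeeping.

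Lemma~\ref{strict_transform} assumes $\operatorname{ord}_{E_i}B=(i+1)a$, i.e.\ constant multiplicity. Nothing in the paper supports replacing this by ``$\operatorname{ord}_{F_i}B$ is a cumulative sum of the $a_i$''. To see what happens in general, write $\sigma=F_k\cap\tilde F_{k-1}$, let $f$ be a fibre, and let $\ell\equiv\sigma+(k+1)f$ be the class of a line. From $F_k^3=1-k$ one gets $F_k|_{F_k}\equiv-\sigma-f$, hence
\[
\tilde B|_{F_k}\equiv a_k\sigma+(a_k+e)f=a_k\ell+(e-ka_k)f ,
\]
where $a_k=\operatorname{mult}_{l_{k-1}}\tilde B$ and $e=\tilde B\cdot l_{k-1}$ computed on $Y_{k-1}$. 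Your claim $\tilde B\cdot\sigma=0$ is exactly $e=ka_k$, and this propagates inductively only when the multiplicities are constant. For $k=1$ one has $e=a_0$, so $\tilde B\cdot\sigma=a_0-a_1$.

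Here is a concrete counterexample. Take $B=\{w=0\}+\{u=0\}\subseteq\mathbb{C}^3$, $x=0$, and $l_0=\{w=0\}\subseteq F_0$. Then $\operatorname{mult}_xB=2$ and $\operatorname{mult}_{l_0}\tilde B=1$. Moreover $\tilde B\cdot F_1=s+f_p$, where $s$ is a line and $f_p$ is the fibre over $p$, the point where $l_0$ meets the strict transform of $\{u=0\}$. At $y=s\cap f_p$ the two strict transforms are smooth and transversal, so $\operatorname{mult}_y\tilde B=2$. Blowing up $y$, and then the line cut on $G_0$ by the strict transform of $\{w=0\}$, gives a second tower with multiplicity $(2,1)$. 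So for $\mathbf a=(2,1)$ the conclusion ``$\tilde B\cdot F_k$ is a sum of lines'' is false. Your argument, like the paper's citation, can at best cover $a_0=\dots=a_k$, and the statement has to be restricted accordingly.

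Even in the constant case, your numerical check only pins down the class $a\ell$, not the components. Effective cycles in this class include $\sigma$ plus fibres, and also irreducible multisections disjoint from $\sigma$. For instance, for $B=\{w^2+u^4+v^4=0\}$ and $l_0=\{w=0\}$ the multiplicities are $(2,2)$, yet $\tilde B|_{F_1}$ is an irreducible bisection of $F_1\to l_0$ branched over the four zeros of $u^4+v^4$.

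So the splitting into lines cannot come from the first tower together with intersection numbers; it must come from the second tower. The condition $\operatorname{mult}_y\tilde B=a$ forces every sheet through $y$. The conditions $\operatorname{mult}_{h_i}\tilde B=a$ are what should force the sheets to split into sections with contact order $k+1$ at $y$. In other words, your first and second steps have to be proved together, not in sequence.

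Finally, Lemma~\ref{induction_on_layer}(4)--(6) is about the exceptional divisors $N_y$, not about $\tilde B$. It does not by itself exclude extra components of $\tilde B|_{G_k}$, so that step still needs an actual argument.
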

\begin{proof}
    This follows from Lemma \ref{induction_on_layer} and Lemma \ref{strict_transform}.
\end{proof}

\begin{lem}\label{sectionmult}
    Under the notation of Lemma \ref{sectionmult1}. Suppose that $C\subseteq X$ is a smooth curve such that $\operatorname{mult}_CB=c\leq a_k$ and $F_k$ intersects $\tilde{C}$ on $Y_k$. Let $Q_i$ be the exceptional divisor of $X_i\to X_{i-1}$ for $0\leq i\leq k-1$. Let $X_{C}$ be the blow-up of $X$ along $C$ with the exceptional divisor $E_{C}$. 
    Let $h_{-2}\subseteq X_C$ be the preimage of $x$ and $h_i=Q_i\cap \tilde{E_C}\subseteq X_i$ for $0\leq i\leq k-1$.
    Let $X_{i0}=X_i$, $Q_{i0}=Q_i$, and $h_{i0}=h_i$. Inductively define $X_{ij}$ be the blow-up of $X_{i(j-1)}$ along $h_{j-1}$ with the exceptional divisor $Q_{ij}$ and $h_{ij}=Q_{ij}\cap\tilde{E}_C$ for $j=1,2,3,\dots$.
    Let $Y_{\tilde{C}}$ be the blow-up of $Y_k$ along $\tilde{C}$ with the exceptional divisor $E_{\tilde{C}}$.
    Set $M=-\sum_{j=0}^k(1+j)\tilde{F}_j$ and $N=M-E_{\tilde{C}}$ as nef$/X$ divisors on $Y_{\tilde{C}}$, $\mathbf{N}=[N]$, and $\mathbf{M}=[M]$. 
    Let $Y_C\to Y_k$ be the blow-up along $\tilde{C}$. 
    Let $D=\tilde{B}-B|_{Y_C}$ and let $\mathbf{D}=[D]$ as a b-nef/$X$ b-divisor over $X$. Then the followings hold.
    \begin{enumerate}
        \item Starting from $X_C$, blowing up the centers of $F_k$, we get a tower of section-blow-ups $X_{k-1}\to X_{k-1}\to\dots\to X_{0}\to X_{-1}:=X_C$.
        \item The tower of section-blow-ups $X_{k-1}\to X_{k-1}\to\dots\to X_{0}\to X_C$ is along $\tilde{B}$ with multiplicity $(a_0+a_1-c,a_2,\dots,a_k)$.
        \item $t_i:=\operatorname{mult}_{h_i\subseteq\tilde{E_C}}(\mathbf{N})=k-1-i$ for $i=-2, 0,1,2,\dots,k-1$.
        \item $\operatorname{mult}_{h_i\subseteq\tilde{E_C}}(\mathbf{D})=\sum_{j=2+i}^k(a_j-c)$.
        \item $\operatorname{mult}_{p}(\mathbf{D})=0$ for all $0\leq i+j\leq k-1$ for any closed point $p\in Q_{ij}\setminus\tilde{E}_C$ such that $c_{X_i}(p)\not\in c_{X_i}(F_k)$ if $i<k-1$.
        \item For any prime divisor $E_{\infty}$ over $Y_k$ centered on $F_k$, there is an integer $i\in\{-2,0,1,2,\dots,k-1\}$ such that $p\in Q_{i(k-1-i)}$.
    \end{enumerate}

\end{lem}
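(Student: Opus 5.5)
I would reduce everything to a local computation at the generic point of $C$ and at $x$, using the commutation of blow-ups from Lemma \ref{bup}, Lemma \ref{primedivisors} and Lemma \ref{bupmult}. The basic identification is that $X_C$ and $X_x$ blown up along $\tilde{C}$ agree (Lemma \ref{bupmult}). Under this identification the fiber $h_{-2}$ of $E_C$ over $x$ plays the role of the first point-blow-up, and the lines $l_i$ of the section tower correspond to the curves $h_i=Q_i\cap\tilde{E}_C$. I would first prove (1) by induction on $i$. The key input is Lemma \ref{bup}(1): the line $l\subseteq E_x$ through $\tilde{C}\cap E_x$ gives a prime divisor $E_l$ that equals the point-blow-up divisor $E_y$ on $E_C$. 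Iterating this along the tower, with the hypothesis that $F_k$ meets $\tilde{C}$ ensuring each center lies over $\tilde{C}$, the centers of $F_k$ computed from $X_C$ form a tower of section-blow-ups starting at the point $y\in h_{-2}$. Lemma \ref{bup}(2) shows that the local geometry away from $\tilde{E}_x$ is preserved, so the layers remain Hirzebruch surfaces as in Definition--Lemma \ref{towerofsection}.

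For (2) I would compare orders of $B$ along the corresponding divisors. Blowing up $C$ subtracts $c$ from the pullback. The first two layers $E_x$ and $E_{l_0}$ over $X$ collapse into the single point-blow-up over $X_C$. Lemma \ref{bupmult}(2) then gives $\operatorname{mult}_y\tilde{B}=a_0+a_1-c$, and the later multiplicities $a_2,\dots,a_k$ are unchanged because the higher centers are isomorphic locally by Lemma \ref{bup}(2). Items (3) and (4) are the same bookkeeping applied to the b-divisors $\mathbf{N}$ and $\mathbf{D}$. I would use the recursive definition of $\operatorname{mult}_{\eta_1\subseteq\eta_2}$: it is the sum of multiplicities at the successive extractions of $h_i$ inside $\tilde{E}_C$, which are exactly the $Q_{ij}$. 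For $\mathbf{N}$ the coefficients $-(1+j)$ on $\tilde{F}_j$, together with $-1$ on $E_{\tilde{C}}$, give a telescoping sum, which I expect to yield $t_i=k-1-i$. For $\mathbf{D}$, each step contributes $a_j-c$ for $j\ge i+2$ and nothing earlier, since the first $i+2$ layers have already been absorbed. This gives $\sum_{j=2+i}^k(a_j-c)$.

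For (5) I would argue as in Lemma \ref{strict_transform} and Lemma \ref{sectionmult1}. By those lemmas the restriction $\tilde{B}|_{F_k}$ is supported on lines through the centers. Hence a point $p\in Q_{ij}\setminus\tilde{E}_C$ whose image avoids $c_{X_i}(F_k)$ is not on any such line, so $\mathbf{D}$ vanishes there. Item (6) follows from Lemma \ref{lim}-type reasoning, or more directly from the fact that any valuation centered on $F_k$ is dominated, through the identification in (1), by one of the chains $Q_{i(k-1-i)}$: the sequence of centers on $\tilde{E}_C$ must stop at some layer.

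I expect the main obstacle to be the index bookkeeping in (3) and (4), in particular the special role of $i=-2$ and the offset by $2$. I would handle it by writing out the cases $k=0$ and $k=1$ explicitly and then inducting on $k$ through Lemma \ref{induction_on_layer}.
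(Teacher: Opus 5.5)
Your route for (1) and (2) is the paper's. Lemma \ref{bup} identifies the layers of the tower built from $X_C$ with $F_1,\dots,F_k$, and Lemma \ref{bupmult}(2) merges $a_0,a_1$ into $a_0+a_1-c$.

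The gap is in (3), on which (4)--(6) rest. By the recursive definition, each term of $t_i$ is the order of $\mathbf N$ along the newly extracted divisor, minus its orders along the divisors through the current center ($Q_{ij}$ and $\tilde E_C$, cf.\ Remark \ref{ordX/Z}). So what you need are the numbers $\operatorname{ord}_{Q_{ij}/X}(\mathbf N)$, and these are not the coefficients $-(1+j)$, $-1$ of $N$. Already the second divisor extracted for $t_{-2}$ is the second point blow-up along $C$. Its center on $Y_0$ is the point $\tilde C\cap F_0$, whereas every divisor of $Y_{\tilde C}$ exceptional over $X$ has center $F_0$, $l_0$ or $\tilde C$ there. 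Its order is therefore a genuine valuation computation, which you never carry out.

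Worse, the $-1$ on $E_{\tilde C}$ cancels rather than contributes. Take $k=0$, a case you planned to write out. Lemma \ref{bupmult} identifies $Y_{\tilde C}$ with the blow-up of $X_C$ along $h_{-2}$, with $\tilde F_0$ as its exceptional divisor. Hence $N=-\tilde F_0-E_{\tilde C}$ is the pullback of the Cartier divisor $-E_C$, so $\mathbf N=[-E_C]$ and $t_{-2}=0\neq k+1$. Consistently, $\mathbf D=\mathbf N$ when $a_j\equiv 1$ and $c=1$, and your own formula for (4) gives $0$ there.

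The value $k-1-i$ is the one for $\mathbf M$, i.e.\ the case $a_j\equiv 1$, $c=0$ of (4). The paper reaches it by a different mechanism:
\begin{itemize}
\item it gets $t_{-2}=k+1$ from $\mathbf M|_U\cdot\tilde C=-(k+1)$ on a model $U$ isomorphic over the general point of $C$;
\item it then determines $t_0,\dots,t_{k-1}$ without computing any individual order, by squeezing them between inequalities imposed by potential nefness on the models $X_j$: $2+t_0\le t_{-2}$, $t_0\ge t_{-2}-2$, $t_j\ge t_{j-1}-1$, and $2t_j\ge t_{j-1}+t_{j+1}$;
\item (4) and (5) are read off by comparison with this model case, using $\operatorname{ord}_{Q_{ij}/X}(\mathbf N)=\operatorname{ord}_{Q_{ij}/X}(\mathbf M)=2+i+j$ for $i+j\le k-1$;
\item (6) comes from the contribution of $a_k$ to $\operatorname{ord}_{E_\infty/X}(\mathbf D)$.
\end{itemize}

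Your arguments for (4) and (5) also have problems of their own. The claim ``each step contributes $a_j-c$ for $j\ge i+2$ since the first $i+2$ layers are absorbed'' restates the answer rather than deriving it.

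For (5), $\operatorname{mult}_p(\mathbf D)$ is computed on the exceptional divisor of $\mathrm{Bl}_pX_{ij}$: it is the order there of the pullback of the trace of $\mathbf D$ on $X_{ij}$, minus the trace of $\mathbf D$ on $\mathrm{Bl}_pX_{ij}$. It measures whether $\mathbf D$ descends near $p$, not whether $\tilde B$ passes through $p$. If the birational map $X_{ij}\dashrightarrow Y_C$ is a local isomorphism at $p$, then $\operatorname{mult}_p(\mathbf D)=0$ even if $\tilde B$ is singular at $p$. Conversely, knowing that $p$ avoids the lines of $\tilde B\cdot F_k$ does not show that $\mathbf D$ descends there; those lines lie on $F_k\subseteq Y_k$, not on $Q_{ij}\subseteq X_{ij}$. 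What (5) actually needs is a local comparison of $X_{ij}$ with $Y_C$ over such points, in the spirit of Lemma \ref{bup}(2), together with the orders of $\mathbf D$ along the $Q_{ij}$. That brings you back to the computation missing from (3).
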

\begin{proof}
    By Lemma \ref{induction_on_layer}, $\sum_{j=0}^k(1+j)\tilde{F}_j$ is anti-nef on $Y_k$ over $X$. Let $Y_{\tilde{C}}$ be the blow-up of $Y_k$ along $\tilde{C}$ with the exceptional divisor $E_{\tilde{C}}$. By Lemma \ref{bup}, for $1\leq i\leq k$, $F_i=G_{i-1}$ as prime divisors over $X$. By Lemma \ref{bupmult}, $\operatorname{mult}_{h_{-2}}(\mathbf{M})=0$ and $\operatorname{ord}_{G_i/X_{i-1}}(\mathbf{M})=1$ for $0\leq i\leq k-1$. (1) then follows from Lemma \ref{bup} and the potentially anti-nefness of $\mathbf{M}|_{X_{k-1}}$.
    
    Computing $\operatorname{ord}_{F_j}(B)$ with Lemma \ref{bupmult}, we have (2).

    For a birational model $U$ over $X$ that is isomorphic over general points of $C$, $\mathbf{M}|_U\cdot \tilde{C}=N\cdot \tilde{C}=-k-1$, which show that $\operatorname{mult}_{x\in C}\mathbf{N}=k+1$. By Lemma \ref{primedivisors}, (4) follows when $i=-2$. Let $s$ be a smooth curve on $E_C$ such that $F_n$ intersects $\tilde{s}$ and $x'=h_{-2}\cap s$. Then $\operatorname{mult}_{x'\in s}\mathbf{N}\geq n+1$. Potentially nefness of $\mathbf{N}|_{X_j}$ then shows that $2+t_0\leq t_{-2}=k+1$, $t_0\geq t_{-2}-2$, $t_{j}\geq t_{j-1}-1$, $2t_0\geq t_{-2}+t_1-2+1$, and $2t_j\geq t_{j-1}+t_{j+1}-1+1$ for $j\geq 1$. This shows (3).

    Considering $a_j=1$ for all $j$ for natural numbers $k$, (3) follows when $c=0$. (4) and (5) then follows since $\operatorname{ord}_{Q_{ij}/X}(\mathbf{N})=\operatorname{ord}_{Q_{ij}/X}(\mathbf{M})=2+i+j$ for $0\leq i+j\leq k-1$.

    (6) follows by (4) and (5), considering the contribution of $a_k$ in $\operatorname{ord}_{E_\infty/X}(\mathbf{D})$.
\end{proof}

\begin{lem}\label{d}
    Let $(X,dD)$ be an lc sub-pair of dimension $2$ such that $X$ and $D$ are smooth at $x\in X$, where $D$ is a prime divisor. 
    Suppose $\mathbf{M}$ is a b-nef$/X$ b-$\mathbb{R}$-Cartier divisor over $X$ with $\operatorname{mult}_{x\in D}\mathbf{M}\leq 1$, then $\operatorname{mld}_x(X,D+\mathbf{M})\geq 1-d$.
\end{lem}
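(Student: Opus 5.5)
The plan is to compute log discrepancies along the sequence of point blow-ups that extracts a given divisor, and to bound $\operatorname{ord}_{E/X}(\mathbf{M})$ by the multiplicities of $\mathbf{M}$ along the chain following the strict transform of $D$.

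\textbf{Reduction.} The statement is local at $x$. Every prime divisor $E$ over $X$ with centre $x$ is reached by a tower of point-blow-ups as in Definition \ref{buptower}:
\[
X_n\to\dots\to X_1\to X_0=X,
\]
with centres $x_0=x,x_1,\dots,x_{n-1}$ and $E=E_n$. Since $\mathbf{M}$ is b-nef$/X$ and $M=\mathbf{M}|_X$, we have
\[
a_E(X,D+\mathbf{M})=a_E(X,D)-\operatorname{ord}_{E/X}(\mathbf{M}).
\]
So it suffices to bound $\operatorname{ord}_{E/X}(\mathbf{M})$ from above and $a_E(X,D)$ from below. The sub-lc hypothesis on $(X,dD)$ gives $d\le 1$, hence $1-d\ge 0$, and $a_{E_x}(X,dD)=2-d$.

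\textbf{Step 1: the chain along $D$.} Let $j\ge 0$ be the largest index such that $x_0,\dots,x_{j-1}$ all lie on the strict transforms $\tilde D$. For $i\le j$ a direct computation gives $a_{E_i}(X,D)=1$ and $\operatorname{ord}_{E_i}D=i$. Also
\[
\operatorname{ord}_{E_i/X}(\mathbf{M})=\sum_{l<i}\operatorname{mult}_{x_l}(\mathbf{M}|_{X_l}),
\]
which is a partial sum of the series defining $\operatorname{mult}_{x\in D}\mathbf{M}$. Each term is non-negative by Lemma \ref{negcurve} and Lemma \ref{mult}, so this partial sum is at most $\operatorname{mult}_{x\in D}\mathbf{M}\le 1$. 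This handles all $E_i$ with $i\le j$. I will use Lemma \ref{a} on the surfaces $X_l$ to keep track of $M_l\cdot\tilde D$ as it decreases by each multiplicity.

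\textbf{Step 2: leaving $D$.} After step $j$ the centres leave $\tilde D$. Write $c:=\operatorname{mult}_{x\in D}\mathbf{M}$. I plan to prove by induction on $n-j$ the inequality
\[
\operatorname{ord}_{E_n/X}(\mathbf{M})\le a_{E_n}(X,D)-a_{E_j}(X,D)+\operatorname{ord}_{E_j/X}(\mathbf{M}).
\]
In words, each further blow-up raises the order of $\mathbf{M}$ by at most the increase in log discrepancy. The tool is Lemma \ref{bddmult} and Lemma \ref{negcurve}, applied to the exceptional curves $E_l$ on $X_{l+1}$. Nefness of $\mathbf{M}$ on a high model, together with the negativity lemma \ref{neglem}, should give $\operatorname{mult}_{x_l}\mathbf{M}\le \operatorname{ord}_{E_{l}}$-contributions of the divisors through $x_l$. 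This is a concavity statement for the b-nef divisor $-\mathbf{M}$ in terms of the exceptional curves through $x_l$.

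\textbf{Combining and the main obstacle.} Putting Steps 1 and 2 together with $a_{E_j}(X,D)=1$ gives $a_{E}(X,D+\mathbf{M})\ge 1-c\ge 0$. The stated bound $1-d$ should then follow by comparing with the pair $(X,dD)$: the difference of the two boundaries is $(1-d)D$, and its order along $E$ is controlled by the same chain bookkeeping, since $\operatorname{ord}_{E}D=\operatorname{ord}_{E_j}D$ once the centres leave $\tilde D$.

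The main obstacle is Step 2. A b-nef divisor on a surface may have positive multiplicity at points off $\tilde D$, so the inequality must be extracted carefully from nefness on a model where $\mathbf{M}$ descends. I would first try the negativity lemma \ref{neglem} applied to $\psi^*M_{X_l}-M_Y$ restricted over $x_l$. If that fails, I would fall back on the explicit linear-algebra description of exceptional configurations over a smooth surface point.
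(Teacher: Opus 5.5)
Your Step 1 is correct and is exactly the base case of the paper's argument. Your passage from $(X,D+\mathbf{M})$ to $\operatorname{mld}_x(X,dD+\mathbf{M})\ge 1-d$ is the paper's last line; there you only need $\operatorname{ord}_ED\ge 1$, and the claim $\operatorname{ord}_ED=\operatorname{ord}_{E_j}D$ is false in general (in the example below, $\operatorname{ord}_{E_3}D=2$). The gap is Step 2, which carries all the content. The mechanism you propose for it fails: it is not true that each further blow-up raises $\operatorname{ord}_{\bullet/X}(\mathbf{M})$ by at most the increase of $a_\bullet(X,D)$.

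Take $X=\mathbb{A}^2$ and $D$ a line through $x$. Blow up $x$ (giving $E_1$), then a point $x_1\in E_1\setminus\tilde D$ (giving $E_2$), then $x_2=E_2\cap\tilde E_1$ (giving $E_3$), and let $M_{X_3}=-(\tilde E_1+\tfrac32\tilde E_2+3E_3)$. Since $\tilde E_1^2=-3$, $\tilde E_2^2=-2$ and $E_3^2=-1$, you get $M_{X_3}\cdot\tilde E_1=M_{X_3}\cdot\tilde E_2=0$ and $M_{X_3}\cdot E_3=\tfrac12$. So $\mathbf{M}=[M_{X_3}]$ is b-nef$/X$ with $\operatorname{mult}_{x\in D}\mathbf{M}=1$. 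Here $j=1$, $a_{E_2}(X,D)=2$ and $a_{E_3}(X,D)=3$, but $\operatorname{ord}_{E_2/X}\mathbf{M}=\tfrac32$ and $\operatorname{ord}_{E_3/X}\mathbf{M}=3$. The order jumps by $\tfrac32$ while the log discrepancy jumps by $1$.

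Equivalently, at the node $x_2=E_2\cap\tilde E_1$ your step would need $\operatorname{mult}_{x_2}\mathbf{M}\le a_{E_1}(X,D+\mathbf{M})=0$, whereas $\operatorname{mult}_{x_2}\mathbf{M}=\tfrac12$. Local bounds such as Lemma \ref{a} cannot give this. Your cumulative inequality still holds here ($3\le 3-1+1$), but only because of the slack $a_{E_2}(X,D+\mathbf{M})=\tfrac12$ produced one step earlier. So an induction on $n-j$ has to carry strictly more information than the inequality itself.

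The missing idea, which the paper supplies, is a global comparison along the whole configuration. First reduce to the case where every centre is a node of $\tilde D+\sum_l E_l$. If some $x_l$ is a smooth point of this divisor, restart with $(X_l,E_l)$ in place of $(X,D)$; iterating Lemma \ref{a} keeps $\operatorname{mult}_{x_l\in E_l}\mathbf{M}\le 1$. Then the dual graph is a chain $\tilde D=F_0,F_1,\dots,F_k=E_1$ with $d_i=-F_i^2\ge 2$ except at $E$.

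Now compare $m_i=\operatorname{ord}_{F_i/X}\mathbf{M}$ with $n_i=\operatorname{ord}_{F_i/X}\mathbf{N}$ for the reference divisor $\mathbf{N}=[-E_1]$. For this divisor $(X,D+\mathbf{N})$ is lc, since it is crepant to $(X_1,\tilde D+E_1)$. Nefness gives $m_{i+1}\le d_im_i-m_{i-1}$, with equality for $\mathbf{N}$. Induction from $m_0=n_0=0$ and $m_1\le n_1=1$ (your Step 1) then yields both $m_i\le n_i$ and $m_i-m_{i-1}\le n_i-n_{i-1}$. This slope comparison is exactly the extra information your induction lacks, and it gives $a_E(X,D+\mathbf{M})\ge a_E(X,D+\mathbf{N})\ge 0$.
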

\begin{proof}
We may assume that $x$ is a closed point.

We first assume that $d=1$.
Let $E$ be a divisor over $x$.
 Let $X_{k}\to\dots\to X_{1}\to X_{0}:=X$ be the sequence of blow-ups along the centers of $E$ such that $E$ is a divisor on $X_k$ but not on $X_j$ for $j<k$. Denote the morphism $X_{j}\to X_{(j-1)}$ by $\varphi_{j}$, the exceptional divisor by $E_{j}$ and the blow-up center by $x_{j-1}$.

Denote by $\tilde{D}$ the strict transform of $D$ on each $X_i$.
To show that $a_E(X,D+\mathbf{M})\geq 0$, we may assume that $x_l$ is not a smooth closed point of $\tilde{D}+\sum_{j=1}^lE_j$ for $0\leq l<k$, otherwise we may replace $(X,D)$ by $(X_l,E_l)$.
This means that the dual graph $G$ of $\tilde{D}+\sum_{j=1}^kE_j$ is a chain with leaves $\tilde{D}$ and $E_1$.
Express $G$ by the sequence $\tilde{D}:=F_0, F_{1}, F_{2}, \dots, F_{j}=E_k, \dots, F_{k}=E_1$.

Let $\mathbf{N}=[-E_1]$ be the b-nef$/X$ divisor.
Then $(X,D+\mathbf{N})$ is generalized lc.

Let $d_i=-F_{i}^2$, $n_i=\operatorname{ord}_{F_{i}/X}(\mathbf{N})$ and $m_i=\operatorname{ord}_{F_{i}/X}(\mathbf{M})$.
Then $d_i\geq 2$ for $0<i\neq j$, $n_0=m_0=0$ 
and $m_1\leq n_1=1$ by assumption.
For $1\leq i<j$ we have $F_i\mathbf{N}|_{X_k}=n_id_i-n_{i+1}-n_{i-1}=0$ and $F_i\mathbf{M}|_{X_i}=m_id_i-m_{i+1}-m_{i-1}\geq 0$.
For $1\leq i\leq j$, let $n'_i=n_i-n_{i-1}$ and
$m'_i=m_i-m_{i-1}$.
Then we have $m'_1\leq n'_1$.

Now we use induction to show that $m_i\leq n_i$ and $m'_i\leq n'_i$ for $1\leq i\leq j$.
Let $k\leq j-1$ be a natural number. Assume that $m_i\leq n_i$ and $m'_i\leq n'_i$ for $1\leq i\leq k$.
Then $m'_{k+1}=m_{k+1}-m_k\leq m_k-m_{k-1}+(d_k-2)m_k\leq n_k-n_{k-1}+(d_k-2)n_k=n_{k+1}-n_k=n'_{k+1}$.
On the other hand, $m_{k+1}=m'_{k+1}+m_k\leq n'_{k+1}+n_k=n_{k+1}$.
By induction on $k$, we have $m_i\leq n_i$ and $m'_i\leq n'_i$ for $1\leq i\leq j$.

In particular, we have $a_E(X,D+\mathbf{M})=a_{F_j}(X,D+\mathbf{M})=a_{F_j}(X,D)-\operatorname{ord}_{F_j}\mathbf{M}=
a_{F_j}(X,D)-m_j\geq a_{F_j}(X,D)-n_j=a_{F_j}(X,D)-\operatorname{ord}_{F_j}\mathbf{N}=a_{F_j}(X,D+\mathbf{N})\geq 0$.
This shows the lemma when $d=1$.

Now we consider the general case.
Let $E$ be a divisor over $x$. Then we have $a_E(X,dD+M)=a_E(X,D+M)+(1-d)\operatorname{ord}_ED\geq 0+(1-d)=1-d$.
\end{proof}

\begin{lem}\label{a<=1_original}
    Let $k$ be a natural number.
    Let $(X,dD)$ be a log smooth pair of dimension $3$, where $D$ is a prime divisor and $0\leq d\leq 1$. Let $f:X\to X_{0}$ be a birational morphism. Let $\mathbf{M}$ be a b-nef/$X_0$ b-divisor over $X$ such that $(X,dD+\mathbf{M})$ is log canonical as a generalized pair over $X_0$, and let $a\leq 1$ be a positive real number. Suppose that $C\subseteq D$ is a smooth exceptional$/X_0$ curve such that $C\cdot \mathbf{M}_X=(1-k)a$, $C\cdot_D C=1-k$, and $\operatorname{mult}_{C\subseteq D}\mathbf{M}=\operatorname{mult}_{C}\mathbf{M}=a$. Then $\operatorname{mld}_C(X,dD+\mathbf{M})\geq 1-d$.
\end{lem}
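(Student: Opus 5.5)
Reduce to $d=1$ first. For any divisor $E$ over $X$ with centre in $C$, since $D$ is smooth and $C\subseteq D$ we have $\operatorname{ord}_E D\geq 1$, so $a_E(X,dD+\mathbf{M})=a_E(X,D+\mathbf{M})+(1-d)\operatorname{ord}_E D\geq a_E(X,D+\mathbf{M})+(1-d)$. This is the same step as at the end of Lemma \ref{d}. It therefore suffices to show $a_E(X,D+\mathbf{M})\geq 0$ for every such $E$, i.e.\ that $(X,D+\mathbf{M})$ is generalized lc near $C$.

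I would cut down to the surface case by working generically along $C$ and then at closed points of $C$.

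\emph{Divisors dominating $C$.} Take a general closed point $x\in C$ and a general smooth surface $S$ transverse to $C$ at $x$. The divisors over $X$ dominating $C$ correspond to divisors over $x\in S$, and $\operatorname{mult}_{C\subseteq D}\mathbf{M}=a\leq 1$ restricts to $\operatorname{mult}_{x\in D|_S}\mathbf{M}|_S\leq 1$. Lemma \ref{d} then gives $\operatorname{mld}\geq 0$ for the restricted pair $(S,D|_S+\mathbf{M}|_S)$. Restricting $\mathbf{M}$ to a general $S$ keeps it b-nef over $X_0$, so this handles the generic point of $C$.

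\emph{Divisors over closed points $x\in C$.} Blow up $C$ to get $E_C$, with $C_1=E_C\cap\tilde D$. Iterating the blow-ups of the curves $C_i=F_i\cap\tilde D$ as in Lemma \ref{primedivisors}, Lemmas \ref{bup} and \ref{bupmult} identify any divisor over $x$ with one extracted by point and line blow-ups inside this tower. I need to control $\operatorname{mult}_{x\in C}\mathbf{M}$ and the multiplicities along the fibres $C_x$. By Lemma \ref{basic}, $C\cdot_D C=1-k$ and $C\cdot \mathbf{M}_X=(1-k)a$, so after blowing up $C$ we get $\tilde C_1\cdot M_{X_C}=(1-k)a-a(C\cdot_D C)=0$ along the new section. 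Hence the normal-bundle data are exactly balanced, the hypotheses reproduce themselves with $\operatorname{mult}$ along $C_1$ at most $a$, and Lemma \ref{a} together with nefness bounds $\operatorname{mult}_{x\in C}\mathbf{M}$ by the surface intersection numbers. At each closed point this reduces to the surface setting of Lemma \ref{d}, applied on $\tilde D$ or on the exceptional surfaces $E_C\cong$ ruled surface with the fibre $C_x$, where $\operatorname{mult}_{y\in C_x}\mathbf{M}\leq a\leq 1$ follows from (2) of Lemma \ref{bupmult} and Lemma \ref{mult}.

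The main obstacle is this second case: showing that $\operatorname{mult}_x\mathbf{M}$ at special points cannot jump beyond what the chain comparison in Lemma \ref{d} tolerates. My first attempt would be to repeat the $m_i\leq n_i$ induction from Lemma \ref{d}, using as comparison divisor the anti-nef exceptional $\mathbf{N}=[-aE_C]$ adjusted by the tower of Lemma \ref{towerofsection}. The equality $C\cdot\mathbf{M}_X=(1-k)a$ would supply the base case, and nefness of $\mathbf{M}$ on exceptional curves (Lemma \ref{negcurve}) would supply the inductive inequalities.
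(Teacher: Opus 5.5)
\textbf{Verdict: there is a genuine gap, in the case of divisors centred at a closed point of $C$.}

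Your first two steps are fine and agree with the paper. These are the reduction via $\operatorname{ord}_E D\geq 1$, and the treatment of divisors with centre $C$ by cutting with a general hyperplane and applying Lemma \ref{d}.

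The gap is the case of divisors $E$ whose centre is a closed point $x\in C$. This is the real content of the lemma, and your text gives no argument there.
\begin{itemize}
\item Lemma \ref{d} is a surface statement. It transfers to the threefold only through a \emph{general} hyperplane section, which sees divisors with curve centres, never divisors over a closed point. There is no mechanism for ``applying it on $\tilde D$ or on $E_C$''. $\mathbf{M}$ does not restrict to a b-nef b-divisor on these special surfaces, and log discrepancies of $X$ over $x$ are not computed on them without an adjunction argument you do not supply. The same objection applies to your use of Lemma \ref{a}.
\item The claim that Lemmas \ref{bup} and \ref{bupmult} identify every divisor over $x$ with one from point and line blow-ups in the tower of Lemma \ref{primedivisors} is false. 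Divisors over $x$ can have arbitrary curve centres in later exceptional divisors.
\item Blowing up $C$ does not reproduce the hypotheses. Since $\operatorname{mult}_{C\subseteq D}\mathbf{M}=\operatorname{mult}_C\mathbf{M}$, you get $\operatorname{mult}_{C_1}\mathbf{M}=0$, not $a$. The crepant pullback also acquires the extra component $aE_C$. So no induction is set up, and your last paragraph remains a plan.
\end{itemize}

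The missing idea is to blow up the \emph{point} rather than the curve, which makes the hypotheses self-similar in $k$.
\begin{itemize}
\item Let $X_1\to X$ be the blow-up of $x$ with exceptional divisor $E_1$, let $C'$ be the strict transform of $C$, let $l=E_1\cap\tilde D$, and let $b=\operatorname{mult}_x\mathbf{M}$. By Lemma \ref{mult}, $b\geq a$. Then $C'\cdot_{\tilde D}C'=-k$ and $C'\cdot\mathbf{M}_{X_1}=(1-k)a-b$.
\item Now blow up $C'$. The curve $C_1=E_{C'}\cap\tilde D$ has $\operatorname{mult}_{C_1}\mathbf{M}=0$, so nefness gives $0\leq C_1\cdot\mathbf{M}=(1-k)a-b+ka=a-b$. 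Hence $b=a$. This is exactly your ``balanced'' computation, used in the right place.
\item Consequently $(X_1,\tilde D,C')$ satisfies the hypotheses with $k+1$ in place of $k$.
\item A similar nefness argument (blow up $l$, then the strict transform of $C'$) gives $\operatorname{mult}_l\mathbf{M}=0$. Since $l\cdot\mathbf{M}_{X_1}=a$, the multiplicity on $l$ is concentrated at $x_1=l\cap C'$, where it equals $a$.
\end{itemize}
The paper then splits by the centre of $E$ on $X_1$:
\begin{itemize}
\item If $c_{X_1}(E)=x_1$, induct on $k$. This terminates because only finitely many point blow-ups occur among the successive centres of $E$.
\item If $c_{X_1}(E)$ is a curve $L\subseteq E_1$, cut by a general hyperplane and apply Lemma \ref{d} to $E_1$. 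Its coefficient in the crepant pullback is $d+a-2$, and $\operatorname{mult}_{L\subseteq E_1}\mathbf{M}\leq a\leq 1$. This gives $a_E\geq 3-d-a\geq 1-d$.
\item Closed-point centres off $\tilde D$ are disposed of using $a_{E_1}(X,dD+\mathbf{M})=3-d-a\geq 1-d$.
\end{itemize}
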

\begin{proof}
    Suppose $E$ is a prime divisor over $X$ with $a_E(X,dD+\mathbf{M})< 1-d$. First, suppose that $c_X(E)=C$. Cutting by a general hyperplane section of $X$, this contradicts Lemma \ref{d}.Therefore, we may assume that $c_X(E)=x\in C$ is a closed point on $X$.
    
    Let $b:=\operatorname{mult}_x\mathbf{M}$, which is at least $\operatorname{mult}_C\mathbf{M}=a$ by Lemma \ref{mult}.
    Let $X_1\to X$ be the blow-up of $X$ at $x$ with the exceptional divisor $E_1$. Let $l$ be the intersection of $E_1$ and $\tilde{D}$ on $X_1$ and let $C'$ be the strict transform of $C$ on $X_1$. Then $\operatorname{mult}_{C'\subseteq\tilde{D}}\mathbf{M} =\operatorname{mult}_{C'}\mathbf{M}=a$, $C'\cdot_{\tilde{D}}C'=-k$, and $C'\cdot\mathbf{M}_{X_1}=(1-k)a-b$.
    Let $X_{C'}$ be the blow-up of $X_1$ along $C'$ with the exceptional divisor $E_{C'}$ and let $C_1$ be the intersection of $E_{C'}$ and $\tilde{D}$ on $X_{C'}$. Then $\operatorname{mult}_{C_1}\mathbf{M}=0$ and therefore $C_1\cdot \mathbf{M}_{X_l}=(1-k)a-b+ka\geq 0$, which, together with $b\geq a$, implies $a=b$.

    We claim that $\operatorname{mult}_{l}\mathbf{M}=0$. Suppose $\operatorname{mult}_{l}\mathbf{M}>0$. Let $X_{l}$ be the blow-up of $X_1$ along $l$ and let $C_l$ be the strict transform of $C$ on $X_l$. Then $\operatorname{mult}_{C_l\subseteq\tilde{D}}\mathbf{M} =\operatorname{mult}_{C_l}\mathbf{M}=a$,$C_l\cdot_{\tilde{D}}C_l=C'\cdot_{\tilde{D}}C'=-k$, and $C_l\cdot\mathbf{M}_{X_1}<C'\cdot\mathbf{M}_{X_1}=-ka$. Let $X_{C_l}$ be the blow-up of $X_l$ along $C_l$ with the exceptional divisor $E_{C_l}$ and let $C_{l1}$ be the intersection of $E_{C_l}$ and $\tilde{D}$ on $X_{C_l}$. Then $\operatorname{mult}_{C_{l1}}\mathbf{M}=0$ and therefore $0\leq C_{l1}\cdot \mathbf{M}_{X_{C_l}}=C_l\cdot \mathbf{M}_{X_l}+ka<-ka+ka= 0$, which is a contradiction and the claim follows.

    Let $x_1:=l\cap C'$. Then $\operatorname{mult}_{x_1}\mathbf{M}\geq \operatorname{mult}_{C'}\mathbf{M}=a$. Since $\operatorname{mult}_{l}\mathbf{M}=0$ and $l\cdot \mathbf{M}_{X_1}=a$, we have $\operatorname{mult}_{x_1}\mathbf{M}=a$ and $\operatorname{mult}_{p}\mathbf{M}=0$ for every closed point $p\in l\setminus\{x_1\}$.

    If $c_{X_1}(E)=x_1$, we may replace $(X,dD)$ by $(X_1,d\tilde{D})$ and $k$ by $k+1$. Hence, we may assume $c_{X_1}(E)\neq x_1$. Then $c_{X_1}(E)\not\subseteq \tilde{D}$. If $\dim c_{X_1}(E)=0$, then $a_{E_1}(X,dD+\mathbf{M})=3-d-a\geq 1-d$ and by replacing $(X,dD)$ by $(X_1,0)$, we may assume $d=0$.

    So we may assume $c_{X_1}(E):= L$ is a curve. Then $\operatorname{mult}_{L\subseteq E_1}\mathbf{M}\leq a\leq 1$. Cutting by a general hyperplane section of $X_1$, by Lemma \ref{d}, we have $a_E(X,dD+\mathbf{M})\geq 1-\big(1-a_{E_1}(X,dD+\mathbf{M})\big)=3-d-a\geq 1-d$, which proves the lemma.
\end{proof}
\begin{lem}\label{a<=1}
    Let $X/Z$ be birational threefolds. 
    Let $x\in X$ be a closed point and let $D_1$ and $D_2$ be distinct prime exceptional divisors on $X$ over $Z$ containing $x$ such that $(X,D_1+D_2)$ is log smooth. Let $l=D_1\cap D_2$. Assume that $l$ is exceptional over $Z$. Suppose $\mathbf{M}$ is an exceptional b-nef$/Z$ b-divisor over $X$ such that $\operatorname{mult}_x{\mathbf{M}}:=a\leq 1$, $\operatorname{mult}_l{\mathbf{M}}=0$ and $l\cdot \mathbf{M}_X=a$. Then $\operatorname{mld}_x(X/Z,D_1+D_2+\mathbf{M})\geq 0$.
\end{lem}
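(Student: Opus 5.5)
We argue by contradiction, following the pattern of Lemmas \ref{d} and \ref{a<=1_original}. Suppose $E$ is a prime divisor over $X$ with $c_X(E)\ni x$ and $a_E(X,D_1+D_2+\mathbf{M})<0$. Since $(X,D_1+D_2)$ is log smooth, it is lc with $a_E(X,D_1+D_2)\geq 0$, and equality holds only for divisors whose centers are strata of $D_1+D_2$. Moreover $a_E(X,D_1+D_2+\mathbf{M})=a_E(X,D_1+D_2)-\operatorname{ord}_{E/X}\mathbf{M}$, so we must bound $\operatorname{ord}_{E/X}\mathbf{M}$ by $a_E(X,D_1+D_2)$. We split according to $\dim c_X(E)$. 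If $c_X(E)$ is a curve through $x$, we cut by a general hyperplane section through that curve. If the curve is $l$, then $\operatorname{mult}_l\mathbf{M}=0$, and the surface section is a smooth surface germ with two transversal curves and a b-nef divisor with multiplicity $0$ at the point, which is lc. Otherwise the curve lies in at most one $D_i$, and Lemma \ref{d} (with $d=1$), applied on the section, gives $\operatorname{mld}\geq 0$, provided $\operatorname{mult}_{\mathrm{pt}\in D_i}\mathbf{M}\leq 1$. We expect this multiplicity bound to follow from $\operatorname{mult}_x\mathbf{M}=a\leq 1$ via Lemma \ref{mult} and Lemma \ref{a}.

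The main case is when $c_X(E)=x$. Let $X_1\to X$ be the blow-up at $x$ with exceptional divisor $E_1\cong\mathbb{P}^2$. Then $a_{E_1}(X,D_1+D_2+\mathbf{M})=3-2-a=1-a\geq 0$, and on $X_1$ we have
\[K_{X_1}+\tilde D_1+\tilde D_2+(1-(1-a))E_1+\mathbf{M}_{X_1}=\text{pullback}.\]
We track $\mathbf{M}$ on $E_1$, in the spirit of the proof of Lemma \ref{a<=1_original}. Let $l'$ be the strict transform of $l$, which meets $E_1$ at a point $x_1$. Since $\operatorname{mult}_l\mathbf{M}=0$, we get $l'\cdot\mathbf{M}_{X_1}=l\cdot\mathbf{M}_X-a=0$. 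The lines $h_i=E_1\cap\tilde D_i$ satisfy $h_i\cdot\mathbf{M}_{X_1}$ computed by the projection formula, and the hypothesis $l\cdot\mathbf{M}_X=a$ together with b-nefness should force $\operatorname{mult}$ of $\mathbf{M}$ along curves in $E_1$ to sum to at most $a$: $\mathbf{M}|_{E_1}$ behaves like $a$ times a line. Lemma \ref{negcurve} and Lemma \ref{bddmult} control which curves in $E_1$ can carry positive multiplicity.

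We then run an induction on the number of blow-ups needed to extract $E$. At each stage the new configuration is one of the following.
\begin{enumerate}
\item A point on $E_1$ away from $\tilde D_1\cup\tilde D_2$, with boundary coefficient $a$ on $E_1$ and multiplicity at most $a\leq 1$. Here we use the crude bound: the log discrepancy is at least $a_{E}(X_1,aE_1)-\operatorname{ord}\geq 0$, by comparing with the pair $(X_1,E_1+\mathbf{M})$ and applying Lemma \ref{d} or the earlier surface theorem after cutting.
\item A point on $h_i$ or $l'\cap E_1$, with boundary $\tilde D_i+aE_1$ (or $\tilde D_1+\tilde D_2+aE_1$). Here we reduce to the same hypotheses with $D_2$ replaced by $E_1$, or use $d=a$ in Lemma \ref{a<=1_original}.
\end{enumerate}

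The main obstacle is the third step: showing that the multiplicities of $\mathbf{M}$ at points and curves of $E_1$ meeting $\tilde D_1\cup\tilde D_2$ reproduce the hypotheses of the lemma, namely multiplicity at most $1$ at the new point, $0$ along the new double curve, and intersection equal to the new multiplicity. This requires verifying the analogue of $l\cdot\mathbf{M}_X=a$ on $X_1$, using Lemma \ref{bupmult} and Lemma \ref{2.35}. I would first try to prove that $\mathbf{M}$ has zero multiplicity along $h_1,h_2$ by the negativity argument of Lemma \ref{a<=1_original}: blow up $h_i$ and test a section curve against nefness. This would give the inductive step, and termination follows since $E$ is reached after finitely many blow-ups.
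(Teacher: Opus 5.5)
There is a genuine gap, and it sits exactly at the step you call the main obstacle. Blowing up the point $x$ does not reproduce the hypotheses, so the proposed induction has nothing to run on.

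On $X_1$ the divisor $E_1$ has coefficient $a$. Every line $h\subseteq E_1$ satisfies $h\cdot\mathbf{M}_{X_1}=a$, because $\mathbf{M}_{X_1}|_{E_1}=-aE_1|_{E_1}$. At a point $p\in h_i\setminus\{x_1\}$, however, the hypotheses only give $0\le\operatorname{mult}_p\mathbf{M}\le a$. At $x_1=h_1\cap h_2$ one even gets $\operatorname{mult}_{x_1}\mathbf{M}=0$, from $\tilde l\cdot\mathbf{M}_{X_1}=0$ and $\operatorname{mult}_l\mathbf{M}=0$.

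Intermediate values are allowed by the local data. Take $D_1=\{u=0\}$, $D_2=\{v=0\}$, and let $\mathbf{M}$ be, near $x$, $\frac a2$ times the sum of the b-nef divisors whose order along any $F$ is $\operatorname{ord}_F\mathfrak m_x$, resp.\ $\operatorname{ord}_F(w,u^2,uv,v^2)$. Then $\mathbf{M}$ has multiplicity $a/2$ along the line $\{w=0\}\subseteq E_1$. This line misses $x_1$ and meets $h_1$ at a point $q$ with $\operatorname{mult}_q\mathbf{M}=a/2<a=h_1\cdot\mathbf{M}_{X_1}$.

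Consequences for your plan:
\begin{itemize}
\item ``The same hypotheses with $D_2$ replaced by $E_1$'' fail at such points.
\item Lemma \ref{a<=1_original} with $D=E_1$, $d=a$ is also unavailable. It needs a curve in $E_1$ carrying the full multiplicity $a$, and a boundary with a single component.
\item Your case (1) is not an argument. A general hyperplane section only reduces divisors with \emph{curve} centres to surfaces, not divisors centred at a point of $E_1$. The naive bound $a+\operatorname{mult}_p\mathbf{M}$ can be as large as $2a$ (use $a$ times the second ideal), which exceeds $1$ when $a>1/2$.
\item Divisors whose centre on $X_1$ is a curve in $E_1$, such as $\{w=0\}$, are not treated at all.
\item Your curve-centre case on $X$ itself is superfluous, since $\operatorname{mld}_x$ at a closed point only involves divisors centred at $x$.
\end{itemize}

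The missing idea is to blow up the double curve $l$ rather than the point $x$; this is what the paper does. Since $\operatorname{mult}_l\mathbf{M}=0$, we have $a_{E_l}(X,D_1+D_2+\mathbf{M})=0$. So on $X_l$ the crepant boundary is the reduced snc divisor $\tilde D_1+\tilde D_2+E_l$, and $\mathbf{M}_{X_l}$ is just the pullback of $\mathbf{M}_X$.

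The fibre $l'$ of $E_l$ over $x$ satisfies $l'\cdot_{E_l}l'=0$ and $l'\cdot\mathbf{M}_{X_l}=0$. By Lemma \ref{bupmult}(1), $\operatorname{mult}_{l'}\mathbf{M}=a$. A negativity argument (Lemma \ref{2.35}) upgrades this to $\operatorname{mult}_{l'\subseteq E_l}\mathbf{M}=a$ and $\operatorname{mult}_p\mathbf{M}=a$ for every $p\in l'$.

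Lemma \ref{a<=1_original} with $D=E_l$, $d=1$, $k=1$ then disposes of every divisor centred on $l'$, except those centred at the two points $x_i'=l'\cap\tilde D_i$. At these points, $(x_i'\in X_l,\tilde D_i+E_l)$ satisfies the hypotheses of the lemma verbatim:
\begin{itemize}
\item the new double curve $\tilde D_i\cap E_l$ is a section of $E_l\to l$, so its $\mathbf{M}$-degree is $l\cdot\mathbf{M}_X=a$;
\item its multiplicity is $0$;
\item $\operatorname{mult}_{x_i'}\mathbf{M}=a$.
\end{itemize}
The induction terminates because $a_E(X,0)$ drops by a positive integer at each step. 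This self-reproducing configuration is exactly what the point blow-up destroys.
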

\begin{proof}
    Suppose that $E$ is a prime divisor over $x\in X$ with $a_E(X/Z,D_1+D_2+\mathbf{M})<0$. Let $X_l$ be the blow-up of $X$ along $l$ with the exceptional divisor $E_l$ and let $l'$ be the fiber over $x$. Then, $\operatorname{mult}_{l'}\mathbf{M}=a$ by Lemma \ref{bupmult}.
    Moreover, by Lemma \ref{2.35}, $\operatorname{mult}_{l'\subseteq E_l}\mathbf{M}=a$ and $\operatorname{mult}_p\mathbf{M}=a$ for all $p\in l'$. By Lemma \ref{a<=1_original}, we may assume that $c_{X_l}(E)=l'\cap D_i:=x'$ for $i\in \{1,2\}$. Replacing $(x\in X,D_1+D_2)$ by $(x'\in X_l,D_i+E_l)$, we are done by induction.
\end{proof}

\begin{lem}\label{finitecurve}
    Fix natural numbers $N$ and $k$ and positive real numbers $\epsilon$, $n$ and $a$. Then there is a natural number $L$ depending only on $N,k,\epsilon,n$, and $a$ satisfying the following.

    Suppose $f:X\to X_0$ is a birational morphism of projective normal threefolds with exceptional prime divisors $D_1,D_2,\dots,D_k$, and $H_i\subseteq D_i$ are very ample divisors on $D_i$ with $H_i\cdot_{D_i} H_i\leq N$. Suppose $\mathbf{M}$ is an exceptional b-nef b-$\mathbb{Q}$-Cartier divisor over $X_0$ with $-\mathbf{M}_X=\sum_{i=1}^k d_iD_i$, where $d_i\in [0,n]$.
    Let $x\in D:=D_1$ be a closed point at which $X$ and $D$ are smooth. Suppose that for every curve $C\subseteq D$ containing $x$ along which $\mathbf{M}$ has positive multiplicity, $\epsilon\leq\operatorname{mult}_C\mathbf{M}\leq a-\varepsilon$.

    Suppose $Y_L\to Y_{L-1}\to\dots\to Y_0\to X$ is either a chain of point blow-ups or a tower of section-blow-ups at $x$ with the exceptional divisors $E_i\subseteq Y_i$ and the centers $y_i\in Y_i$ such that $\operatorname{mult}_{y_i}\mathbf{M}=\operatorname{mult}_{x}\mathbf{M}=a$ for every $0\leq i\leq L-1$. Then $y_{L-1}\cap \tilde{D}=\varnothing$.
\end{lem}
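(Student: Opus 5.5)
The plan is to argue by contradiction through a bounded ``multiplicity budget'' on $D$, in the spirit of Lemma \ref{longchain} and Lemma \ref{boundedcurves}. Suppose $y_{L-1}\in\tilde{D}$ for arbitrarily large $L$. Then every center $y_i$ with $i\le L-1$ lies on the strict transform of $D$ (each $\tilde D$ on $Y_i$ maps into $\tilde D$ on $Y_{i-1}$). So the whole tower follows a curvilinear or section-type path inside $D$, and at each step $\operatorname{mult}_{y_i}\mathbf{M}=a$.

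First I will bound an intersection number on $D$. Since $-\mathbf{M}_X=\sum d_iD_i$ with $d_i\in[0,n]$, and each $D_i$ carries a very ample $H_i$ with $H_i^2\le N$, the quantities $\mathbf{M}_X|_D\cdot H_1$ and $(\mathbf{M}_X|_D)^2$ are bounded in terms of $N,k,n$. This mimics the bound $DM_X^2\le L$ in Lemma \ref{longchain}. Call the resulting bound $L_0$.

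Each blow-up of a center $y_i\in\tilde D$ with $\operatorname{mult}_{y_i}\mathbf{M}=a$ lowers $\tilde D\cdot\mathbf{M}^2$ (computed on the strict transform of $D$) by a definite amount, of order $a^2$. For point blow-ups this follows from Lemma \ref{basic}(2). For section blow-ups it follows from the intersection numbers in Definition--Lemma \ref{towerofsection}. After roughly $L_0/a^2$ steps, the restriction of $\mathbf{M}$ to $\tilde D$ can no longer be potentially nef over $X_0$. Lemma \ref{negcurve}, applied to a general hyperplane cut, then produces a curve $C\subseteq D$ through $x$ whose strict transform still passes through $y_j$ for a large $j$, with $\mathbf{M}$ having positive multiplicity along $C$. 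This is the first assertion of Lemma \ref{longchain} repeated in the present setting.

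Next I compare multiplicities along this curve. By Lemma \ref{bupmult}, and Lemma \ref{sectionmult} in the section case, repeated blow-ups of points on $\tilde C$ decompose the multiplicity $a$ at the centers as $\operatorname{mult}_C\mathbf{M}$ plus the multiplicity along the fibers over $\tilde C$. The hypothesis $\operatorname{mult}_C\mathbf{M}\le a-\epsilon$ leaves a defect of at least $\epsilon$ at each of the $j$ steps. This defect must be carried by multiplicity along the exceptional fibers or lines inside $E_C$, as in the second half of the proof of Lemma \ref{longchain}. That forces $\operatorname{mult}_{x_C\subseteq E_C}\mathbf{M}\ge j\epsilon$.

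This is incompatible with Lemma \ref{a}, which bounds $\operatorname{mult}_{x\in E}\mathbf{M}$ by $\mathbf{M}\cdot E$ on a surface section. It is also incompatible with the bounded quantities $\operatorname{mult}_{C\subseteq D}\mathbf{M}\le$ const from Lemma \ref{boundedcurves}(2), since $C\cdot H_1$ is bounded by Lemma \ref{boundedcenter}. Choosing $L>L_0/a^2+\text{const}/\epsilon$ gives the contradiction.

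The main obstacle will be the section-blow-up case. There the centers are lines on $\mathbb F_{i+1}$ rather than points, and the decrease of the intersection number together with the transfer of the defect to $E_C$ must be tracked using Lemmas \ref{induction_on_layer}, \ref{strict_transform} and \ref{sectionmult}. I would first try to reduce that case to the chain case via Lemma \ref{primedivisors}, which identifies layers of a section tower with point blow-ups over a curve blow-up.
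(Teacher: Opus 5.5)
\textbf{Overall.} Your outline has the same architecture as the paper's proof. The bounded quantity $D\cdot\mathbf{M}_X^2$ drops by a fixed amount at each center, which produces a curve $C\subseteq D$ through $x$ with positive multiplicity and negative degree. The defect $a-\operatorname{mult}_C\mathbf{M}\geq\epsilon$ then accumulates along $\tilde C$. Your endgame, $\operatorname{mult}_{f\subseteq E_C}\mathbf{M}\geq j\epsilon$ played against Lemma \ref{a} on a hyperplane section, is the multiplicity-side counterpart of the paper's computation $C'\cdot\mathbf{M}_W\leq-\epsilon K'+O(1)$. There $C'=E_{C_0}\cap\tilde D$ and $W$ is the blow-up of $Y_{K'}$ along $\tilde C_0$. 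That part is fine.

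\textbf{The gap.} The gap is your assertion that the negative curve ``still passes through $y_j$ for a large $j$''; nothing in your argument forces this. $\mathbf{M}_{Y_L}$ is always potentially nef; what fails for large $L$ is only nefness of $\mathbf{M}_{Y_L}|_{\tilde D}$. But $\mathbf{M}_X|_D$ is in general already non-nef on $X$, since negativity can sit exactly on curves of positive multiplicity (Lemma \ref{negcurve}). So a curve $\tilde C$ with $\tilde C\cdot\mathbf{M}_{Y_L}<0$ may meet only $y_0$, or no center at all, in which case it need not even contain $x$. Your defect argument then yields only a bounded amount, and no contradiction follows. That $\tilde D\cdot\mathbf{M}^2$ turns negative only after many steps says nothing about which curve witnesses the failure of nefness. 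Citing the first assertion of Lemma \ref{longchain} does not close this, because that argument rests on the same device described below (``replacing $M$ by $M+lH$'').

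\textbf{The missing step.} The missing step is the twist in the paper's proof. By Lemma \ref{boundedcurves}, the curves $C\subseteq D$ through $x$ with $\operatorname{mult}_C\mathbf{M}>0$ form a bounded family, so $\operatorname{mult}_xC\leq l$ for a fixed $l$. One then replaces $\mathbf{M}$ by $\mathbf{M}+[G]$ for a bounded, relatively very ample, exceptional divisor $G$ over $X_0$. This has the following effects:
\begin{itemize}
\item Since $[G]$ descends to $X$, no multiplicity changes.
\item b-nefness and the bound on $D\cdot\mathbf{M}_X^2$ survive.
\item Now $C\cdot\mathbf{M}_X\geq L'$ for every such curve.
\end{itemize}
Each center on $\tilde C$ lowers $\tilde C\cdot\mathbf{M}$ by at most $la$. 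Hence a curve $\tilde C_0$ with $\tilde C_0\cdot\mathbf{M}_{Y_L}<0$ must contain $y_{K'}$ once $L'\geq laK'$. Only then does your $j\epsilon$ estimate, with $j=K'$ large, give the contradiction.

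\textbf{Two smaller points.} First, the alternative incompatibility you cite, via $\operatorname{mult}_{C\subseteq D}\mathbf{M}$ from Lemma \ref{boundedcurves}(2), involves the wrong quantity. What grows is the multiplicity along the fibre inside $E_C$, not along $C$ inside $D$. Second, for section-blow-ups the paper does not reduce to chains. It gets the drop directly from $D\cdot\mathbf{M}_X^2\geq(\tilde D+E_0)\cdot\mathbf{M}_{Y_L}^2$ together with Definition--Lemma \ref{towerofsection}, and then runs the same curve argument.
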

\begin{proof}
Let $K'$and $L'$ be fixed natural numbers to be determined later.
By Lemma \ref{boundedcurves}, the set of curves $C\subseteq D$ containing $x$ along which $\mathbf{M}$ has positive multiplicity form a bounded family. In particular, $\operatorname{mult}_xC\leq l$ for such curves $C$ for some fixed natural number $l$.
Let $G$ be a bounded relatively very ample exceptional divisor on $X$ over $X_0$ such that $C\cdot(\mathbf{M}_X+G)\geq L'$ for every curve $C\subseteq D$ containing $x$ along which $\mathbf{M}$ has positive multiplicity. Replacing $\mathbf{M}$ by $\mathbf{M}+[G]$, we may assume that $C\cdot\mathbf{M}_X\geq L'$ for every curve $C\subseteq D$ containing $x$ along which $\mathbf{M}$ has positive multiplicity.

Let $N'$ be a uniform bound number such that $D\cdot \mathbf{M}^2_X\leq N'$. Suppose $y_{L-1}\cap \tilde{D}\neq\varnothing$.

Then $\tilde{D}\cdot \mathbf{M}^2_{Y_i}=D\cdot \mathbf{M}^2_X-(i+1)a$ if $Y_i$ are point blow-ups.

Suppose that $Y_i$ are towers of section-blow-ups. Since $\mathbf{M}_{Y_i}$ are nef over $X$, we have 
\[
N'\geq D\cdot \mathbf{M}^2_X=D|_{Y_L}\cdot \mathbf{M}_{Y_L}^2\geq (\tilde{D}+E_0)\cdot \mathbf{M}_{Y_L}^2=\tilde{D}\cdot \mathbf{M}_{Y_L}^2+(L+1)a.
\]

Taking $L$ large enough, we may assume that $\mathbf{M}_{Y_L}$ is not nef over $X$.

So there is a curve $C\subseteq D$ containing $x$ along which $\mathbf{M}$ has positive multiplicity such that $\tilde{C}\cdot\mathbf{M}_{Y_L}<0$.
So, taking $L'\geq lK'$, we have $y_{K'}\cap\tilde{C}_0\neq \varnothing$ for some curve $C_0\subseteq D$ containing $x$ along which $\mathbf{M}$ has positive multiplicity.

For $1\leq i\leq K'$, we consider the following computations. By the boundedness of $C_0$ and $D$, on $Y_{i}$, $\tilde{C}_0\cdot \tilde{D}-\tilde{C}_0{\cdot_{\tilde{D}}} \tilde{C}_0\geq\alpha$, $\tilde{C}_0\cdot \mathbf{M}_{Y_{i}}=-ia+\alpha_1$, and $\tilde{C}_0{\cdot_{\tilde{D}}}\tilde{C}_0=-i+\alpha_2$ for some constants $\alpha,\alpha_1,\alpha_2\in (-b,b)$ for some fixed positive real numbers $b$ depending only on $N,k,\epsilon,n$ and $a$. Let $W$ be the blow-up of $Y_{K'}$ along $\tilde{C}_0$ with the exceptional divisor $E_{C_0}$ and let $C':=E_{C_0}\cap \tilde{D}$. Then $C'\cdot \mathbf{M}_W =\tilde{C}_0\cdot \mathbf{M}_{Y_{K'}}-(\operatorname{mult}_{\tilde{C}_0}\mathbf{M})C'\cdot E_{C_0}= - (a-\operatorname{mult}_{\tilde{C}_0}\mathbf{M}) K'+\alpha_1-(\operatorname{mult}_{\tilde{C}_0}\mathbf{M})\alpha_2\leq -\epsilon K'+\alpha_1+(a-\epsilon)|\alpha_2|$ and $C'\cdot_{E_{C_0}}C'=\tilde{C}_0\cdot_{Y_{K'}} \tilde{D}-\tilde{C}_0{\cdot_{\tilde{D}}} \tilde{C}_0\geq\alpha$, where $\tilde{D}$ is on $Y_{K'}$. Since $\operatorname{mult}_{C'\subseteq E_{C_0}}\mathbf{M}\leq a$, taking $K'$ large enough such that $\epsilon K'-b-(a-\epsilon)b>ab$, we get a contradiction to the nefness of $\mathbf{M}$ over $X_0$.
\end{proof}

\begin{lem}\label{curvebupinduction}
    Let $X$ be a smooth threefold, and let $C$ be a smooth curve on $X$ and $x\in C$ be a closed point. Let $n$ be a natural number. Suppose that $Y_n\to Y_{n-1}\to\dots\to Y_1\to Y_0:= X$ is a tower of point blow-ups at $x$ with the exceptional divisors $F_i\subseteq Y_i$ and centers $y_i\in Y_i$ with $y_i\in \tilde{C}$.
    
    Let $X_C$ be the blow-up of $X$ along $C$ with the exceptional divisor $E_C$. 

    Suppose that $E$ is a prime divisor over $X$ such that $c_{Y_n}(E_{\infty})\subseteq F_n$ and $c_{E_C}(E):=x_C$ is a closed point.

    Let $Z_n\to Z_{n-1}\to\dots\to Z_1\to Z_0:=X_C$ be the tower of blow-ups of centers of $E_{\infty}$ with the exceptional divisor $G_i\subseteq Z_i$ and the centers $z_i\subseteq Z_i$.

    Suppose $k\leq n$ is a non-negative integer such that $z_k$ is a closed point and either $\dim z_{k+1}>0$ or $k=n$.

 Then $z_{k}\in\tilde{E}_C$, and either $n=k$ or $z_{k+1}=G_{k+1}\cap \tilde{E}_C$.
\end{lem}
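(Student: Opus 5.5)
This is a local statement about where the centers of $E$ sit, so I would work in local analytic coordinates at $x$ and compare the two blow-up sequences through the universal property of blow-ups, as in Lemma \ref{bup} and Lemma \ref{bupmult}. The organising remark is this. The first point blow-up $Y_1\to X$ at $x\in C$, followed by the blow-up of $\tilde C$, is isomorphic to $X_C$ followed by the blow-up of the fiber $C_x\subseteq E_C$ over $x$ (Lemma \ref{bupmult}). Under this isomorphism, the exceptional divisor over the fiber identifies with $F_1$ as a prime divisor over $X$ (Lemma \ref{bup1}(1) and Lemma \ref{bup}).

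\textbf{First step.} Since $c_{Y_n}(E)\subseteq F_n\subseteq$ preimage of $x$, we get $c_X(E)=x$. Hence $z_0=x_C$ lies in $C_x\subseteq E_C=\tilde E_C$, which settles the case $k=0$ of the first assertion.

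\textbf{Induction.} Assume $z_0,\dots,z_{k-1}$ are closed points. I would prove by induction on $j\le k$ that $z_j\in\tilde E_C$, and moreover that $G_j$ coincides, as a prime divisor over $X$, with the exceptional divisor obtained by blowing up $Y_j$ along $\tilde C$ over $y_{j-1}$. The inductive step applies Lemma \ref{bup} to the pair $(Y_{j-1},\tilde C)$ at the point $y_{j-1}\in\tilde C$:
\begin{itemize}
\item the strict transform of $\tilde C$ meets $F_j$ at exactly one point;
\item since $y_j\in\tilde C$ by hypothesis, the blow-up centers follow $\tilde C$;
\item transported to the $Z$-tower, a center lying on $\tilde C$ corresponds to a center on $\tilde E_C$, because $\tilde E_C$ is precisely the divisor swept out over the curve.
\end{itemize}
Concretely, once the local model of $(\tilde C\subseteq Y_j)$ is blown up along $\tilde C$, it is isomorphic near $z_j$ to $(\tilde E_C\subseteq Z_j)$ with $G_j$ matched to $F_j$. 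Since $c_{Y_n}(E)\subseteq F_n$, the center of $E$ on $Z_j$ must then lie in $G_j\cap\tilde E_C$ whenever it is a point. This gives $z_k\in\tilde E_C$.

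\textbf{Second assertion.} If $k<n$, then $z_{k+1}$ is positive dimensional and contained in $G_{k+1}$. Because $E$ still has center inside $F_n$, and $F_{k+1}$ corresponds to the divisor extracted from $\tilde E_C\cap G_{k+1}$ (Lemma \ref{bup1}(2) and Lemma \ref{primedivisors}), the only curve available as the center is $G_{k+1}\cap\tilde E_C$. Any other curve in $G_{k+1}\cong\mathbb P^2$ or in the ruled surface would produce a divisor whose center on $Y_{k+1}$ is not contained in $F_{k+1}$, and hence not in $F_n$.

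\textbf{Main obstacle.} The hardest part is making the identification between the two towers precise at each step: tracking which points of $G_j$ correspond to points of $F_j\setminus\tilde F_{j-1}$ versus points on $\tilde E_C$. I would first handle this by an explicit coordinate computation, taking $C=\{y=z=0\}$ and comparing charts, and only then induct.
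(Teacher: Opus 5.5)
\textbf{The identification of the two towers is false.} Your argument relies on the claim that blowing up $\tilde C$ on $Y_j$ gives a model isomorphic near $z_j$ to $(\tilde E_C\subseteq Z_j)$, with $G_j$ matched to $F_j$. In the second assertion you also use that $F_{k+1}$ is the divisor extracted from $\tilde E_C\cap G_{k+1}$. Neither holds.
\begin{itemize}
\item By Lemmas \ref{bup} and \ref{bupmult}, $\operatorname{Bl}_{\tilde C}Y_j$ is obtained from $X_C$ by blowing up \emph{curves}: the fiber $f=C_x$, then $H_1\cap\tilde E_C$, and so on. This is the auxiliary tower $W_j$ in the paper's proof, and it does not depend on $E$.
\item The tower $Z_j$ blows up the \emph{points} $z_i$, which depend on $E$, and its exceptional divisors are different valuations.
\item Concretely, take local parameters $(t,y,z)$ at $x$ with $C=\{y=z=0\}$ and $z_0$ the origin of the chart $(t,y,w=z/y)$. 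Then $G_1$ is the monomial valuation with weights $(1,1,2)$ on $(t,y,z)$, and the divisor extracted from $G_1\cap\tilde E_C$ has weights $(1,2,3)$. By contrast, $F_1$ and $F_2$ have weights $(1,1,1)$ and $(1,2,2)$.
\end{itemize}
The paper relates the two towers by an induction that compares, on a model $T$, the blow-up of a curve $C'\subseteq\tilde E_C$ through $c_T(E)$ with the blow-up of the point $c_T(E)$. Your proposal has nothing in place of this.

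\textbf{The missing idea is a count, and the statement fails without it.} A qualitative ``the centers follow $\tilde C$'' argument cannot see the mechanism. Put $v=\operatorname{ord}_E$.
\begin{itemize}
\item The hypothesis $c_{Y_n}(E)\subseteq F_n$ amounts to $v(t)>0$ and $v(I_C)>(n-1)v(t)$.
\item Each $z_i$ lies over $x$, so $v(\mathfrak m_{z_i})\le v(t)$.
\item If $z_0,\dots,z_j$ are points of $\tilde E_C$, the local equation of $\tilde E_C$ at $z_j$ has value $v(I_C)-\sum_{i<j}v(\mathfrak m_{z_i})$.
\item When this value exceeds $v(\mathfrak m_{z_j})$, the center on $Z_{j+1}$ lies on the line $G_{j+1}\cap\tilde E_C$, which gives both assertions at that step.
\end{itemize}
The hypothesis only guarantees this inequality for $j\le n-2$, and the statement as literally written fails at the boundary.

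Take $n=2$ and $v$ monomial with weights $(1,2,7)$ on $(t,y,z)$. Then $E$ is centered at $y_1$ on $Y_1$, so $c_{Y_2}(E)\subseteq F_2$, and $z_0,z_1\in\tilde E_C$ are closed points. In coordinates $(t,y/t,w/t)$ at $z_1$ the weights are $(1,1,4)$. So the center $z_2$ is the line $\{w/t=0\}\subseteq G_2$, not $G_2\cap\tilde E_C=\{y/t=0\}$. Meanwhile the center of $E$ on $Y_2$ is still a line inside $F_2$. This directly contradicts your claim that any other curve would give a center not contained in $F_{k+1}$.

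Similarly, weights $(2,3,8)$ with $k=n=2$ give a closed point $z_2\notin\tilde E_C$, so the first assertion fails too.

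So the lemma needs either $c_{Y_n}(E)$ to be the point $\tilde C\cap F_n$ (that is, $v(I_C)>n\,v(t)$), or $k\le n-2$. Under that corrected hypothesis, the valuative count above is a complete proof.
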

\begin{proof}
    Let $f$ be the fiber of $E_C\to C$ over $x$.
    Let $W_n\to W_{n-1}\to\dots\to W_1\to W_0:= X_C$ the tower of blow-ups along $f\subseteq E_C$ with the exceptional divisors $H_i\subseteq W_i$.
    By Lemma \ref{bup}, $W_n$ is the blow-up of $Y_n$ along $\tilde{C}$.

    By assumption, $z_0:=c_{X_C}(E_{\infty})$ is a closed point.

    Since a center of a prime divisor is always an irreducible subvariety, $c_{W_{n-1}}(E)=H_{n-1}\cap\tilde{E}_C\cap \varphi_{n-1}^{-1}(z_0)$ is a closed point, where $\varphi_{n-1}$ is the morphism $W_{n-1}\to X_C$ constructed above.

    It follows inductively on $k>0$ that if $T$ is a birational model over $X_C$ and $C'\subseteq \tilde{E}_C\subseteq T$ is a curve smooth at $c_T(E_{\infty})\in C'$ with $X_{C'}$ the blow-up of $T$ along $C'$ such that the tower of blow-ups of length $k-1$ of centers of $E_{\infty}$ starting form $X_{C'}$ has blow-up centers closed points in $\tilde{E}_C$ followed by possibly a curve as the intersection of the last exceptional divisor with $\tilde{E}_C$, then the tower of blow-ups of length $k$ of centers of $E_{\infty}$ starting form $T$ has blow-up centers closed points in $\tilde{E}_C$ followed by possibly a curve as the intersection of the last exceptional divisor with $\tilde{E}_C$, which shows the lemma.
\end{proof}

\begin{lem}
    Let $X\to Z$ be a birational morphism of smooth surfaces.
    Let $\delta$ be a positive real number and $N$ be a natural number.
    Let $M$ be a b-nef b-$\mathbb{R}$-Cartier divisor with coefficients at least $\delta$ over $Z$.
    Let $D$ be a reduced divisor on $X$ exceptional over $Z$ such that $D\cdot M_X\leq N$.
    Let $X_k\to X_{k-1}\to \dots \to X_1\to X_0:=X$ be any sequence of blow-ups at closed point centers $x_i\in X_i$.
    Let $D'_i\subseteq X_i$ be the reduced divisor supported on the pullback of $D$ on $X_i$.
    Then the number of $i\in [0,k-1]$ satisfying $\operatorname{mult}_{x_i}(D'_i)>1$ and $\operatorname{mult}_{x_i}(M)>0$ is at most $\frac{N}{\delta}$.
\end{lem}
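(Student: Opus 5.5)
We argue by a monotone invariant that drops by at least $\delta$ at each ``bad'' blow-up. Set $M_i:=\mathbf{M}|_{X_i}$ and $D_i:=D|_{X_i}$ (the pullback of $D$, not reduced), so $D'_i=\red{D_i}$. The quantity to track is
\[
\Phi_i:=D'_i\cdot M_i .
\]
We have $\Phi_0=D\cdot M_X\le N$. We will show that $\Phi_i\ge 0$ for all $i$, and that $\Phi_{i+1}\le \Phi_i-\delta$ whenever $x_i$ is bad, i.e.\ $\operatorname{mult}_{x_i}(D'_i)>1$ and $\operatorname{mult}_{x_i}(M)>0$. These two facts together bound the number of bad indices by $N/\delta$.

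For the change of $\Phi$ under one blow-up $\pi_i:X_{i+1}\to X_i$ with exceptional curve $E_{i+1}$, write $m_i:=\operatorname{mult}_{x_i}\mathbf{M}$. Then $M_{i+1}=\pi_i^*M_i-m_iE_{i+1}$. Since $D'_i$ is reduced, its pullback is $\tilde D'_i+\mu_iE_{i+1}$, where $\mu_i=\operatorname{mult}_{x_i}D'_i$ is a nonnegative integer. Now $D'_{i+1}$ is supported on $\tilde D'_i\cup E_{i+1}$ when $x_i\in D'_i$, so $D'_{i+1}=\tilde D'_i+E_{i+1}$. Using $E_{i+1}\cdot \pi_i^*(\cdot)=0$ and $E_{i+1}^2=-1$, we get
\[
\Phi_{i+1}=(\pi_i^*D'_i-(\mu_i-1)E_{i+1})\cdot(\pi_i^*M_i-m_iE_{i+1})=\Phi_i-(\mu_i-1)m_i .
\]
This is valid when $x_i\in D'_i$. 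If $x_i\notin D'_i$, then $\Phi$ is unchanged.

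We need $m_i\ge 0$ and $\mu_i\ge 1$ on $D'_i$, so $\Phi$ never increases. The first fact $m_i\ge 0$ should follow from the negativity lemma (Lemma \ref{neglem}), as in the proof of Lemma \ref{negcurve}. Because $\mathbf{M}$ is b-nef over $Z$ and the $X_i$ are birational over $Z$, the difference $\pi^*M_i-M_{W}$ on a model $W$ where $\mathbf{M}$ descends is effective, so $\operatorname{mult}_{x_i}\mathbf{M}\ge0$. The second fact holds since $\mu_i$ is a positive integer. At a bad step, $\mu_i\ge 2$, so the drop $(\mu_i-1)m_i$ is at least $m_i$.

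The main obstacle is to show $m_i\ge\delta$ rather than merely $m_i>0$. For this we use the hypothesis that the coefficients of $\mathbf{M}$ are at least $\delta$. I would interpret this (as in Theorem \ref{surf}) as saying that every nonzero value $\operatorname{ord}_{E/X}$ of the exceptional b-divisor is at least $\delta$. Then $m_i=\operatorname{ord}_{E_{i+1}/X_i}(\mathbf{M})$, and by Remark \ref{ordX/Z} this equals $\operatorname{coeff}_{E_{i+1}}(M_i|_{X_{i+1}}-M_{i+1})$, a nonzero order. Hence $m_i>0$ forces $m_i\ge\delta$, and each bad step drops $\Phi$ by at least $\delta$. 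If this reading of the coefficient condition fails, I would instead write $m_i$ as a difference of orders on $X_{i+1}$ versus $X_i$ with respect to the fixed model over $Z$ and appeal to the discreteness of those orders. The careful step is justifying that $m_i$ lies in the coefficient set.

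Finally we need $\Phi_k\ge0$. Every component of $D'_k$ is exceptional over $Z$ (it is either a strict transform of a component of $D$ or an exceptional curve of $X_k\to X$), and $M_k$ is the trace of a b-nef$/Z$ divisor. By the argument of Lemma \ref{a} (nef over $Z$ intersected with an effective exceptional curve) and Lemma \ref{negcurve}, $M_k\cdot\Gamma\ge0$ for each such curve $\Gamma$ wherever the multiplicity of $\mathbf{M}$ along $\Gamma$ vanishes; since $\Gamma$ is a curve on a surface, that multiplicity is always $0$. So $0\le\Phi_k\le N-\delta\cdot\#\{\text{bad } i\}$, which proves the lemma.
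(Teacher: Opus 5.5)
Your proposal is correct and is essentially the paper's proof. It uses the same invariant $D'_i\cdot M_i$, the same one-step identity $\Phi_{i+1}=\Phi_i-(\mu_i-1)m_i$ (with steps off $D'_i$ leaving $\Phi$ unchanged), the dichotomy $m_i\in\{0\}\cup[\delta,\infty)$, and nefness over $Z$ of the trace $M_k$ to get $\Phi_k\geq 0$. The step you left as an ``interpretation'', which the paper also simply asserts, follows directly from the definition of $\operatorname{coeff}(\mathbf{M})$: writing $\mathbf{M}=\sum_j r_j[M'_j]$ with $M'_j$ Cartier and nef$/Z$ and $r_j\geq\delta$, one gets $m_i=\sum_j r_j\operatorname{ord}_{E_{i+1}/X_i}([M'_j])$, where each order is a nonnegative integer (integral because $X_i$ is smooth, nonnegative by the negativity lemma). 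So your fallback via ``discreteness'' is not needed.
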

\begin{proof}
     Since the statement is local over $D$, we may assume that $x_i\in D'_i$ for all $i$.
     Let $M_i$ be the image of $M$ on $X_i$.
     Denote the blow-up $X_i\to X_{i-1}$ by $\varphi_i$ and denote $E_i$ by the exceptional divisor.
     Then for $0\leq i\leq k-1$, we have
     $D'_{i+1}M_{i+1}=\big(\varphi^*_{i+1}D'_i-(\operatorname{mult}_{x_i}(D'_i)-1)E_{i+1}\big)\big(\varphi^*_{i+1}M_i-\operatorname{mult}_{x_i}(M)E_{i+1}\big)=D'_iM_i-(\operatorname{mult}_{x_i}(D'_i)-1)\operatorname{mult}_{x_i}(M)$.
    Since $(\operatorname{mult}_{x_i}(D'_i)-1)\in \mathbb{Z}_{\geq 0}$ and $\operatorname{mult}_{x_i}(M)\in \{0\}\cup [\delta,\infty)$,
    we have $(\operatorname{mult}_{x_i}(D'_i)-1)\operatorname{mult}_{x_i}(M)\in \{0\}\cup [\delta,\infty)$ and the statements follows by the assumption $D\cdot M_X\leq N$ and the nefness over $Z$ of $M_i$.
\end{proof}
\begin{lem}\label{cut}
Fix a bounded family of varieties $\mathcal{P}$, a natural number $n$ and a positive real number $\delta$.
Then there is a natural number $N$ satisfying the following.
    Let $Y\to Z$ be birational varieties of dimension $3$.
    Assume that $(Y,D)$ is  log smooth, where $D\in \mathcal{P}$ is an effective divisor on $Y$ exceptional over $Z$ onto a closed point of $Z$,
    $B$ is an effective divisor on $Z$ and $M'\subseteq Y'\to Y$ is an exceptional relatively nef over $Z$ $\mathbb{R}$-Cartier divisor such that $\operatorname{ord}_{D_0}(B+[M'])\leq n$ for every component $D_0$ of $D$.
    Assume that coefficients of $B$ and $M'$ are at least $\delta$.
    Let $Y_k\to Y_{k-1}\to \dots \to Y_1\to Y_0:=Y$ be any sequence of blow-ups at centers $y_i\in Y_i$.
    Let $D'_i\subseteq Y_i$ be the reduced divisor supported on the pullback of $D$ on $Y_i$.
    Let $M_i$ be the image of $[M']$ on $Y_i$.
    Then the number of $i\in [0,k-1]$ satisfying $\operatorname{mult}_{y_i}(D'_i)>1$, $\operatorname{mult}_{y_i}(\tilde{B}+M_i)>0$ and $\dim c_Y(y_i)>0$ is at most $N$.
\end{lem}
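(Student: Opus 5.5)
The plan is to reduce Lemma \ref{cut} to the surface lemma immediately preceding it (the count of blow-ups with $\operatorname{mult}_{x_i}(D'_i)>1$ and $\operatorname{mult}_{x_i}(M)>0$) by cutting with a general hyperplane section. The blow-ups in question have centers $y_i$ with $\dim c_Y(y_i)>0$, so $c_Y(y_i)$ is a curve. Since $D$ maps to a closed point of $Z$, this curve lies in $D$ and is exceptional over $Z$.

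\textbf{Step 1: bound the relevant curves.} Because $D\in\mathcal{P}$ is bounded, I fix a relatively very ample exceptional divisor $H$ on $Y$ over $Z$ with bounded coefficients and bounded degree on the components of $D$. Set $M:=B|_{Y}$-strict part plus $[M']$; more precisely, I work with the b-divisor $\mathbf{B}+[M']$. Here $\tilde{B}$ is potentially nef by Lemma \ref{pot_nef}, and $[M']$ is b-nef$/Z$. The hypothesis $\operatorname{ord}_{D_0}(B+[M'])\leq n$ bounds the coefficients of the exceptional part on $Y$. Exactly as in Lemma \ref{boundedcurves}, this gives a bound $(H\cap D)\cdot(\tilde{B}+M_0)\leq N_1$ with $N_1$ depending only on $\mathcal{P}$, $n$ and $\delta$. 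By Lemma \ref{boundedcenter}, the curves in $D$ along which $\tilde B+M_0$ has positive multiplicity are then bounded in $H$-degree, and there are finitely many of them.

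\textbf{Step 2: slice by a general surface.} Let $S\in|H|$ be general, and let $S_i$ be its strict transform on $Y_i$. Blow-ups at closed points, or at curves not dominating a curve of $Y$, miss the general $S$. A blow-up along a curve $y_i$ with $c_Y(y_i)$ a curve restricts on $S$ to point blow-ups at the finitely many points $y_i\cap S_i$. These points are distinct points of $D'_i|_{S_i}$ with the same multiplicities of $D'_i$ and of $\tilde B+M_i$. On $S$ I therefore get a sequence of point blow-ups $S_{k'}\to\dots\to S_0=S$, together with the reduced exceptional divisor $D|_S$ over the image of $S$ in $Z$ (a surface over a surface, generically, after Stein factorization). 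The restriction of $\tilde{B}+M_i$ is b-nef over the base on $S$, since restricting a realizing nef model to a general member preserves nefness. Its coefficients remain $\geq\delta$, and $D|_S\cdot(\tilde B+M)|_S=(H\cap D)\cdot(\tilde B+M_0)\leq N_1$.

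\textbf{Step 3: apply the surface lemma and handle the main obstacle.} The surface lemma shows the number of point blow-ups on $S$ with $\operatorname{mult}(D'_i|_S)>1$ and positive multiplicity of the nef part is at most $N_1/\delta$. Each counted $i$ in the threefold sequence contributes at least one such point blow-up, so $N:=\lfloor N_1/\delta\rfloor$ works.

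The main obstacle is that $\tilde B$ itself is not b-nef; only its realization is, via Lemma \ref{pot_nef}. I would first replace $\tilde B$ by the nef b-divisor $\mathbf{B}$ of Lemma \ref{pot_nef}, as in Lemma \ref{B=0} in the surface proof of Theorem \ref{surf}. This b-divisor has the same orders along the relevant divisors and coefficients in the coefficient set of $B$, hence coefficients $\geq\delta$. I would then check that multiplicities along the centers $y_i$ agree with those of $\tilde B+M_i$. This is where positivity of $\operatorname{mult}_{y_i}$ must be transferred carefully, using Lemma \ref{mult} and the negativity lemma \ref{neglem}.
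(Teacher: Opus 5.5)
Your proposal is correct, and it is the paper's argument moved up one dimension rather than a genuinely new route. The paper slices all the way down to a curve. It takes $C_0=\sum_j C_j$ with $C_j$ a general member of a bounded very ample system on each component $D_j$. It lets $C_{i+1}$ be the strict transform of $C_i$ plus a fibre of $E_{i+1}\to y_i$ whenever $c_Y(y_i)$ is a curve. It then reads off $0\le C_{i+1}\cdot(\tilde B+M_{i+1})\le C_i\cdot(\tilde B+M_i)-(\operatorname{mult}_{y_i}D'_i-1)\operatorname{mult}_{y_i}(\tilde B+M_i)$.

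Your $D|_S$ is exactly such a $C_0$, with $C_j=S\cap D_j$. The reduced total transform of $D|_S$ on $S_i$ is the paper's $C_i$, with fibres over every point of $y_i\cap S_i$ instead of just one. So after slicing, the proof of the surface lemma preceding Lemma \ref{cut} is literally the paper's computation.

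Your packaging buys two things. It reduces to an already proved statement. And on a surface the image of a b-nef divisor is automatically nef over the base, whereas in the threefold the paper's inequality $C_{i+1}\cdot M_{i+1}\ge 0$ silently needs $C_{i+1}$ to avoid the centre of $[M']$. The paper's packaging only needs curves intrinsic to each $D_j$, rather than a relatively very ample divisor on $Y$. Both versions need the numbers $C_0\cdot D_j$ to be bounded, which neither really extracts from $D\in\mathcal{P}$.

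Two small corrections. Each counted $i$ yields a point blow-up on $S$ because $\operatorname{mult}_{y_i}D'_i>1$ forces $c_Y(y_i)\subseteq D$, where $H$ is relatively ample, so $S$ meets $c_Y(y_i)$; it is not because $D$ maps to a point. Also, the finiteness claim via Lemma \ref{boundedcenter} in Step 1 is never used and can be dropped.

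The ``main obstacle'' in your last paragraph disappears once you have sliced. The detour through Lemma \ref{pot_nef} (stated for boundaries with coefficients in $[0,1]$, in a different set-up) and through the negativity lemma is unnecessary. Here is why. For general $S$, the locus $\tilde B\cap\operatorname{Exc}(Y_k\to Z)$ on $Y_k$ has dimension at most one, so it meets the strict transform $S_k$ in finitely many points. Hence $\tilde B|_{S_k}$ is effective with no component exceptional over the image of $S$ in $Z$, so it meets every exceptional curve non-negatively. It follows that $[\tilde B|_{S_k}]$ is b-nef over that image, its image on each $S_i$ is $\tilde B|_{S_i}$, and its multiplicity at each blown-up point is tautologically $\operatorname{mult}(\tilde B|_{S_i})$. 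You can therefore feed $[\tilde B|_{S_k}]+[M']|_S$ into the surface lemma as it stands.
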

\begin{proof}
There is a fixed natural number $k$ such that for each component $D_j$ of $D$, there is a very ample divisor $H_{D_j}$ on $D_j$ with volume at most $k$.
Let $C_0=\sum C_j$, where $C_j$ is a general member of $|H_{D_j}|$ and the sum is taken over irreducible components $D_j$ of $D$.
Then there is a fixed natural number $L$ such $C_0(\tilde{B}+M_i)\leq L$.
Defined inductively $C_i\subseteq Y_i$ as follows.
If $\dim c_Y(y_i)=0$, then we set $C_{i+1}$ to be the strict transform of $C_i$.
If $\dim c_Y(y_i)>0$, then we set $C_{i+1}$ to be the strict transform of $C_i$ plus a general fiber of $E_{i+1}\to y_i$, where $E_{i+1}$ is the exceptional divisor of $Y_{i+1}\to Y_i$.
Then we have $0\leq C_{i+1}(\tilde{B}+M_{i+1})\leq C_{i}(\tilde{B}+M_{i})-(\operatorname{mult}_{y_i}(D'_i)-1)\operatorname{mult}_{y_i}(\tilde{B}+M_i)\dim c_Y(y_i)$.
\end{proof}
\begin{thm}[{\cite[Theorem 5.18]{HLS24}}]\label{HLS}
    Let $d$ be a positive integer, $\alpha$ a positive real number, and $\Gamma\subseteq [0,1]$ a closed DCC set. Then there exist a finite set $\Gamma'\subseteq \Gamma$ and a projection $g:\Gamma\to \Gamma'$ (i.e. $g\circ g=g$) depending only on $d$, $\alpha$, and $\Gamma$ satisfying the following.
    
    Let $(X,B:=\sum_{i=1}^sb_iB_i)$ be an lc pair of dimension $d$, such that each $b_i\in\Gamma$ and each $B_i$ is a $\mathbb{Q}$-Cartier Weil divisor. Then
    \begin{itemize}
        \item
        $\gamma+\alpha\geq g(\gamma)\geq \gamma$ for any $\gamma\in \Gamma$, $g(\gamma')\geq g(\gamma)$ for any $\gamma'\geq\gamma$ in $\Gamma$
        \item $\big(X,\sum_{i=1}^sg(b_i)B_i\big)$ is lc.
    \end{itemize}
\end{thm}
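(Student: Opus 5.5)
The statement is \cite[Theorem 5.18]{HLS24}, and we can simply quote it. If we wanted to reprove it, the plan would be to reduce it to the ACC for log canonical thresholds of Hacon--McKernan--Xu, in its form for DCC coefficient sets. Everything hinges on choosing the finite set $\Gamma'$ correctly. We define $g$ by rounding up: $g(\gamma)=\min\{\gamma'\in\Gamma' : \gamma'\geq\gamma\}$. This makes $g$ non-decreasing and gives $g(\gamma)\geq\gamma$ automatically. It is a projection onto $\Gamma'$ as soon as $\max\Gamma\in\Gamma'$, and it sends $\Gamma$ into $\Gamma'\subseteq\Gamma$. So the two real requirements are the inequality $g(\gamma)\leq\gamma+\alpha$ and the preservation of log canonicity.

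\emph{Step 1 (reduction to a statement about limits).} We argue by contradiction. Suppose that for every finite $\Gamma'\subseteq\Gamma$ that is $\alpha$-dense from above in $\Gamma$ (meaning every $\gamma\in\Gamma$ lies below some $\gamma'\in\Gamma'$ with $\gamma'-\gamma\leq\alpha$), some lc pair fails after rounding up to $\Gamma'$. Such $\Gamma'$ exist because $\Gamma$ is closed, DCC and bounded, hence compact. Enlarge $\Gamma'$ along an increasing exhausting sequence of finite subsets $\Gamma'_1\subseteq\Gamma'_2\subseteq\cdots$. This produces lc pairs $(X_j,\sum_i b_{ij}B_{ij})$ with $b_{ij}\in\Gamma$ whose roundings $(X_j,\sum_i g_j(b_{ij})B_{ij})$ are not lc. Moreover, since the $\Gamma'_j$ exhaust a dense subset, $g_j(b)-b\to 0$ uniformly along the relevant subsequences.

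\emph{Step 2 (bounding the number of components and taking limits).} Using the DCC property, we bound the number of components of $B_j$ with coefficient at least a fixed positive number through the lc condition at a general point of $X_j$ on which they meet. Components with small coefficients are handled by first putting $0$ and a lower cutoff into $\Gamma'$. Next we pass to a subsequence so that, for each $i$, the sequence $b_{ij}$ is non-decreasing with limit $\bar b_i\in\Gamma$; closedness of $\Gamma$ is used here. We also arrange that $g_j(b_{ij})$ converges to the same limit $\bar b_i$.

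\emph{Step 3 (contradiction via ACC for lc thresholds).} Consider the lc thresholds $t_j$ of $(X_j,B_j)$ with respect to $D_j:=\sum_i\big(g_j(b_{ij})-b_{ij}\big)B_{ij}$, that is, the largest $t$ with $(X_j,B_j+tD_j)$ lc. By assumption $t_j<1$. Two inputs are needed.
\begin{itemize}
\item First, the key fact: lc pairs with coefficients in a DCC set stay lc when each coefficient is replaced by its limit. This is \cite[Theorem 1.5]{HMX14}-type "rounding to accumulation points", and it gives lc-ness of $(X_j,\sum\bar b_iB_{ij})$ for $j\gg0$.
\item Second, the ACC for lc thresholds.
\end{itemize}
Since $g_j(b_{ij})$ lies between $b_{ij}$ and $\bar b_i$, convexity of the lc condition then yields lc-ness of the rounded pair, which is the desired contradiction.

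The main obstacle is exactly the first input, the limit-rounding statement. In higher dimension it is a deep result: its proof needs adjunction to a non-klt centre and global ACC in one dimension lower. I would quote it from Hacon--McKernan--Xu rather than reprove it. The remaining difficulty is bookkeeping: making sure a single finite $\Gamma'$ works for all pairs at once, rather than one per sequence. I would try to handle this with a compactness argument on $\Gamma$, covering $\Gamma$ by finitely many intervals $[\gamma'-\alpha',\gamma']$ on which the limit statement applies uniformly.
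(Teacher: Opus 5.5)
Quoting \cite[Theorem 5.18]{HLS24} is exactly what the paper does, since it gives no proof of its own. As a proof of the statement, your first sentence is therefore all that is needed.

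The optional reproof sketch, however, is not sound as written, and it is more roundabout than necessary.

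In Step 2, looking at a general point where the components meet does not bound their number. The components can meet only at singular points of $X_j$, or all contain them (for example, many lines through an $A_1$ point of a surface). There the smooth-point bound $\operatorname{mult}_x\Delta\le d$ is unavailable. Your remark about small coefficients is moot, because DCC already gives $\delta:=\min(\Gamma\setminus\{0\})>0$.

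In Step 3, the thresholds $t_j$ play no role. Their coefficients $g_j(b_{ij})-b_{ij}$ tend to $0$ and do not form a DCC set, so ACC could not be applied to them anyway. The ``limit-rounding'' fact you treat as the deep input is itself a two-line consequence of the ACC for lc thresholds of Hacon--McKernan--Xu.

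Using that ACC directly removes the need to count components or take limits:
\begin{enumerate}
\item Pick a finite $\Gamma'_j\subseteq\Gamma$ containing $\max\Gamma$ (and $0$ if $0\in\Gamma$) such that every $\gamma\in\Gamma$ lies within $1/j$ below some element of $\Gamma'_j$. You can build it greedily downward from $\max\Gamma$, using that $\Gamma$ is closed. Let $g_j$ round up to $\Gamma'_j$.
\item Suppose $(X_j,\Delta_j)$ is lc while $(X_j,\Delta'_j)$ is not, where $\Delta'_j:=\sum_i g_j(b_{ij})B_{ij}$.
\item Because the $B_{ij}$ are $\mathbb{Q}$-Cartier, $(X_j,0)$ is lc and $\Delta'_j$ is $\mathbb{R}$-Cartier. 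Its coefficients lie in the DCC set of finite sums of elements of $\Gamma$, and $\Delta'_j\le(1+\frac{1}{j\delta})\Delta_j$.
\item Hence $\lambda_j:=\operatorname{lct}(X_j,0;\Delta'_j)$ satisfies $(1+\frac{1}{j\delta})^{-1}\le\lambda_j<1$. So $\lambda_j\to 1^-$ along infinitely many distinct values, which contradicts the ACC.
\item Taking $j$ with $1/j\le\alpha$ then gives the required $\Gamma'$ and $g$.
\end{enumerate}
Since the contradiction is set up over all pairs at once, this also removes your concern about a single $\Gamma'$ working for every pair.
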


\begin{thm}\label{main2}
Fix natural numbers $n$ and $N$, a positive real number $\delta$ and a DCC set $I\subseteq\mathbb{R}$. Then there exists an ACC set $J\subseteq\mathbb{R}_{\geq 0}$ satisfying the following.
If $(X,\Delta+B+\mathbf{M})$ is a generalized sub-lc pair of dimension $2$ satisfying the following conditions
\begin{itemize}
  \item $(X+\Delta)$ is a log smooth pair,
    \item $B=B^+-B^-$ with $B^+$ and $B^-$ effective divisors,
    \item $\mathbf{M}$ is a b-nef b-divisor over $X$ with image $M$ on $X$,
    \item the coefficients of $\Delta$, $B$ and $\mathbf{M}$ are in $I$,
    \item the coefficients of $B^+$ and $\mathbf{M}$ are at least $\delta$,
\end{itemize}
then for any closed point $x\in X$ with $\big(\red{B^-}\cdot (B^++M)\big)_x\leq N\delta$,
we have $\operatorname{mld}_x(X,\Delta+B+\mathbf{M})\in J$.
\end{thm}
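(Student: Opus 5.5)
We argue by contradiction: suppose there are generalized sub-lc surfaces $(X_i,\Delta_i+B_i+\mathbf{M}_i)$ and closed points $x_i$ satisfying the hypotheses of Theorem \ref{main2}, with $m_i:=\operatorname{mld}_{x_i}(X_i,\Delta_i+B_i+\mathbf{M}_i)$ strictly increasing. Since $m_i\geq 0$ and the coefficients of $\Delta_i$ lie in $I\cap[0,1]$, the number of components of $\Delta_i$ through $x_i$ is at most $2$ (log smoothness). Passing to a subsequence, we fix this local configuration, and we may work locally analytically around $x_i$ with $X_i$ smooth.

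\textbf{Step 1: reduce to bounded multiplicity at $x_i$.} We first treat the case where $x_i\notin\red{B^-_i}$. Near $x_i$ we then have $B_i=B^+_i$, so the pair is an honest generalized lc pair there. Using Lemma \ref{B=0}, we replace $B^+_i$ by a b-nef$/X_i$ exceptional b-divisor. This has coefficients in $I$ and does not change the log discrepancy of any divisor computing the mld, so we may fold $B^+_i$ into $\mathbf{M}_i$. We are then reduced to $(X_i,\Delta_i+\mathbf{M}_i)$ with $\Delta_i$ snc. Here $\operatorname{mult}_{x_i}\mathbf{M}_i\leq 2-m_i\leq 2$ because $a_{E_{x_i}}\geq m_i$. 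Moreover, since $\operatorname{coeff}(\mathbf{M}_i)\geq\delta$ and $\mathbf{M}_i$ is b-nef, the multiplicities of $\mathbf{M}_i$ along the tower of blow-ups computing the mld form a DCC-type set. In the case $x_i\in\red{B^-_i}$, we use the hypothesis $\big(\red{B^-_i}\cdot(B^+_i+M_i)\big)_{x_i}\leq N\delta$ instead. By the argument of the unnumbered lemma preceding Lemma \ref{cut} (the surface version, counting blow-ups with $\operatorname{mult}(D'_i)>1$ and $\operatorname{mult}(M)>0$), only $N$ blow-ups in the tower over $x_i$ can meet $\red{B^-_i}$ at a point where $B^+_i+\mathbf{M}_i$ has positive multiplicity. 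After these boundedly many steps, the remaining blow-ups either avoid $\red{B^-_i}$ or see $\mathbf{M}_i$ with multiplicity zero, so $B^-_i$ contributes only a bounded integral/DCC amount.

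\textbf{Step 2: bound the length of the relevant tower.} Let $F_i$ be a divisor computing $m_i$, and let $X_{ik}\to\cdots\to X_i$ be the blow-ups of its centers. We aim to show that, after passing to a subsequence, either the length $k$ is bounded or $m_i$ is eventually determined by a bounded-length tower. For the second alternative we use Theorem \ref{surf}. It gives an $\epsilon>0$, depending only on $N$ and $\delta$, such that $(X_i,B_i+\mathbf{M}_i+\epsilon m_i\mathbf{E}_i)$ is lc at $x_i$. Hence $\operatorname{ord}_{F_i}(\mathbf{E}_i)\leq 1/\epsilon$ whenever $m_i>0$, which bounds the number of blow-ups through $x_i$ on which $\mathbf{E}_i$ increases, that is, the non-free points of the chain. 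The free (chain) points afterwards can be controlled by Lemma \ref{d} and Lemma \ref{a}: along a chain following a single curve $D$, the multiplicity sequence of $\mathbf{M}_i$ is nonincreasing, bounded by $M\cdot D$, and has values in $\{0\}\cup[\delta,\infty)$, so it stabilizes after boundedly many steps. If it reaches $0$, further blow-ups only increase the log discrepancy.

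\textbf{Step 3: finite data and ACC.} Once the tower length is bounded, $m_i=a_{F_i}$ is a linear expression $\sum c_j t_j$ with bounded nonnegative integer coefficients. Here $1-\operatorname{mld}$ is built from the coefficients $t_j$ of $\Delta_i$, $B^+_i$ and $\mathbf{M}_i$, which lie in the DCC set $I$, together with integer coefficients from $B^-_i$. The set of such values $\{\sum c_jt_j\}$ is DCC, so the set of mlds $\{k_0-\sum c_jt_j\}$ is ACC. Here Lemma \ref{lim} is used to extract subsequences on which the index at which the tower stabilizes is constant.

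The main obstacle is Step 2 in the case $m_i\to 0$, or when the tower is long through points of $\red{B^-_i}$. There Theorem \ref{surf} gives no bound, since $\epsilon m_i$ degenerates. In that case we would instead argue directly that an increasing sequence converging to a limit in $[0,\infty)$ must have $m_i$ bounded away from $0$ after finitely many terms: the first term is positive or the sequence is constant. This lets us apply Theorem \ref{surf} with $m_i\geq m_1>0$.
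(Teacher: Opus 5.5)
Your outline follows the paper's route in most places. You argue by contradiction along the tower of centers of $F_i$ using Lemma~\ref{lim}. You count steps via the fact that $\red{B^-}\cdot(B^++M)$ drops by at least $\delta$ whenever the center lies on the strict transform of $\red{B^-}$ with $\operatorname{mult}(B^++\mathbf{M})>0$. You use Theorem~\ref{surf} to bound the satellite steps; note that $m_i$ cancels in $m_i-\epsilon m_i\operatorname{ord}_{F_i}\mathbf{E}_i\ge 0$, so you only need $m_i>0$, which holds for $i\ge 2$. And you absorb $B^+_i$ into $\mathbf{M}_i$ via Lemma~\ref{B=0}.

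The gap is the free-chain part of Step 2. You bound the multiplicities along a chain following a curve $D$ by $M\cdot D$ using Lemma~\ref{a}, and conclude that they reach $0$ after boundedly many steps. But Lemma~\ref{a} requires $D$ to be exceptional over the base over which $\mathbf{M}$ is b-nef, and the germ followed by a free chain is not. The conclusion is in fact false. Take $X=\mathbb{A}^2$ with $\Delta=B=0$, and let $\mathbf{M}_i=\tfrac12[-G_i]$, where $\mathfrak{a}_i\mathcal{O}_{X'}=\mathcal{O}_{X'}(-G_i)$ on a principalization of $\mathfrak{a}_i=(y,x^i)$. Then $M_i=0$ and $\operatorname{lct}(\mathfrak{a}_i)=1+\frac1i>\frac12$, yet the multiplicities along the free chain following $\{y=0\}$ equal $\frac12$ for $i$ consecutive steps. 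So the multiplicity sequence can sit at a positive value for arbitrarily long, your clause ``if it reaches $0$'' never triggers, and Step 2 gives no bound. With coefficient $1$ in place of $\frac12$, every divisor in that chain computes the mld. So a bound on the tower can only come from choosing $F_i$ well and comparing log discrepancies, not from multiplicities vanishing.

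What is missing is the paper's endgame, which splits according to whether the stable multiplicity exceeds $1$.
\begin{itemize}
\item Choose $F_i$ so that $a_{F_{ij}}>m_i$ for every earlier $F_{ij}$ in its tower.
\item Along the chain, sub-lc gives $a_{F_{ij}}=1+j-b_i-\sum_{k\le j}a_{ik}\ge 0$.
\item The $a_{ik}$ lie in the DCC set of finite sums of elements of $I\cap\mathbb{R}_{\ge 0}$, which has a least element in $(1,\infty)$. Hence only boundedly many $a_{ik}$ exceed $1$, and after re-indexing with Lemma~\ref{lim} you may assume $a_{i0}\le 1$.
\item Apply Lemma~\ref{a} to the \emph{exceptional} curve $F_{i1}$. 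There $(\mathbf{B}^+_i+\mathbf{M}_i)|_{X_{i1}}\cdot F_{i1}=a_{i0}\le 1$, so $\operatorname{mult}_{x_{i1}\subseteq F_{i1}}(\mathbf{B}^+_i+\mathbf{M}_i)\le 1$.
\item Lemma~\ref{d} then gives $\operatorname{mld}_{x_{i1}}\ge a_{F_{i1}}$. So no divisor deeper in the tower, free or satellite, does better than $F_{i1}$, which contradicts the choice of $F_i$ once the towers are long.
\end{itemize}
This ``multiplicity $\le 1$, then Lemma~\ref{d}'' step is what your Step 2 needs in place of the vanishing claim.
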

\begin{proof}
Assume $x\in X$ is a closed point.
Let $\phi_x:X_x\to X$ be the blow-up of $X$ at $x$ with exceptional divisor $E_x$.
Let $\tilde{B^+}$,
$\tilde{B^-}$ and $\tilde{\Delta}$ be the strict transforms of $B^+$, $B^-$ and $\Delta$, respectively.
Let $M_x=\mathbf{M}|_{X_x}$.
Let $a=a_{E_x} (X,B+\mathbf{M})$.
 If $(\operatorname{mult}_xB^+
 +\operatorname{mult}_x\mathbf{M})=0$, then $E_x$ computes $\operatorname{mld}_x(X,\Delta+B+\mathbf{M})$.
Since $a=2-\operatorname{mult}_x(\Delta+B)-\operatorname{mult}_x\mathbf{M}$ is contained in the ACC set $ 2-I$, we may assume that 
$(\operatorname{mult}_xB^+
 +\operatorname{mult}_x\mathbf{M})
 \geq\delta$.





Assume that the statement does not hold. Then there is a sequence of generalized sub-lc pairs $(X_i,B_i+\mathbf{M}_i)$, for $i=1,2,3,\dots$ with $\Delta_i$, $\mathbf{M}_i$ satisfying the conditions in the statement such that $\{m_i\}_i$ is an increasing sequence for some
closed points $x_i\in X_i$, and positive real numbers $m_i= \operatorname{mld}_{x_i}(X,B_i+\mathbf{M}_i)$.

Let $F_i$ be a divisor over $X_i$ that computes $\operatorname{mld}_{x_i}(X,B_i+\mathbf{M}_i)$.

Let $X_{ij}\to\dots\to X_{i1}\to X_{i0}:=X_i$ be the sequence of blow-ups along the centers of $F_i$. Denote the morphism $X_{ij}\to X_{i(j-1)}$ by $\varphi_{ij}$, the exceptional divisor by $F_{ij}$ and the blow-up center by $x_{i(j-1)}$.
Let $M_{ij}=\mathbf{M}|_{X_{ij}}$.

By Lemma \ref{lim}, we may assume that $F_{ii}\neq F_i$ for all $i$.

    Possibly replacing $F_i$ by some $F_{ij}$, we may assume that $a_{F_{ij}}(X_i,B_i+\mathbf{M}_i)>m_i$ for all $F_{ij}\neq F_i$.
    This implies that $\operatorname{mult}_{x_{ij}}(B^+_{ij}+\mathbf{M}_i)\geq \delta$ for all $F_{ij}\neq F_i$, where $B^+_{ij}$ is the strict transform of $B^+$ on $X_{ij}$.

We claim that there is a fixed natural number $n$ such that $\operatorname{ord}_{F_i}\red{B^-_i}\leq n$ for all $i$.
By Theorem \ref{surf}, it is enough to show that there is a fix natural number $k$ such that $x_{ik}\not\in B^-_{ik}$ for all $i$, where $B^-_{ik}$ is the strict transform of $B^-$ on $X_{ik}$.
By Lemma \ref{lim}, we may assume, by contrast, that $x_{ii}\in B^-_{ii}$.

Then, for $j<i$, we have
\begin{align*}
    &\red{B^-_{ij}}\cdot \big({B^+_{ij}}+{M_{ij}}\big)  \\
    =&\big(\red{B^-_{i(j+1)}}+\operatorname{mult}_{x_{ij}}\red{B^-_{ij}}F_{i(j+1)}\big)\cdot
    \big(\red{B^+_{i(j+1)}}+\operatorname{mult}_{x_{ij}}{B^+_{ij}}F_{i(j+1)}\big)    \\
    +&\big(\red{B^-_{i(j+1)}}+\operatorname{mult}_{x_{ij}}\red{B^-_{ij}}F_{i(j+1)}\big)\cdot
    \big(M_{i(j+1)}+\operatorname{mult}_{x_{ij}}\mathbf{M}_iF_{i(j+1)}\big)\\
    =&\red{B^-_{i(j+1)}}\cdot ({B^+_{i(j+1)}}+{M_{i(j+1)}})+\operatorname{mult}_{x_{ij}}\red{B^-_{ij}}\big(\operatorname{mult}_{x_{ij}}{B^+_{ij}}+\operatorname{mult}_{x_{ij}}\mathbf{M}_i\big)\\
    \geq &\red{B^-_{i(j+1)}}\cdot ({B^+_{i(j+1)}}+{M_{i(j+1)}})+\delta.
\end{align*}
Therefore $0\leq\red{B^-_{ii}}\cdot \big({B^+_{ii}}+{M_{ii}}\big)\leq N-i\delta$, which is impossible for $i$ large enough, and so the claim holds.

Since $\operatorname{ord}_{F_i}\red{B^-_i}\leq n$ for all $i$, replacing $X_i$ by some $X_{ik}$ for some $k$, we may assume that $\operatorname{ord}_{F_{ij}}\red{B^-_i}\leq 1$ and that $x_{i(j+1)}$ is not in the strict transform of $F_{ij}$ for all $j\leq i$.
In particular, for all $j\leq i$, we have $\operatorname{ord}_{F_{ii}}{B^-_i}=b_i$ for some non-positive real number $b_i\in I\cup\{0\}$.

Let $a_{ij}=\operatorname{mult}_{x_{ij}}(B^+_{ij}+\mathbf{M})$. Then for each $i$, we have a non-increasing sequence $\{a_{ij}\}_j$ and $a_{ij}\in \sum I_{\geq 0}$
, where $I_{\geq 0}=I\cup\mathbb{R}_{\geq 0}$.
Since $a_{F_{ij}}(X,B_i+\mathbf{M}_i)=1+j-b_i-\sum_{k=1}^ja_{ik}\geq 0$,
we have $\sum_{k=1}^j(a_{ik}-1)\leq\min I+1 $.
By Lemma \ref{lim}, we may assume that $a_{i1}\leq 1$.

By Lemma \ref{B=0}, there is a b-nef$/X_i$ b-$\mathbb{R}$-Cartier divisor $\mathbf{B}^+_i$ over $X_i$ such that
$\operatorname{ord}_{F_{ij}/X_i}(\mathbf{B}^+_i)=\operatorname{ord}_{F_{ij}}B^+_i$ for all $i, j$.
Since $\operatorname{mult}_{x_i}(\mathbf{B}^+_i+\mathbf{M}_i)=a_{i0}\leq 1$, by Lemma \ref{a},
we have $\operatorname{mult}_{x_{i1}\subseteq F_{i1}}(\mathbf{B}^+_i+\mathbf{M}_i)\leq a_{i0}\leq 1$.
By Lemma \ref{d}, $m_i=a_{F_i}(X_i,B^-_i+\mathbf{B}^+_i+\mathbf{M}_i )\geq a_{F_{i1}}(X_i,B^-_i+\mathbf{B}^+_i+\mathbf{M}_i ) $, which gives a contradiction.
\end{proof}

\begin{cor}
   Fix a bounded family of (not necessarily smooth) varieties $\mathcal{P}$. Then, Theorem \ref{main2} hold for non-smooth varieties $X$ in $\mathcal{P}$.
\end{cor}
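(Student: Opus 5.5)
We reduce to the smooth case of Theorem \ref{main2} by resolving each member of the bounded family in a bounded way and transferring the generalized sub-pair to the resolution. Since $\mathcal{P}$ is bounded, there are finitely many families $V^j\to T^j$ whose fibers realize all $X\in\mathcal{P}$. After stratifying each $T^j$ into finitely many locally closed pieces and passing to finite covers, we may assume that there is a simultaneous log resolution $\pi_t:Y_t\to V^j_t$ for all $t$ in each stratum. In addition, the exceptional divisor $\operatorname{Exc}(\pi_t)$, its number of components, the intersection matrix of those components, and the discrepancies $a_{E}(V^j_t,0)$ are constant on each stratum. Because there are finitely many strata, the set $I_1$ of coefficients appearing in $K_{Y_t}-\pi_t^*K_{V^j_t}$ is finite.

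Given $(X,\Delta+B+\mathbf{M})$ with $X\in\mathcal{P}$ and a closed point $x$, we write the crepant pullback on $Y=Y_t$ as
\[
K_Y+\Delta_Y+B_Y+M_Y=\pi^*(K_X+\Delta+B+M),
\]
where $\Delta_Y=\tilde\Delta+\sum E_k$ is the reduced exceptional boundary and $B_Y=\tilde B+\sum e_kE_k$. Log discrepancies are preserved, so $\operatorname{mld}_x(X,\Delta+B+\mathbf{M})=\min_{y}\operatorname{mld}_y(Y,\Delta_Y+B_Y+\mathbf{M})$, where $y$ ranges over the finitely many strata of $\pi^{-1}(x)$. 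If $\pi^{-1}(x)$ contains curves, the minimum is taken over points together with generic points of curves; the curve case is handled by cutting with a general hyperplane, using $\operatorname{mld}$ at the generic point. A minimum of finitely many values, each lying in an ACC set, again lies in an ACC set. It therefore suffices to show that each $\operatorname{mld}_y$ lies in a fixed ACC set. This follows from Theorem \ref{main2} on $Y$ once we check its hypotheses with a new DCC set $I'$, a new $\delta'$ and a new $N'$.

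The main step, and the expected obstacle, is to control the exceptional coefficients $e_k$. These satisfy $e_k=-a_{E_k}(X,0)+\operatorname{ord}_{E_k}(\pi^*(B+M))$, and we must show that they lie in a DCC (indeed, ACC-compatible) set and that the intersection bound $(\red{B_Y^-}\cdot(B_Y^++M_Y))_y\le N'\delta'$ holds uniformly. For the coefficients, note that on a fixed resolution type $\operatorname{ord}_{E_k}\pi^*D$ is computed by inverting the constant intersection matrix against the intersection numbers of the strict transforms with the $E_k$. Since $\mathbf{M}$ descends with coefficients in $I$ and $B$ has coefficients in $I$, these orders lie in $\sum_{k} c_k I$ for finitely many fixed rationals $c_k$, which is DCC once boundedness gives a bound on the relevant degrees. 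The sign of $e_k$ then decides whether $E_k$ contributes to $B_Y^+$ or to $B_Y^-$. Positive contributions must have coefficient at least some $\delta'>0$. We obtain this by discarding components with $e_k=0$ and using the DCC property: we pass to subsequences in a contradiction argument as in Lemma \ref{lim}, so that $e_k$ either stabilizes or is bounded away from zero.

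For the intersection bound, $\red{B_Y^-}$ consists of strict transforms of $B^-$ together with some $E_k$. We use boundedness to get a very ample $H$ on $Y$ of bounded degree dominating $\red{B_Y^-}$. Then $(\red{B_Y^-}\cdot(B_Y^++M_Y))_y$ is bounded by the products of $\pi^*$ of the given quantity with fixed intersection numbers on the resolution. This reduces to the original bound $N\delta$ plus a constant depending only on $\mathcal{P}$.

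Once these are in place, Theorem \ref{main2} applied to $(Y,\Delta_Y+B_Y+\mathbf{M})$ with the data $(n,N',\delta',I')$ produces an ACC set $J'$. We then take $J$ to be the union of the sets $J'$ over the finitely many strata. Since a finite union of ACC sets is ACC, this gives the required set.
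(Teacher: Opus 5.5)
\textbf{Verdict: there is a genuine gap, at exactly the step you flag as the main obstacle, and it cannot be closed under the stated hypotheses.} The paper gives no proof of this corollary, so your bounded-resolution reduction has to stand on its own.

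\textbf{The gap: nothing bounds $B^-$.} Boundedness of $\mathcal{P}$ controls $X$, the resolution $\pi\colon Y\to X$, the configuration of the $E_k$ and the numbers $a_{E_k}(X,0)$. It says nothing about $\Delta$, $B$ or $\mathbf{M}$. The number of components of $B$ through $x$ and the orders $\operatorname{ord}_{E_k}\pi^*B_j$ are not bounded by it. So the assertion that the exceptional coefficients lie in $\sum_k c_kI$ for finitely many fixed $c_k$ is unjustified.

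For $B^+$ (and likewise $\mathbf{M}$) you can extract a bound from the hypotheses:
\begin{itemize}
\item If $x\in\operatorname{Supp}B^-$, every $E_l$ over $x$ occurs in $\pi^*\red{B^-}$ with coefficient at least $1/r$, with $r$ bounded. Then $(\red{B^-}\cdot B^+)_x\geq\frac{1}{r}\sum_l E_l\cdot\tilde{B}^+$ bounds each $E_l\cdot\tilde{B}^+$, and hence $\operatorname{ord}_{E_l}\pi^*B^+$.
\item If $x\notin\operatorname{Supp}B^-$, sub-lc gives $\operatorname{ord}_{E_k}\pi^*B^+\leq a_{E_k}(X,0)$.
\end{itemize}

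Nothing, however, bounds $B^-=\sum_j b_j^-B_j^-$. With your normalization, the coefficient of $E_k$ in $B_Y$ is $-a_{E_k}(X,\Delta+B+\mathbf{M})\leq 0$. So every $E_k$ lands in $B_Y^-$, and your sign discussion is moot. This coefficient contains $-\sum_j b_j^-\operatorname{ord}_{E_k}\pi^*B_j^-$, which is unbounded below. Hence $I'$ is not DCC and Theorem \ref{main2} cannot be invoked on $Y$. Moreover, the $a_{E_k}$, which are themselves candidates for $\operatorname{mld}_x$, are not confined to an ACC set.

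\textbf{Counterexample.} Take the quadric cone with vertex $x$, $\Delta=0$, $\mathbf{M}=0$ and $B=-\sum_{j=1}^k L_j$ for distinct rulings $L_j$. Every hypothesis holds; the intersection condition reads $0\leq N\delta$. Yet $\operatorname{mld}_x(X,B)=a_E(X,B)=1+\frac{k}{2}$ is unbounded. Lines through $x$ on $\mathbb{A}^2$ give $\operatorname{mld}_x=2+k$, so Theorem \ref{main2} itself already needs extra control of $B^-$ at $x$. The corollary inherits this.

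\textbf{Further repairs needed once such control is added.}
\begin{itemize}
\item A resolution of $X$ alone does not make $\tilde{\Delta}+\sum_k E_k$ simple normal crossing. To keep the log resolution bounded, you also need $\Delta$ controlled near the singular points.
\item No very ample $H$ of bounded degree on $Y$ dominates $\red{B_Y^-}$, since $\widetilde{\red{B^-}}$ is unbounded. The intersection bound must be proved locally instead. Use $(\widetilde{\red{B^-}}\cdot\tilde{B}^+)_y\leq(\red{B^-}\cdot B^+)_x$, which holds because $\pi^*\red{B^-}-\widetilde{\red{B^-}}$ is effective, exceptional and meets $\tilde{B}^+$ nonnegatively. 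Add the analogous inequality for $M_Y$ (using $E_l\cdot M_Y\geq 0$) and the bound on $E_k\cdot\tilde{B}^+$ above.
\item Since $X$ is a surface, $\pi^{-1}(x)$ is a union of curves that are divisors on $Y$. There is nothing to cut: their contribution to $\operatorname{mld}_x$ is just $\min_k a_{E_k}$. Otherwise $\operatorname{mld}_x$ equals $\operatorname{mld}_y$ at the single closed point $y$ that is the center of an mld place, not a minimum over ``finitely many strata''.
\end{itemize}
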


\section{ACC for Bounded Generalized Sub-Pairs in Arbitrary Dimension}\label{section:main}
In this section, we prove that the ACC holds for bounded families of generalized sub-pairs in any dimension.

\begin{thm}\label{sub}
Fix natural numbers $d$ and $n$, positive real number $\epsilon$ and $\delta$, and a DCC set $I\subseteq \mathbb{R}_{\geq 0}$. Then there exists a positive real number $t$ satisfying the following.
If $(X,B+\mathbf{M})$ is a generalized sub-$\epsilon$-lc sub-pair and $D+\mathbf{N}$ is a generalized boundary of it such that
\begin{itemize}
\item $\dim X = d$.
\item we may write $B=E-F$ for some effective divisors $E$ and $F$,
\item there is a very ample divisor $H$ on $X$ with $H^d\leq n$ and $H-(E+F+D+\mathbf{M}|_X+\mathbf{N}|_X)\geq 0$ and
\item the coefficients of $E$, $F$, $D$, $\mathbf{M}$ and $\mathbf{N}$ are at least $\delta$.
\end{itemize}
then $\big(X,(B+tD)+(\mathbf{M}+t\mathbf{N})\big)$ is generalized sub-lc.
\end{thm}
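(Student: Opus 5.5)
The plan is to reduce the statement to a uniform upper bound on the orders of $D+\mathbf{N}$ measured against log discrepancies. For any prime divisor $G$ over $X$,
\[
a_G\big(X,(B+tD)+(\mathbf{M}+t\mathbf{N})\big)=a_G(X,B+\mathbf{M})-t\operatorname{ord}_{G/X}(D+\mathbf{N}).
\]
Since $(X,B+\mathbf{M})$ is sub-$\epsilon$-lc, it suffices to find a constant $c=c(d,n,\delta)$ with
\[
\operatorname{ord}_{G/X}(D+\mathbf{N})\leq c\, a_G(X,B+\mathbf{M}) \quad\text{for every } G,
\]
and then to take $t=1/c$. On a divisor $G$ on $X$ itself the coefficient of $D$ is at most that of $H$, so the claim holds trivially there. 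The substance is therefore the exceptional divisors.

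\textbf{Step 1 (boundedness).} The hypothesis $H-(E+F+D+\mathbf{M}|_X+\mathbf{N}|_X)\geq 0$ with $H^d\leq n$ places the data $(X,\operatorname{Supp}(E+F+D+M+N))$ in a bounded family. Because every coefficient is at least $\delta$, the number of components and their degrees are bounded. After passing to a stratification of the parameter space, I would fix a simultaneous log resolution $Y\to X$ in families. On $Y$, the $\mathbb{R}$-divisors $E,F,D$, the traces of $\mathbf{M},\mathbf{N}$, and $K_Y-\varphi^*(K_X+B+M)$ all have coefficients bounded by a constant depending only on $(d,n,\delta)$, at least on the divisors they are supported on. This uses that $H$ dominates each of these divisors, so its pullback bounds their pullbacks.

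\textbf{Step 2 (comparison on the resolution).} On $Y$ I would write the sub-boundary $B_Y$ and the divisor $D_Y=\varphi^*(D+N)-N_Y$. Then $\operatorname{ord}_{G/X}(D+\mathbf{N})$ is the sum of $\operatorname{mult}_G D_Y$ and the defect $\operatorname{ord}_{G/Y}(\mathbf{N})$. The first term is controlled by the toroidal/log-smooth computation: for $G$ over the snc pair $(Y,\operatorname{Supp})$, both $a_G$ and $\operatorname{mult}_G D_Y$ are linear in the weights of a quasi-monomial valuation. Since $a_G(Y,B_Y)\geq\epsilon$ on each stratum and the coefficients of $D_Y$ are bounded, we get $\operatorname{mult}_G D_Y\leq c_1\,a_G$, with $c_1$ depending on the bounds and on $\epsilon$.

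\textbf{Step 3 (the b-nef defect).} The term $\operatorname{ord}_{G/Y}(\mathbf{N})$ is the main obstacle. Here $\mathbf{M}$ itself enters $a_G$ negatively. I would use the negativity lemma (Lemma~\ref{neglem}) to show that $\operatorname{ord}_{G/Y}(\mathbf{N})\leq \operatorname{mult}$ of $\varphi^*N$ along the center, as in Lemma~\ref{mult} and Lemma~\ref{bddmult}. Since $\mathbf{N}$ is b-nef, the defect is dominated by the order of the bounded divisor $\varphi^*\varphi_*N_Y\leq\varphi^*H$. For a divisor $G$ exceptional over $Y$, the order of a Cartier divisor on an snc model is at most its coefficient bound times the weighted sum defining $a_G(Y,0)$. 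Combined with $\epsilon$-lc-ness, this gives $a_G(X,B+\mathbf{M})\geq\epsilon' a_G(Y,\operatorname{Supp})$ up to constants.

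The delicate point is making this last inequality uniform when $\mathbf{M}$ has unbounded defect along $G$. I would handle it by arguing by contradiction with a sequence $t_i\to 0$, as in Theorem~\ref{surf}. Take the divisors $G_i$ realizing non-sub-lc-ness and use $a_{G_i}\geq\epsilon$ to force $\operatorname{ord}_{G_i}(D+\mathbf{N})\to\infty$. Then bound this order via the bounded orders of $\varphi^*H$ along $G_i$ against $a_{G_i}(Y,\operatorname{Supp})$, obtaining a contradiction.
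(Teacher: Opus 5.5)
\paragraph{Verdict: Step 3 has a genuine gap.}
Your Steps 1--2 are essentially the paper's argument. Step 2 is sound once the nef parts live on $Y$, and no quasi-monomial reduction is needed: for every $G$ one has $\operatorname{ord}_G D_Y\le k\sum_j\operatorname{ord}_G\Sigma_j\le k\,a_G(Y,0)\le \tfrac{k}{\epsilon}\,a_G(Y,B_Y)$, because $(Y,\Sigma)$ is lc and $\operatorname{coeff}B_Y\le 1-\epsilon$. The gap is Step 3, the uniform control of the defects $\operatorname{ord}_{G/Y}(\mathbf{M})$ and $\operatorname{ord}_{G/Y}(\mathbf{N})$.

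\textbf{The proposed inequality is false.} Let $\pi\colon W\to X=\mathbb{P}^2$ be the blow-up at a point $p$ off a line $L$, and let $\mathbf{N}=[\pi^*L-E]$ (nef: the fibre class of $\mathbb{F}_1$). Then $\mathbf{N}|_X=L$ and $\operatorname{ord}_{E/X}(\mathbf{N})=1$, yet $\operatorname{mult}_pL=0$, and $\operatorname{ord}_E\pi^*H=0$ for $H=L$. The defect is intrinsic to the b-divisor and is not bounded by the order of a fixed divisor. Lemma \ref{mult} only compares multiplicities along nested centres, and Lemma \ref{bddmult} is a relative surface statement about $\operatorname{mult}_\eta$; neither gives such a bound.

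\textbf{Relative tools cannot suffice.} Apart from that inequality, Step 3 uses only tools that are relative over $X$, and the statement is false if $\mathbf{M}$ is merely b-nef over $X$. Take $o\in X=\mathbb{P}^2$ with coordinates $x,y$ such that $\ell=\{x=0\}$ is a line, and set $\mathfrak{a}_w=(x,y^w)$ with $\mathfrak{a}_w\mathcal{O}_W=\mathcal{O}_W(-Z_w)$ on a log resolution. Put $\mathbf{M}=[-Z_w]$, $B=0$, $D=\delta\ell$, $\mathbf{N}=0$ and $H=\ell$. Then $(X,\mathbf{M})/X$ is $1$-lc with $\mathbf{M}|_X=0$. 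However, the toric divisor $G_w$ with weights $(w,1)$ gives $a_{G_w}(X,tD+\mathbf{M})=(w+1)-w-t\delta w<0$ once $w>1/(t\delta)$. So a proof must use that $\mathbf{M},\mathbf{N}$ are nef over a point with traces bounded by $H$. In this example, if $\mathbf{M}=[\pi^*M-Z_w]$ with $M\le H$ is nef, then $w=e(\mathfrak{a}_w)\le M^2\le H^2$.

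\textbf{The closing contradiction argument does not use this.} From $a_{G_i}(X,B+\mathbf{M})\ge\epsilon$ you only get $\operatorname{ord}_{G_i}(D+\mathbf{N})>\epsilon/t_i$. Bounding this by a multiple of $a_{G_i}(Y,\cdot)$ yields no contradiction, since those numbers may diverge as well. The comparison $a_G(X,B+\mathbf{M})\ge\epsilon' a_G(Y,\cdot)$ you would need amounts to a uniform bound on $\operatorname{ord}_{G/Y}(\mathbf{M})$, i.e.\ essentially the theorem itself.

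\paragraph{How the paper avoids Step 3.}
The paper never faces this issue. It takes the bounded log smooth model $(\overline{X},\overline{\Sigma})$ from \cite[Theorem 3.14]{birkar21} with the extra property that $\mathbf{M}$ and $\mathbf{N}$ \emph{descend} to $\overline{X}$. Then all defects vanish. The crepant sub-boundary of $\big(X,(B+tD)+(\mathbf{M}+t\mathbf{N})\big)$ on $\overline{X}$ has snc support and coefficients at most $1-\epsilon+tk$, so $t=\epsilon/k$ works; this is exactly your Step 2. That descent to a bounded model, which is where global nefness and the bound by $H$ must enter, is the ingredient your proposal is missing. Without it, or a genuinely global substitute, Step 3 does not close.
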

\begin{proof}
By \cite[Theorem 3.14]{birkar21}, there is a bounded family of log smooth couples $\mathcal{P}$ such that for every generalized sub-pair $(X,B+\mathbf{M})$ and every generalized boundary $D+\mathbf{N}$ as in the assumption of the theorem, there is an element $(\overline{X},\overline{\Sigma})\in\mathcal{P}$ satisfying that
\begin{itemize}
\item there is a birational morphism $\varphi:\overline{X}\to X$,
\item $\overline{\Sigma}$ contains the exceptional divisors of $\varphi$ and the support of the birational transforms of $E$, $F$, and $D$, and
\item $\mathbf{M}$ and $\mathbf{N}$ descend to $\overline{X}$.
\end{itemize}
Furthermore, the existence of $H$ guarantees that there exists a natural number $k$ depending only on $d$ and $n$ such that the coefficients of $\varphi^*(D+\mathbf{N})$ are at most $k$.
The theorem follows by taking $t=\frac{\epsilon}{k}$.
\end{proof}

\begin{thm}(ACC for bounded sub-lct)
Fix natural numbers $d$ and $n$ and fix a DCC set $I\subseteq \mathbb{R}_{\geq 0}$. Let $P(d,n,I)$ be the set of generalized sub-pairs $(X,B+\mathbf{M})$ such that 
\begin{itemize}
\item $\dim X = d$,
\item $X$ is $\mathbb{Q}$-factorial,
\item we may write $B=E-F$ for some some effective divisors $E$ and $F$,
\item there is a very ample divisor $H$ on $X$ with $H^d\leq n$ and $H-(E+F+\mathbf{M}|_X)\geq 0$,
\item $F$ is integral, and
\item $\operatorname{coeff}E\in I$ and $\operatorname{coeff}M\in I$.
\end{itemize}
Then the set
\[\{ \operatorname{mld}(X,B+\mathbf{M})|(X,B+\mathbf{M})\in P(d,n,I)\}\]
satisfies ACC.  
\end{thm}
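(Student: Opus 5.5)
We argue by contradiction. Suppose there is a sequence $(X_i,B_i+\mathbf{M}_i)\in P(d,n,I)$, with $B_i=E_i-F_i$, whose minimal log discrepancies $m_i:=\operatorname{mld}(X_i,B_i+\mathbf{M}_i)$ are strictly increasing. If $m_i=-\infty$ the sequence cannot increase past it, so we may assume every pair is generalized sub-lc and $m_i\geq m_1\geq 0$. Since each $m_i$ is at most the log discrepancy of a general codimension-one point, the $m_i$ are bounded above; write $m:=\lim m_i$.

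\textbf{Step 1: bounded log resolutions.} Since $H_i^d\leq n$ and $H_i-(E_i+F_i+\mathbf{M}_i|_{X_i})\geq 0$, the argument in the proof of Theorem \ref{sub} applies, using \cite[Theorem 3.14]{birkar21}. It produces a bounded family $\mathcal{P}$ of log smooth couples $(\overline{X}_i,\overline{\Sigma}_i)$ and birational morphisms $\varphi_i:\overline{X}_i\to X_i$. Here $\overline{\Sigma}_i$ contains $\operatorname{Exc}(\varphi_i)$ and the strict transforms of $E_i$ and $F_i$, and $\mathbf{M}_i$ descends to $\overline{X}_i$. The coefficients of $\varphi_i^*(K_{X_i}+B_i+M_i)-K_{\overline{X}_i}$ along components of $\overline{\Sigma}_i$ are bounded, with a bound $k$ depending only on $d,n$. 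Passing to a subsequence, all $\overline{X}_i$ are fibers of one log smooth family over a base $T$, and the number of components of $\overline{\Sigma}_i$ is constant. Write $K_{\overline{X}_i}+\overline{B}_i+\overline{M}_i=\varphi_i^*(K_{X_i}+B_i+M_i)$. The coefficients of $\overline{B}_i$ are then affine combinations, with bounded integer or bounded rational weights coming from the bounded family, of the coefficients of $E_i$ and $F_i$ and the coefficients of $\mathbf{M}_i$. The latter lie in $I$, and $F_i$ is integral.

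\textbf{Step 2: reduction to a log smooth computation.} The coefficients of $E_i$ and $\mathbf{M}_i$ lie in the DCC set $I$. After passing to a subsequence we may therefore assume each coefficient sequence is non-decreasing and converges. The integral coefficients of $F_i$ are bounded by $H_i$, so after a further subsequence they are constant. Hence every coefficient of $\overline{B}_i$ forms a sequence that is monotone non-increasing in the pullback direction. This is because increasing a coefficient of $E$ or $\mathbf{M}$ only increases the corresponding coefficients of $\overline{B}_i$ and decreases all log discrepancies; this uses the negativity lemma (Lemma \ref{neglem}) for the $\mathbf{M}$ part. Now $(\overline{X}_i,\overline{B}_i+\overline{M}_i)$ is crepant to $(X_i,B_i+\mathbf{M}_i)$ with $\overline{M}_i$ nef. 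The mld of $(X_i,B_i+\mathbf{M}_i)$ is the minimum of two quantities: the log discrepancies at the finitely many components of $\overline{\Sigma}_i$ (exceptional or not), and the log discrepancies of divisors over $\overline{X}_i$. The first set of values consists of finitely many decreasing-limit sequences, so its minimum satisfies ACC along the sequence. For divisors over the log smooth pair $(\overline{X}_i,\overline{\Sigma}_i)$ we use toroidal (weighted blow-up) valuations together with b-nefness of $\mathbf{M}_i$. The b-nef part only lowers discrepancies, and its order along a toric valuation is bounded below by the linear function determined by the multiplicities of $\overline{M}_i$ (cf. Lemma \ref{mult}). This should let us compute the mld as a minimum over a finite, uniformly bounded set of toroidal valuations, with weights bounded in terms of $k$, $\epsilon$-type bounds, and $\min I$.

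\textbf{Main obstacle.} The main obstacle is this last claim. We must show that the mld is attained by a divisor whose combinatorial type is uniformly bounded along the sequence, so that each candidate log discrepancy is an affine function with bounded coefficients of the convergent coefficient data. Then each candidate sequence is non-increasing, the minimum cannot strictly increase, and we get the contradiction.

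I would first try to obtain this from Theorem \ref{sub}, applied with $D+\mathbf{N}$ equal to $E+\mathbf{M}$ itself. A uniform perturbation $t$ keeps the pair sub-lc, which bounds the weights of potential minimizers: a divisor with huge weight would have discrepancy decreased by $t\cdot(\text{weight})$ and become negative. Caveat: Theorem \ref{sub} needs sub-$\epsilon$-lc. This holds once $m>0$. The case where the $m_i$ increase to a limit near $0$ cannot occur with strict increase from $m_1\geq 0$ except trivially, since $m_1>0$ already gives $\epsilon=m_1$.
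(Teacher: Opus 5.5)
Your strategy differs from the paper's. The paper uses Theorem \ref{sub} only to gain room $(1+\delta)$. It then replaces the coefficients of $E_i$ and $\mathbf M_i$ by their limits, so that $\operatorname{mld}(X_i,\hat B_i+\hat{\mathbf M}_i)\in[(1-c_i)m_i,m_i]$, and contradicts the discreteness in Theorem \ref{discrete}. You instead want $m_i$ to be a minimum of a fixed finite number of non-increasing sequences. That can work, but as written the decisive step is not proved: this is the step you yourself call the main obstacle, and the mechanism you propose for it fails. Theorem \ref{sub} with $D+\mathbf N=E+\mathbf M$ only gives $t\,\operatorname{ord}_G\varphi_i^*(E_i+M_i)\le a_G\le 1$ for a minimizer $G$. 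It says nothing about the weights of $G$ along components of $\overline\Sigma_i$ on which $\varphi_i^*(E_i+M_i)$ has coefficient $0$, such as components of $F_i$ and many exceptional divisors. So no bound on the combinatorial type follows. Your remarks on the order of the b-nef part along toroidal valuations are also beside the point. Since $\mathbf M_i$ descends to $\overline X_i$, it contributes nothing to $a_G$ for $G$ exceptional over $\overline X_i$.

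The missing idea is the elementary log smooth estimate, which removes the obstacle altogether. Let $G$ be exceptional over $\overline X_i$ with center $Z$, and let $\bar b_k=\operatorname{coeff}_{\overline\Sigma_{ik}}\overline B_i\le 1$. Take a regular system of parameters $z_1,\dots,z_c$ ($c\ge 2$) at the generic point of $Z$, with $\overline\Sigma_{k_j}=(z_j=0)$ for the $r$ components through $Z$. Use the standard inequality $a_G(\overline X_i,0)\ge\sum_l\operatorname{ord}_G z_l$. It holds because, on a model $\pi:Y\to\overline X_i$ where $G$ appears, $z_l=g^{v_l}u_l$ along $G$, so $\pi^*(dz_1\wedge\cdots\wedge dz_c)$ is divisible by $g^{\sum_l v_l-1}$. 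This gives
\[
a_G(X_i,B_i+\mathbf M_i)\ge\sum_{j\le r}(1-\bar b_{k_j})\operatorname{ord}_G z_j+\sum_{r<l\le c}\operatorname{ord}_G z_l\ge\min\{1,\min_k(1-\bar b_k)\}.
\]
Hence $m_i=\min\{1,\min_k a_{\overline\Sigma_{ik}}(X_i,B_i+\mathbf M_i)\}$ is computed on $\overline X_i$ itself, and neither Theorem \ref{sub} nor $\epsilon$ is needed. To finish, state explicitly that after passing to a subsequence the relevant weights are \emph{constant}; bounded alone does not give monotonicity. These weights are the coefficients along each $\overline\Sigma_{ik}$ of $K_{\overline X_i}-\varphi_i^*K_{X_i}$, $\varphi_i^*E_{ij}$, $\varphi_i^*F_{ij}$ and $\varphi_i^*\varphi_{i*}M'_{ij}-M'_{ij}$. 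Your negativity-lemma remark then makes each $a_{\overline\Sigma_{ik}}$ non-increasing in $i$, so $m_i$ is non-increasing, which is a contradiction. Completed this way, your proof is more elementary than the paper's. It avoids Theorem \ref{discrete}, whose hypotheses (an effective boundary, and a fixed $r$ with $rK_X$ Cartier) are not checked for $(X_i,\hat B_i+\hat{\mathbf M}_i)$. The cost is that you need the constancy of the pullback weights, which the bounded family should supply after stratifying its base.
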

\begin{proof}
Assume that there exists a sequence of generalized pairs $(X_i,B_i+\mathbf{M}_i)$ in $P(d,n,I)$ such that $\{m_i:= \operatorname{mld}(X_i,B_i+\mathbf{M}_i)\}_i$ is a strictly increasing sequence.
Passing to a subsequence, we may assume that there exists some positive number $\epsilon$ such that $m_i\geq \epsilon$ for all $i$.
Write $B_i=E_i-F_i$ for some effective divisors $E_i$ and $F_i$ such that the coefficients of $E_i\in I$ and $F_i$ is integral. 
By Theorem \ref{sub}, there exists a positive number $\delta$ such that $(X_i,(1+\delta) E_i-F_i+(1+\delta) \mathbf{M}_i)$ is generalized log canonical for all $i$. 

Write $E_i=\sum_{j=1}^{s_i} e_{i,j}E_{i,j}$ and $M'_i=\sum_{j=1}^{t_i} a_{i,j}M'_{i,j}$ for some $ e_{i,j}\in I$, $ a_{i,j}\in I$, $E_{i,j}$ integral divisors on $X_i$ and $M'_{i,j}$ Cartier divisors on $X'_i$, on which $\mathbf{M}_i$ descends to $M'_i$.
Since $\{(X_i,E_i+\mathbf{M}_i)\}_i$ is a bounded family of generalized pairs, we may assume that $s_i=t_i=s$ for some $s\in \mathbb{N}$ for every $i$. Furthermore, $e_{i,j}$ and $a_{i,j}$ are bounded above for all $i$ and $j$.
Passing through a subsequence, we may assume that sequences $\{e_{i,j}\}_i$ and  $\{a_{i,j}\}_i$ are non-decreasing for all $1\leq j\leq s$.
Let $e_{0,j}=\lim_{i\to 0}e_{i.j}$ and $a_{0,j}=\lim_{i\to 0}a_{i.j}$.
Consider the generalized pairs $(X_i,\hat{B_i}+\hat{\mathbf{M}}_i)$ where $\hat{B_i}=(\sum_{j=1}^se_{0,j}E_{i,j})-F_i$ and $\hat{\mathbf{M}}_i=\sum_{j=1}^{s}a_{0,j}[M'_{i,j}]$.
By construction, we have that $B_i\leq \hat{B_i}\leq (1+c_i\delta)E_i-F_i$ and $M'_i\leq \hat{\mathbf{M}}_i|_{X'_i}\leq (1+c_i\delta) M_i$ for some positive numbers $c_i$'s such that $\lim_{i\to\infty} c_i=0$.
Therefore, $\operatorname{mld(X_i, \hat{B_i}+ \hat{\mathbf{M}}_i)}\in [(1-c_i)m_i,m_i]$.
Passing to a subsequence, we may assume that $\{\operatorname{mld}(X_i, \hat{B_i}+ \hat{\mathbf{M}}_i)\}_i$ is strictly increasing.

On the other hand, by Theorem \ref{discrete}, $\{\operatorname{mld}(X_i, \hat{B_i}+ \hat{\mathbf{M}}_i)\}_i$ is a discrete set. This gives the desired contradiction, and the proof is completed.
\end{proof}

\section{ACC for Bounded 3-Folds with Arbitrary Boundaries}
In this section, we prove that the ACC holds for bounded families of threefolds.

\begin{thm}[ACC for mlds for bounded threefolds]\label{original}
Fix a DCC set $I$ of positive real numbers and a natural number $N$.
Then there is an ACC set $J=J(I,N)\subseteq\mathbb{R}_{\geq 0}$ satisfying the following. 

If $(X,D)$ is a lc pair of dimension $3$, such that
\begin{itemize}
    \item $\operatorname{coeff} D\in I$, and
    \item there is a very ample divisor $H$ on $X$ with $H^3\leq N$,
\end{itemize}
then $\operatorname{mld}(X,D)\in J$.
\end{thm}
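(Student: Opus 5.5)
We argue by contradiction. Suppose there is a sequence of lc pairs $(X_i,D_i)$ as in Theorem \ref{original} with $m_i:=\operatorname{mld}(X_i,D_i)$ strictly increasing. Since $H_i^3\le N$, the $X_i$ form a bounded family. By \cite[Theorem 3.14]{birkar21}, exactly as in the proof of Theorem \ref{sub}, we may pass to log resolutions $f_i:Y_i\to X_i$ lying in a bounded family of log smooth couples, with $\operatorname{Exc}(f_i)$ and the strict transform of $D_i$ bounded, and with bounded coefficients of $f_i^*D_i$. Writing $K_{Y_i}+\Delta_i+B_i=f_i^*(K_{X_i}+D_i)$, we separate the strict transform part from the exceptional part. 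By Lemma \ref{pot_nef}, the strict transform of $D_i$ is potentially nef over $X_i$. So, after going to a higher model $Z_i$, we may trade the positive part for a b-nef$/X_i$ b-divisor $\mathbf{M}_i$ whose coefficients lie in $I$. This turns $(X_i,D_i)$ into a generalized sub-pair on a smooth bounded model. The mld is preserved over all exceptional divisors, and the negative exceptional part $B_i^-$ has coefficients in a fixed finite set, since $X_i$ is bounded (fixed Gorenstein-type data after stratifying the family). Passing to subsequences, we may assume that $m_i$ is computed at the generic point of a center $\eta_i$ of fixed dimension.

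If $\dim\eta_i\ge 1$, cutting by general hyperplane sections from the bounded very ample system reduces to the surface case. There Theorem \ref{main2} (together with the Corollary following it for non-smooth bounded members) gives an ACC set. The required bound $\big(\red{B^-}\cdot(B^++M)\big)_x\le N\delta$ follows from boundedness of the coefficients and the intersection numbers with $H_i$. Coefficients are bounded below by $\delta=\min I$.

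The main case is when $\eta_i=y_i$ is a closed point on the exceptional locus of $Y_i\to X_i$. Let $F_i$ compute the mld, and let $Y_{ij}\to\cdots\to Y_i$ be the tower of blow-ups of the centers of $F_i$. By Lemma \ref{lim}, either the tower leaves the support of the exceptional locus after a bounded number of steps, or the towers get arbitrarily long. In the first case, after a bounded number of blow-ups we reduce to $m_i$ being computed over a point away from the bounded negative part. There the pair is effective with DCC coefficients, and the argument of the section on generalized sub-pairs applies: by Theorem \ref{sub}, perturb the coefficients up to their limits, then use Theorem \ref{discrete} together with Theorem \ref{HLS} to get a contradiction with strict increase. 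In the second case, long chains of point blow-ups or towers of section blow-ups stay inside an exceptional divisor $D$ with constant multiplicity $a$. We then use Lemmas \ref{longchain}, \ref{boundedcurves} and \ref{finitecurve}: these force either a curve $C_0\subseteq D$ with $\operatorname{mult}_{C_0}\mathbf{M}=a$, or the center leaving $\tilde D$ after a bounded number $L$ of steps. The curve $C_0$ is handled by cutting down again or via Lemma \ref{sectionmult}. If $a\le 1$, Lemmas \ref{a<=1_original} and \ref{a<=1} show that the exceptional divisors obtained after the bounded prefix already achieve the mld, which bounds the length of the tower. If $a>1$, the log discrepancies along the tower decrease linearly, contradicting lc-ness.

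The hardest step is this last case analysis: showing uniformly that the center of $F_i$ can only run along $D$ for a bounded number of blow-ups. It combines the potentially-nef bookkeeping of Lemma \ref{induction_on_layer} and Lemma \ref{strict_transform} with the volume bound $D\cdot\mathbf{M}^2\le N'$. I would first try to prove a three-dimensional analogue of the claim in Theorem \ref{main2}, namely $\operatorname{ord}_{F_i}\red{B_i^-}\le n$. For this I would use the decreasing quantity $D'\cdot(\tilde B+M)\cdot C$ from Lemma \ref{cut}, reducing to finitely many combinatorial types. After that, the limiting-coefficient argument with Theorem \ref{discrete} finishes the proof.
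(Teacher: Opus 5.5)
Your proposal has a genuine gap. The two steps you treat as routine are exactly where the difficulty of the theorem lies. First, the opening reduction is false. In the theorem only $X$ is bounded: $D$ has DCC coefficients but arbitrary degree (e.g.\ $(\mathbb{P}^3,\tfrac12 S)$ with $S$ a smooth surface of arbitrarily large degree), which is what ``arbitrary boundaries'' means. So:
\begin{itemize}
\item \cite[Theorem 3.14]{birkar21} gives no bounded family of log resolutions of $(X_i,D_i)$, and the strict transform of $D_i$ is not bounded.
\item Theorem \ref{sub}, which needs $H-(E+F+\cdots)\geq 0$, does not apply. The paper only resolves $X_i$ boundedly (Lemma \ref{I0}) and keeps $B_i$ as an unbounded strict transform.
\item The exceptional coefficients of $f_i^*(K_{X_i}+D_i)$ lie in a finite set only after $I$ is replaced by a finite $I_0$ via Theorem \ref{HLS}, as in Lemma \ref{original_I0}.
\end{itemize}
Consequently your ``first case'' does not close. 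Projecting to $g(D_i)\geq D_i$ and invoking Theorem \ref{discrete} only gives $\operatorname{mld}(X_i,g(D_i))\leq m_i$. The reverse approximation needs a uniform bound on $\operatorname{ord}_{F_i}D_i$, i.e.\ on the depth of the mld place. Leaving $\operatorname{Exc}(f_i)$ after boundedly many steps gives no such bound, since near a smooth point the boundary is still unbounded; bounding the mld place there is the content of the smooth case \cite{kaw26}. Your proposed three-dimensional analogue of $\operatorname{ord}_{F_i}(B_i^-)_{\mathrm{red}}\leq n$ would only land you in this same case.

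Second, your claim ``if $a>1$ the log discrepancies decrease linearly'' is wrong for chains of point blow-ups. Write $\Gamma_j$ for the crepant boundary on $X_j$. If $x_j\in E_j$ lies on no other boundary component, then $a_{E_{j+1}}=3-\operatorname{mult}_{x_j}\Gamma_j=a_{E_j}+2-a$. If $x_j$ also lies on an exceptional $\tilde G$, the increment is $1+a_G-a$. So for $1<a\leq 2$ nothing decreases. Only for section-blow-ups, where the increment is $1-a$, does $a>1$ force a contradiction.

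This is precisely the case the paper is left with after steps (3), (6), (7) of its final theorem: a chain with $1<a\leq 2$ that leaves $\tilde\Delta_i$. The paper handles it with three ingredients missing from your plan:
\begin{itemize}
\item the jump $\operatorname{mld}(X_i,D'_i)=m<m'\leq\operatorname{mld}(X_i,\tfrac{i-1}{i}D'_i)$ from Lemma \ref{original_I0};
\item the transfer to towers of section-blow-ups over $\mathbf{o}\in\mathbb{C}^3$ via property $\mathrm{P}(I_0,m,\mathbf{a},c)$ and Lemmas \ref{step8}, \ref{step8_2};
\item a final contradiction with ACC for log canonical thresholds, since the lct of $\tilde S_i$ for the generalized pair on $Z_{i1}$ lies in $[\tfrac{\sqrt{i}-1}{\sqrt{i}},1)$.
\end{itemize}
Your treatment of $\dim\eta_i\geq 1$ (cutting down to Theorem \ref{main2}) and of $a\leq 1$ (Lemmas \ref{a<=1_original}, \ref{a<=1}) does match the paper's steps (1), (5), (6).
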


\begin{lem}\label{original_I0}
      Fix a DCC set $I$ of positive real numbers and a natural number $N$. If Theorem \ref{original} does not hold, then there is a natural number $N'$, a finite set $I_0\subseteq \mathbb{R}_{\geq 0}$, real numbers $0\leq m<m'$, and a sequence of $3$-dimensional lc pairs $(X_i,D'_i)$, and prime divisors $F_i$ over $X_{i}$, $i=1,2,3, \dots$, satisfying the following.

\begin{itemize}
\item $X_i$ is terminal,
\item $\operatorname{coeff} D'_i\in I_0$, 

\item there is a very ample divisor $H_i\subseteq X_i$ with $H_i^3\leq N'$,

\item $\operatorname{mld}(X_i,D'_i)=a_{F_i}(D'_i)=m$
, and 
\item $\operatorname{mld}(X_i,\frac{i-1}{i}D'_i)\geq m'$.
\end{itemize}
\end{lem}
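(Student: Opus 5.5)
We argue by contradiction and reduce in stages. Suppose Theorem \ref{original} fails. Then there are lc pairs $(X_i,D_i)$ of dimension $3$ with $\operatorname{coeff}D_i\in I$, very ample $H_i$ with $H_i^3\leq N$, and a strictly increasing sequence $m_i=\operatorname{mld}(X_i,D_i)$ converging to some $m'$. Since $H_i-D_i$ can be arranged effective after bounding degrees ($D_i\cdot H_i^2\leq$ const by boundedness of the family of $(X_i,\operatorname{Supp}D_i)$ cut out by $H_i$ up to a fixed multiple), we may pass to a bounded family of couples. In particular, the number of components of $D_i$ is bounded. Passing to a subsequence, we assume the coefficient vectors $(d_{i,1},\dots,d_{i,s})$ are componentwise non-decreasing with limits $d_{0,j}$ (DCC). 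We then set $\bar D_i=\sum_j d_{0,j}D_{i,j}$, which has coefficients in the finite set $I_0=\{d_{0,j}\}$.

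The first key step is to compare $\operatorname{mld}(X_i,D_i)$ and $\operatorname{mld}(X_i,\bar D_i)$. Since $D_i\leq \bar D_i$, we have $\operatorname{mld}(X_i,\bar D_i)\leq m_i<m'$. If $(X_i,\bar D_i)$ is lc, then by Theorem \ref{discrete}, applied after fixing the Cartier index via boundedness (a bounded family of $\mathbb{Q}$-factorial varieties has bounded Cartier index of $K_X$), these mlds lie in a discrete set. Hence after a subsequence $\operatorname{mld}(X_i,\bar D_i)=m$ is constant with $m<m'$. The lc property of $\bar D_i$ for large $i$ I would get from Theorem \ref{HLS} (ACC for lct): a small increase of coefficients within the DCC closure preserves lc. Choose $D'_i$ to be $\bar D_i$, and rescale the index so that $\frac{i-1}{i}D'_i\leq D_i$ (possible since $d_{i,j}\to d_{0,j}$ from below with positive limits). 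This gives $\operatorname{mld}(X_i,\frac{i-1}{i}D'_i)\geq m_i$. Passing to a tail where $m_i$ is close to $m'$ and replacing $m'$ by a slightly smaller value, we obtain $\geq m'$ with $m<m'$. Let $F_i$ be a divisor computing $\operatorname{mld}(X_i,D'_i)$.

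Next comes the terminal reduction. Take a terminalization $Y_i\to X_i$ extracting divisors of discrepancy $\leq 0$ for $X_i$ (finitely many, bounded by boundedness). Replace $(X_i,D'_i)$ by $(Y_i,D'_{Y_i})$, where $D'_{Y_i}$ is the crepant pullback. The exceptional coefficients are $1-a_E(X_i,0)$ plus contributions from $D'_i$, which lie in a finite set because the family is bounded and $I_0$ is finite. The mld and the log discrepancies are unchanged, and the coefficients of $\frac{i-1}{i}D'$ on $Y_i$ are still controlled. The $Y_i$ form a bounded family, since they are extracted over a bounded family with bounded data, so a very ample $H'_i$ with bounded volume $N'$ exists.

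The main obstacle is this last terminal/boundedness step. One must ensure that the crepant pullback of $\frac{i-1}{i}D'_i$ still has nonnegative coefficients, so that we get a genuine pair, and that the exceptional coefficients stay in a finite set. I would handle this by extracting only divisors with discrepancy in $(-1,0]$ over the canonical locus and using boundedness to make that set finite uniformly.
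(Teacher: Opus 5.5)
Your route is the paper's: take a subsequence with componentwise non-decreasing coefficients $d_{ij}\nearrow d_{0j}$, then use Theorem \ref{HLS} to make the limit-coefficient pair lc, then Theorem \ref{discrete} to freeze its mld at some $m$, and finally $\frac{i-1}{i}D'_i\le D_i$ for the bound $m'$. You terminalize at the end and the paper at the start; this difference is harmless. The step you flag as the main obstacle is not one. Every divisor $E$ extracted by a terminalization $f\colon Y_i\to X_i$ has $a(E,X_i)\le 0$, so the crepant pullback $D'_{Y_i}$ is effective. Moreover, $\frac{i-1}{i}D'_{Y_i}$ is at most the crepant pullback of $\frac{i-1}{i}D'_i$ (they differ by $\frac1i\sum(-a(E,X_i))E\ge 0$), which transfers $\operatorname{mld}\ge m'$.

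The step that genuinely fails is the claim that $D_i\cdot H_i^2$ is bounded. You use it to bound the number of components of $D_i$, and without that bound you cannot extract the componentwise monotone subsequence. Nothing in the hypotheses bounds $\deg D_i$. For general planes $L_1,\dots,L_i\subseteq\mathbb{P}^3$, the pair $(\mathbb{P}^3,\frac12\sum_{k\le i}L_k)$ is klt with coefficients in $\{\frac12\}$ and $H^3=1$, yet it has $i$ components.

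The missing idea is to localize, as the paper does. Fix $F_i$ computing $m_i$ and discard the components of $D_i$ not passing through $c_{X_i}(F_i)$. This leaves $a_{F_i}$ unchanged and can only raise the other log discrepancies, so the mld is still $m_i=a_{F_i}(D_i)$. The number of remaining components is bounded because $\operatorname{ord}_{c_{X_i}(F_i)}D_i$ is bounded: the pair is lc, $X_i$ lies in a bounded family, and the coefficients are at least $\min I>0$.

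A smaller point: to get lc-ness of exactly your $\bar D_i$ from Theorem \ref{HLS}, choose $\alpha>0$ with $(d_{0j},d_{0j}+\alpha]\cap\bar I=\varnothing$, which is possible by DCC. Since $g(\gamma)\in[\gamma,\gamma+\alpha]$ and $g$ has finite image, $g(d_{ij})=d_{0j}$ for $i\gg0$. Hence $g(D_i)=\bar D_i$, which is therefore lc.
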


\begin{proof} 
   Replacing $I$ by its closure $\overline{I}$, we may assume $I$ is closed.
   Suppose that Theorem \ref{original} does not hold. Then, for $i=1,2,3\dots$, there is a sequence of lc pairs $(X_i,D_i)$ and very ample divisors $H_i\subseteq X_i$ satisfying the assumptions of Theorem \ref{original} with prime divisors $F_i$ over $X_{i}$ such that $m_i:=\operatorname{mld}(X_i,D_i)=a_{F_i}(D_i)$ is strictly increasing.
   Replacing $X_i$ by their terminal models, we may assume that $X_i$ is terminal.
   Since $\{X_i\}$ is bounded, the $\operatorname{ord}_{c_{X_i}F_i}D_i$ is bounded. Dropping the components outside around the generic point of $c_{X_i}F_i$, we may write $D_i=\sum_{j=1}^k d_{ij}D_{ij}$ for some natural number $k$, $d_{ij}\in I$ and prime divisors $D_{ij}$.

   Passing through a subsequence, we may assume that for every $j$, $\{d_{ij}\}_{i=1}^{\infty}$ is a non-decreasing sequence with limit $d_{0j}$.

    Let $\alpha$ be a positive real number such that $(d_{0j},d_{0j}+\alpha)\cap I=\varnothing$, which is guaranteed by the DCC of $I$. 
   
   Let $I_0\subseteq I$ be a finite set and $g:I\to I_0$ be a projection given by Theorem \ref{HLS} with respect to $\alpha$. Then $\big(X_i,g(D_i)\big)$ is lc for all $i$. By the choice of $\alpha$, passing through a subsequence, we have $g(d_{ij})=d_{0j}$ and $\frac{i-1}{i}d_{0j}\leq d_{ij}$ for all $i,j$.
    Therefore, $\frac{i-1}{i}g(D_i)\leq D_i$ for all $i$.
    
   On the other hand, by Theorem \ref{discrete}, $\{\operatorname{mld}\big(X_i,g(D_i)\big)\}$ is a finite set. Therefore, passing through a subsequence, we have $m=\operatorname{mld}\big(X_i,g(D_i)\big)\leq m_i$ for some non-negative real number $m\leq m_i$ for all $i$. 
   
   Since $\frac{i-1}{i}g(D_i)\leq D_i$, for all $i\geq 2$, we have $\operatorname{mld}\big(X_i,\frac{i-1}{i}g(D_i)\big)\geq \operatorname{mld}\big(X_i,D_i\big)=m_i\geq m_2>m_1\geq m$. Dropping the first term of the sequence and taking $m'=m_1$ and $D'_i=g(D_i)$, the lemma follows.
   
\end{proof}

\begin{lem}\label{I0}
      Fix a DCC set $I$ of real numbers and a natural number $N$. If Theorem \ref{original} does not hold, then there is a natural number $N'$, a finite set $I_0\subseteq \mathbb{R}$, real numbers $0\leq m<m'$, and a sequence of birational morphisms $\phi_i:X_{i1}\to X_{i0}:=X_{i}$ of threefolds and prime divisors $F_i$ over $X_{i1}$, $i=1,2,3, \dots$, satisfying the following.

\begin{itemize}
\item $\phi_i$ is isomorphic over outside a subvariety of dimension $0$ of $X_i$,
\item $\Delta_i=-\Delta^-_i+\Delta^+_i$ is exceptional over $X_{i0}$, $\Delta^-_i\geq 0$, $\Delta^+_i\geq 0$,
\item $(X_{i1}:=X_i,\Delta_i)$ is log smooth,
\item $B_i\geq 0$ is a divisor on $X_{i1}$ which contains no exceptional component over $X_{i0}$,
    \item $\operatorname{coeff}\phi_i^*{\phi_i}_*B_i\leq N' $, and
    \item there is a very ample divisor $H_i\subseteq X_{i1}$ such that $M_i:=\phi_i^*{\phi_i}_*(H_i)-H_i$ satisfies $-M_i^3\leq N'$,
\item $\operatorname{coeff} \Delta_i\in I_0$, $\operatorname{coeff} B_i\in I_0$, 
\item $\operatorname{mld}(X_i^\circ,\Delta_i+B_i)=a_{F_i}(\Delta_i+B_i)=m$
, and $\operatorname{mld}\big(X_i^\circ,-\Delta_i^-+\frac{i-1}{i}(\Delta^+_i+B_i)\big)\geq m'$ for some neighborhood $X_i^\circ$ of $c_{X_{i1}}(F_i)$ in $X_{i1}$.
\end{itemize}
\end{lem}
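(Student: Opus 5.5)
The plan is to start from the output of Lemma \ref{original_I0}. It gives $N'$, a finite set $I_0$, numbers $0\leq m<m'$, terminal threefolds $X_i$ with very ample $H_i$ satisfying $H_i^3\leq N'$, boundaries $D'_i$ with $\operatorname{coeff} D'_i\in I_0$, and divisors $F_i$ with $\operatorname{mld}(X_i,D'_i)=a_{F_i}(X_i,D'_i)=m$ and $\operatorname{mld}(X_i,\frac{i-1}{i}D'_i)\geq m'$. Since $X_i$ is terminal, the pair is lc, and $m<m'$, the center $c_{X_i}(F_i)$ cannot be a divisor. After passing to a subsequence, we may also assume it is not a curve. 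Otherwise, we cut by general members of $|H_i|$ near the generic point of the curve and reduce to the surface statement, Theorem \ref{main2} (with $\mathbf{M}=0$); this contradicts $m<m'$ together with the ACC there. So we may assume $x_i:=c_{X_i}(F_i)$ is a closed point. The variety $X_{i0}:=X_i$ is then the base, and everything can be localized to a neighborhood $X_i^\circ$ of the point.

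Next we produce $\phi_i$. By boundedness, as in the proof of Theorem \ref{sub} using \cite[Theorem 3.14]{birkar21}, the pairs $(X_i,\operatorname{Supp}D'_i)$ form a bounded family. So there is a bounded log resolution $\psi_i:X_{i1}\to X_i$, and we can arrange that over a neighborhood of $x_i$ it only modifies things above $x_i$. Concretely, we first take a bounded log resolution of the family and then, working locally around $x_i$, keep only the part that is an isomorphism outside $x_i$. This is possible because the lower-dimensional centers can be handled by the curve-case reduction above; failing that, we shrink $X_i^\circ$ so that no other centers are present. Write
\[
K_{X_{i1}}+\Delta_i+B_i=\phi_i^*(K_{X_i}+D'_i),
\]
where $B_i$ is the strict transform of $D'_i$ and $\Delta_i$ is the exceptional part. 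Set $\Delta_i^+$ and $\Delta_i^-$ to be the positive and negative parts of $\Delta_i$. Because $X_i$ is terminal, $\Delta_i$ has coefficients $1-a_E(X_i,D'_i)$. These lie in a finite set: there are boundedly many exceptional divisors, the coefficients of $D'_i$ lie in $I_0$, and the discrepancies lie in the discrete set of Theorem \ref{discrete} (bounded Cartier index in a bounded family). Enlarging $I_0$ by these values gives $\operatorname{coeff}\Delta_i\in I_0$.

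The bounds $\operatorname{coeff}\phi_i^*\phi_{i*}B_i\leq N'$ and $-M_i^3\leq N'$ then follow by enlarging $N'$, since everything lies in a bounded family and intersection numbers are constant on strata. The mld conditions transfer by crepancy. We have $\operatorname{mld}(X_i^\circ,\Delta_i+B_i)=a_{F_i}=m$. For the perturbed pair, note that
\[
-\Delta_i^-+\tfrac{i-1}{i}(\Delta_i^++B_i)\leq \phi_i\text{-crepant transform of }\tfrac{i-1}{i}D'_i,
\]
because the exceptional coefficients of the pullback of $\frac{i-1}{i}D'_i$ are at most those of $D'_i$ scaled appropriately plus the nonpositive contribution from $\Delta^-$. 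So log discrepancies only increase, and the lower bound $m'$ persists.

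The main obstacle is this last comparison. Scaling the boundary by $\frac{i-1}{i}$ on $X_i$ does not simply scale $\Delta_i$, since $K$ contributes. I would check it divisor by divisor: $a_E$ is affine in the boundary, and $\operatorname{ord}_E(\Delta^-)$ only comes from $K_{X_{i1}/X_i}$ (which is positive because $X_i$ is terminal). From this we get $-\Delta_i^-\leq$ the $K$-part and $\Delta_i^+\leq$ the pullback-of-$D'_i$ part, so the inequality holds. Arranging that $\phi_i$ is an isomorphism outside a point is the secondary difficulty.
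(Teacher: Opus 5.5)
\textbf{The gap is in the final mld comparison.} With your choice of $\Delta_i^{\pm}$ as the positive and negative parts of $\Delta_i$, the inequality you assert goes the wrong way.

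Put $k_E:=\operatorname{coeff}_E K_{X_{i1}/X_i}>0$ and $o_E:=\operatorname{ord}_E D'_i$ for $\phi_i$-exceptional $E$. Let $\Gamma_i:=\frac{i-1}{i}\phi_i^*D'_i-K_{X_{i1}/X_i}$ be the crepant transform of $\frac{i-1}{i}D'_i$. For \emph{any} decomposition $\Delta_i=\Delta_i^+-\Delta_i^-$ we have $\Delta_i^++B_i=\phi_i^*D'_i-K_{X_{i1}/X_i}+\Delta_i^-$, which gives the identity
\[
-\Delta_i^-+\tfrac{i-1}{i}\bigl(\Delta_i^++B_i\bigr)=\Gamma_i+\tfrac{1}{i}\bigl(K_{X_{i1}/X_i}-\Delta_i^-\bigr).
\]
For positive and negative parts, $\operatorname{coeff}_E\Delta_i^-=\max(k_E-o_E,0)$. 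Hence $K_{X_{i1}/X_i}-\Delta_i^-=\sum_E\min(o_E,k_E)E\geq 0$, and this is nonzero as soon as $D'_i$ passes through a singular point of $X_i$.

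So your perturbed sub-boundary is $\geq\Gamma_i$. Every log discrepancy of it is therefore \emph{at most} the corresponding one of $(X_i,\frac{i-1}{i}D'_i)$. The loss $\frac1i\operatorname{ord}_G(K_{X_{i1}/X_i}-\Delta_i^-)$, as $G$ ranges over divisors over $X_{i1}$, is not controlled by your argument, so the bound $m'$ does not follow.

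Your sub-claim ``$-\Delta_i^-\leq$ the $K$-part'' is reversed for the same reason: in fact $-\Delta_i^-\geq -K_{X_{i1}/X_i}$. The culprit is the one you flagged. The positive part $\Delta_i^+$ still carries a piece of $-K_{X_{i1}/X_i}$, and multiplying it by $\frac{i-1}{i}$ shrinks that negative contribution.

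\textbf{The missing idea} is to choose the decomposition with $\Delta_i^-\geq K_{X_{i1}/X_i}$, so that the whole relative canonical divisor sits in the part that is not rescaled. The paper sets $\Delta_i^-:=N'\operatorname{Exc}(\phi_i)$, with $N'$ uniform; this is possible because the resolutions form a bounded family, so the $k_E$ are bounded. It then sets
\[
\Delta_i^+:=\Delta_i+\Delta_i^-=(\phi_i^*D'_i-B_i)+\bigl(N'\operatorname{Exc}(\phi_i)-K_{X_{i1}/X_i}\bigr)\geq 0.
\]
The identity above then yields a sub-boundary $\leq\Gamma_i$, so log discrepancies only increase and $m'$ transfers.

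\textbf{A secondary problem is your construction of $\phi_i$.} Nothing bounds the degree of $D'_i$, so $(X_i,\operatorname{Supp}D'_i)$ need not form a bounded family. Moreover, a log resolution of the pair generally blows up curves through $x_i$, which no shrinking of $X_i^\circ$ removes. The statement only asks that $(X_{i1},\Delta_i)$ be log smooth. So take a bounded family of resolutions of the $X_i$ alone; these are isomorphisms outside finitely many points because terminal threefolds have isolated singularities.

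Likewise, the bound on $\operatorname{coeff}\phi_i^*\phi_{i*}B_i$ should come from $\operatorname{ord}_E D'_i\leq a_E(X_i,0)$, which follows from log canonicity, not from boundedness of $D'_i$. Finally, your reduction to closed-point centers is not needed for this lemma.
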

\begin{proof}
    Suppose Theorem \ref{original} does not hold. Let $I_0$, $m$, $m'$, $(X_i,D_i)$ and $F_i$ be given as in Lemma \ref{original_I0}. Let $\{\phi_i:X_{i1}\to X_{i0}:=X_{i}\}$ be a bounded family of resolutions of $\{X_{i}\}$. 
    Since $X_{i}$ is terminal, we may assume $\phi_i$ is isomorphic over outside some finitely many closed points of $X_{i0}$.
    
    Write $\phi_i^*(K_{X_{i0}}+D_i)=K_{X_{i1}}+\Delta_i+B_i$, where $\Delta_i$ is exceptional and $B_i$ is the strict transform of $D_i$. Possibly extending $I_0$, we may assume $\operatorname{coeff} \Delta_i\in I_0$ and $\operatorname{coeff} B_i\in I_0$. Also, $\operatorname{coeff}\phi_i^*{\phi_i}_*B_i$ is bounded. Take $H_i\subseteq X_{i1}$ a bounded family of very ample divisor and set $M_i:=\phi_i^*{\phi_i}_*(H_i)-H_i$.
    Then $M_i$ and $\operatorname{Exc}(\phi_i)$ are bounded.
    Finally, let $\Delta^-_i:=N'\operatorname{Exc}(\phi_i)$ for some fixed natural number such that $\Delta^-_i+\phi_i^*(K_{X_i})-K_{X_{i1}}\geq 0$ and set $\Delta_i^+:=\Delta_i+\Delta_i^-$, and we are done.
\end{proof}
\begin{defi}
        For each non-negative integer $n$ and for each $(n+1)$-tuple $\mathbf{a}=(a_1,a_2,\dots,a_n)$ of real numbers, we define $\sum\mathbf{a}$ to be the value $\sum_{i=0}^na_i-n-2$.
\end{defi}

\begin{defi}\label{P}
  Let $I_0\subseteq \mathbb{R}_{\geq 0}$ be a finite set, $0\leq m$ and $c>0$ be real numbers, $n$ be a non-negative integer, and $\mathbf{a}=(a_0,a_1,\dots,a_n)$ be a $(n+1)$-tuple of non-negative real numbers. Let $\mathbf{o}\in \mathbb{C}^3$ be the origin, $E_\mathbf{o}$ be the exceptional divisor of the blow up of $\mathbb{C}^3$ at the $\mathbf{o}$, and let $\mathbf{M}=[-E_\mathbf{o}]$. We say that property $\mathrm{P}(I_0, m,\mathbf{a},c)$ holds if there are real numbers $m'>m$ and $t$ such that for every natural number $i$, there are a pair $(\mathbb{C}^3,S_i)$ and a smooth prime divisor $\Delta_i\subseteq \mathbb{C}^3$ satisfying the following conditions.
  \begin{enumerate}
      \item $\operatorname{coeff} S_i\in I_0$.
      \item There is a chain $Z_{ii}\to Z_{i(i-1)}\to\dots\to Z_{i1}\to Z_{i0}:=\mathbb{C}^3$ of towers of section-blow-ups $Z_{ik}:=Z_{ikn}\to Z_{ik(n-1)}\to\dots\to Z_{ik1}\to Z_{ik0}\to Z_{i(k-1)n}:=Z_{i(k-1)}$ with multiplicity $\mathbf{a}$ along $\tilde{S}_i$ over $\mathbf{o}\in \mathbb{C}^3$ and layers $F_{ik0}, F_{ik1}, \dots  F_{ikn}$ of $Z_{ik}\to Z_{i(k-1)}$ such that $c_{Z_{ik}}F_{i(k+1)0}=:x_{ik}\not\in\tilde{F}_{ik(n-1)}$
      for $1\leq k\leq i$.
      \item $\operatorname{ord}_{F_{ikl}}(\Delta_i)=1$ for all $k,l$.
      \item $\operatorname{mld}(\mathbb{C}^3,S_i+t\Delta_i)=m$ with $\mathbf{o}$ being a mld center.
      \item $\operatorname{mld}(\mathbb{C}^3,\frac{i-1}{i}S_i+t\Delta_i)\geq m'$.
      \item For $1\leq k\leq i$, either there is a non-lc center of $\big(Z_{ik},\tilde{S_i}+\Delta_{ik}+(1-\sum\mathbf{a})\mathbf{M}\big)/\mathbb{C}^3$ on $F_{ikn}
      $ or there is no non-lc center, not identical with $x_{ikn}$, of $\big(Z_{ik},\tilde{S_i}+\Delta_{ik}+(1-\sum\mathbf{a})\mathbf{M}/\mathbb{C}^3\big)$ on $\tilde{F}_{ikl}$ for all $0\leq l\leq n$, where $\Delta_{ik}$ is the exceptional divisor over $\mathbb{C}^3$ such that $\tilde{S_i}+\Delta_{ik}+(1-\sum\mathbf{a})\mathbf{M}$ is the crepant pullback of $S_i+(1-\sum\mathbf{a})\mathbf{M}$ on $Z_{ik}$. Note that $\mathbf{M}$ descends to its image on $Z_{ik}$.
      \item $\tilde{S}_i\cdot F_{i1n}=\sum_j d_j s_j$ for some positive real numbers $d_j$ and some lines $s_j\subseteq F_{i1n}$ with $c=\max_j\{d_j\}$.
  \end{enumerate}
\end{defi}~
\begin{lem}\label{step8}
Let $I_0\subseteq \mathbb{R}_{\geq 0}$ be a finite set, $m\geq 0$ and $c>0$ be real numbers, $n$ be a non-negative integer, and $\mathbf{a}=(a_0>1,a_1,\dots,a_n)$ be an $(n+1)$-tuple of non-negative real numbers. Suppose that $\mathrm{P}(I_0, m,\mathbf{a},c)$ holds. Then at least one of the following holds.
\begin{enumerate}
    \item $c\leq 1$.
    \item $c<a_1$, $n\geq 1$ and $\mathrm{P}(I_0, m,\mathbf{a}',c')$ holds, where $\mathbf{a}'=(a_0':=a_0+a_1-c,a_2,\dots,a_n,c)$ and $c'\leq c$ is some non-negative real number.
    \item $c=a_1$, $n\geq 1$ and $\mathrm{P}(I_0, m,\mathbf{a}',c')$ holds, where $\mathbf{a}'=(a_0,a_1,a_2,\dots,a_n,c)$ and $c'\leq c$ is some non-negative real number.
    \item $n=0$ and $\mathrm{P}(I_0, m,\mathbf{a}',c')$ holds, where $\mathbf{a}'=(a_0,c)$ and $c'\leq c$ is some non-negative real number.
    \item $\mathrm{P}(I_0, m,\mathbf{a},c')$ holds, where $c'< c$ is some non-negative real number.
\end{enumerate}
\end{lem}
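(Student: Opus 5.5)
We argue by a case analysis on the structure of $\tilde{S}_i\cdot F_{i1n}$ on the last layer $F_{i1n}\simeq\mathbb{F}_{n+1}$. The goal is to show that if $c>1$ then the chain of towers can be re-extracted with one more layer (cases (2)–(4)), or the maximal coefficient $c$ can be lowered (case (5)). Fix the data $m'>m$, $t$, $(\mathbb{C}^3,S_i)$, $\Delta_i$ and the chains $Z_{ii}\to\dots\to Z_{i0}$ provided by $\mathrm{P}(I_0,m,\mathbf{a},c)$, and assume $c>1$.

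\emph{Step 1: identify the line of maximal coefficient and compare $c$ with $a_1$.} By condition (7), write $\tilde{S}_i\cdot F_{i1n}=\sum_j d_js_j$, and let $s_{j_0}$ be a line with $d_{j_0}=c$. Lemma \ref{sectionmult1} and Lemma \ref{strict_transform} say that the lines $s_j$ pass through the next blow-up centre $x_{i1}$ and propagate to degree-$1$ sections on the later layers, with the same coefficients. In particular $\operatorname{mult}_{s_{j_0}}\tilde{S}_i=c$. When $n\geq1$ we compare this with $a_1=\operatorname{mult}_{l_1}\tilde{S}_i$, using Lemma \ref{mult}, and distinguish $c<a_1$, $c=a_1$ and $c>a_1$; when $n=0$ there is no $a_1$ to compare with. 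The case $c>a_1$ should be excluded, or reduced to (5), by potential nefness: by Lemma \ref{pot_nef}, $\tilde{S}_i$ is potentially nef, so Lemma \ref{2.35} and Lemma \ref{negcurve} bound the multiplicity along a section by the multiplicity at the previous layer.

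\emph{Step 2: build the new towers.} If $c=a_1$, or if $n=0$, we append one more section-blow-up along $\tilde{s}_{j_0}$ at each stage $k$. This produces towers with multiplicities $(a_0,\dots,a_n,c)$, which is case (3) or case (4). If $c<a_1$, we instead blow up the curve $\tilde{C}=\tilde{s}_{j_0}$ first and use Lemma \ref{sectionmult}(1),(2). This reorganizes each tower as a section tower over $X_C$ with multiplicities $(a_0+a_1-c,a_2,\dots,a_n)$, and the extra layer coming from $E_{\tilde C}$ has multiplicity $c$; this gives $\mathbf{a}'$ as in (2). In both situations we must recheck conditions (1)–(7) of Definition \ref{P}:
\begin{itemize}
\item Conditions (1), (4) and (5) are untouched, since the pairs are unchanged.
\item Condition (3) follows from Lemma \ref{bup} and Lemma \ref{primedivisors}, which identify the new exceptional divisors with divisors dominated by $\Delta_i$ with order one.
\item Condition (2) holds because the non-collinearity of $x_{ik}$ is preserved by Lemma \ref{bup}(2).
\item Condition (7) holds with some new $c'\leq c$ by Lemma \ref{sectionmult1}, since the lines $s'_j$ on the new last layer carry coefficients bounded by those on the old one.
\end{itemize}
If none of the extensions is available, the lines of coefficient $c$ must fail to persist along the chain for infinitely many $i$. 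Passing to a subsequence with Lemma \ref{lim}, we then truncate the chains so that the maximum coefficient drops, which gives case (5).

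\emph{Step 3 (main obstacle): preserving condition (6) on non-lc centres.} The generalized pair $(Z_{ik},\tilde{S}_i+\Delta_{ik}+(1-\sum\mathbf{a})\mathbf{M})$ changes when $\mathbf{a}$ is replaced by $\mathbf{a}'$. We must show that a non-lc centre on the old last layer either yields one on the new last layer, or that no new non-lc centres appear on the intermediate layers. I would first use $c>1$ together with Lemma \ref{a<=1_original} and Lemma \ref{a<=1}. These say that when the relevant multiplicity is at most $1$ the pair is lc near the layer, so every non-lc centre is forced to lie over the lines of coefficient exceeding $1$, that is, over $s_{j_0}$. Then Lemma \ref{curvebupinduction} and Lemma \ref{sectionmult}(5),(6) locate the centres of the offending divisors inside $\tilde{E}_C$ along the new tower, so they reappear on the new last layer $F_{i1(n+1)}$. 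Finally, Lemma \ref{finitecurve} guarantees that these centres cannot escape $\tilde D$ before a bounded number of steps, which is what makes the subsequence argument for condition (6) work uniformly in $i$.
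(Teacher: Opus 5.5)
Your proposal follows essentially the same route as the paper. Assuming $c>1$, both extend each tower by one section-blow-up along a line of coefficient $c$ when $c=a_1$ or $n=0$ (Lemma~\ref{sectionmult1}, with Lemma~\ref{lim} deciding between (3)/(4) and (5)), re-extract via Lemma~\ref{sectionmult} when $c<a_1$ to obtain (2), and carry condition (6) over by using Lemma~\ref{a<=1} to force non-lc centres on the last layer onto a line of coefficient $c$.

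The paper does not treat $c>a_1$ separately and does not use Lemma~\ref{curvebupinduction} or Lemma~\ref{finitecurve} here. Your Step~3 also misquotes Lemma~\ref{finitecurve}: it forces the centres to leave $\tilde D$ within boundedly many steps, rather than keeping them on it.
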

\begin{proof}  
     Suppose $\mathrm{P}(I_0, m,\mathbf{a},c)$ holds with $c>1$. By Lemma \ref{sectionmult1} and Lemma \ref{a<=1}, if there is a non-lc center of $(Z_{ik}/\mathbb{C}^3,\tilde{S_i}+\Delta_{ik}+(1-\sum\mathbf{a})\mathbf{M})$ on $F_{ikn}$ for some $1\leq k\leq i$, the non-lc center must be contained in a line $s\subseteq F_{ikn}$ with $\operatorname{mult}_s\tilde{S}_i=c>1$. By Lemma \ref{sectionmult} and Lemma \ref{a<=1}, (6) in Definition \ref{P} is passed to the new chains of towers of section-blow-ups, constructed below.
     
     We consider the following cases separately. 
     
     Case (A): either $n\geq 1$ and $c=a_1$ or $n=0$. By Lemma \ref{sectionmult1}, for each $i$, there is a natural number $j=j(i)\leq i$ such that there is a chain $Z'_{ij}\to Z'_{i(j-1)}\to\dots\to Z'_{i1}\to Z'_{i0}:=\mathbb{C}^3$ of towers of section-blow-ups $Z'_{ik}:=Z'_{ik(n+1)}\to Z'_{ikn}\to\dots\to Z'_{ik1}\to Z'_{ik0}\to Z'_{i(k-1)(n+1)}:=Z'_{i(k-1)}$ with multiplicity $\mathbf{a}':=(a_0,a_1,\dots,a_n,c)=(a_0,c,\dots,c)$ along $\tilde{S}_i$ over $\mathbf{o}\in \mathbb{C}^3$ and layers $F'_{ik0}, F'_{ik1}, \dots  F'_{ik(n+1)}$ of $Z'_{ik}\to Z'_{i(k-1)}$ such that $c_{Z'_{ik}}F'_{i(k+1)0}:=x'_{ik}\not\in\tilde{F}'_{ik(n-1)}$
    for $1\leq k\leq j$ and $\tilde{S_i}\cdot F'_{ij(n+1)}=\sum d'_js'_j$ for some positive real numbers $d'_j<c$ and some distinct lines $s'_j$ on $F'_{ij(n+1)}$.
    By Lemma \ref{lim}, if $n\geq 1$ and $c=a_1$, either (3) holds or (5) holds.
    Similarly, by Lemma \ref{lim}, if $n=0$, either (4) holds or (5) holds.

    Case (B): $n\geq 1$, and $c<a_1$. By Lemma \ref{sectionmult1} and Lemma \ref{sectionmult}, (2) holds.
\end{proof}
\begin{lem}\label{step8_2}
     Under the notations of Lemma \ref{step8}, suppose that $\mathrm{P}(I_0, m,\mathbf{a},c)$ holds. Then, there is a non-negative integer $n'$, an $(n'+1)$-tuple of non-negative real numbers $\mathbf{a}'=(a_0'>1,a_1',\dots,a_{n'}')$ and a positive real number $c'\leq 1$ such that $\mathrm{P}(I_0, m,\mathbf{a}',c')$ holds.
\end{lem}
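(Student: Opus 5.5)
We obtain Lemma \ref{step8_2} by iterating Lemma \ref{step8} and showing that the iteration must land in alternative (1). Set $\mathbf{a}^{(0)}=\mathbf{a}$ and $c^{(0)}=c$. If $c\leq 1$ we are done at once; here we must also check that $a_0>1$, which we take from the standing hypothesis of Lemma \ref{step8}. Otherwise Lemma \ref{step8} applies and produces a new triple $(\mathbf{a}^{(1)},c^{(1)})$ for which $\mathrm{P}(I_0,m,\mathbf{a}^{(1)},c^{(1)})$ holds, through one of the alternatives (2)--(5). In every one of these alternatives the new first entry is still larger than $1$: in (3), (4) and (5) it is unchanged, and in (2) it equals $a_0+a_1-c>a_0$. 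Hence the hypothesis of Lemma \ref{step8} survives each step and we can keep applying the lemma. The work is to show that alternative (1) occurs after finitely many steps.

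To bound the number of steps we use a monotone quantity. Take $\sum\mathbf{a}$, or equivalently the log discrepancy of the top layer of a single tower of section-blow-ups, which is determined by $\mathbf{a}$ and the layer count. Along any step the value $c$ never increases. Moreover, alternatives (2), (3) and (4) each lengthen the tower by one layer of multiplicity $c>1$, so they decrease the log discrepancy of the last layer by at least $c-1$ relative to the trivial contribution. Conditions (4) and (5) of Definition \ref{P} then apply: the pairs $(\mathbb{C}^3,S_i+t\Delta_i)$ have mld exactly $m$ at $\mathbf{o}$ and satisfy $\operatorname{mld}(\mathbb{C}^3,\frac{i-1}{i}S_i+t\Delta_i)\geq m'$. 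Together with log canonicity, these force the log discrepancies of all layers $F_{ikl}$ to stay at least $m\geq 0$, while the towers are repeated $i\to\infty$ times. From this we expect a uniform bound on the number of lengthening steps, provided $c$ stays bounded away from $1$. Since all coefficients lie in the finite set $I_0$, the multiplicities $a_j$ and $c$ lie in a finite set generated by sums of elements of $I_0$, bounded by the bounded-coefficient data. Consequently $c$ takes only finitely many values above $1$, so a gap $c-1\geq\gamma>0$ holds uniformly.

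Alternative (5) strictly decreases $c$ without changing $\mathbf{a}$. Since $c$ lies in a discrete finite set, (5) can occur only finitely many times between lengthening steps. Combining the two bounds, the process terminates after finitely many steps, and termination is possible only through alternative (1). This gives $c'\leq 1$, with $\mathbf{a}'$ and $n'$ the final data.

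The main obstacle will be justifying rigorously that the values of $c$ and of the $a_j$ lie in a finite, or at least DCC-discrete, set, and that lengthening the tower really costs a definite amount of log discrepancy. For the first point we will try Lemma \ref{strict_transform} and Lemma \ref{sectionmult1}: they express the $d_j$ as sums of multiplicities of components of $S_i$, whose coefficients lie in $I_0$ with bounded multiplicities by Lemma \ref{boundedcurves}. For the second point we will compute $a_{F_{ikn}}$ directly from the multiplicities via Lemma \ref{bupmult}, and compare the result with the $\frac{i-1}{i}$ perturbation of condition (5) to rule out infinitely many extensions.
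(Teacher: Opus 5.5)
Your overall plan is the right one: iterate Lemma~\ref{step8}, check that $a_0>1$ persists, and force termination with bounded monotone quantities whose jumps are bounded below by the discreteness of $\sum I_0$. But the termination argument has a genuine gap at alternative (2).

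You claim that (2), (3) and (4) each append a layer of multiplicity $c>1$, and so raise $\sum\mathbf{a}$ by $c-1$. This holds for (3) and (4), but not for (2). There $\mathbf{a}'=(a_0+a_1-c,a_2,\dots,a_n,c)$ still has $n+1$ entries: the first two entries merge, reflecting that Lemma~\ref{sectionmult} restarts the tower from the blow-up of the curve. Moreover
\[
(a_0+a_1-c)+a_2+\dots+a_n+c-n-2=\sum_{j=0}^{n}a_j-n-2,
\]
so $\sum\mathbf{a}'=\sum\mathbf{a}$, and the log discrepancy of the top layer does not move. Alternative (2) only gives $c'\leq c$, possibly with equality. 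So neither of your two monitored quantities need change, and nothing in your argument excludes an infinite run of type-(2) steps. Your bound on (5) ``between lengthening steps'' does not address this.

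The missing ingredient is something you already wrote down but did not use. In (2) the first entry strictly increases, $a_0'-a_0=a_1-c>0$, while in (3)--(5) it is unchanged. Take a uniform $\epsilon>0$ from discreteness; the paper adjoins $1$ to $I_0$ and takes the least positive difference of elements of $\sum I_0\cap[0,2]$. Then each alternative raises one of $\sum\mathbf{a}$, $a_0$, $-c$ by at least $\epsilon$ without lowering the others. All three are bounded:
\begin{itemize}
\item $\sum\mathbf{a}$ is bounded above by the mld condition (4) of Definition~\ref{P} (the paper records $\sum\mathbf{a}\leq 0$);
\item $a_0\leq\sum\mathbf{a}+n+2$ is bounded during any run where $\sum\mathbf{a}$ and $n$ stay fixed, which is the case for (2) and (5);
\item $c>1$ throughout.
\end{itemize}
This is exactly the paper's induction on $(\sum\mathbf{a},a_0,2-c)$ in lexicographic order.

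Incidentally, the appeal to the $\frac{i-1}{i}$ perturbation and to towers repeated infinitely often is not needed. The log discrepancy of a single top layer with respect to $(\mathbb{C}^3,S_i+t\Delta_i)$ already bounds $\sum\mathbf{a}$.
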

\begin{proof} 
    If $c\leq 1$, there is nothing to show. So we may assume that $c>1$.
    
    Replacing $I_0$ by $I_0\cup\{1\}$, we may assume that $1\in I_0$. Let $\epsilon$ be the minimum element of $(\sum I_0\cap[0,2]-\sum I_0\cap[0,2])\cap\mathbb{R}_{>0}$, which exists since $\sum I_0$ is discrete. 

    Note that $\sum\mathbf{a}\leq 0$ because $\operatorname{mld}(\mathbb{C}^3,S_i+t\Delta_i)=m$ for all $i$.

    We may observe that each condition of Lemma \ref{step8} either $\sum \mathbf{a'}-\sum\mathbf{a}\geq \epsilon$, $a_0'-a_0\geq \epsilon$, or $c-c'\geq \epsilon$.
    The lemma then follows from Lemma \ref{step8}, by induction on $(\sum\mathbf{a},a_0,2-c)$ under the lexicographical order. 
\end{proof}

\begin{thm}
   Let $I$ be a DCC set of real numbers and $N$ be a natural number.

   Suppose that $I_0$, $\phi_i:X_{i1}\to X_{i0}$, $\Delta_{i}$, and $F_i$, are given as in Lemma \ref{I0} for $i=1,2,3, \dots$.

    Inductively, for $j\geq 1$, set $x_{ij}=c_{X_{ij}}(F_{i})$, $\phi_{i(j+1)}:X_{i(j+1)}\to X_{ij}$ be the blow-up at $x_{ij}$ with exceptional divisor $E_{i(j+1)}$, and $a_{ij}=\operatorname{mult}_{x_{ij}}\tilde{B}_i\in \sum I_0^+\cap [0,-3\min\{I_0\}+3]$ where $I_{0}^{+}:=I_{0}\cap[0,\infty)$.

   Then, possibly replacing $I$ and $N$ and passing to a subsequence, we may assume that the followings hold.
   
   \begin{enumerate}
       \item $x_{ii}$ is a closed point for every $i$.
       \item $a_{ij}=a$ for some positive real number $a$ for every $j\leq i$, and $a$ is the minimal possible.
      
       \item Either (i): ``$X_{ii}\to X_{i0}$ is a chain and $a\leq 2$", or (ii) ``$a\leq 1$ and there is a blow-up of sections $Y_{ii}\to Y_{i(i-1)}\to \dots\to Y_{i0}\to X_{i1}$ of layer $i$ with blow-up centers $l_{ij}$ and $\operatorname{mult}_{l_{ij}}\tilde{B_i}=a$ and $c_{F_i}(Y_{ii})$ is a closed point contained in $l_{i(i-1)}$ for $0\leq j\leq i$".
       \item There are prime exceptional divisors $D_{i1}$, $D_{i2}$ on $X_{i1}$ over $X_{i0}$, a natural number $k$, and smooth curves $C_{i1},\dots,C_{ik}$ intersecting transversely mutually contained in $D_{i1}\cup D_{i2}$ such that $B_i\cap \Delta_i\subseteq C_{i1}\cup \dots\cup C_{ik
       }$ around $x_{i0}$.
       \item If $a\leq 1$, then $k=2$ and $\operatorname{mult}_{C_{i1}}B_i=\operatorname{mult}_{C_{i2}}B_i=a$.
       \item $a>1$.
       \item $\Delta_i$ is a prime divisor and $x_{i2}\not\in \tilde{\Delta_i}$.
       \item There is a non-negative integer $n'$, an $(n'+1)$-tuple of non-negative real numbers $\mathbf{a}'=(a_0'>1,a_1',\dots,a_n')$ and a positive real number $c'\leq 1$ such that $\mathrm{P}(I_0, m,\mathbf{a}',c')$ holds.
       \item Let $(\mathbb{C}^3, \tilde{S}_i)$ be pairs and $\Delta_i$ be prime divisors, $t$ and $m'$ be real numbers, $Z_{ii}\to Z_{i(i-1)}\to\dots\to Z_{i1}\to \mathbb{C}^3$ be birational models, $F_{ikl}$ be the exceptional divisors, and $x_{ij}$ be the blow-up centers given by $\mathrm{P}(I_0, m,\mathbf{a}',c')$ for $i=1,2, 3,\dots$ as in Definition \ref{P}. Let $\mathbf{M}=[-E_{\mathbf{o}}]$ be the b-divisor given as in Definition \ref{P}. Then, $i$ large enough, $\big(Z_{i1},\tilde{S}_i+\Delta_{i1}+(1-\sum\mathbf{a}')\mathbf{M}\big)/\mathbb{C}^3$ is not log canonical around $x_{i1}$ but $\big(Z_{i1},\frac{\sqrt{i}-1}{\sqrt{i}}\tilde{S}_i+\Delta_{i1}+(1-\sum\mathbf{a}')\mathbf{M}\big)/\mathbb{C}^3$ is log canonical around $x_{i1}$, where $\Delta_{i1}$ is the exceptional divisor over $\mathbb{C}^3$ such that $\tilde{S_i}+\Delta_{i1}+(1-\sum\mathbf{a}')\mathbf{M}$ is the crepant pullback of $S_i+(1-\sum\mathbf{a}')\mathbf{M}$ on $Z_{i1}$.
       \item Such sequences in Lemma \ref{I0} does not exist, which proves Theorem \ref{original}.
       
   \end{enumerate}
\end{thm}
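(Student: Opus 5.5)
The argument is a successive reduction: each item (1)--(9) is obtained from the previous ones by passing to a subsequence or adjusting $I$, $N$, $I_0$. The last item (10) is then a contradiction, which proves Theorem~\ref{original}.

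\emph{Steps (1)--(2).} Since $a_{F_i}(\Delta_i+B_i)=m$ while the mld of the perturbed pair is at least $m'>m$, the order of $F_i$ along $\tilde B_i$ must grow with $i$. So the number of blow-ups needed to extract $F_i$ tends to infinity. Using Lemma~\ref{lim} we pass to a subsequence where the first $i$ centers $x_{ij}$ are closed points; if they were curves from some fixed step on, then cutting by a general hyperplane reduces to the surface statement Theorem~\ref{main2}, which gives ACC and hence a contradiction.

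The multiplicities $a_{ij}$ lie in the finite set $\sum I_0^+\cap[0,-3\min I_0+3]$ and are non-increasing in $j$. Hence after a subsequence and a shift of the base model they are constant, $a_{ij}=a$, and we choose $a$ minimal among all such configurations.

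\emph{Steps (3)--(5).} Consider the incidence pattern of the centers $x_{i(j+1)}$ relative to the previous exceptional divisors. If infinitely many centers lie on intersections $\tilde E_{ij}\cap E_{i(j+1)}$, then Lemma~\ref{strict_transform} and Lemma~\ref{sectionmult1} identify the configuration with a tower of section-blow-ups. In that case the potentially anti-nef bookkeeping of Lemma~\ref{induction_on_layer} forces $a\le 1$. Otherwise we have a chain, and the discrepancy count $a_{E_{ij}}=2j+\dots-ja\ge m$ forces $a\le2$.

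For (4), Lemma~\ref{boundedcurves} shows that the curves in $B_i\cap\Delta_i$ through $x_{i0}$ are bounded in degree and multiplicity, and they lie on at most two exceptional components. For (5), when $a\le1$, Lemma~\ref{finitecurve} and Lemma~\ref{longchain} produce curves $C_{i1},C_{i2}$ of multiplicity exactly $a$ carrying the centers.

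\emph{Step (6)} is a contradiction argument. Suppose $a\le1$. Then Lemma~\ref{a<=1} and Lemma~\ref{a<=1_original}, applied with $\mathbf M$ built from $B_i$ via Lemma~\ref{pot_nef}, show $\operatorname{mld}\ge$ the value attained at the first blow-up. This contradicts $F_i$ computing $m$ with unbounded length. Hence $a>1$, and we are in case (i).

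\emph{Step (7).} By (4), (5) and Lemma~\ref{cut}, only boundedly many centers can meet $\tilde\Delta_i$ with positive multiplicity. So after replacing $X_{i1}$ by a bounded blow-up we may take $\Delta_i$ prime with $x_{i2}\notin\tilde\Delta_i$.

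\emph{Step (8).} Localize analytically at $x_{i0}$ to obtain $(\mathbb C^3,S_i)$ with $\Delta_i$ smooth. The resulting chain is a chain of towers with multiplicity $(a)$, $a>1$, so $\mathrm P(I_0,m,(a),c)$ holds for some $c$. Lemma~\ref{step8_2} then upgrades this to $\mathrm P(I_0,m,\mathbf a',c')$ with $c'\le1$.

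\emph{Step (9).} Non-lc-ness of the generalized pair around $x_{i1}$ follows from condition (4) of Definition~\ref{P}, since $m$ is attained deep in the tower, together with condition (6). Log canonicity after scaling $\tilde S_i$ by $\frac{\sqrt i-1}{\sqrt i}$ follows from condition (5) of Definition~\ref{P} and the comparison of orders along the layers.

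\emph{Step (10).} Because $c'\le1$, Lemma~\ref{sectionmult1} together with Lemma~\ref{a<=1} show that no non-lc center can lie on $F_{i1n}$, and condition (6) of Definition~\ref{P} rules out the others. This contradicts (9).

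\emph{Main obstacle.} I expect the hardest part to be (9) and (10): controlling non-lc centers of the generalized pair along lines in the layers while keeping the perturbation $\frac{\sqrt i-1}{\sqrt i}$ compatible with the finite set $I_0$. To handle this I would first try to use the gap $\epsilon$ from the discreteness of $\sum I_0$, as in the proof of Lemma~\ref{step8_2}.
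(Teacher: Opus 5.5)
Steps (1)--(8) follow the paper's reductions closely enough, apart from which auxiliary lemmas you cite. The closing step (10), however, does not work, and it is exactly where the paper needs an outside input.

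You claim that $c'\le 1$, via Lemma~\ref{sectionmult1} and Lemma~\ref{a<=1}, rules out every non-lc center on $F_{i1n}$. But $c'$ only bounds the coefficients $d_j$ in $\tilde S_i\cdot F_{i1n}=\sum_j d_j s_j$, that is, the multiplicity of $\tilde S_i$ along each single line $s_j$. By Lemma~\ref{sectionmult1} these lines all pass through the point $x_{i1}$ where the next tower of Definition~\ref{P}(2) is centered. There $\operatorname{mult}_{x_{i1}}\tilde S_i=a_0'>1$, since the multiplicities add up where the lines meet. Lemma~\ref{a<=1} needs multiplicity at most $1$ at the point, so it says nothing at $x_{i1}$. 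Condition (6) of Definition~\ref{P} cannot rule out the remaining centers either: it is a dichotomy whose first branch is precisely a non-lc center on $F_{ikn}$, and its second branch exempts $x_{ikn}$. So your argument for log canonicity breaks down exactly at $x_{i1}\in F_{i1n}$, which is where your own (9) places the non-lc center. The contradiction you draw is therefore not established.

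The missing idea is the ACC for log canonical thresholds. In the paper, $c'\le 1$ is used in (9), not in (10). Together with Lemma~\ref{a<=1} and condition (6), it forces any non-lc center of the rescaled pair $\big(Z_{i1},\frac{\sqrt{i}-1}{\sqrt{i}}\tilde S_i+\Delta_{i1}+(1-\sum\mathbf{a}')\mathbf{M}\big)$ through $x_{i1}$ to be a closed point of the last layer $F_{iin}$ lying on no other layer. Only there does your ``comparison of orders along the layers'' apply: the order of $t\Delta_i+\frac{1}{\sqrt{i}}\tilde S_i$ along $F_{iin}$ grows like $\frac{i-1}{\sqrt{i}}$ and eventually exceeds the fixed number $1-\sum\mathbf{a}'$. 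Your sketch of (9) omits this localization step.

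With both halves of (9) in hand, $\operatorname{lct}\big(Z_{i1}/\mathbb{C}^3,\Delta_{i1}+(1-\sum\mathbf{a}')\mathbf{M};\tilde S_i\big)$ lies in $[\frac{\sqrt{i}-1}{\sqrt{i}},1)$. These thresholds have coefficients in a fixed finite set and accumulate at $1$ from below, which contradicts the local ACC for lct. The $\epsilon$-gap from the discreteness of $\sum I_0$ that you propose in your last paragraph is what makes the induction in Lemma~\ref{step8_2} terminate; it does not produce this final contradiction.
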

\begin{proof}
\begin{enumerate}
    \item By Lemma \ref{lim}, we may assume either (1) holds or $\dim x_{i1}>0$ for all $i$. Cutting by a general hyperplane intersection, the later contradicts Theorem \ref{main2}.
    \item  Let $I^+_0=I_0\cap [0,\infty )$. Since $\operatorname{coeff}B_i\in I^+_0$, $\{ a_{ij}\}_{ij}\subseteq \sum I^+_0\cup \{ 0 \}$ is a discrete set. By the potential nefness of $\tilde{B}_i$ on $X_{i(j+1)}$ over $X_{i(j-1)}$, $a_{i(j+1)}\leq a_{ij}$ for all $i,j$. (2) then follows from Lemma \ref{lim}.
    \item Assume that (ii) does not hold. By Lemma \ref{lim}, we may assume that there is a natural number $h$ such that there is a tower of section-blow-ups $W_i\to X_i$ of layer $h$ with multiplicity $a$ along $B$ and $\operatorname{mult}_{{\bullet}}\tilde{B_i}< a$ around outside the closed point $c_{W_i}(F_i)$. Applying induction on $h$, (i) then follows from Lemma \ref{induction_on_layer}, Lemma \ref{strict_transform} and Lemma \ref{lim}.
    \item If (i) of (3) holds, this follows from Lemma \ref{finitecurve}. Otherwise, (ii) of (3) holds and we may assume that $x_{ih}\not \in \tilde{\Delta}_i$ for some fixed natural number $h\geq 3$ and $E_{ij}\cap \tilde{B_i}$ is a line for every $j<h$, on which the center of $F_i$ lies. (4) then follows from the log smoothness of $(X_{ih},\sum_{j=1}^k E_{ij})$.
    
    \item If (ii) of (3) holds, (5) follows from (4). Otherwise, the (i) of (3) holds and by Lemma \ref{finitecurve}, there is a curve $C_{i1}\subseteq \operatorname{Supp}\Delta_i$ with $\operatorname{mult}_{C_{i1}}B_i=a$. By Lemma \ref{curvebupinduction} and the boundedness of $\Delta_i$, we may assume that $C_{i1}$ is contained in only one prime component of $\Delta_i$. Lemma \ref{a<=1} then gives a contradiction that $F_i$ computes the minimal log discrepancy.
    \item This follows from Lemma \ref{a<=1}.
    \item By (6), (i) of (3) holds. This follows from Lemma \ref{finitecurve} and Lemma \ref{lim}.
    \item Since $X_{ii}\to X_{i0}$ is a chain, by (7) and Lemma \ref{a<=1}, $\mathrm{P}\big(I_0,m,(a),c\big)$ holds for some $c\leq a$. This follows from Lemma \ref{step8_2}.
    \item Since $(\mathbb{C}^3,\frac{\sqrt{i}-1}{\sqrt{i}}S_i+t\Delta_i)$ is sub-lc, a non-lc center $\eta_i$ of non-lc place $F'_i$ of $\big(Z_{i1},\frac{\sqrt{i}-1}{\sqrt{i}}\tilde{S}_i+\Delta_{i1}+(1-\sum\mathbf{a}')\mathbf{M}\big)/\mathbb{C}^3$ must be contained in $\operatorname{Supp}{\tilde{\Delta}_i}\cup\operatorname{Supp}\Delta_{i1}$. So, $\eta_i\subseteq F_{i1n}$ if $x_{i1}\in \eta_i$.
    
    
    Moreover, $\eta_i$ is a non-lc center of $\big(Z_{ik},\tilde{S_i}+\Delta_{ik}+(1-\sum\mathbf{a})\mathbf{M}\big)/\mathbb{C}^3$ for all $1\leq k\leq i$. By Lemma \ref{a<=1} and the sixth condition of Definition \ref{P}, $c_{Z_{ii}}(F'_i)$ is a closed point in $F_{iin}$ outside $\tilde{F_{ikl}}$ for any $(k,l)\neq (i,n)$. Let $D_i=\frac{1}{\sqrt{i}}\tilde{S}_i\subseteq Z_{i1}$ be a divisor on $Z_{i1}$. Since $\operatorname{ord}_{F_{iin} }(t\Delta_i)+\operatorname{ord}_{F_{iin}}(D_i)=t+\frac{i-1}{\sqrt{i}}(\sum\mathbf{a}'+n+2)>1-\sum \mathbf{a}'=(1-\sum\mathbf{a}')\operatorname{ord}_{F_{iin}}(E_{\mathbf{o}})$ for $i$ large enough, $a_{F'_{i}}\big(Z_{i1},\frac{\sqrt{i}-1}{\sqrt{i}}\tilde{S}_i+\Delta_{i1}+(1-\sum\mathbf{a}')\mathbf{M}\big)>a_{F'_{i}}(\mathbb{C}^3,S_i+t\Delta_i)=m\geq 0$, a contradiction. 

    On the other hand, suppose that $F_i'$ is a mld place of $(\mathbb{C}^3,S_i+t\Delta_i)$ centered at $\mathbf{o}$. Then $a_{F'_i}\big(Z_{i1},\tilde{S}_i+\Delta_{i1}+(1-\sum\mathbf{a}')\mathbf{M}\big)=a_{F'_i}\big(\mathbb{C}^3,S_i+(1-\sum\mathbf{a}')\mathbf{M}\big)>a_{F_{i1n}}\big(\mathbb{C}^3,S_i+(1-\sum\mathbf{a}')\mathbf{M}\big)=0$, which proves (7).
    \item By (9), $\operatorname{lct}\big(Z_{i1}/\mathbb{C}^3,\Delta_{i1}+(1-\sum\mathbf{a}')\mathbf{M};\tilde{S}_i\big)\in [\frac{\sqrt{i}-1}{\sqrt{i}},1)$ for all large $i$, which contradicts the local acc for lct (c.f. \cite{HX26}).
    
\end{enumerate}
\end{proof}


\bibliographystyle{alpha-links}
\bibliography{ref}

\end{document}